\documentclass[11pt,reqno]{amsart}
\usepackage[pdftex]{graphicx} \usepackage[dvipsnames]{xcolor}

\usepackage{a4wide}

\usepackage{amsfonts}
\usepackage{amsmath}
\usepackage{amssymb}
\usepackage{amsthm}
\usepackage{algorithm2e}
\usepackage{mathtools}
\usepackage{tikz}
\usepackage{tikz-3dplot}
\usetikzlibrary{patterns}
\usepackage{appendix}

\usepackage{paralist}
\usepackage[colorlinks,cite color=blue,pagebackref=true,pdftex]{hyperref}

\usepackage[T1]{fontenc}

\usepackage{caption}
\usepackage{subcaption}

\newcommand\Defn[1]{\textbf{\color{black}#1}}
\newcommand\Def[1]{\Defn{#1}}

\newcommand\eps{\varepsilon}
\renewcommand\emptyset{\varnothing}
\newcommand\Z{\mathbb{Z}}

\newcommand\N{\mathbb{N}}
\newcommand\Q{\mathbb{Q}}
\newcommand\R{\mathbb{R}}

\newcommand\x{\mathbf{x}}
\newcommand\y{\mathbf{y}}

\newcommand\inner[1]{\langle {#1} \rangle}
\newcommand\defeq{\coloneqq}

\newcommand\0{\mathbf{0}}

\newcommand\Fan{\mathcal{N}}%

\newcommand\id{\mathrm{id}}%
\newcommand\TypeSpc{\mathcal{T}}%
\newcommand\TypeCone{\TypeSpc_+}%
\newcommand\cTypeCone{\overline{\TypeSpc}_+}%

\newcommand\InSpc{\mathcal{I}}%
\newcommand\InCone{\InSpc_+}%
\newcommand\cInCone{\overline{\InSpc}_+}%
\newcommand\Arr{\mathcal{A}}%
\newcommand\braid{\mathcal{A}}%
\newcommand\Sym{\mathfrak{S}}%

\newcommand\Hyp{\mathbb{H}}%
\newcommand\Del{\mathcal{D}}%
\newcommand\PP{\mathbb{P}}%
\newcommand\Walk{\mathcal{W}}%
\newcommand\Prof{\mathcal{B}}%
\newcommand\VInProf{\Prof^\mathrm{vin}}%
\newcommand\InProf{\Prof^\mathrm{in}}%
\newcommand\Centers{M}%

\newcommand\BB{\mathcal{B}}%
\newcommand{\BBass}{\BB_{\text{\it ass}}}%
\newcommand{\BBcyc}{\BB_{\text{\it cyc}}}%
\newcommand{\BBflag}{\BB_{\text{\it flag}}}%

\newcommand\PL{\mathrm{PL}}%
\newcommand\Contr{\mathcal{E}}%

\newcommand\Traj{\mathcal{S}}%

\newcommand\Grp{\mathfrak{G}}%
\newcommand\Ob{\mathrm{Ob}}%

\DeclareMathOperator{\relint}{relint}
\DeclareMathOperator{\interior}{int}
\DeclareMathOperator{\aff}{aff}
\DeclareMathOperator{\lin}{lin}
\DeclareMathOperator{\lineal}{lineal}
\DeclareMathOperator{\conv}{conv}

\DeclareMathOperator{\im}{im}

\newtheorem{thm}{Theorem}[section]
\newtheorem{cor}[thm]{Corollary}
\newtheorem{lem}[thm]{Lemma}
\newtheorem{prop}[thm]{Proposition}

\newtheorem{quest}{Question}

\theoremstyle{definition}
\newtheorem{dfn}[thm]{Definition}
\newtheorem{example}[thm]{Example}
\newtheorem{rem}[thm]{Remark}

\title[Inscribable fans I]{Inscribable fans I:\\ Inscribed cones and virtual
polytopes}

\author{Sebastian Manecke} 
\author{Raman Sanyal}

\address{Institut f\"ur Mathematik, Goethe-Universit\"at Frankfurt, Germany} 
\email{manecke@math.uni-frankfurt.de}
\email{sanyal@math.uni-frankfurt.de}

\keywords{inscribed polytopes, normal equivalence, ideal hyperbolic polytopes, Delaunay
subdivisions, inscribed cones, inscribed virtual polytopes, routed particle
trajectories}

\subjclass[2010]{
51M20, %
52B11, %
52B12, %
52A55, %
20L05} %

\date{\today}

\parindent=0pt
\parskip=5pt

\binoppenalty=\maxdimen
\relpenalty=\maxdimen

\begin{document}

\begin{abstract}
    We investigate polytopes inscribed into a sphere that are normally
    equivalent (or strongly isomorphic) to a given polytope $P$. We show that
    the associated space of polytopes, called the \emph{inscribed cone} of
    $P$, is closed under Minkowski addition. Inscribed cones are interpreted
    as type cones of ideal hyperbolic polytopes and as deformation spaces of
    Delaunay subdivisions.  In particular, testing if there is an inscribed
    polytope normally equivalent to $P$ is polynomial time solvable.

    Normal equivalence is decided on the level of normal fans and we study the
    structure of inscribed cones for various classes of polytopes and fans,
    including simple, simplicial, and even. We classify (virtually)
    inscribable fans in dimension $2$ as well as inscribable permutahedra and
    nestohedra.

    A second goal of the paper is to introduce inscribed \emph{virtual}
    polytopes. Polytopes with a fixed normal fan $\Fan$ form a monoid with
    respect to Minkowski addition and the associated Grothendieck group is
    called the \emph{type space} of $\Fan$.  Elements of the type space
    correspond to formal Minkowski differences and are naturally equipped with
    vertices and hence with a notion of inscribability. We show that inscribed
    virtual polytopes form a subgroup, which can be non-trivial even if $\Fan$
    does not have actual inscribed polytopes.

    We relate inscribed virtual polytopes to routed particle trajectories,
    that is, piecewise-linear trajectories of particles in a ball with
    restricted directions. The state spaces gives rise to connected groupoids
    generated by reflections, called \emph{reflection groupoids}. The
    endomorphism groups of reflection groupoids can be thought of as discrete
    holonomy groups of the trajectories and we determine when they are
    reflection groups.
\end{abstract}

\maketitle

\tableofcontents

\section{Introduction}\label{sec:intro}

Let $P \subset \R^d$ be a convex polytope. We call $P$ \Def{inscribed} if its
vertices $V(P)$ lie on a common sphere. Prominent examples include the regular
polytopes and, more generally, vertex-transitive polytopes; cf.~\cite{Coxeter}.
In 1832 Steiner (cf.~\cite{Steiner}) asked if every $3$-dimensional polytope is
\Def{inscribable}, that is, if for every $3$-polytope $P$ there is an inscribed
polytope $P'$ that is combinatorially equivalent to $P$. Almost a century later,
Steinitz~\cite{Steinitz} gave a simple combinatorial criterion for
non-inscribability and with it the first counterexample to Steiner's question.
Using Steinitz seminal result that combinatorial types of $3$-polytopes are in
one-to-one correspondence with $3$-connected planar graphs (on $\ge 4$ nodes),
Rivin~\cite{Rivin} showed that testing inscribability can be done in polynomial
time: to a given $3$-connected planar graph, a system of linear equations and
strict inequalities is associated and feasibility of said system is equivalent
to an inscribed realization; see \eqref{eqn:Rivin}. This settled the
inscribability problem in dimensions $\le 3$; see~\cite[Ch.13.5]{grunbaum}
or~\cite{PadrolZiegler} for more on the historic background.

By Corollary~4.16 of~\cite{AdiprasitoPadrolTheran}, the problem of finding an
inscribed polytope combinatorially equivalent to a given (simplicial) polytope
$P$ is polynomial-time equivalent to the \emph{existential theory of the reals}
(ETR) and hence NP-hard. In this paper, we focus on a more restrictive form of
equivalence: two polytopes $P$ and $P'$ are \Def{normally equivalent} (written
$P \simeq P'$) if $P$ is combinatorially isomorphic to  $P'$ and corresponding
faces are parallel.  Figure~\ref{fig:normally_equiv} shows three
normally-equivalent polytopes. 
\begin{figure}[h]
    \centering
     \includegraphics[width=0.2\textwidth]{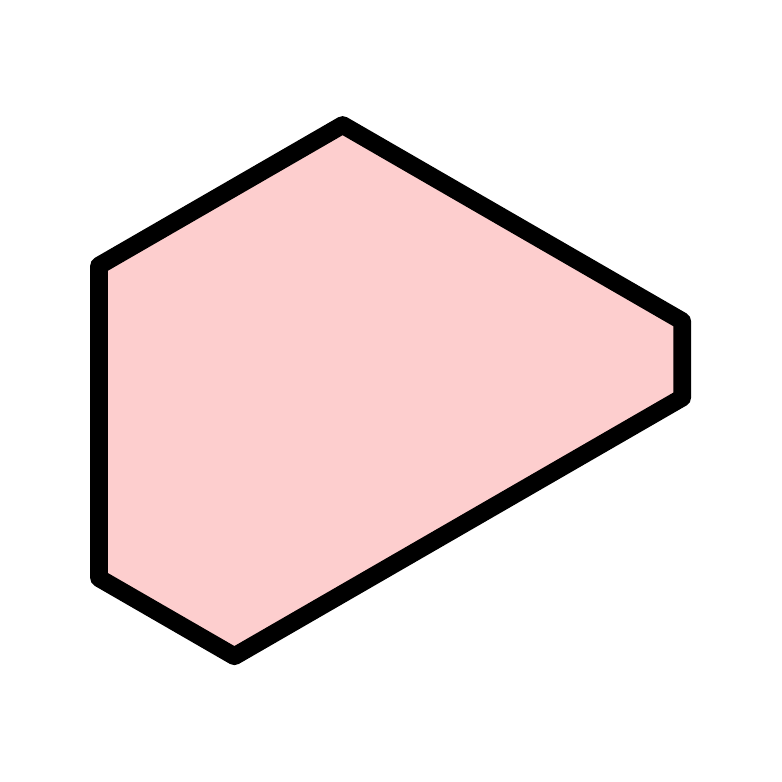}
    \qquad
    \qquad
     \includegraphics[width=0.2\textwidth]{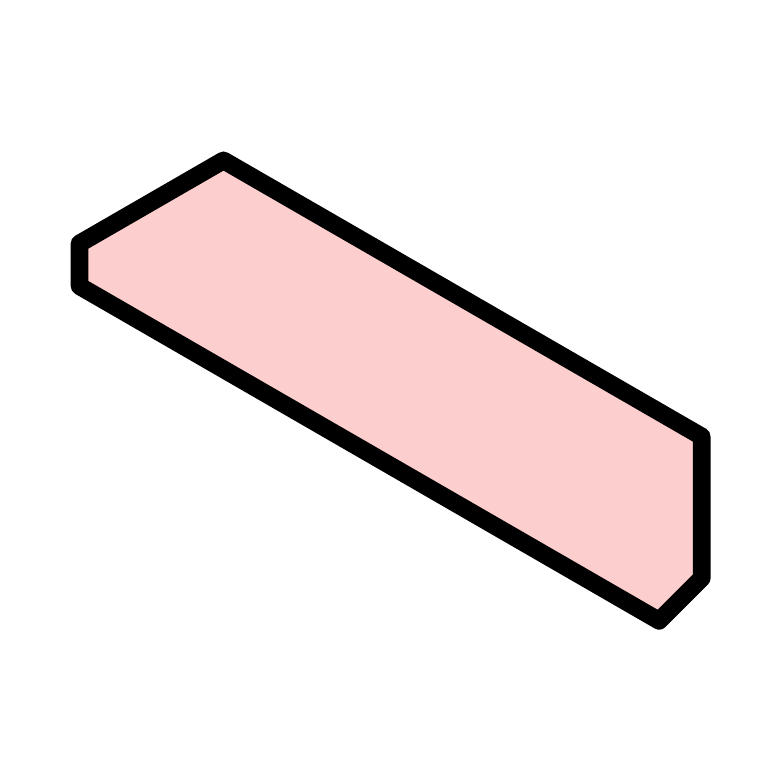}
    \qquad
    \qquad
     \includegraphics[width=0.2\textwidth]{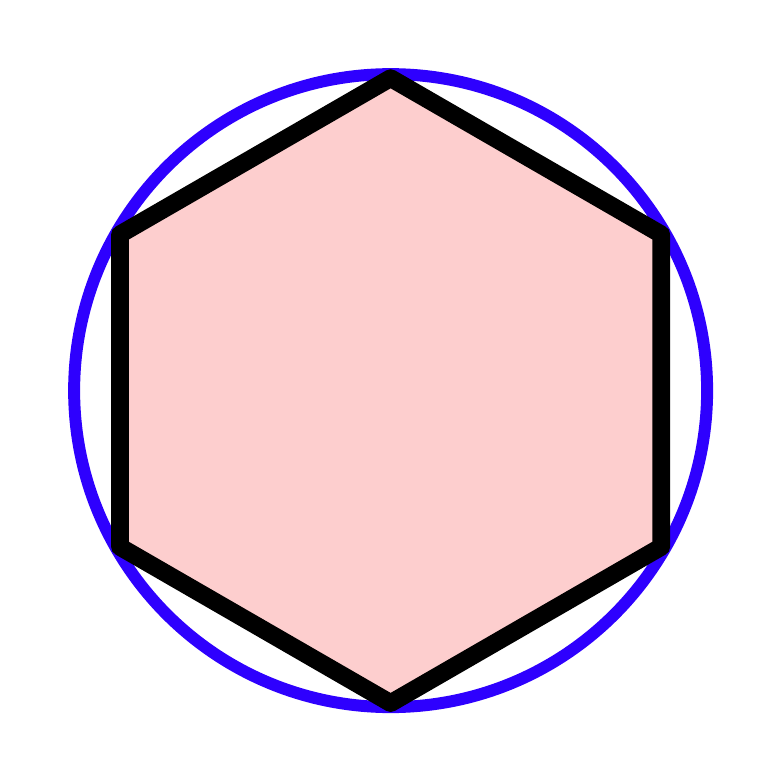}
    \caption{Three normally equivalent hexagons. The rightmost one is
    inscribed.}
    \label{fig:normally_equiv}
\end{figure}
Normal equivalence is a natural and important notion. It occurs, for example,
in McMullen's proof of the $g$-Theorem~\cite{McMullen-simple}, in the study of
torus invariant divisors on toric varieties~\cite[Ch.~6.3]{CLS}, and in
parametric linear programming~\cite{Adler}. In particular, the deformation
space of polytopes normally equivalent to $P$ is (simply) connected, which
prompts the following question:

\begin{center}
    \it
    Is there an inscribed polytope $P'$ normally equivalent to $P$?
\end{center}

We define the \Def{inscribed cone} 
\[
    \InCone(P) \ \defeq \ \{ P' \simeq P : P' \text{ inscribed} \} 
    \ / \ \text{translations}
\]
as the space of inscribed polytopes normally equivalent to $P$ up to
translation.  A polytope $P$ is said to be \Def{normally inscribable} if
$\InCone(P) \neq \emptyset$. Recall that the Minkowski sum of two polytopes
$Q,Q' \subset \R^d$ is the polytope $Q + Q' \defeq \{q + q' : q \in Q, q' \in
Q'\}$. It is easy to see that if $Q \simeq Q' \simeq P$, then $Q + Q' \simeq
P$. Our first main result shows that $\InCone(P)$ has an intriguing structure,
which justifies the name.

\begin{thm} \label{thm:InSpc}
    Let $P$ be a convex $d$-dimensional polytope and $Q,Q' \in \InCone(P)$.
    Then
    \[
        Q + Q' \in \InCone(P) \, .
    \]
    In particular, $\InCone(P)$ has the structure of an open polyhedral cone of
    dimension $\le d$.
\end{thm}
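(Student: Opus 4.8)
The plan is to reformulate inscribability as the solvability of a system of \emph{linear} equations whose coefficients depend linearly on the support data of the polytope; all assertions of the theorem then follow from linear algebra. The first step is to replace the circumsphere by the perpendicular bisectors of edges. Let $\Fan := \Fan(P)$, let $V$ and $E$ be the vertex set and edge set of $P$, and for a polytope $Q \simeq P$ write $v_w(Q)$ for the vertex of $Q$ whose normal cone is the maximal cone $w$ of $\Fan$. A point $c \in \R^d$ is equidistant from all vertices of $Q$ if and only if it lies on the perpendicular bisector hyperplane $H_e$ of \emph{every} edge $e$: equidistance propagates along edges, and the $1$-skeleton of $P$ is connected. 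Hence $Q$ is inscribed if and only if $\bigcap_{e \in E} H_e \neq \emptyset$.

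Next, parametrize the polytopes $Q \simeq P$ (up to translation) by their support vectors $h$, which range over the relatively open polyhedral type cone $\TypeCone(P)$, and recall that Minkowski addition acts by $h \mapsto h + h'$, with $h + h' \in \TypeCone(P)$ whenever $h, h' \in \TypeCone(P)$. For $h \in \TypeCone(P)$ write $Q_h$ for the corresponding polytope; the vertex $v_w(h) := v_w(Q_h)$ solves the consistent system $\langle u, x \rangle = h_u$ over the facet normals $u$ spanning $w$, so $v_w(h)$ is linear in $h$, and hence so is the midpoint $m_e(h)$ of every edge $e$. The edge direction $d_e$ (spanning the line orthogonal to the wall of $\Fan$ dual to $e$) does not depend on $h$, and $H_e = \{\, x \in \R^d : \langle x, d_e\rangle = \langle m_e(h), d_e\rangle \,\}$. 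Writing $T \colon \R^d \to \R^{E}$, $c \mapsto (\langle c, d_e\rangle)_{e \in E}$, for the fixed linear map and $b(h) := (\langle m_e(h), d_e\rangle)_{e \in E}$ for the linear right-hand side, we obtain
\[
  Q_h \text{ inscribed} \iff b(h) \in \im T \iff N\,b(h) = \0,
\]
where $N$ is any fixed matrix whose rows span $(\im T)^{\perp}$. Since $b$ is linear and maps the translation vectors into $\im T$, the set $L := \{\, h : N\,b(h) = \0 \,\}$ is a linear subspace and the condition descends to the type space, giving $\InCone(P) = \TypeCone(P) \cap L$. As $\TypeCone(P)$ is a relatively open polyhedral cone and $L$ is a linear subspace, $\InCone(P)$ is a relatively open polyhedral cone; and since both $\TypeCone(P)$ and $L$ are closed under addition, $Q, Q' \in \InCone(P)$ implies $Q + Q' \in \InCone(P)$.

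For the dimension bound, note that each $Q \in \InCone(P)$ has a unique circumcenter $c(Q)$ and circumradius $r(Q) > 0$ since $V(Q)$ affinely spans $\R^d$. Fix a maximal cone $w_0$ of $\Fan$. The translation-invariant map $Q \mapsto \bigl(v_{w_0}(Q) - c(Q),\, r(Q)\bigr)$ takes values in the $d$-dimensional set $\{\,(v, r) \in \R^d \times \R_{>0} : |v| = r \,\}$, and it is injective: translating $c(Q)$ to the origin, all vertices lie on the sphere of radius $r(Q)$; once $v_{w_0}(Q)$ is known, each neighbouring vertex is the second intersection of this sphere with the line through $v_{w_0}(Q)$ in the prescribed direction $d_e$, and by connectivity of the $1$-skeleton this propagates to all of $V(Q)$, hence determines $Q$ up to translation. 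Therefore $\dim \InCone(P) \leq d$.

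The conceptual heart — and the step I expect to be the genuine obstacle — is the linearization in the second paragraph: the naive conditions $|c - v|^2 = \mathrm{const}$ are quadratic in $h$, and it is precisely the passage to edge bisectors, together with cancelling the positive ($h$-dependent) edge length in each bisector equation, that yields a linear system with a \emph{fixed} coefficient matrix $T$. The rest is bookkeeping; the only mild point is that $Q$ and $Q_h$ need not be simple, but since vertices and edge lengths remain linear in $h$ this causes no difficulty.
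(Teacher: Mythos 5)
Your proof is correct, but it takes a genuinely different route from the paper's. The paper proves Theorem~\ref{thm:InSpc} via the ``reflection game'': an inscribed polytope centered at the origin is determined by a single vertex, neighbouring vertices are obtained by reflecting in the linear hyperplanes spanned by the walls (Lemma~\ref{lem:key}, Corollaries~\ref{cor:key_reflect} and~\ref{cor:key_int}), and since each $t_\Walk$ is linear, the vertex sum $q_{R_0}+q'_{R_0}$ propagates consistently and lands in $\interior(R)$ by convexity of the regions; this realizes $\InCone(\Fan)\cong\InCone(\Fan,R_0)\subseteq R_0\subseteq\R^d$, which immediately gives the dimension bound and is the workhorse for the rest of the paper (permutahedra, the planar classification, Delaunay deformations). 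You instead work on the support-function side: you eliminate the circumcenter by passing to perpendicular bisectors of edges, divide out the $h$-dependent edge lengths so that the coefficient matrix $T$ is fixed, and exhibit $\InCone(P)=\TypeCone(P)\cap L$ for a linear subspace $L$; closure under Minkowski addition and polyhedrality are then immediate, and the dimension bound comes from injectivity of $Q\mapsto v_{w_0}(Q)-c(Q)$ (which is in fact linear, since $T$ is injective for full-dimensional $P$, so invariance of domain is not even needed). Your linearization is essentially the intrinsic description the paper only develops later in Section~\ref{sec:type} (Theorem~\ref{thm:VInSpc_rep} and Corollary~\ref{cor:algo_rational}): what you buy is a one-step proof that $\InCone(P)$ is cut out of $\TypeCone(P)$ by explicit linear equations, hence the algorithmic consequence; what you give up is the based realization inside a region of the fan, which the paper uses repeatedly afterwards. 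The two minor points worth making explicit in a polished write-up are that $v_w(h)$ must be extended to a linear map on all of $\R^{E}$ by fixing $d$ independent normals per maximal cone (so that $L$ is genuinely a linear subspace, not just a subset of the type cone), and that $d_e\neq\0$ for all $h\in\TypeCone(P)$ because normal equivalence keeps every edge nondegenerate.
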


A direct application of Theorem~\ref{thm:InSpc} shows how symmetry is affected
by inscribability.

\begin{cor}\label{cor:symm_intro}
    Let $P \subset \R^d$ be a convex polytope invariant with respect to the action of a
    finite group of orthogonal transformations $G$. If $P$ is normally inscribable, then
    there is an inscribed polytope $P' \in \InCone(P)$ that is invariant under $G$. 
\end{cor}

We give a first proof of Theorem~\ref{thm:InSpc} in
Section~\ref{sec:ref_game}. In Section~\ref{sec:type} we give an \emph{intrinsic}
representation of $\InCone(P)$ that makes no reference to the embedding of $P$
(Theorem~\ref{thm:VInSpc_rep}). We obtain an explicit description in terms of
linear inequalities and equations, which then allows us to give a satisfactory
algorithmic answer to the above question:

\begin{thm}\label{thm:algo_rational}
    Testing if a (rational) polytope $P$ has a normally equivalent inscribed
    polytope can be done in polynomial time.

    Moreover, if $P$ is rational and $\InCone(P) \neq \emptyset$, then there is
    a rational inscribed polytope $P' \simeq P$.
\end{thm}

Whereas testing if a given polytope $P$ is inscribable is tantamount to
solving a system of polynomial equations and strict inequalities
(see~\cite[Section 5]{PadrolZiegler}), the proof of
Theorem~\ref{thm:algo_rational} shows that testing if $P$ is normally
inscribable can be phrased in terms of linear equations and strict
inequalities. Moreover, the feasiblity problem only makes use of the geometric
graph of $P$ and is similiar to that of Rivin~\cite{Rivin}.

Let us write $\Fan(P)$ for the normal fan of $P$. This is a complete and
strongly connected polyhedral fan whose maximal cones are precisely the
domains of linearity of the support function of $P$; see
Section~\ref{sec:ref_game}. It is straightforward to check that $P \simeq P'$
if and only if $\Fan(P) = \Fan(P')$. Hence, the class of polytopes normally
equivalent to $P$ is represented by $\Fan(P)$. We call a fan $\Fan$
\Def{inscribable} if there is an inscribed polytope $P$ with $\Fan = \Fan(P)$
and we denote by $\InCone(\Fan)$ the inscribed cone of $\Fan$. Thus,
$\InCone(P) = \InCone(\Fan(P))$.  We study inscribed cones for certain classes
of polytopes and fans in Section~\ref{sec:gen_insc_fans}. In particular, we
give a characterization of polytopes $P$ for which $\dim \InCone(P)$ has
maximal dimension (Theorem~\ref{thm:inspc_even}). At the other extreme, we
show that the $\dim \InCone(P) \le 1$ whenever $P$ is a simplicial polytope
(Corollary~\ref{cor:ins_simplicial}) and we show that this can even happen for
simple polytopes (Corollary~\ref{cor:3polytope_odd}).
Using a local characterization (Theorem~\ref{thm:inscribability_j_k})
of inscribability for general polytopes, we show that a
simple polytope $P$ is inscribed if and only if all $k$-faces are inscribed
for some fixed $k > 1$ (Corollary~\ref{cor:simple_inscribed}). In the
follow-up paper~\cite{InFan2}, we use that to show that inscribed zonotopes
are determined by their $2$-faces, extending the local characterization of
zonotopes due to Bolker~\cite{Bolker}.

Many of these results rely on Section~\ref{sec:dim2}, in which we completely
characterize (virtually) inscribable fans in the plane. We show that the set
of (virtually) inscribable $2$-dimensional fans with a fixed number of regions
has the structure of a polytope (Proposition~\ref{prop:2d_virtual_insc},
Theorem~\ref{thm:2d_insc}).

As an example, we investigate the fan determined by the braid arrangement
$\braid_{d-1}$ in Section~\ref{sec:permutahedra}. We show that the polytopes
in $\InCone(\braid_{d-1})$ are vertex-transitive with respect to the symmetric
group and hence permutahedra. The example is continued in
Section~\ref{sec:nestohedra}, where we determine the inscribed nestohedra, a
class of simple, generalized permutahedra~\cite{post}. This characterizes all
matroid polytopes (in the sense of~\cite{Coxetermatroids}) among nestohedra.

In the upcoming paper~\cite{InFan2}, we study the inscribed cones for fans
coming from general hyperplane arrangements and show strong ties to simplicial
arrangements and reflection groups.

The following two subsections highlight two significant implications of
Theorem~\ref{thm:InSpc}.

\subsection{Ideal hyperbolic type cones}
The \emph{projective disk} (or Beltrami--Klein) model identifies
hyperbolic space $\Hyp^d$ with the points in the open unit ball
$D^d = \{ x \in \R^d : \|x\| < 1 \}$. Hyperplanes in $\Hyp^d$
correspond to sets of the form $D^d \cap H$, where $H \subset \R^d$ is
an ordinary hyperplane meeting $D^d$. The two halfspaces induced by
$H$ are convex and finite intersections of halfspaces give rise to
hyperbolic polyhedra. In particular, \Def{hyperbolic polytopes}
correspond to polytopes contained in the unit ball
$B^d = \overline{D^d}$. A vertex $v$ of $P$ is called \Def{ideal} if
$v \in \partial B^d = S^{d-1}$ and $P \subseteq B^d$ is an \Def{ideal
  hyperbolic polyhedron} if all vertices are ideal. For more
information on hyperbolic geometry and hyperbolic polytopes we refer
to~\cite[Ch.~6]{Ratcliffe} and~\cite{Thurston}

In this non-conformal model for hyperbolic space, hyperbolic polytopes are
simply Euclidean polytopes and thus are equipped with the notion of normal
equivalence and normal fans. Steiner's question and Rivin's result pertain to
the combinatorics of \emph{ideal} hyperbolic polyhedra.
Theorem~\ref{thm:InSpc} states that Minkowski sums extend to ideal hyperbolic
polytopes with fixed normal fan. Let $Q,Q' \subset B^d$ be ideal
hyperbolic polytopes. For a generic linear function $l(x)$, let $q$ and $q'$
be the unique vertices maximizing $l(x)$ over $Q$ and $Q'$, respectively. We
define the \Def{angle} between $Q$ and $Q'$ by $\theta(Q,Q') \in [0,\pi)$ as
the angle between $q$ and $q'$. 

\begin{cor}\label{cor:ideal_sum}
    Let $Q,Q' \subset \R^d$ be normally equivalent ideal hyperbolic polytopes.
    The angle $\theta(Q,Q')$ is independent of the choice of linear function
    $l(x)$. Moreover, 
    \[
        \frac{1}{\sqrt{2 + 2 \cos \theta(Q,Q')}} (Q + Q')
    \]
    is again an ideal hyperbolic polytope.
\end{cor}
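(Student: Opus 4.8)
The plan is to specialise (the proof of) Theorem~\ref{thm:InSpc} to the unit sphere and to keep track of the circumradius explicitly. Since $Q$ and $Q'$ are ideal, every vertex of each lies on $S^{d-1}$. Write $\Fan \defeq \Fan(Q) = \Fan(Q')$ (equality by normal equivalence), and for a maximal cone $D$ of $\Fan$ let $v_D \in V(Q)$ and $v_D' \in V(Q')$ be the vertices selected by $D$, so that $\|v_D\| = \|v_D'\| = 1$. A generic linear function $l$ lies in the relative interior of a unique maximal cone $D$, and then $q = v_D$ and $q' = v_D'$; thus the assertion that $\theta(Q,Q')$ is independent of $l$ amounts to the assertion that $\langle v_D, v_D'\rangle$ is independent of $D$.

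The geometric heart of the matter is a reflection lemma: if $a, b \in S^{d-1}$ are distinct and $b - a$ is parallel to a unit vector $u$, then $b = s_u(a)$, where $s_u$ denotes the orthogonal reflection of $\R^d$ in the linear hyperplane $u^\perp$; indeed the line $a + \R u$ meets $S^{d-1}$ only in $a$ and $a - 2\langle a, u\rangle u = s_u(a)$. Now let $F$ be a wall of $\Fan$ separating maximal cones $D$ and $E$, and let $u_F$ span the line $(\lin F)^\perp$. The edge of $Q$ corresponding to $F$ joins $v_D$ to $v_E$ and is parallel to $u_F$, and the same holds for $Q'$; hence, by the lemma, $v_E = s_{u_F}(v_D)$ and $v_E' = s_{u_F}(v_D')$ — with the \emph{same} reflection $s_{u_F}$, since $\lin F$ is determined by $\Fan$ alone. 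Because a normal fan is strongly connected, any maximal cone $D$ is joined to a fixed base cone $C_0$ by a sequence of maximal cones, consecutive ones sharing a wall, and composing the associated wall reflections produces an orthogonal map $w_D \in O(d)$ with $v_D = w_D(v_{C_0})$ and $v_D' = w_D(v_{C_0}')$. Therefore $\langle v_D, v_D'\rangle = \langle w_D v_{C_0}, w_D v_{C_0}'\rangle = \langle v_{C_0}, v_{C_0}'\rangle$ is constant; let $\theta \in [0,\pi]$ be its arccosine. (This is essentially the computation behind Theorem~\ref{thm:InSpc}; alternatively one checks $\langle v_D, v_D'\rangle = \langle v_E, v_E'\rangle$ for adjacent $D,E$ by expanding in the four unit vectors and using the proportionality of the parallel edges.)

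It remains to identify $Q + Q'$. Since $Q + Q' \simeq Q$ (recalled above), the vertices of $Q + Q'$ are precisely the points $v_D + v_D'$, $D$ a maximal cone of $\Fan$, and
\[
  \|v_D + v_D'\|^2 \;=\; \|v_D\|^2 + 2\langle v_D, v_D'\rangle + \|v_D'\|^2 \;=\; 2 + 2\cos\theta ,
\]
independent of $D$ by the previous step. Assuming, as we may, that $Q$ is not a point, $Q + Q'$ contains a translate of the positive-dimensional polytope $Q'$; so $Q + Q' \ne \{0\}$, which rules out $\theta = \pi$ and gives $2 + 2\cos\theta > 0$. Hence, with $\lambda \defeq (2 + 2\cos\theta)^{-1/2}$, every vertex of $\lambda(Q + Q')$ lies on $S^{d-1}$; being the convex hull of points of $S^{d-1}$, the polytope $\lambda(Q+Q')$ lies in $B^d$ with all vertices ideal, i.e.\ it is an ideal hyperbolic polytope.

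The one step that requires care is the independence of $\theta$ from $l$; the rest is bookkeeping. The reflection lemma is precisely where idealness is used — that the vertices lie on $S^{d-1}$ itself, rather than merely on some sphere — and it is what lets the argument proceed uniformly even when $\aff Q$ misses the origin, so that the circumcentre of $Q$ need not be $0$.
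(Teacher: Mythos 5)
Your proof is correct and follows essentially the same route as the paper's: both arguments observe that corresponding vertices of $Q$ and $Q'$ are obtained from a base pair by the \emph{same} composition of wall reflections (the content of Lemma~\ref{lem:key} and Corollary~\ref{cor:key_reflect}), so the inner product $\inner{v_D,v_D'}$ is constant, and then compute $\|v_D+v_D'\|^2 = 2+2\cos\theta$. The only differences are cosmetic: you re-derive the reflection step inline and add the (harmless) checks that $\theta\neq\pi$ and that the vertices of $Q+Q'$ are exactly the sums $v_D+v_D'$.
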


If $Q$ and $Q'$ are ideal hyperbolic polytopes which are not normally
equivalent, then their Minkowski sum might still be ideal, provided
$Q$ and $Q'$ are \emph{inscribed relative to $\Fan$} with respect to
the fan $\Fan(Q + Q')$; see Section~\ref{sec:relative} for details.

McMullen~\cite{McMullen-rep} introduced the \Def{type cone} of a polytope $P$
as the space of polytopes normally equivalent to $P$ up to translation
\[
    \TypeCone(P) \ \defeq \ \{ P' : P \simeq P' \} 
    \ / \ \text{translations} \, .
\]
It follows from work of Shephard (cf.~\cite[Sect.~15.1]{grunbaum}) that
$\TypeCone(P)$ is an open polyhedral cone.  In that sense, we may view
$\InCone(P)$ as the \Def{ideal hyperbolic type cone} of an ideal hyperbolic
polytope $P$. In Section~\ref{sec:relative}, we investigate the structure of
the closure $\cInCone(P)$, which turns out to be more subtle than that of
$\cTypeCone(P)$.

We believe that further study of the relationship between $\InCone(P)$ and
$\TypeCone(P)$ and their implications for ideal hyperbolic polytopes (such as
ideal hyperbolic flips, for example) will be very exciting.

\subsection{Deformations of Delaunay subdivisions}
A \Def{Delaunay subdivision} of a full-dimen\-sional polytope $P \subset \R^{d-1}$ is a
subdivision $\Del = \{ P_1,\dots,P_m \}$ into full-dimensional and inscribed polytopes
that satisfy the following condition for all $i=1,\dots,m$: If $B$ is the unique ball to which $P_i$ is
inscribed, then $B \cap V(P_j) \subseteq V(P_i)$ for all $j \neq i$.  Delaunay
subdivisions and in particular Delaunay triangulations play an important role in
numerical computations; see~\cite{Aurenhammer} for more. The basic computational task is
for a given $U \subset \R^{d-1}$ to construct a Delaunay subdivision $\Del(U)$ of $P =
\conv(U)$ such that $V(P_i) \subseteq U$ for all $i=1,\dots,m$ and $U = \bigcup_i
V(P_i)$.

Brown~\cite{BROWN1979223} observed a simple correspondence between inscribed
polytopes and Delaunay subdivisions. Let $\eta : S^{d-1} \to \R^{d-1} \times
\{0\}$ be the stereographic projection from the north-pole $e_{d}$ to the
equatorial plane $\R^{d-1} \times \{0\}$. The polytope $\hat{P} \defeq \conv(
\eta^{-1}(U))$ is inscribed into the unit sphere.  The \Def{visibility
complex} of $\hat{P}$ with respect to some fixed $\xi \in S^{d-1}$ is the
collection of faces $F \subset \hat{P}$ such that $\conv(\xi \cup F)$ does not
meet the interior of $\hat{P}$. Then the collection of facets of $\hat{P}$ not
visible from $e_d$ stereographically projects to a Delaunay subdivision of $P
= \conv(U)$. 

Thus every configuration $U$ has a Delaunay subdivision but slight
perturbations of the points can result in drastic changes in the combinatorics
of $\Del(U)$. In Section~\ref{sec:delaunay}, we distill a notion of
\emph{normal equivalence} for (labelled) Delaunay subdivisions, which allows
us to interpret $\InCone(\hat{P})$ as a \emph{deformation space} of the
Delaunay subdivisions.  Figure~\ref{fig:del_norm_equiv} shows two normally
equivalent Delaunay subdivisions.
\begin{figure}[h]
    \centering
    \includegraphics[width=0.35\textwidth]{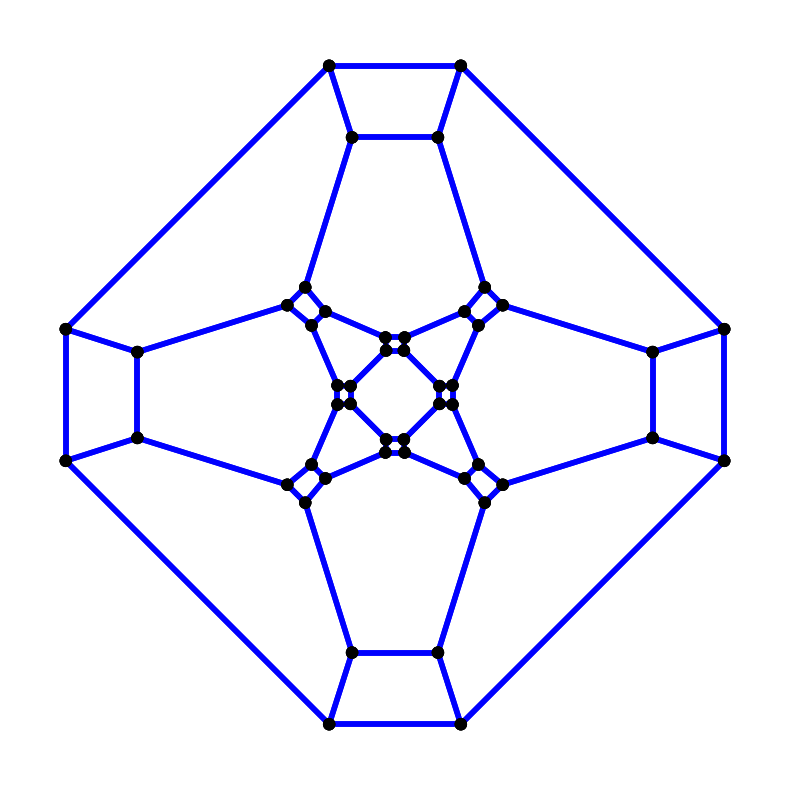}
    \qquad\qquad
    \includegraphics[width=0.35\textwidth]{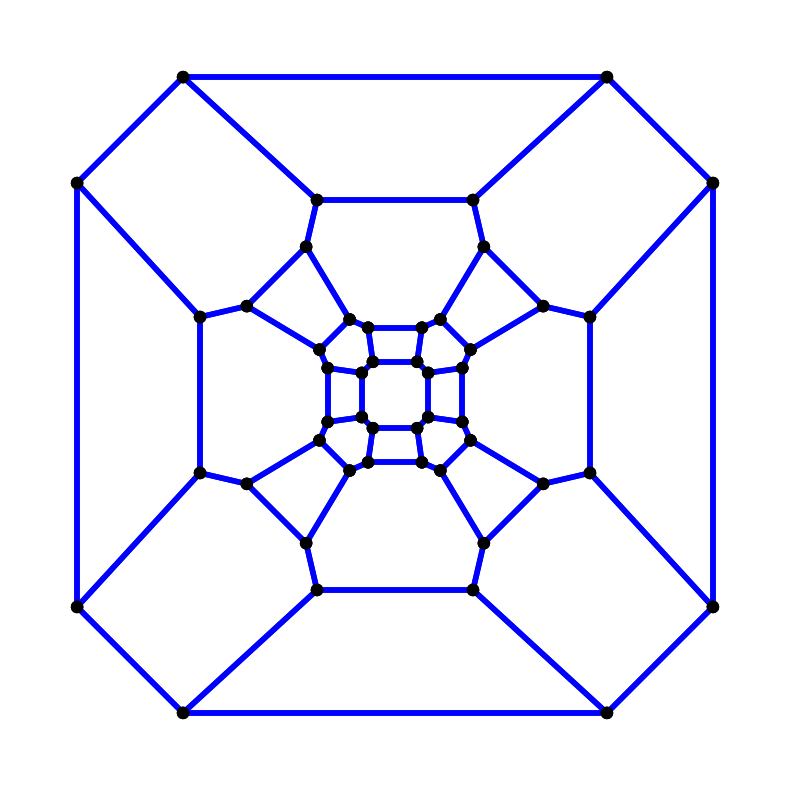}
    \caption{Two positively co-circular Delaunay subdivisions.}
    \label{fig:del_norm_equiv}
\end{figure}

\begin{cor}\label{cor:delaunay_deform}
    Let $U \subset \R^{d-1}$ be an affinely-spanning point configuration.  The
    space of Delaunay subdivisions $\Del(U')$ normally equivalent to $\Del(U)$
    has the structure of a spherical polytope of dimension $\le d-1$.
\end{cor}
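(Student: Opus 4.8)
The plan is to reduce the statement to Theorem~\ref{thm:InSpc} via Brown's correspondence. Write $\hat P \defeq \conv(\eta^{-1}(U))$; since $\eta$ carries spheres and hyperplanes of $\R^{d-1}$ to spheres and hyperplanes of $S^{d-1}$, the lift $\eta^{-1}(U)$ affinely spans $\R^d$ unless $U$ is co-spherical, so $\hat P \subseteq B^d$ is an inscribed polytope with $\dim \hat P \le d$, and as $S^{d-1}$ is strictly convex the vertex set of $\hat P$ is exactly $\eta^{-1}(U)$. The first, and technically most demanding, step -- carried out in Section~\ref{sec:delaunay} -- is to pin down a notion of normal equivalence of labelled Delaunay subdivisions under which $\Del(U') \simeq \Del(U)$ if and only if $\hat P(U') \defeq \conv(\eta^{-1}(U'))$ is normally equivalent to $\hat P$; this requires keeping track of which facets of $\hat P(U')$ are invisible from $e_d$, since these are the ones whose stereographic images tile $\conv(U')$, together with the fact that $e_d$ is never a vertex of $\hat P(U')$. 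Granting this, and noting that an inscribed polytope $Q$ with circumscribed sphere $S^{d-1}$ and $e_d \notin V(Q)$ coincides with $\hat P(\eta(V(Q)))$, the deformation space of $\Del(U)$ is put in bijection with the set of $Q \in \InCone(\hat P)$ whose circumscribed sphere is the unit sphere.

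It then remains to recognize this set as the spherical link of the cone $\InCone(\hat P)$. The inscribed cone is stable under positive dilations $Q \mapsto tQ$, under which the circumradius scales by $r(tQ) = t\, r(Q)$; since $r$ is continuous and strictly positive on $\InCone(\hat P)$, every ray of this cone contains exactly one polytope of circumradius $1$. Hence passing to the ray defines a homeomorphism from the set of circumradius-one representatives onto $\InCone(\hat P)/\R_{>0}$, which is homeomorphic, via radial projection, to the intersection of $\InCone(\hat P)$ with a round sphere centered at the origin of the ambient space -- a relatively open spherical polytope.

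Finally, Theorem~\ref{thm:InSpc} furnishes that $\InCone(\hat P)$ is an open polyhedral cone of dimension at most $\dim \hat P \le d$, so its spherical link is a spherical polytope of dimension at most $d - 1$, as claimed. The crux is not this dimension count but the bridge in the first paragraph: converting the combinatorial and geometric equivalence of Delaunay subdivisions into normal equivalence of the stereographic lifts $\hat P(U')$, controlling facet-visibility from $e_d$ and the generic condition $e_d \notin V(\hat P(U'))$, and verifying that the resulting bijection is a homeomorphism so that the spherical-polytope structure genuinely descends to the deformation space.
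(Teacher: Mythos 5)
Your argument follows the paper's own route: both pass through the stereographic correspondence ``$\Del(U')$ normally equivalent to $\Del(U)$ if and only if $\hat{P}(U') \simeq \hat{P}(U)$'' and then identify the deformation space with the circumradius-one (unit-sphere) slice of the inscribed cone, which by Theorem~\ref{thm:InSpc} is an open polyhedral cone of dimension $\le d$ and hence has a spherical link of dimension $\le d-1$. The paper realizes this slice concretely in the based model as $S(\Fan,R_0) = \InCone(\Fan,R_0) \cap S^{d-1}$, where circumradius one is literally $\|v_{R_0}(P)\| = 1$; your radial-projection/ray formulation is the same identification.
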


If $\Del(U)$ and $\Del(U')$ are normally equivalent, then it is in generally
not true that $\conv(U)$ and $\conv(U')$ are normally equivalent or even
combinatorially equivalent. This stems from the fact that two normally
equivalent polytopes $\hat{P},\hat{P}'$ inscribed to the unit sphere can have
quite different visibility complexes with respect to a fixed point $\xi \in
S^{d-1}$. In Theorem~\ref{thm:del_equiv} we determine equivalence relation on
normally equivalent polytopes inscribed to the unit sphere with fixed
visibility complex. The equivalence classes are nonconvex in general and can
be disconnected. We give a simple necessary condition when a cell in this
subdivision is convex (Corollary~\ref{cor:del_equiv_convex}).

\subsection{Inscribed virtual polytopes}\label{sec:intro_virtual}
A second goal of this paper is to introduce and study inscribed \emph{virtual}
polytopes. A fan $\Fan$ is \Def{polytopal} if there is a polytope $P$ with
$\Fan(P) = \Fan$ and we define $\TypeCone(\Fan) \defeq \TypeCone(P)$, the open
polyhedral cone of polytopes with normal fan $\Fan$ modulo translations.  If
$P,Q,R \in \TypeCone(\Fan)$ satisfy $P = Q+R$, then $R$ is called the
\Def{Minkowski difference} of $P$ and $Q$ and is denoted by $P-Q$. Minkowski
differences exist for all pairs $P$ and $Q$ in the Grothendieck group
$\TypeSpc(\Fan) \defeq (\TypeCone(\Fan) \times \TypeCone(\Fan))/\!\!\sim$ with
$(Q+R,Q'+R) \sim (Q,Q')$ for $Q,Q',R \in \TypeCone(\Fan)$ and $P-Q$ is called
is called a \emph{virtual polytope} if $P-Q \in \TypeSpc(\Fan) \setminus
\TypeCone(\Fan)$. Since $\TypeCone(\Fan)$ is a convex cone, $\TypeSpc(\Fan)$
has the structure of an $\R$-vector space and will be called the \Def{type
space} of $\Fan$. We recall in Section~\ref{sec:PL} that $\TypeSpc(\Fan)$ can
be defined in terms of piecewise-linear functions supported on $\Fan$ and is
thus also defined for non-polytopal fans.  Virtual polytopes are related to
non-nef divisors in toric geometry~\cite[Ch.~6.1]{CLS} and they embody
reciprocity results for translation-invariant valuations by means of
McMullen's polytope algebra~\cite{McMullen-algebra}. For more on virtual
polytopes, see~\cite{PS}. Important for us is that virtual polytopes are
naturally equipped with \emph{vertices} and hence a notion of
\emph{inscribability} (Section~\ref{sec:virtual}).
Figure~\ref{fig:non_insc_hexagon} depicts some examples of inscribed virtual
polytopes with respect to a fixed fan.

\begin{figure}[h]
    \centering
    \includegraphics[width=0.20\textwidth]{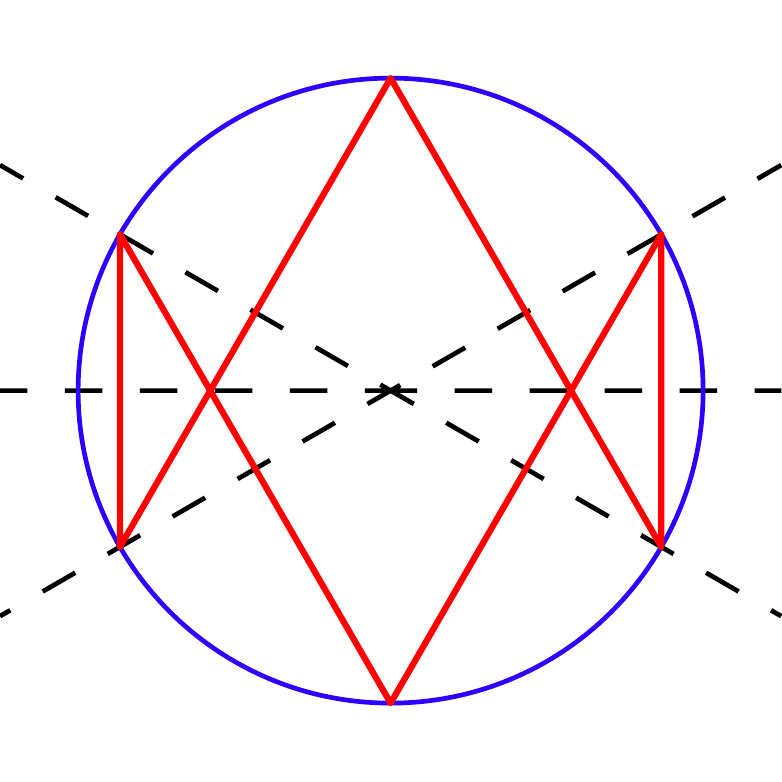}\qquad
    \includegraphics[width=0.20\textwidth]{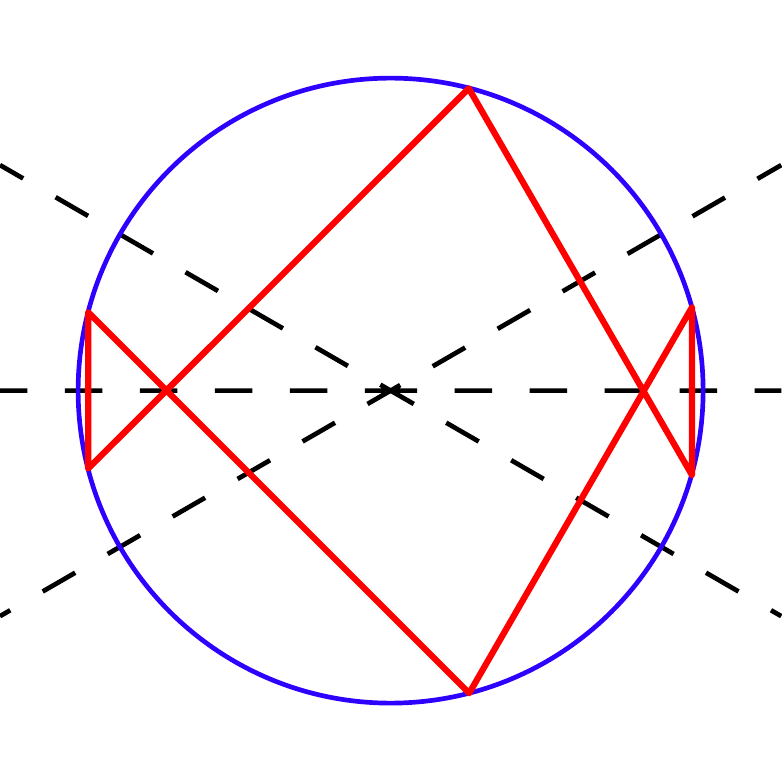}\qquad
    \includegraphics[width=0.20\textwidth]{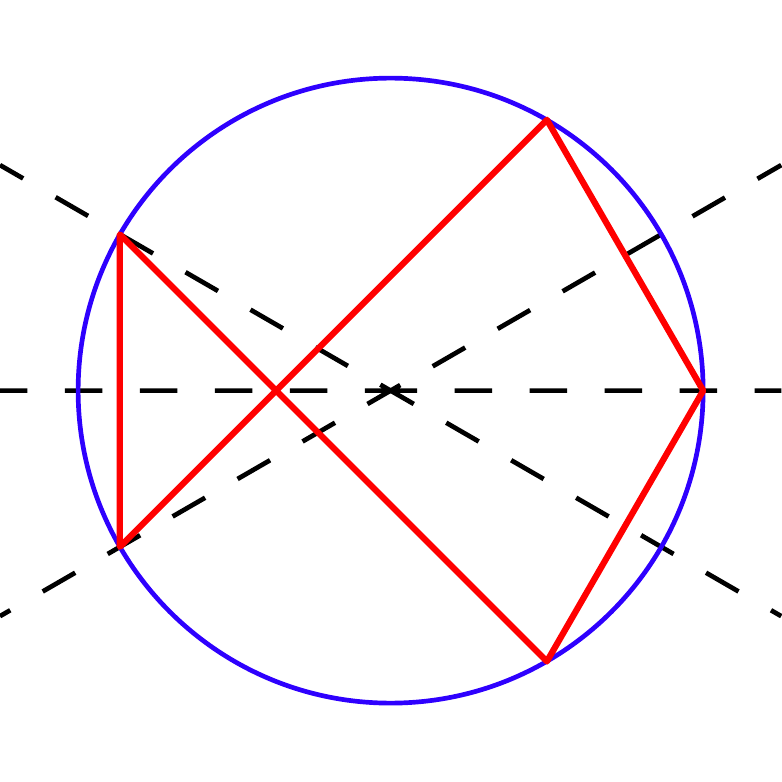}\qquad
    \includegraphics[width=0.20\textwidth]{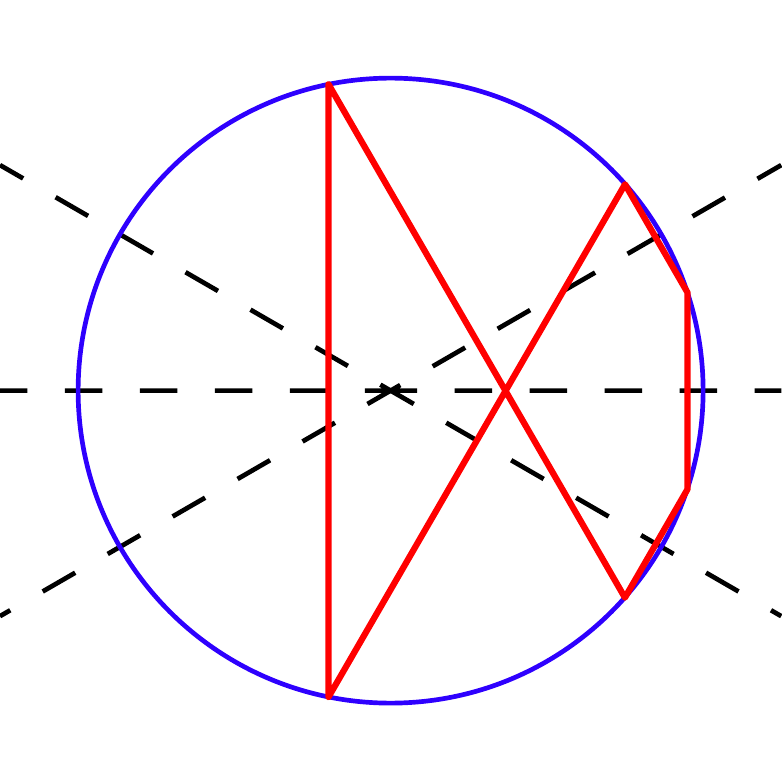}
    \qquad
    \caption{A $2$-dimensional fan all whose inscribed polytopes are virtual.}
    \label{fig:non_insc_hexagon}
\end{figure}

We study inscribed virtual polytopes in Section~\ref{sec:virtual} and show
that virtual polytopes inscribed relative to $\Fan$ form a vector
subspace $\InSpc(\Fan) \subseteq \TypeSpc(\Fan)$.  Naturally, $\InCone(\Fan) +
(-\InCone(\Fan)) \subseteq \InSpc(\Fan)$ with equality if $\Fan$ is
inscribable. However, there are fans that only possess virtual inscribed
polytopes. For example, we show that $\InSpc(\Fan)$ is always $1$-dimensional
for every $2$-dimensional fan with an odd number of rays
(Proposition~\ref{prop:2d_virtual_insc}).

\subsection{Routed billiard trajectories and reflection groupoids}
The inscribed virtual polygons in Figure~\ref{fig:non_insc_hexagon} are
reminiscent of closed, piecewise-linear trajectories of particles inside the
unit disc that bounce off the boundary in random directions. We make this
analogy precise in Section~\ref{sec:erratic}, where we introduce routed
particle trajectories. We model the state-space of a particle as a graph $G =
(V,E)$ together with a map $\alpha : E \to \PP^{d-1}$, which encodes the
admissible directions. The pair $(G,\alpha)$ is called a \Def{routing scheme}.
A trajectory is then a map $T : V \to S^{d-1}$ that
records positions of a trajectory routed by $(G,\alpha)$. We show that the
space of trajectories is isomorphic to a spherical subspace
(Theorem~\ref{thm:subsphere}) and
we show that for routing schemes $(G,\alpha)$ derived from a fan, routed
trajectories correspond to inscribed virtual polytopes
(Theorem~\ref{thm:traj_inspc}). 

From the perspective of state spaces, routing schemes give rise to groupoids,
whose morphisms are generated by reflections and are therefore called
\Def{reflection groupoids}. We discuss reflection groupoids in
Section~\ref{sec:groupoids} and, in particular, study their associated
endomorphism groups. These groups can be thought of as discrete holonomy
groups and generalize Joswig's groups of projectivities~\cite{joswig}.

\begin{rem}
    All results regarding inscribed cones and inscribed spaces  remain valid
    if we replace the unit sphere $S^{d-1} = \{ x \in \R^d : \inner{x,x} = 1
    \}$ with a general, non-degenerate quadric $\mathcal{Q} = \{ x :
    \inner{Ax,x} = 1 \}$. See also~\cite{DMS} for work related to
    $3$-polytopes inscribed in a general quadric.
\end{rem}

\begin{rem}
    A polyhedron $Q \subseteq \R^d$ is \emph{inscribed} if all vertices lie on
    a sphere $S$ and all unbounded edges meet $S$ only in a vertex. All our
    results can be adapted to inscribed polyhedra.
\end{rem}

\subsection*{Acknowledgements} 
We thank Michael Cuntz, Thilo R\"orig, Christian Stump, and Martin Winter for
insightful discussions. Research that led to this paper was supported by the
DFG-Collaborative Research Center, TRR 109 ``Discretization in Geometry and
Dynamics'' and we also thank our colleagues of project A3 for their support.
Many of our findings were inspired by experiments and computations conducted
with SAGE~\cite{SAGE} and Geogebra~\cite{geogebra}.

\section{Local reflections and inscribable fans}\label{sec:ref_game}

A non-empty collection $\Fan$ of polyhedral cones in some $\R^d$ is called a
\Def{fan}~\cite[Sect.~7]{ziegler} if 
\begin{enumerate}[\rm (F1)]
    \item if $C \in \Fan$ and $F \subseteq C$ is face, then $F \in \Fan$;
    \item if $C, C' \in \Fan$, then $C \cap C' \in \Fan$.
\end{enumerate}
The dimension of $\Fan$ is
$\dim \Fan \defeq \max\{\dim C : C \in \Fan\}$. The inclusion-maximal
cones of $\Fan$ are called \Def{regions} and $\Fan$ is \Def{pure} if
all regions have the same dimension.  The \Def{support} of $\Fan$ is
$|\Fan| = \bigcup_{C \in \Fan} C$ and $\Fan$ is \Def{complete} if
$|\Fan| = \R^d$. A convex cone $C$ is \Defn{pointed}, if its \Defn{lineality
 space} $\lineal(C) \defeq \{x \in C : -x \in C\}$ contains only the
origin. All cones in a fan $\Fan$ share the same lineality space
$\lineal(\Fan)$ and we will therefore call $\Fan$ pointed if
$\lineal(\Fan) = \{\0\}$.

Let $P \subset \R^d$ be a non-empty convex polytope. For $c \in \R^d$, we
write
\[
    P^c \ \defeq \ \{ x \in P : \inner{c,x} \ge \inner{c,y} \text{ for all } y
    \in P \} 
\]
for the (non-empty) face that maximizes the linear function $x \mapsto
\inner{c,x}$. 

The \Def{normal cone} of $P$ at a face $F \subseteq P$ is the polyhedral cone
\[
    N_F P \ \defeq \ \{ c \in \R^d : F \subseteq P^c  \} \, .
\]
It is easy to verify that $\Fan(P) = \{ N_F P : F \subseteq P \text{ face} \}$
is a complete fan, called the \Def{normal fan} of $P$. The normal fan is
pointed precisely when $P$ is full-dimensional. We call a fan $\Fan$
\Def{polytopal} if it is the normal fan of a polytope.

Let $\aff(P)$ be the affine hull of $P$.  Recall that two polytopes $P_0,P_1
\subset \R^d$ are normally equivalent ($P_0 \simeq P_1$) if for every $c \in
\R^d$ the affine spaces $\aff(P_0^c)$ and $\aff(P^c_1)$ differ by a
translation. The upcoming characterization of normally equivalent polytopes follows
from the definition of normal fans; see, for example, \cite[Section
7.2]{ziegler}.
\begin{prop}\label{prop:normally_equiv}
    Let $P_0, P_1 \subset \R^d$ polytopes. Then
    \[
        P_0 \ \simeq \ P_1 \quad \text{ if and only if } \quad \Fan(P_0) \ = \
        \Fan(P_1) \, .
    \]
    In particular, $(1 - \mu) P_0 + \mu P_1$ is normally equivalent to $P_0$
    for all $0 \le \mu \le 1$.
\end{prop}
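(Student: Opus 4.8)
The plan is to establish both implications via the dual dictionary between faces of a polytope and cones of its normal fan. Recall that for any $c \in \R^d$, the face $P^c$ is the unique face $F$ whose relative interior contains $c$ in its normal cone, i.e. $c \in \relint(N_F P)$; equivalently $N_{P^c}P$ is the unique region of $\Fan(P)$ containing $c$ in its relative interior, and in general $c \in N_F P$ iff $F \subseteq P^c$. I would begin by recording the standard fact (which also fixes the lineality space) that $\aff(P^c) - x_0 = (N_{P^c}P)^\perp$ for any $x_0 \in P^c$; that is, the \emph{direction space} of the face $P^c$ is the orthogonal complement of the linear span of its normal cone. This identity is what ties ``affine hulls differ by a translation'' to ``normal cones coincide.''

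For the direction ``$\Fan(P_0) = \Fan(P_1) \Rightarrow P_0 \simeq P_1$'': fix $c \in \R^d$ and let $C$ be the unique cone of the common fan with $c \in \relint(C)$. Then $C = N_{P_0^c}P_0 = N_{P_1^c}P_1$, so by the direction-space identity $\aff(P_0^c)$ and $\aff(P_1^c)$ are both translates of $C^\perp$, hence translates of each other. Thus $P_0 \simeq P_1$ by definition.

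For the converse ``$P_0 \simeq P_1 \Rightarrow \Fan(P_0) = \Fan(P_1)$'': since both normal fans are complete, it suffices to show they have the same regions, and in fact it is enough to show $N_{P_0^c}P_0 = N_{P_1^c}P_1$ for every generic $c$ (those lying in the interior of a region of each fan), as the regions are the closures of these full-dimensional cones and the lower-dimensional cones are their faces. Here the main subtlety — and I expect this to be the only real obstacle — is that normal equivalence only gives that corresponding faces have \emph{parallel} affine hulls, not that the combinatorial face lattices are identified a priori; one must bootstrap the combinatorial correspondence out of the parallelism condition. I would do this by an argument on the support functions $h_i(c) = \max_{x \in P_i}\inner{c,x}$: both are piecewise linear and convex, and the condition $\aff(P_0^c) \parallel \aff(P_1^c)$ for all $c$ forces the domains of linearity of $h_0$ and $h_1$ to coincide (on each maximal domain of linearity of $h_0$, the gradient of $h_0$ is a single point $v$, meaning $P_0^c = \{v\}$ is a vertex whose normal cone is that domain; parallelism of the zero-dimensional affine hull $\aff(P_1^c)$ then forces $P_1^c$ to be a vertex too, with the same normal cone, and vice versa). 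Since the maximal domains of linearity of $h_i$ are exactly the regions of $\Fan(P_i)$, this gives $\Fan(P_0) = \Fan(P_1)$.

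Finally, for the ``in particular'' clause: for $0 \le \mu \le 1$ one has $((1-\mu)P_0 + \mu P_1)^c = (1-\mu)P_0^c + \mu P_1^c$ for every $c$ (support functions add under Minkowski sums, so faces do too), and a Minkowski sum of two parallel affine subspaces through the respective faces is again an affine subspace parallel to both; hence $\aff\big(((1-\mu)P_0+\mu P_1)^c\big)$ is a translate of $\aff(P_0^c)$ for all $c$, which is exactly $ (1-\mu)P_0 + \mu P_1 \simeq P_0$. (When $\mu \in \{0,1\}$ the statement is trivial, and the interesting content is that the normal fan is constant along the segment, which also follows since $\Fan\big((1-\mu)P_0 + \mu P_1\big)$ is the common refinement $\Fan(P_0) \wedge \Fan(P_1) = \Fan(P_0)$ for $0<\mu<1$.)
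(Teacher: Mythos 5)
The paper states Proposition~\ref{prop:normally_equiv} without proof, treating it as standard background, so there is no in-paper argument to compare against; your proposal is a correct rendering of the standard proof. Both directions are sound: the forward implication via the identity $\aff(P^c) - x_0 = \bigl(\lin N_{P^c}P\bigr)^\perp$, and the converse via the support functions, as is the treatment of the ``in particular'' clause through $(Q+Q')^c = Q^c + (Q')^c$. The one step you compress is ``$P_1^c$ is a vertex \emph{with the same normal cone}'': a priori the vertex of $P_1$ selected by $c$ could vary as $c$ ranges over $\interior(R)$ for a fixed region $R$ of $\Fan(P_0)$. To close this, observe that $\interior(R)$ is covered by the pairwise disjoint open sets $\interior(R)\cap\interior(N_w P_1)$ for $w \in V(P_1)$, so by connectedness of $\interior(R)$ exactly one occurs and $R \subseteq N_w P_1$; the symmetric containment forces equality of the regions, and equality of the fans then follows because every cone of a normal fan is a face of a region. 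This is a minor omission of detail, not a gap in the approach.
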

The proposition shows that $\TypeCone(\Fan)$ is a convex cone that depends
only on $\Fan(P)$; see Section~\ref{sec:PL} for details.

\subsection{Local reflections}
The following is the key observation in the proof of Theorem~\ref{thm:InSpc}.

\begin{lem}\label{lem:key}
    Let $P \subset \R^d$ be inscribed to a sphere centered at the
    origin with normal fan $\Fan(P) = \Fan$. Then $P$ is completely
    determined by $\Fan$ and a single vertex.
\end{lem}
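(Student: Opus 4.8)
The plan is to show that once we fix the normal fan $\Fan$ and the position of one vertex, the positions of all other vertices are forced. The key tool is the observation that adjacent vertices of an inscribed polytope are related by a reflection. Concretely, suppose $v$ and $v'$ are two vertices of $P$ joined by an edge $E = [v,v']$, and let $\Fan = \Fan(P)$. The edge direction $v' - v$ is determined by $\Fan$: it spans the line orthogonal to the affine hull of the facet $N_E P$ of the fan (equivalently, $v'-v$ lies in the one-dimensional space $\lineal(N_E P)^\perp \cap \bigcap$ of the facet normals of $P$ meeting along $E$). Since both $v$ and $v'$ lie on the sphere centered at the origin, we have $\inner{v,v} = \inner{v',v'}$, so $\inner{v'-v, v'+v} = 0$; that is, the midpoint of $E$ is orthogonal to the (known) edge direction. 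This pins down $v'$ as the reflection of $v$ across the hyperplane through the origin orthogonal to the edge direction $u_E \defeq v'-v$: explicitly $v' = v - 2\frac{\inner{u_E,v}}{\inner{u_E,u_E}} u_E$. Thus $v'$ is a uniquely determined function of $v$ and of the combinatorial/metric data in $\Fan$.

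Next I would promote this local statement to a global one using strong connectedness of the normal fan. The edge graph (1-skeleton) $G(P)$ of $P$ is connected, and each edge of $P$ corresponds to a ridge (codimension-1 face) of $\Fan$ separating two regions; since $\Fan$ is polytopal it is in particular strongly connected, so $G(P)$ is connected. Starting from the given vertex $v_0$ and walking along edges, the computation above determines the vertex at the other end of each edge from the vertex at the near end, using only $\Fan$. By induction on the distance in $G(P)$, every vertex of $P$ is determined by $v_0$ and $\Fan$. Finally, $P = \conv(V(P))$, so $P$ itself is determined.

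The one point that needs care — and is the main obstacle — is consistency: the walk-based reconstruction must be well-defined, i.e. independent of the path in $G(P)$ chosen to reach a given vertex. But this is automatic here because we are not \emph{defining} the vertices by the reflection rule; we already have the polytope $P$, and we are merely \emph{verifying} that each actual vertex satisfies the forced relation with its neighbors. Since $P$ exists, all the local reflection relations hold simultaneously and consistently, so any path from $v_0$ to a vertex $w$ recovers the same point $w$. (The consistency question becomes genuinely substantive only when one tries to go the other way and \emph{construct} inscribed polytopes from $\Fan$ and a putative vertex — that is the content of the reflection-groupoid machinery developed later in the paper — but it is not needed for the uniqueness statement asserted here.) I would therefore present the argument as: (i) derive the edge-reflection formula from co-sphericity plus the fan-determined edge direction; (ii) invoke connectedness of the edge graph; (iii) conclude by induction and take convex hulls.
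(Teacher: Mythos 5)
Your proposal is correct and follows essentially the same route as the paper: the edge directions are read off from the walls of $\Fan$, the condition $\|v\|=\|v'\|$ forces each neighbor to be the reflection of $v$ in the corresponding wall hyperplane (the paper phrases this as solving the quadratic $\|v\|^2=\|v+\lambda_i\alpha_i\|^2$ for the unique $\lambda_i>0$, which yields exactly your formula), and connectedness of the edge graph finishes the argument. Your remark that path-consistency is automatic for the uniqueness direction is also in line with the paper, which defers the genuine consistency issue to the converse construction in Proposition~\ref{prop:ref_game}.
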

\begin{proof}
    We may assume that $P$ is full-dimensional and hence $\Fan$ is pointed.
    Let $v \in V(P)$ be a vertex with normal cone $N_v P$. The polyhedral cone
    has an irredundant representation of the form
    \[
        N_v P \ = \ \{ c \in \R^d : \inner{\alpha_i,c} \le 0 \text{ for } i =
        1,\dots,m \}  
    \]
    for some $\alpha_1,\dots,\alpha_m \in \R^d \setminus \{0\}$.
    If $u \in V(P)$ is a neighbor of $v$, then $u = v + \lambda_i \alpha_i$ for
    some $i \in [m] \defeq \{1,\dots,m\}$ and $\lambda_i > 0$. Since $P$ is
    inscribed into a sphere centered at the origin, we have
    \begin{equation}\label{eqn:key}
        \|v\|^2 \ = \ \|u\|^2 \ = \ \| v + \lambda_i \alpha_i \|^2 \, .
    \end{equation}
    This is a quadratic equation in $\lambda_i$ with a unique solution 
    $\lambda_i > 0$. Thus, knowing $v$ and $N_v P$, we can uniquely recover
    the neighbors of $v$. As the graph of $P$ is connected, we can recover all
    vertices of $P$.
\end{proof}

Let $\Fan$ be a pure $d$-dimensional fan in $\R^d$. The cones of dimension
$d-1$ in $\Fan$ are called \Def{walls}. Every wall $W \in \Fan$
induces a hyperplane $\lin(W) = \{ x : \inner{\alpha,x} =
0 \}$ and we let $s_W : \R^d \to \R^d$ 
\[
    s_W(x) \ \defeq \ x - 2 \frac{\inner{\alpha,x}}{\inner{\alpha,\alpha}} \alpha
\]
be the corresponding \Def{reflection}. By inspecting the proof of
Lemma~\ref{lem:key}, we make the following observation.

\begin{cor}\label{cor:key_reflect}
    Let $P \subset \R^d$ be a polytope inscribed to a sphere centered at the
    origin and $v \in V(P)$. The neighbors of $v$ are given by $s_W(v)$ where
    $W$ ranges over the walls of $N_v P$.
\end{cor}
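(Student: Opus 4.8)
\textbf{Proof plan for Corollary~\ref{cor:key_reflect}.}

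The plan is to re-examine the proof of Lemma~\ref{lem:key} and make explicit the geometric content of the unique positive solution $\lambda_i$ to the quadratic equation~\eqref{eqn:key}. Recall that if $u = v + \lambda_i \alpha_i$ is a neighbor of $v$, then $\|v + \lambda_i\alpha_i\|^2 = \|v\|^2$, which expands to $2\lambda_i \inner{\alpha_i, v} + \lambda_i^2 \inner{\alpha_i,\alpha_i} = 0$. The two solutions are $\lambda_i = 0$ (giving back $v$) and $\lambda_i = -2\inner{\alpha_i,v}/\inner{\alpha_i,\alpha_i}$. First I would substitute this nonzero value back into $u = v + \lambda_i\alpha_i$ to get
\[
    u \ = \ v - 2\frac{\inner{\alpha_i,v}}{\inner{\alpha_i,\alpha_i}}\alpha_i \ = \ s_{W_i}(v),
\]
where $W_i$ is the wall of $N_vP$ whose linear span is the hyperplane $\{x : \inner{\alpha_i,x}=0\}$ orthogonal to the facet normal $\alpha_i$ of the cone $N_vP$.

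The one point that needs to be checked carefully is the correspondence between the facet inequalities $\inner{\alpha_i,c}\le 0$ of $N_vP$ and the walls of $N_vP$: since the representation is irredundant, each $\alpha_i$ determines a facet $F_i = N_vP \cap \{c : \inner{\alpha_i,c}=0\}$ of dimension $d-1$, and $F_i$ is exactly a wall of $\Fan$ contained in $N_vP$ (conversely every wall of $N_vP$ arises this way). Thus ``$W$ ranges over the walls of $N_vP$'' is the same as ``$i$ ranges over $[m]$'', and $\lin(W_i) = \{x : \inner{\alpha_i,x}=0\}$, so the reflection $s_{W_i}$ in the statement coincides with the map $x \mapsto x - 2\frac{\inner{\alpha_i,x}}{\inner{\alpha_i,\alpha_i}}\alpha_i$ computed above. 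I would also note that the edge direction $\alpha_i$ from $v$ into the neighbor is indeed (a positive multiple of) the inner facet normal of $N_vP$ corresponding to that wall, which is the standard duality between the edge-star of $v$ and the normal cone $N_vP$; this is already used implicitly in the proof of Lemma~\ref{lem:key}.

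There is no real obstacle here: the corollary is a direct unwinding of the displayed quadratic in the proof of Lemma~\ref{lem:key}, the only mild subtlety being to confirm that $\lambda_i > 0$ (so that $u$ is the genuine neighbor and not a reflection pointing the wrong way) — but this is guaranteed because $v \in V(P)$ lies on the sphere and $u$, being a distinct vertex on the same sphere, forces the nonzero root, while positivity follows from $u$ lying on the ray $v + \Rnn\alpha_i$ as in the lemma. Hence the neighbors of $v$ are precisely the points $s_W(v)$ as $W$ runs over the walls of $N_vP$.
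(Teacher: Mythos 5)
Your proposal is correct and follows essentially the same route as the paper: solve the quadratic \eqref{eqn:key} for the nonzero root $\lambda_i = -2\inner{\alpha_i,v}/\inner{\alpha_i,\alpha_i}$, substitute back to recognize $u = s_W(v)$, and identify the wall $W$ with the facet of $N_vP$ cut out by $\inner{\alpha_i,c}=0$. The extra care you take with the bijection between the irredundant facet normals $\alpha_i$ and the walls of $N_vP$, and with the sign of $\lambda_i$, is a sound elaboration of what the paper leaves implicit.
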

\begin{proof}
    The unique nonzero solution of~\eqref{eqn:key} is given by $\lambda_i  =
    -2 \frac{\inner{\alpha_i,v}}{\inner{\alpha_i,\alpha_i}}$ and hence
    \[
        u \ = \ v - 2 \frac{\inner{\alpha_i,v}}{\inner{\alpha_i,\alpha_i}}
        \alpha_i \ = \ s_W(v)
        \, ,
    \]
    where $W = \{ c \in N_vP : \inner{\alpha_i,c} = 0 \}$ is the wall of $N_vP$
    corresponding to $\alpha_i$.
\end{proof}

A second observation drawn from the proof of Lemma~\ref{lem:key} is the
following relation between vertices and their normal cones.

\begin{cor}\label{cor:key_int}
    Let $P \subset \R^d$ be a polytope inscribed to a sphere centered at the
    origin.  Then $v \in \interior(N_vP)$ holds for every vertex $v \in V(P)$.
\end{cor}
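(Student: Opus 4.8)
The goal is to show that for a polytope $P$ inscribed in a sphere centered at the origin, every vertex $v$ lies in the interior of its own normal cone $N_vP$.

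My plan is to argue by contradiction, or rather directly, using Corollary~\ref{cor:key_reflect}. Fix a vertex $v$ and suppose, for contradiction, that $v \in \partial(N_vP)$, i.e. $v$ lies on some wall $W$ of $N_vP$ (or outside the cone entirely). Write the irredundant facet description $N_vP = \{c : \inner{\alpha_i,c}\le 0,\ i=1,\dots,m\}$ as in the proof of Lemma~\ref{lem:key}. Then $v\notin\interior(N_vP)$ means $\inner{\alpha_i,v}\ge 0$ for some index $i$. Let $u = s_W(v) = v - 2\frac{\inner{\alpha_i,v}}{\inner{\alpha_i,\alpha_i}}\alpha_i$ be the corresponding neighbor of $v$, which exists and equals $s_W(v)$ by Corollary~\ref{cor:key_reflect}. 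The step $\lambda_i = -2\frac{\inner{\alpha_i,v}}{\inner{\alpha_i,\alpha_i}}$ must be strictly positive because $u\ne v$ is a genuine neighboring vertex along the ray $v+\lambda_i\alpha_i$ with $\lambda_i>0$ (this is exactly how the neighbor was produced in Lemma~\ref{lem:key}). But $\lambda_i>0$ forces $\inner{\alpha_i,v}<0$, contradicting $\inner{\alpha_i,v}\ge 0$. Hence no such $i$ exists and $\inner{\alpha_i,v}<0$ for all $i$, i.e. $v\in\interior(N_vP)$.

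More concretely, I would phrase it as: for each $i$, the edge of $P$ at $v$ in the direction $\alpha_i$ gives a neighbor $u$ with $\|u\|=\|v\|$, and by~\eqref{eqn:key} the unique nonzero solution is $\lambda_i = -2\inner{\alpha_i,v}/\inner{\alpha_i,\alpha_i}>0$; positivity of $\lambda_i$ is equivalent to $\inner{\alpha_i,v}<0$. Since this holds for every facet normal $\alpha_i$ of $N_vP$, the point $v$ strictly satisfies all the defining inequalities, which is exactly the statement $v\in\interior(N_vP)$.

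The main thing to be careful about is the degenerate possibility that $v=\0$: then $\|u\|^2=\|v\|^2=0$ would force $u=\0=v$, but $u\ne v$, so this cannot occur — equivalently, the origin is never a vertex of an inscribed polytope with a nontrivial edge, and a $d$-dimensional inscribed polytope with $d\ge 1$ has at least one edge at each vertex. One should also note the reduction at the start (as in Lemma~\ref{lem:key}) to the full-dimensional case so that $\Fan$ is pointed and $N_vP$ is a genuine $d$-dimensional cone with a well-defined interior; in lower dimensions one works within $\aff(P)$ and $\lin(\aff(P))$, and "interior" means relative interior, but the argument is verbatim the same. I do not expect any real obstacle here — the statement is essentially a repackaging of the computation already done in Lemma~\ref{lem:key} and Corollary~\ref{cor:key_reflect}, with the sign of $\lambda_i$ being the operative point.
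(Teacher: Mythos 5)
Your argument is correct and is essentially identical to the paper's: both observe that the unique nonzero solution $\lambda_i$ of~\eqref{eqn:key} is positive precisely when $\inner{\alpha_i,v}<0$, so $v$ strictly satisfies every defining inequality of $N_vP$ and hence lies in its interior. Your extra remarks on the case $v=\0$ and the reduction to the full-dimensional setting are sensible but not needed beyond what Lemma~\ref{lem:key} already sets up.
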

\begin{proof}
    Observe that the nonzero solution $\lambda_i$ to~\eqref{eqn:key} is
    positive if and only if $\inner{\alpha_i,v} < 0$. Thus $v \in \{ c :
    \inner{\alpha_i,c} < \nolinebreak 0 \text{ for } i =1,\dots, m \} =
    \interior(N_vP)$.
\end{proof}

\begin{dfn}[Inscribed cone of a fan]
    Let $\Fan$ be a fan in $\R^d$. The \Def{inscribed cone} of $\Fan$
    is the set of all inscribed polytopes $P$ with $\Fan(P) = \Fan$
    modulo translations. We call $\Fan$ \Def{inscribable} if
    $\InCone(\Fan) \neq \emptyset$.
\end{dfn}

Calling $\InCone(\Fan)$ a \emph{cone} is justified as $\lambda P \in
\InCone(\Fan)$ for all $P \in \InCone(\Fan)$ and $\lambda > 0$.
Theorem~\ref{thm:InSpc} asserts that $\InCone(\Fan)$ is in fact a \emph{convex}
cone. 

For every translation class in $\InCone(\Fan)$, a canonical representative can
be obtained as follows: Let $P \subset \R^d$ be a polytope inscribed into a sphere $S$. If $P$
is full-dimensional, then $S$ is unique. If $P$ is of lower dimension,
then $S \cap \aff(P)$ is the unique inscribing sphere relative to its
affine hull. We write $c(P)$ for the \Def{center} of $S \cap
\aff(P)$. Since $c(P + \x) = c(P) + \x$ for all $\x \in \R^d$, we can
always assume that $c(P) = \0$ and we write $\overline{P} \defeq P - c(P)$.

The space $\InCone(\Fan)$ is endowed with the Hausdorff metric 
\[
    d_H(P, Q) \ \defeq \ \min \{ \mu \ge 0 : \overline{P} \subseteq
    \overline{Q} + \mu B^d, \overline{Q} \subseteq \overline{P} + \mu B^d \}
    \, ,
\]
where $B^d$ is the unit ball and $P,Q \in \InCone(\Fan)$.

Let $P$ be a polytope with normal fan $\Fan$.  For a fixed region $R_0 \in
\Fan$, we write $v_{R_0}(P)$ for the unique vertex $v$ of $P$ with $N_vP = R_0$.
Let us denote the set of possible $v_{R_0}(P)$ for inscribed $P$ by
\[
    \InCone(\Fan, R_0) \ \defeq \ \{ v_{R_0}(P) : \Fan(P) = \Fan, \text{$P$
      inscribed}, c(P) = \0 \}\,.
\]
We call $\InCone(\Fan, R_0)$ the \Def{inscribed cone based at $R_0$}.

It follows from Lemma~\ref{lem:key} and
Corollaries~\ref{cor:key_reflect} and~\ref{cor:key_int} that the map
$v_{R_0} : \InCone(\Fan) \to \InCone(\Fan, R_0)$ is a
homeomorphism.

\subsection{Virtually inscribable fans and the reflection game}
In order to find necessary conditions for a fan to be inscribed, we use the
following \emph{reflection game} for fans: Let $\Fan$ be a pure and
full-dimensional fan. The \Def{dual graph} of $\Fan$ is the simple undirected
graph $G(\Fan)$ with nodes given by the regions of $\Fan$. Two regions $R, R'$
are adjacent in $G(\Fan)$ if $R \cap R'$ is a wall. We call $\Fan$
\Def{strongly connected} if $G(\Fan)$ is connected.  For example, every
complete fan is strongly connected.  If $R,R'$ are two adjacent regions, then
let $s_{RR'}$ be the reflection in the hyperplane $\lin(R \cap R')$. Every
walk $\Walk = R_0 R_1 \dots R_k$ in $G(\Fan)$ yields an orthogonal
transformation
\[
    t_\Walk \ \defeq \ s_{R_{k}R_{k-1}}\cdots s_{R_2R_1} s_{R_1R_0} \, .
\]
\begin{dfn}[Virtually inscribable]
    Let $\Fan$ be a full-dimensional and strongly connected fan in
    $\R^d$ and let $R_0 \in \Fan$ be a region. The fan $\Fan$ is
    \Def{virtually inscribable} if there is a point
    $v \in \R^d \setminus \lineal(\Fan)$ such that
    \begin{equation}\label{eqn:t_W}
        t_\Walk(v) \ = \ v
    \end{equation}
    for all closed walks $\Walk$ starting in $R_0$.
\end{dfn}

Note that we do not require that $\Fan$ is polytopal. Also note that every
$t_\Walk$ fixes $\lineal(\Fan)$ pointwise. The linear subspace
$\InSpc(\Fan,R_0) \subset \R^d$ of all $v \in (\lineal(\Fan))^\perp$
satisfying~\eqref{eqn:t_W} for all closed walks $\Walk$ will be called the
\Def{based inscribed space} of $\Fan$. Thus, $\Fan$ is virtually inscribable
if and only if $\InSpc(\Fan) \neq \{\0\}$. The actual choice of $R_0$ is
immaterial: for a different base region $R_0'$, we have
\[
    \InSpc(\Fan,R_0') \ = \  t_{\Walk'} \InSpc(\Fan,R_0)
\]
for any walk $\Walk'$ from $R_0$ to $R_0'$. Hence $\InSpc(\Fan,R_0)$ is
\emph{based} at $R_0$. In Section~\ref{sec:computing}, we will discuss
inscribed spaces that do not require the choice of a base region.  Clearly,
\[
    \InCone(\Fan,R_0) \ \subseteq \ \InSpc(\Fan,R_0) \, .
\]

\begin{prop}\label{prop:ref_game}
    Let $\Fan$ be a complete fan. Then $\Fan$ is inscribed if and only
    if $\Fan$ is virtually inscribed and there is
    $v_0 \in \InSpc(\Fan,R_0)$ such that for every region $R$
    \[
        t_\Walk(v_0) \in \interior(R)
    \]
    for some path $\Walk$ from $R_0$ to $R$.
\end{prop}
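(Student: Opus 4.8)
The plan is to show both directions by connecting the reflection game to the concrete reconstruction of an inscribed polytope via Lemma~\ref{lem:key} and Corollaries~\ref{cor:key_reflect}, \ref{cor:key_int}.

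For the forward direction, suppose $\Fan = \Fan(P)$ for an inscribed polytope $P$; after translating we may assume $c(P) = \0$, so $P$ is inscribed in a sphere centered at the origin. Set $v_0 \defeq v_{R_0}(P)$, the vertex of $P$ whose normal cone is the base region $R_0$. By Corollary~\ref{cor:key_reflect}, if $R$ and $R'$ are adjacent regions of $\Fan$ with $W = R \cap R'$ a wall, and if $v$ is the vertex of $P$ with normal cone $R$, then the vertex with normal cone $R'$ is exactly $s_W(v) = s_{RR'}(v)$. Iterating along any walk $\Walk = R_0 R_1 \cdots R_k$ in $G(\Fan)$ therefore shows that $t_\Walk(v_0)$ is the vertex of $P$ with normal cone $R_k$. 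In particular, if $\Walk$ is a closed walk based at $R_0$, then $t_\Walk(v_0)$ is again the vertex with normal cone $R_0$, i.e. $t_\Walk(v_0) = v_0$, so $\Fan$ is virtually inscribed with $v_0 \in \InSpc(\Fan, R_0)$. Moreover, by Corollary~\ref{cor:key_int}, for every region $R$ the corresponding vertex lies in $\interior(N_v P) = \interior(R)$; since that vertex equals $t_\Walk(v_0)$ for any walk $\Walk$ from $R_0$ to $R$, the interior condition holds. Note $v_0 \neq \0$ since a full-dimensional inscribed polytope with center $\0$ has no vertex at the center, so $v_0 \notin \lineal(\Fan)$.

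For the converse, suppose $\Fan$ is virtually inscribed and there is $v_0 \in \InSpc(\Fan, R_0)$ with $t_\Walk(v_0) \in \interior(R)$ for some walk $\Walk$ from $R_0$ to each region $R$. The key point is that the assignment $R \mapsto v_R \defeq t_\Walk(v_0)$ is \emph{well-defined}: if $\Walk, \Walk'$ are two walks from $R_0$ to $R$, then $\Walk'^{-1}\Walk$ (concatenate $\Walk$ with the reverse of $\Walk'$) is a closed walk based at $R_0$, and since each reflection is an involution, $t_{\Walk'^{-1}\Walk} = t_{\Walk'}^{-1} t_\Walk$; the defining relation~\eqref{eqn:t_W} then gives $t_{\Walk'}(v_0) = t_\Walk(v_0)$. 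Now define $v_R$ for every region $R$ this way, and for each region $R$ let $N_R \defeq R$ (the cone itself) and build the polytope $P \defeq \bigcap_R \{ x \in \R^d : \inner{\alpha, x} \le \inner{\alpha, v_R} \text{ for each facet normal } \alpha \text{ of } R \text{ pointing out of } R\}$, or equivalently the polytope whose support function is piecewise linear on $\Fan$ and takes value $\inner{v_R, \cdot}$ on $R$. One checks this is consistent across walls: if $W = R \cap R'$ is a wall with reflection $s_W$ and $\Walk'$ extends a walk $\Walk$ to $R_0 \to R$ by the step $R \to R'$, then $v_{R'} = s_W(v_R)$, which is exactly the relation forcing the two linear pieces $\inner{v_R,\cdot}$ and $\inner{v_{R'},\cdot}$ to agree on $\lin(W)$ (since $s_W$ fixes $\lin(W)$ pointwise), so the resulting function is continuous and hence the support function of a well-defined convex body $P$. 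The interior condition $v_R \in \interior(R)$ guarantees that $v_R$ is genuinely a vertex of $P$ with normal cone exactly $R$ (not a higher-dimensional face), so $\Fan(P) = \Fan$; and by construction $\|v_R\|^2 = \|s_W(v_{R'})\|^2 = \|v_{R'}\|^2$ across every wall, hence $\|v_R\|$ is constant over all regions by strong connectedness, i.e. $P$ is inscribed in the sphere of that radius centered at $\0$. Thus $\Fan$ is inscribable.

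The main obstacle is the converse: assembling the local vertex data $\{v_R\}$ into an honest polytope whose normal fan is exactly $\Fan$. Well-definedness of $v_R$ is immediate from~\eqref{eqn:t_W}, and the norm-equality giving inscribedness is immediate from the reflections being isometries; the delicate point is verifying that the piecewise-linear function with pieces $\inner{v_R,\cdot}$ is (a) continuous — which follows because adjacent pieces differ by $v_R - v_{R'} = v_R - s_W(v_R)$, a multiple of the wall normal $\alpha_W$, so they agree on $\lin(W) = \alpha_W^\perp$ — and (b) convex with normal fan refining into exactly the regions of $\Fan$, which is where the hypothesis $v_R \in \interior(R)$ is used: it says the gradient $v_R$ of the piece over $R$ lies strictly inside the cone $R$ over which that piece is active, which is precisely the condition that the PL function is strictly convex across each wall and that no region collapses. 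I would make this last step rigorous by invoking the standard correspondence (recalled in Section~\ref{sec:PL}) between complete fans $\Fan$, continuous PL functions on $\Fan$, and their (possibly virtual) polytopes, together with the observation that strict containment $v_R \in \interior(R)$ rules out the virtual case and pins down the normal fan.
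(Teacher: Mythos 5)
Your proof is correct, and the forward direction coincides with the paper's (Lemma~\ref{lem:key} plus Corollaries~\ref{cor:key_reflect} and~\ref{cor:key_int}). For the converse you take a genuinely different, dual route: the paper defines $P = \conv(v_R : R)$ directly, notes all $v_R$ lie on a sphere of radius $\|v_0\|$, and then shows $N_{v_R}P \subseteq R$ by observing that $v_S - v_R$ is a \emph{positive} multiple of the outer wall normal $\alpha_{RS}$ (positivity being exactly where $v_R \in \interior(R)$ enters), concluding $N_{v_R}P = R$ from completeness of both fans. You instead assemble the data $(\ell_R = \inner{v_R,\cdot})_R$ into a PL function and invoke the strict-convexity criterion of Section~\ref{sec:PL} (the sign condition $\lambda_{RS} = -2\inner{\alpha_{RS},v_R} > 0$, i.e.\ Theorem~\ref{thm:PL-lambda} and Proposition~\ref{prop:PL}) to get $\ell = h_P$ with $\Fan(P)=\Fan$. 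The key computation is identical in both arguments; what your route buys is a clean reduction to the type-cone machinery, at the cost of forward-referencing Section~\ref{sec:type} (no circularity, since those results do not use Proposition~\ref{prop:ref_game}, but the paper's proof is self-contained within Section~\ref{sec:ref_game}). One slip to fix: your first definition of $P$ as $\bigcap_R \{x : \inner{\alpha,x} \le \inner{\alpha,v_R}\}$ over the \emph{wall} normals $\alpha$ of each region $R$ is not equivalent to the support-function definition and in fact yields the empty set --- for a wall $R \cap S$ one gets the contradictory pair $\inner{\alpha_{RS},x} \le \inner{\alpha_{RS},v_R} < \inner{\alpha_{RS},v_S} \le \inner{\alpha_{RS},x}$, since the facet normals of $P$ are the rays of $\Fan$, not the wall normals. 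Delete that clause and keep only the support-function construction, which is what your argument actually uses.
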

\begin{proof}
    If $P \subset \R^d$ is a polytope inscribed into a sphere centered at
    the origin and $\Fan(P) = P$, then Lemma~\ref{lem:key} and
    Corollary~\ref{cor:key_reflect} show that $\Fan$ is virtually inscribable.
    The graph of $P$ is exactly $G(\Fan)$ and $t_W(v_0) = u$ is the vertex
    with region $R$. Corollary~\ref{cor:key_int} now shows that $u \in
    \interior(R)$.

    For the converse, let $\Fan$ be a virtually inscribed fan
    satisfying the given conditions. For every region $R$, let
    $v_R \defeq t_\Walk(v_0)$, where $\Walk$ is a path connecting
    $R_0$ to $R$. Note that this implies $v_R \neq v_S$ for
    $R \neq S$. Since $\Fan$ is virtually inscribed, $v_R$ is
    independent of the chosen path. Define
    $P = \conv(v_R : R \text{ region} )$. Since $t_\Walk$ is an
    orthogonal transformation, all vertices lie on a sphere centered
    at the origin with radius $\|v_0\|$.  In particular $P$ is an
    inscribed polytope with vertices $v_R$ for $R \in \Fan$ region.
    
    We are left to show that $\Fan(P) = \Fan$.  By construction $N_{v_R}
    \subseteq R$ for every region $R$. Indeed, for every region $S$ adjacent
    to $R_0$, $v_S - v_0$ is an outer normal for $R_0$ and 
    \[
        N_{v_{R_0}} \ = \  \{ c \in \R^d : \inner{c,v_{R_0}} \ge \inner{c,v_R}
        \text{ for $R \in \Fan$ region} \} \, .
    \]
    As $P$ is independent of the choice of $R_0$, $N_{v_R} \subseteq R$ holds
    for all regions. Since $\Fan$ and $\Fan(P)$ are complete fans, this
    implies $N_{v_R} = R$ for all regions $R$ and completes the proof.
\end{proof}

We are now in the position to prove Theorem~\ref{thm:InSpc}.

\begin{proof}[Proof of Theorem~\ref{thm:InSpc}]
    Let $Q, Q'$ be two polytopes inscribed into a sphere centered at the
    origin with $\Fan(Q) = \Fan(Q') = \Fan$. For every region $R \in \Fan$,
    let $q_R, q'_{R} \in R$ be the respective vertices, whose existence is
    vouched for by Corollary~\ref{cor:key_int}. Since $\Fan(Q + Q') = \Fan$,
    we have
    \[
        Q+Q' \ = \ \conv\{ q_R  + q'_{R} : R \in \Fan \text{ region} \} \, .
    \]
    To show that $Q+Q'$ is inscribed, let $R_0 \in \Fan$ be a fixed
    region. By convexity, $q_{R_0} + q'_{R_0} \in \interior(R_0)$ and
    $q_R + q'_R = t_\Walk(q_{R_0} + q'_{R_0}) \in \interior(R_0)$ for
    all walks $\Walk$ from $R_0$ to any region $R \in \Fan$.
    Proposition~\ref{prop:ref_game} now shows that
    $Q + Q' \in \InCone(P)$.
    
    It remains to see that $\InCone(\Fan) \cong \InCone(\Fan, R_0)$ is a
    relatively open polyhedral cone of dimension at most $d$. The collection
    of points $v_0 \in \R^d$ with $t_\Walk(v_0) \in \interior(R)$ for $\Walk$
    a path from $R_0$ to $R$ is an open polyhedral cone of dimension $d$. If
    $v_0 \in \InSpc(\Fan,R_0)$, then $t_\Walk(v_0)$ is independent of the
    choice of $\Walk$.  Hence, if we choose a path $\Walk_R$ from $R_0$ to $R$
    for every region $R$, then
    \begin{equation}\label{eqn:InSpc_intersect_rep}
        \InCone(\Fan, R_0) \ = \ \InSpc(\Fan,R_0) \ \cap \ \bigcap_R
        t_{\Walk_R}^{-1}(\interior(R)) \, .
    \end{equation}
    The latter is the restriction of a linear subspace to the intersection of
    finitely many open polyhedral cones.
\end{proof}

\begin{cor} \label{cor:InSpc_iso}%
   The map $v_{R_0} : \InCone(\Fan) \to \InCone(\Fan, R_0)$ is a linear
   homeomorphism:
\begin{equation}\label{eqn:InSpc_real1}
   \InCone(\Fan)  \ \cong \ \InCone(\Fan, R_0)\,.
\end{equation}
\end{cor}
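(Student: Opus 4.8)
The plan is to read off the statement from what was already established in the proof of Theorem~\ref{thm:InSpc}, supplying only the verifications that $v_{R_0}$ is a bijection, that it and its inverse are continuous, and that it intertwines the cone operations.

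First I would check that $v_{R_0}$ is a well-defined bijection onto $\InCone(\Fan,R_0)$. Given $P\in\InCone(\Fan)$, represented by the centered polytope $\overline{P}$ with $c(\overline{P})=\0$, the vertex $v_{R_0}(\overline{P})$ with normal cone $R_0$ lies in $\interior(R_0)$ by Corollary~\ref{cor:key_int} and in $\InSpc(\Fan,R_0)$ by the reconstruction argument in Proposition~\ref{prop:ref_game}, hence in $\InCone(\Fan,R_0)$ by the intersection representation~\eqref{eqn:InSpc_intersect_rep}. Surjectivity onto $\InCone(\Fan,R_0)$ is immediate from the definition of that set, and injectivity is precisely Lemma~\ref{lem:key}: a centered inscribed polytope is determined by $\Fan$ together with one vertex. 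An explicit inverse $\psi$ is at hand: fixing for each region $R$ a path $\Walk_R$ from $R_0$ to $R$, set $\psi(v_0)\defeq\conv\{t_{\Walk_R}(v_0):R\in\Fan\text{ region}\}$; by the argument in the proof of Proposition~\ref{prop:ref_game}, $\psi(v_0)\in\InCone(\Fan)$, $c(\psi(v_0))=\0$, and $\psi=v_{R_0}^{-1}$.

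For continuity of $\psi$: each $t_{\Walk_R}$ is a fixed orthogonal map, so $v_0\mapsto(t_{\Walk_R}(v_0))_R$ is linear, and since the Hausdorff distance between the convex hulls of two labelled point tuples is bounded by the maximal displacement of the points, $\psi$ is Lipschitz, hence continuous. For continuity of $v_{R_0}$ itself, fix any $c\in\interior(R_0)$; then $v_{R_0}(\overline{P})$ is the unique maximizer of $\inner{c,\cdot}$ over $\overline{P}$, and on the family of polytopes in $\InCone(\Fan)$ (where this maximizer is always a vertex, by Corollary~\ref{cor:key_int}) the assignment $\overline{P}\mapsto\operatorname{argmax}$ is continuous in the Hausdorff metric. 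Thus $v_{R_0}$ is a homeomorphism. Finally, linearity: by Theorem~\ref{thm:InSpc} and its proof, for $Q,Q'\in\InCone(\Fan)$ centered at the origin the sum $Q+Q'$ is again centered at the origin and equals $\conv\{q_R+q'_R:R\}$, so $v_{R_0}(Q+Q')=q_{R_0}+q'_{R_0}=v_{R_0}(Q)+v_{R_0}(Q')$; likewise $v_{R_0}(\lambda Q)=\lambda\,v_{R_0}(Q)$ for $\lambda>0$. Hence $v_{R_0}$ carries Minkowski addition and positive dilation on $\InCone(\Fan)$ to vector addition and scalar multiplication on $\R^d$, i.e.\ it is the restriction of a linear map between the linear hulls of the two cones (the linear hull of $\InCone(\Fan)$ being taken inside the type space of $\Fan$, cf.\ Section~\ref{sec:type}), which is the assertion.

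The only mildly delicate point — the one I would spell out most carefully — is the continuity of the forward map $v_{R_0}$, i.e.\ that the distinguished vertex of an inscribed polytope varies continuously with the polytope; everything else is bookkeeping on top of Theorem~\ref{thm:InSpc}. As an alternative to the direct $\operatorname{argmax}$ argument, one can instead invoke invariance of domain: $\psi$ is a continuous injection from the relatively open cone $\InCone(\Fan,R_0)$ (open in the $\le d$-dimensional space cut out by~\eqref{eqn:InSpc_intersect_rep}) onto $\InCone(\Fan)$, hence open, so its inverse $v_{R_0}$ is automatically continuous.
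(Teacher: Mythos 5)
Your proposal is correct and follows the same route the paper takes: the corollary is read off from Lemma~\ref{lem:key}, Corollaries~\ref{cor:key_reflect} and~\ref{cor:key_int}, and the identity $Q+Q'=\conv\{q_R+q'_R\}$ together with the representation~\eqref{eqn:InSpc_intersect_rep} from the proof of Theorem~\ref{thm:InSpc}; the paper leaves the bijectivity/continuity/additivity bookkeeping implicit, and you have simply spelled it out. No gaps.
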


We are now in a position to proof Corollary~\ref{cor:symm_intro} and Corollary~\ref{cor:ideal_sum}.

\begin{proof}[Proof of Corollary~\ref{cor:symm_intro}]
    Let $P$ be an normally inscribable polytope and let $G$ be a group of orthogonal
    transformations such that $gP = P$ for all $g \in G$. For $P_0 \in \InCone(P)$
    consider 
    \[
        P' \ \defeq \ \frac{1}{|G|} \sum_{g \in G} g P_0 \, .
    \]
    $P'$ is clearly invariant under $G$ and since $\InCone(\Fan)$ is convex by 
    Theorem~\ref{thm:InSpc}, it follows that $P' \in \InCone(\Fan)$.
\end{proof}

\begin{proof}[Proof of Corollary~\ref{cor:ideal_sum}]
    Let $Q,Q' \subseteq \R^d$ be polytopes inscribed into the unit sphere with $\Fan(Q) =
    \Fan(Q') = \Fan$. Fix a region $R_0$ and let $q = v_{R_0}(Q)$ and $q' =
    v_{R_0}(Q')$. For any region $R \in \Fan$, we have that $v_R(Q) =
    t_\Walk(q)$ and $v_R(Q') = t_\Walk(q')$ for any path $\Walk$ from $R_0$ to
    $R$.  Since $t_\Walk$ is a product of reflection, it follows that
    $\inner{t_\Walk(q),t_\Walk(q')} = \inner{q,q'}$ and hence $\theta(Q,Q')$
    is independent of the choice of $R_0$ or, equivalently, the choice of a
    generic linear function $l(x)$.

    For the second statement, we simply note that $\| q + q' \|^2 = 2 + 2
    \inner{q,q'} = 2 + 2 \cos \theta(Q,Q')$.
\end{proof}

\subsection{Inscribed Permutahedra}\label{sec:permutahedra}

The \Defn{braid arrangement} $\braid_{d-1}$ is the arrangement of linear
hyperplanes 
\[
    H_{ij} \ = \ \{ x \in \R^d : x_i = x_j \} \quad  \text{ for } 1 \le i
    < j \le d \, .
\]
Every connected component of $\R^{d} \setminus \bigcup_{H \in
\braid_{d-1}} H$ is an open cone, and the closures of these cones form a
fan which we will also denote by $\braid_{d-1}$.  Note that $\braid_{d-1}$
is not pointed.  Its lineality space is the line spanned by
$(1,1,\dots,1)$.  It is straightforward to verify that the $d!$ regions of
$\braid_{d-1}$ are given by 
\[ 
    R_\sigma \ \defeq \ \{ z \in \R^d : z_{\sigma(1)} \leq
    z_{\sigma(2)} \leq \dots \leq z_{\sigma(d)} \} \, ,
\]
where $\sigma \in \Sym_{d}$ is a permutation. Two regions
$R_\sigma$ and $R_\tau$ are adjacent if they differ by an adjacent
transposition, that is, $\sigma\tau^{-1} = (i,i+1)$ for some $1 \le i < d$.

Let $R_0 = \{ x_1 \le x_2 \le \cdots \le x_d \}$ be the region for the
identity permutation.  If $\Walk$ is a path from $R_0$ to $R_\sigma$, then
$t_\Walk(x_1,\dots,x_d) = (x_{\sigma(1)},\dots,x_{\sigma(d)})$ is the
permutation of coordinates by $\sigma$. Thus $t_{\Walk}^{-1}(R_\sigma) = R_0$.
This shows that $\InSpc(\braid_{d-1}, R_0) = \R^d$ and we can conclude
\[
    \InCone(\braid_{d-1}, R_0) \ = \ R_0 \ = \ \{ z : z_1 \le z_2 \le
    \dots \le z_d \} \]
and for $z \in R_0$, the corresponding polytope is 
\[
    P(z) \ = \ \conv \{ (x_{\sigma(1)},\dots,x_{\sigma(d)}) : \sigma \in 
    \Sym_{d} \}
\]
a \Def{permutahedron} or \Def{weight polytope}
of type $\braid_{d-1}$; cf.~\cite{Coxeter_submodular_functions}.

The rays of the closure $\cInCone(\braid_{d-1}) \cong R_0$ are of the form
$(0,\dots,0,1,\dots,1)$ and the associated polytopes are precisely the
\emph{hypersimplices} $\Delta(d,k)$ for $0 < k < d$; see next section.

\begin{cor}\label{cor:permutahedra}
    Let $\braid_{d-1}$ be the fan of the braid arrangement.
    Then every $P \in \cInCone(\braid_{d-1})$ is of the form
    \[
        P \ = \ P(z) \ = \ z_1 \Delta(d,1) + (z_2 - z_1) \Delta(d,2) + \cdots
        + (z_d - z_{d-1}) \Delta(d,d-1)
    \]
    for $z= (z_1 \le z_2 \le \cdots \le z_d)$.
    In particular, every $P \in \cInCone(\braid_{d-1})$ is symmetric with
    respect to $\Sym_d$.
\end{cor}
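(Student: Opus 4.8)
The plan is to read the result off from the explicit description of $\InCone(\braid_{d-1})$ established just above. Via the linear homeomorphism $v_{R_0}$ of Corollary~\ref{cor:InSpc_iso} we have identified $\InCone(\braid_{d-1})$ with $\InCone(\braid_{d-1},R_0)=\relint R_0$, where $R_0=\{z:z_1\le\cdots\le z_d\}$, and the polytope attached to $z$ is the weight polytope $P(z)=\conv\{(z_{\sigma(1)},\dots,z_{\sigma(d)}):\sigma\in\Sym_d\}$. Since $z\mapsto P(z)$ is continuous in the Hausdorff metric and $R_0=\overline{\relint R_0}$, passing to closures shows that every $P\in\cInCone(\braid_{d-1})$ equals $P(z)$ up to translation for some $z$ in the closed cone $R_0$ (in agreement with $\cInCone(\braid_{d-1})\cong R_0$). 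It then remains to decompose $P(z)$ and to observe its symmetry.

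For the decomposition, I would first record that $z\mapsto P(z)$ is additive and positively homogeneous on $R_0$: this is precisely the linearity asserted in Corollary~\ref{cor:InSpc_iso}, and it also follows from the rearrangement inequality, which gives $h_{P(z)}(c)=\inner{c^{\uparrow},z}$ for $z\in R_0$ (linear in $z$, with $c^{\uparrow}$ the increasing rearrangement of $c$), so that additivity of support functions transfers to the polytopes. Next I would decompose $z$ itself. The cone $R_0$ has lineality line $\R\1$, and its extreme rays modulo $\R\1$ are spanned by the $0/1$-vectors $f_k$ whose top $k$ coordinates equal $1$ and whose remaining $d-k$ coordinates equal $0$, for $k=1,\dots,d-1$. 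A short telescoping computation expresses $z$ as $z_1\1$ plus a nonnegative combination of the $f_k$'s whose coefficients are precisely the consecutive gaps $z_{i+1}-z_i\ge 0$ of $z$. Applying $P(\cdot)$ and the additivity above turns this into a Minkowski decomposition of $P(z)$ as the translate by $z_1\1$ of the corresponding nonnegative combination of the polytopes $P(f_k)$; since $P(f_k)=\conv\{\,0/1\text{-vectors with exactly }k\text{ ones}\,\}=\Delta(d,k)$ by definition of the hypersimplex, this is exactly the decomposition in the statement.

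Finally, the $\Sym_d$-symmetry needs no work: $P(z)=\conv\{(z_{\sigma(1)},\dots,z_{\sigma(d)}):\sigma\in\Sym_d\}$ is fixed setwise under permutation of coordinates, and alternatively $\braid_{d-1}$ is $\Sym_d$-invariant so Corollary~\ref{cor:symm} applies directly. I do not anticipate a genuine obstacle here: Theorem~\ref{thm:InSpc} and the identification $\InCone(\braid_{d-1},R_0)=R_0$ carry the weight of the argument. The only step demanding a little care is the bookkeeping in the telescoping identity — matching each ray generator $f_k$ with the hypersimplex $\Delta(d,k)$ and with the correct gap $z_{i+1}-z_i$ — together with noting that the translation term $z_1\1$ is harmless because $\cInCone(\braid_{d-1})$ is taken modulo translations.
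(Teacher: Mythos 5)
Your argument is correct and follows exactly the route the paper intends: the corollary is read off from the preceding computation ($\InCone(\braid_{d-1},R_0)=R_0$ with associated weight polytopes $P(z)$), combined with linearity of $z\mapsto P(z)$ on $R_0$ (via $h_{P(z)}(c)=\inner{c^{\uparrow},z}$) and the telescoping of $z$ along the extreme rays of $R_0$, whose polytopes are the hypersimplices. One small point your careful bookkeeping actually exposes: the gap $z_{j}-z_{j-1}$ pairs with $\Delta(d,d-j+1)$ and the leading term is the translation $z_1\1=z_1\Delta(d,d)$ rather than $z_1\Delta(d,1)$, so the displayed formula in the corollary should be read with its hypersimplex indices reversed (harmless modulo translation, and your derivation is the correct one).
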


\subsection{Delaunay subdivisions and visibility complexes}\label{sec:delaunay}

\newcommand\hP{\hat{P}}%
Let $[n] = \{1,\dots,n\}$. A \Def{labelled point configuration} is an injective
map $U : [n] \to \R^{d-1}$. We will mostly identify $U$ with its underlying set
$U([n])$.  Let $P = \conv(U)$ be the corresponding convex hull and let $\hat{P}
= \conv(\eta^{-1}(V))$, where $\eta : S^{d-1} \to \R^{d-1} \times \{0\}$ is
the stereographic projection from $e_d$. The projection of the faces of $\hP$
not visible from $e_d$ is the Delaunay subdivision $\Del(U)$ of $P$.  The
distinguishing feature of the Delaunay subdivision is that it is the coarsest
subdivision such that every face $F$ of some cell $P_i \in \Del(U)$ is
inscribed to some $(d-1)$-dimensional ball $B$ such that $U \cap B = V(F)$.

A pair of distinct points $u_1,u_2 \in U$ is a \Def{hidden edge} of $\Del(U)$
if the segment $[u_1,u_2]$ is inscribed to some ball $B$ containing all points
$U \setminus \{u_1,u_2\}$ in its interior.  It follows that if $u_1u_2$ is a
hidden edge, then $u_1, u_2$ are contained in the boundary of $P$. We write
$G(U)$ for the graph with nodes $[n]$ and $i,j \in [n]$ form an edge if $U(i)U(j)$
is an edge or a hidden edge of $\Del(U)$. It is not hard to see that $G(U)$ is
exactly the edge graph of $\hP$.

Let $I \subset \R^{d-1}$ be a segment. There is a unique sphere $S = S(I)$
such that $I$ is invariant under inversion in $S$ and $S$ meets $S^{d-2}$ in a
great-sphere. We call two segments $I,I'$ \Def{co-circular} if $S(I) = S(I')$.
If $I$ and $I'$ are oriented, then they are \Def{positively} co-circular if
the positive endpoints of $I$ and $I'$ are not separated by $S(I)$.
Figure~\ref{fig:co-circular} shows three positively co-circular segments.
\begin{figure}[h]
    \centering
    \includegraphics[width=0.40\textwidth]{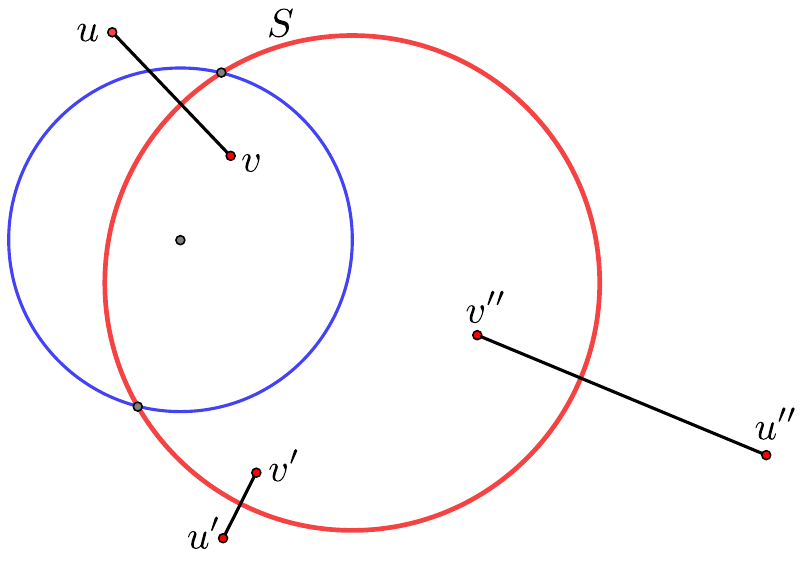}
    \caption{Three positively co-circular segments in the plane.}
    \label{fig:co-circular}
\end{figure}
Let $U$ and $U'$ be labelled point configurations. We call $\Del(U)$ and
$\Del(U')$ \Def{normally equivalent} if $G(U) = G(U')$ as labelled graphs and
for every edge $e$ of $G(U)$, the corresponding segments of $U$ and $U'$ are
positively co-circular.

With respect to stereographic projection, we get the following.
\begin{prop}
    Let $U,U'$ be labelled configurations. Then $\Del(U)$ and $\Del(U')$ are
    normally equivalent if and only if $\hP(U) \simeq \hP(U')$.
\end{prop}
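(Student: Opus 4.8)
The plan is to unwind both sides of the equivalence to a common statement about the polytope $\hP = \hP(U) = \conv(\eta^{-1}(U))$ inscribed in $S^{d-1}$. Recall from the discussion preceding the proposition that $G(U)$ is precisely the edge graph of $\hP$; so the hypothesis $G(U) = G(U')$ (as labelled graphs) is the same as saying $\hP(U)$ and $\hP(U')$ have the same edge graph, with vertices labelled by $[n]$ via $\eta^{-1} \circ U$ and $\eta^{-1}\circ U'$. Since both $\hP(U)$ and $\hP(U')$ are inscribed in the \emph{same} sphere $S^{d-1}$, Lemma~\ref{lem:key} and Corollary~\ref{cor:key_reflect} tell us that each is determined by its normal fan together with one vertex, and that adjacent vertices are related by the wall-reflections of the common normal fan. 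I would use this to argue that "$\hP(U) \simeq \hP(U')$" is equivalent to the purely combinatorial-metric condition: $G(U)=G(U')$ as labelled graphs \emph{and} for every edge $ij$ of this graph, the edge-reflection taking $\eta^{-1}(U(i))$ to $\eta^{-1}(U(j))$ is the same as the one taking $\eta^{-1}(U'(i))$ to $\eta^{-1}(U'(j))$ — i.e. the two edges span parallel walls of the respective normal fans. (The "only if" direction is immediate from $\Fan(\hP(U))=\Fan(\hP(U'))$; the "if" direction is exactly the reconstruction in Lemma~\ref{lem:key}, since equal fans plus one matched vertex propagate to all vertices.)

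Next I would translate the edge-level condition through stereographic projection. Fix an edge $e=ij$ of $G(U)$, with points $u=U(i), u'=U(j) \in \R^{d-1}$ and lifted vertices $\hat u = \eta^{-1}(u)$, $\hat u' = \eta^{-1}(u')$ on $S^{d-1}$. The edge $[\hat u,\hat u']$ of $\hP$ spans a wall $W$ of $\Fan(\hP)$ whose linear hull is the hyperplane through the origin orthogonal to $\hat u - \hat u'$; equivalently $W$ is determined by the affine hyperplane $H_e \subset \R^d$ that is the perpendicular bisector of $\hat u$ and $\hat u'$, since both lie on the unit sphere. Under stereographic projection, affine hyperplanes of $\R^d$ that meet the sphere correspond to spheres (or hyperplanes) in $\R^{d-1}\times\{0\}$ orthogonal to the equator $S^{d-2}$; the perpendicular-bisector hyperplane $H_e$ of $\hat u,\hat u'$ corresponds precisely to the sphere $S([u,u'])$ of the definition — the unique sphere with respect to which inversion fixes the segment $[u,u']$ and which meets $S^{d-2}$ in a great subsphere. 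Thus the wall $W_e$ of $\Fan(\hP)$ is encoded by $S([u,u'])$, and equality of walls $W_e(\hP(U)) = W_e(\hP(U'))$ is exactly the condition $S([U(i),U(j)]) = S([U'(i),U'(j)])$, i.e. co-circularity of the corresponding segments. The \emph{positive} co-circularity refinement is what upgrades "same wall" to "same normal cone on the correct side", i.e. it pins down the labelling $\hat u \leftrightarrow i$ consistently rather than swapping the two endpoints across the wall; I would check this by tracking which side of $H_e$ the vertex $\hat u$ lies on and verifying it matches the "positive endpoint not separated by $S(I)$" condition after projection.

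Assembling these, both "$\Del(U)$ normally equivalent to $\Del(U')$" and "$\hP(U)\simeq\hP(U')$" are equivalent to: same labelled edge graph, and for each edge the two defining walls of the normal fans coincide with matching sides. Since the normal fan of an inscribed polytope is reconstructed from its edge graph plus its wall hyperplanes (again Lemma~\ref{lem:key}), "same walls for all edges" forces $\Fan(\hP(U))=\Fan(\hP(U'))$, which by Proposition~\ref{prop:normally_equiv} is $\hP(U)\simeq\hP(U')$, closing the loop.

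The step I expect to be the main obstacle is the careful identification, under stereographic projection $\eta$, of the perpendicular-bisector hyperplane of two points $\hat u,\hat u'\in S^{d-1}$ with the sphere $S([u,u'])$ appearing in the definition of co-circularity, together with the correct bookkeeping of the positivity/orientation condition. This is a concrete inversive-geometry computation (using that $\eta$ is the restriction of an inversion and that affine hyperplanes pull back to spheres orthogonal to the image of the sphere's equator), and while not deep, it requires being precise about which sphere through which points, and about which half-space corresponds to which arc — so I would isolate it as a self-contained lemma about stereographic projection of segments before feeding it into the argument above.
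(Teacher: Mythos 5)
Your plan is correct and is essentially the argument the paper intends: the proposition is stated without proof as a consequence of how normal equivalence of Delaunay subdivisions was defined, and your outline supplies exactly the missing details — the wall of $\Fan(\hat{P}(U))$ at an edge is the (linear) perpendicular bisector of its two vertices on $S^{d-1}$, stereographic projection identifies this hyperplane with the sphere $S(I)$ of the corresponding segment, positive co-circularity pins down the orientation of the edge normal, and Lemma~\ref{lem:key} together with Corollary~\ref{cor:key_reflect} then reconstructs the whole fan from the labelled edge graph and these oriented walls. The only point worth stating explicitly is that ``$\hat{P}(U) \simeq \hat{P}(U')$'' must be read as a \emph{labelled} normal equivalence (the vertex bijection induced by the common normal fan must agree with the labellings via $\eta^{-1}\circ U$ and $\eta^{-1}\circ U'$), which your argument implicitly and correctly assumes.
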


\begin{proof}[Proof of Corollary~\ref{cor:delaunay_deform}]
    Normally equivalent Delaunay subdivisions are represented by a polyhedral
    fan $\Fan$ in $\R^d$. Recalling that for a fixed region $R_0$, the cone
    $\InCone(\Fan,R_0)$ represents all polytopes $P \subset \R^d$ with
    $\Fan(P) = \Fan$ and inscribing sphere centered at the origin, we obtain
    that the spherical polytope $S(\Fan,R_0) \defeq \InCone(\Fan,R_0) \cap
    S^{d-1}$ parametrizes all normally equivalent Delaunay subdivisions in
    $\R^{d-1}$ represented by $\Fan$.
\end{proof}

\newcommand\Vis{\mathrm{Vis}_\xi}%
\newcommand\ur{{\overline{r}}}%
Choose $\xi \in S^{d-1}$ and let $P \subset \R^d$ be a
full-dimensional polytope inscribed to the unit sphere. Recall from the
introduction that the \Def{visibility complex} $\Vis(P)$ is the collection of
faces $F \subseteq P$ such that $\conv(\xi \cup F)$ does not meet the interior
of $P$. We call such $F$ \Def{visible} from $\xi$.  The visibility complex is
a full-dimensional pure polyhedral complex and hence is determined by the
collection of facets of $P$ visible from $\xi$. For a ray $r \in \Fan$, let us
denote by $\ur \defeq r \cap S^{d-1}$ the unit vector in $r$.  Every ray $r$
determines the oriented hyperplane 
\[
    H_r \ \defeq \ \{ x \in \R^d : \inner{\ur,x} = \inner{\ur,\xi} \}
\]
It follows that the facet $P^\ur$ is visible from $\xi$ if and only if $P^\ur$
is contained in the open halfspace
\[
    H^-_r \ = \ \{ x \in \R^d : \inner{\ur,x} < \inner{\ur,\xi} \} \, .
\]
In fact, if $v$ is a vertex of $P$ contained in $P^\ur$, then this is
equivalent to $v \in H^-_r$. We denote the opposite closed halfspace by $H^+_r$.

\newcommand\HypArr{\mathcal{H}}%
Let $\Fan$ be a polyhedral fan with base region $R_0$.  For every ray $r \in
\Fan$, choose a path $\Walk_r$ to a region $R$ with $r \in R$. We define the
arrangement of hyperplanes
\[
    \HypArr(\Fan,R_0,\xi) \ \defeq \ \{ t_{\Walk_r}^{-1}(H_r) : r \in \Fan \text{
    ray} \} \, .
\]
The arrangement determines an equivalence relation on $\R^d$ by setting $x
\sim y$ if and only if $|\{x,y\} \cap H^-| \neq 1$ for all $H \in
\HypArr(\Fan,R_0,\xi)$. If $P$ is a polytope with normal fan $\Fan$ inscribed
to the unit sphere, then $t_{\Walk_r}(v_{R_0}(P)) \in P^\ur$ for all rays $r$.
This proves the following.

\begin{thm}\label{thm:del_equiv}
    Let $\Fan$ be an inscribable fan with base region $R_0$ and $\xi \in
    S^{d-1}$. Then $P,Q \in \InCone(\Fan,R_0)$ have the same visibility complex
    if and only if $v_{R_0}(P)$ and $v_{R_0}(Q)$ are equivalent with respect
    to $\HypArr(\Fan,R_0,\xi)$.
\end{thm}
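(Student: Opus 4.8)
The plan is to translate the geometric statement about visibility complexes directly into the combinatorial language already set up in the excerpt. The key observation is that the visibility complex $\Vis(P)$ of an inscribed full-dimensional polytope $P$ with normal fan $\Fan$ is completely determined by knowing, for each ray $r \in \Fan$, whether the corresponding facet $P^{\ur}$ is visible from $\xi$ or not. As recorded just before the statement, $P^{\ur}$ is visible if and only if any (equivalently, every) vertex $v$ of $P$ lying in $P^{\ur}$ satisfies $v \in H^-_r$. So the visibility complex is encoded by the sign vector $(\sigma_r(P))_{r \in \Fan}$ where $\sigma_r(P) \in \{-,0,+\}$ records the position of a vertex of $P^{\ur}$ relative to the oriented hyperplane $H_r$.

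First I would fix, for every ray $r$, a region $R$ with $r \in R$ and a path $\Walk_r$ from $R_0$ to $R$, exactly as in the definition of $\HypArr(\Fan, R_0, \xi)$. By Corollary~\ref{cor:InSpc_iso}, an inscribed polytope $P$ with $\Fan(P) = \Fan$ and inscribing sphere centered at the origin is determined by its base vertex $v = v_{R_0}(P)$, and the vertex of $P$ in region $R$ is $v_R(P) = t_{\Walk_r}(v)$. Since $t_{\Walk_r}$ is orthogonal, $r \in R$ means $\ur$ is (up to the chosen path) the image under $t_{\Walk_r}$ of some ray in $R_0$; but more to the point, the vertex $v_R(P)$ lies in the facet $P^{\ur}$, so the visibility of $P^{\ur}$ is governed by the sign of $\inner{\ur, v_R(P)} - \inner{\ur, \xi}$. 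Pulling back by the orthogonal map $t_{\Walk_r}$, this sign equals the sign of $\inner{t_{\Walk_r}^{-1}(\ur), v} - \inner{t_{\Walk_r}^{-1}(\ur), t_{\Walk_r}^{-1}(\xi)}$, which is precisely the position of $v = v_{R_0}(P)$ relative to the hyperplane $t_{\Walk_r}^{-1}(H_r) \in \HypArr(\Fan, R_0, \xi)$. (Here one uses that $H_r = \{x : \inner{\ur, x} = \inner{\ur, \xi}\}$ pulls back under the linear isometry $t_{\Walk_r}$ to $\{x : \inner{t_{\Walk_r}^{-1}(\ur), x} = \inner{t_{\Walk_r}^{-1}(\ur), t_{\Walk_r}^{-1}(\xi)}\}$.)

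Hence for $P, Q \in \InCone(\Fan, R_0)$, the facet $P^{\ur}$ is visible from $\xi$ if and only if $v_{R_0}(P) \in H^-$ for the corresponding $H \in \HypArr(\Fan, R_0, \xi)$, and likewise for $Q$. Therefore $\Vis(P) = \Vis(Q)$ if and only if $v_{R_0}(P)$ and $v_{R_0}(Q)$ lie on the same side of every hyperplane of $\HypArr(\Fan, R_0, \xi)$ that separates them into distinct open halfspaces — which is exactly the equivalence relation $\sim$ defined in the excerpt. One small point needs care: the equivalence relation as stated only looks at the open halfspaces $H^-$ (the condition $|\{x,y\} \cap H^-| \neq 1$), so a point on $H$ itself is treated as equivalent to a point strictly on either side; this matches the geometry because a vertex $v$ lying on $H_r$ corresponds to $P^{\ur}$ lying in the closed halfspace $\overline{H^-_r}$, i.e. a degenerate situation at the boundary between visible and non-visible, and the combinatorial visibility complex (determined by which facets are strictly visible) does not distinguish it from the non-visible case. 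I should double-check this convention is consistent with the definition of $\Vis(P)$ as "faces $F$ such that $\conv(\xi \cup F)$ does not meet the interior of $P$."

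The main obstacle is this boundary bookkeeping: one must verify that the chosen definition of visibility (does not meet the \emph{interior}) together with the chosen definition of $\sim$ (using only open halfspaces $H^-$) are compatible, including the edge cases where $\xi$ itself lies on some $H_r$ or where a vertex lies exactly on a hyperplane of the arrangement. Everything else is a routine unwinding of the orthogonality of $t_{\Walk_r}$ and the homeomorphism of Corollary~\ref{cor:InSpc_iso}; the content is entirely in the remark "In fact, if $v$ is a vertex of $P$ contained in $P^{\ur}$, then this is equivalent to $v \in H^-_r$," which is the bridge from facet-visibility to vertex-position, and which is stated in the excerpt just before the theorem.
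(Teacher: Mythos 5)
Your argument is correct and is essentially the paper's own (very terse) proof: the visibility of the facet $P^{\ur}$ is decided by the position of its vertex $t_{\Walk_r}(v_{R_0}(P))$ relative to $H_r$, which pulls back under the orthogonal map $t_{\Walk_r}$ to the position of $v_{R_0}(P)$ relative to $t_{\Walk_r}^{-1}(H_r) \in \HypArr(\Fan,R_0,\xi)$. One small slip in your boundary discussion: under the relation $|\{x,y\}\cap H^-|\neq 1$ a point on $H$ is equivalent to points in the closed halfspace $H^+$ but \emph{not} to points in the open halfspace $H^-$, which is precisely the grouping you want (a vertex on $H_r$ means the facet is not visible), so your conclusion stands.
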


Note that the equivalence classes on $S^{d-1}$ with respect to $\sim$ are not
necessarily convex or even connected. This is due to the fact that the
hyperplanes $H_r$ do not pass through the origin and therefore do not induce
great-spheres. We close this section with a simple criterion when the set of
points in $S(\Fan.R_0)$ with a fixed visibility complex is convex. If $H = \{
x : \inner{c,x} = \delta \}$ is an affine hyperplane with normal vector
$c$, then $H^- \cap S^{d-1}$ is a convex subset if and only if $\delta < 0$.

\begin{cor}\label{cor:del_equiv_convex}
    Let $M$ be a collection of rays of $\Fan$. Then the polytopes $P \in
    S(\Fan,R_0)$ with visibility complex induced by $P^{\ur}$ for $r \in M$
    form a (spherically) convex subset if $\inner{\ur_i,\xi} < 0$ for $r \in
    M$ and $\inner{\ur_i,\xi} \ge 0$ for all rays $r \not\in M$.
\end{cor}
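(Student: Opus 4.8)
\textbf{Proof proposal for Corollary~\ref{cor:del_equiv_convex}.}

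The plan is to reduce the assertion to a purely convexity-theoretic statement about the spherical polytope $S(\Fan, R_0) = \InCone(\Fan,R_0) \cap S^{d-1}$ together with the hyperplane arrangement $\HypArr(\Fan,R_0,\xi)$, using Theorem~\ref{thm:del_equiv} as the bridge. By that theorem, the set of $P \in S(\Fan,R_0)$ with a fixed visibility complex is exactly the set of points $v = v_{R_0}(P) \in S(\Fan,R_0)$ lying in a fixed equivalence class of $\sim$; so it suffices to show that the relevant equivalence class is spherically convex under the stated hypotheses. First I would recall the reformulation from the paragraph preceding Theorem~\ref{thm:del_equiv}: for a ray $r$, the facet $P^{\ur}$ is visible from $\xi$ precisely when $t_{\Walk_r}(v_{R_0}(P)) \in H_r^-$, equivalently $v_{R_0}(P) \in t_{\Walk_r}^{-1}(H_r^-)$. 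Thus the visibility complex ``induced by $P^{\ur}$ for $r \in M$'' corresponds to the conditions $v \in t_{\Walk_r}^{-1}(H_r^-)$ for $r \in M$ and $v \in t_{\Walk_r}^{-1}(H_r^+)$ for $r \notin M$, intersected with $S(\Fan,R_0)$.

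The key step is the observation already flagged in the excerpt: if $H = \{x : \inner{c,x} = \delta\}$ is an affine hyperplane, then $H^- \cap S^{d-1}$ is spherically convex iff $\delta < 0$ (this is because $H^- \cap S^{d-1}$ is the intersection of the sphere with an open halfspace, and such a spherical cap is convex exactly when it contains no pair of antipodal points, which happens iff the bounding hyperplane separates the origin strictly on the $H^+$ side, i.e. $\delta < 0$; dually $H^+ \cap S^{d-1}$ is convex iff $\delta \ge 0$, i.e. iff $0 \in H^+$). Now I would apply this to each transformed hyperplane $t_{\Walk_r}^{-1}(H_r)$. Since $t_{\Walk_r}$ is an orthogonal transformation fixing the origin, $t_{\Walk_r}^{-1}(H_r)$ has the equation $\inner{t_{\Walk_r}(\ur), x} = \inner{\ur,\xi}$, so its ``offset'' is the same scalar $\inner{\ur,\xi}$. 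Hence: for $r \in M$ the hypothesis $\inner{\ur,\xi} < 0$ makes $t_{\Walk_r}^{-1}(H_r^-) \cap S^{d-1}$ spherically convex; for $r \notin M$ the hypothesis $\inner{\ur,\xi} \ge 0$ means $0 \in t_{\Walk_r}^{-1}(H_r^+)$, so $t_{\Walk_r}^{-1}(H_r^+) \cap S^{d-1}$ is spherically convex. The desired set is the intersection of these spherically convex caps with the spherically convex set $S(\Fan,R_0)$, hence spherically convex.

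The one genuine subtlety — and where I expect to spend the most care — is the independence of the conclusion from the choices made in defining $\HypArr(\Fan,R_0,\xi)$, namely the choice of path $\Walk_r$ for each ray $r$ and the choice of region $R \supseteq r$. If $\Walk_r$ and $\Walk_r'$ are two such paths (possibly to different regions both containing $r$), then $t_{\Walk_r'} t_{\Walk_r}^{-1}$ is $t_{\Walk}$ for a closed walk $\Walk$ at $R_0$; on $\InSpc(\Fan,R_0) \supseteq \InCone(\Fan,R_0)$ this acts as the identity by~\eqref{eqn:t_W}, so $t_{\Walk_r}^{-1}(H_r)$ and $t_{\Walk_r'}^{-1}(H_r)$ agree when intersected with (the affine span of) $S(\Fan,R_0)$, and the offset scalar $\inner{\ur,\xi}$ is unchanged because $\ur$ is the same unit ray vector regardless. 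So the convexity hypothesis is well posed and the argument goes through; I would state this well-definedness remark explicitly and then assemble the three ingredients (Theorem~\ref{thm:del_equiv}, the visibility-from-$\xi$ reformulation, and the spherical-cap convexity criterion) into the short proof sketched above.
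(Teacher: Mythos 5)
Your argument is precisely the one the paper intends: the corollary is stated without a separate proof, as an immediate consequence of Theorem~\ref{thm:del_equiv} combined with the spherical-cap criterion given in the sentence directly preceding it, and your writeup assembles exactly those ingredients. Two small corrections to the details. First, your justification for the closed halfspace is inverted: $H^+=\{x:\inner{c,x}\ge\delta\}$ contains the origin iff $\delta\le 0$, not $\delta\ge 0$. The uniform statement is that $H^{\pm}\cap S^{d-1}$ is convex iff the origin does \emph{not} lie in the interior of $H^{\pm}$ (so that the cap is at most a hemisphere); thus for $r\notin M$ the hypothesis $\inner{\ur,\xi}\ge 0$ places the origin in $\{x:\inner{\ur,x}\le\inner{\ur,\xi}\}$, i.e.\ outside $\interior(H_r^+)$, which is what makes $t_{\Walk_r}^{-1}(H_r^+)\cap S^{d-1}$ convex --- your conclusion is right, but the intermediate claim ``$0\in H_r^+$'' is false in general. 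Second, in the well-definedness digression, if $\Walk_r$ and $\Walk_r'$ end at \emph{different} regions containing $r$, then $t_{\Walk_r'}t_{\Walk_r}^{-1}$ does not correspond to a closed walk based at $R_0$, so~\eqref{eqn:t_W} does not apply directly; the correct reason the condition is independent of the choice is that $t_{\Walk_r}(v)$ and $t_{\Walk_r'}(v)$ are both vertices of the same facet $P^{\ur}$, so $\inner{\ur,\cdot}$ takes the common value $h_P(\ur)$ on both and they lie on the same side of $H_r$. Neither point affects the validity of the approach, which matches the paper's.
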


\section{Inscribable fans in dimension $2$}\label{sec:dim2}

In this section we study inscribable fans in the plane. The situation in the
plane is simple enough to give a complete classification of inscribable fans.
However, since faces of inscribable polytopes are inscribable, the results
obtained in this section give simple necessary conditions for inscribable fans
in higher dimensions.

Throughout this section, let $\Fan$ be a complete and pointed fan in
$\R^2$.  We order the $n\ge 3$ regions of $\Fan$ counterclockwise and
denote them by $R_0,\dots,R_{n-1}$. For $i = 0,\dots,n-1$, let
$\beta_i$ the angle of $R_i$ (cf.~Figure~\ref{fig:2fan}). We call
$\beta(\Fan) = (\beta_0,\dots,\beta_{n-1})$ the \Def{profile} of
$\Fan$. The profile $\beta(\Fan)$ determines $\Fan$ up to rotation.

\begin{figure}[ht]
    \centering
    \includegraphics[width=0.35\textwidth]{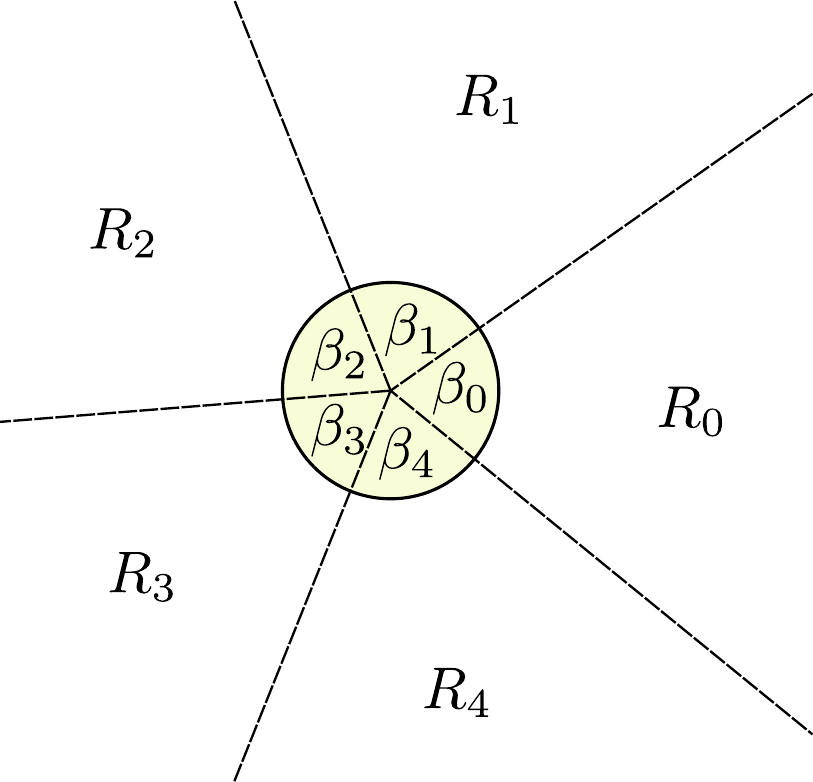}\qquad\qquad
    \includegraphics[width=0.35\textwidth]{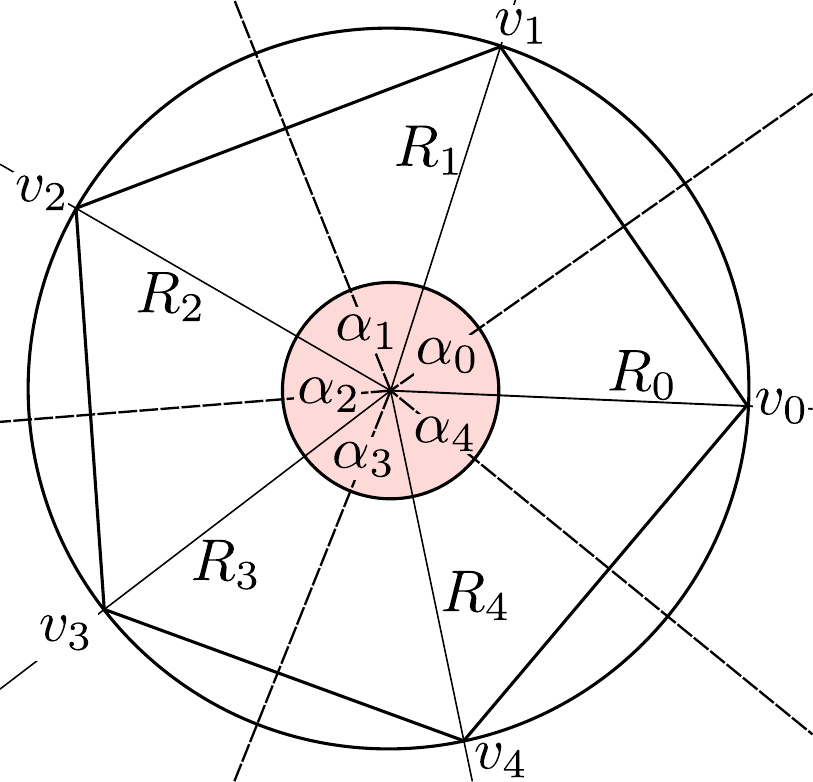}
    \caption{ Left: A $2$-dimensional fan $\Fan$ with regions and
      its profile $\beta = (\beta_0, \beta_1, \beta_2, \beta_3, \beta_4)$.
      Right: An inscribed polygon with normal fan $\Fan$ and labels as in
      the proof of Theorem~\ref{thm:2d_insc}.
      \label{fig:2fan} }
\end{figure}

It is clear that the set of all profiles of complete fans with $n$ regions is
\[
    \Prof_n \ \defeq \ \left \{ \beta \in \R^n : 
    \begin{array}{c} 
        0 < \beta_i < \pi \text{ for } i=0,\dots,n-1 \\
        \beta_0 + \cdots + \beta_{n-1} = 2 \pi
    \end{array} \right \} \, .
\]
Recall that the \Def{$(n,k)$-hypersimplex}~\cite[Ex.~0.11]{ziegler}  is the
polytope given by the convex hull of points $v \in \{0,1\}^n$ with exactly $k$
entries equal to $1$. The $(n,1)$-hypersimplex is the \Def{standard simplex}
$\Delta_{n-1} = \{ x \in \R^n : x \ge 0, x_1+\cdots+x_n = 1\} =
\conv(e_1,\dots,e_n)$, where $e_i$ are the standard basis vectors.

It follows that $\Prof_n$ is the relative interior of $\pi \cdot
\Delta(n,2)$. This description also highlights the fact that cyclic shifts of
$\beta$ correspond to cyclic relabellings of $\Fan$. To ease notation, we
decree
\[
    \beta_{n+i} \defeq \beta_i \quad \text{ for } 0 \le i < n \, .
\]
Our first result determines the locus of virtually inscribable fans.

\begin{prop}\label{prop:2d_virtual_insc}
    Let $\Fan$ be a $2$-dimensional fan with $n$ regions and $\beta(\Fan) =
    (\beta_0,\dots,\beta_{n-1})$. If $n$ is odd, then $\Fan$ is virtually
    inscribable and $\dim \InSpc(\Fan) = 1$. If $n$ is even, then $\Fan$ is
    virtually inscribable if and only if
    \[
        \beta_2 + \beta_4 + \cdots + \beta_{n-2} \ = \ \pi \, .
    \]
    In this case $\dim \InSpc(\Fan) = 2$.
\end{prop}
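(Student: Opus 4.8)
We work in the based inscribed space $\InSpc(\Fan,R_0)$, where $R_0,\dots,R_{n-1}$ are the regions of $\Fan$ ordered counterclockwise. A point $v$ lies in $\InSpc(\Fan,R_0)$ precisely if $t_\Walk(v)=v$ for every closed walk $\Walk$ at $R_0$. Since the dual graph of a complete $2$-fan is a cycle $C_n$, its fundamental group is generated by the single loop $R_0 R_1 \cdots R_{n-1} R_0$, so the condition reduces to $t(v)=v$ for the single orthogonal map $t \defeq s_{R_0 R_{n-1}} s_{R_{n-1}R_{n-2}} \cdots s_{R_1 R_0}$, the product of the $n$ wall-reflections encountered going once around.

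**Plan.** First I would compute $t$ explicitly. Each wall $R_i \cap R_{i+1}$ is a line through the origin in $\R^2$; passing from $R_{i-1}$ to $R_i$ across this line means reflecting in it, and the line between consecutive walls turns by the region angle $\beta_i$. A product of two reflections in lines meeting at angle $\varphi$ is a rotation by $2\varphi$, so composing all $n$ reflections around the fan gives: if $n$ is even, $t$ is a rotation by $2(\beta_1 - \beta_2 + \beta_3 - \cdots + \beta_{n-1} - \beta_{n})$ — more precisely, by twice an alternating sum of the $\beta_i$, which using $\sum \beta_i = 2\pi$ can be rewritten in terms of $\beta_2+\beta_4+\cdots+\beta_{n-2}$; if $n$ is odd, $t$ is itself a reflection (odd number of reflections), in some line $\ell$ through the origin. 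I would carry out this bookkeeping carefully with Figure~\ref{fig:2fan} as a guide, tracking the cumulative angle.

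**Conclusion in the two cases.** If $n$ is odd, $t$ is a reflection in a line $\ell$, whose fixed space is exactly $\ell$, a $1$-dimensional subspace; hence $\InSpc(\Fan,R_0) = \ell$ and $\dim\InSpc(\Fan)=1$, so $\Fan$ is always virtually inscribable. If $n$ is even, $t$ is a rotation by angle $2\theta$ where $\theta$ is the alternating sum described above. A rotation fixes a nonzero vector iff it is the identity, i.e. iff $2\theta \equiv 0 \pmod{2\pi}$. Unwinding the alternating sum against $\sum_i\beta_i = 2\pi$ and the constraints $0<\beta_i<\pi$, the only admissible solution of $\theta\equiv 0$ is $\beta_2+\beta_4+\cdots+\beta_{n-2} = \pi$ (the other congruence class being excluded by the angle bounds — this is the one routine inequality check I would actually do). In that case $t = \id$, so $\InSpc(\Fan,R_0) = \R^2$ and $\dim\InSpc(\Fan) = 2$; otherwise $\InSpc(\Fan,R_0)=\{\0\}$.

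**Main obstacle.** The only real work is the sign/angle bookkeeping in identifying $t$ as a rotation-versus-reflection and pinning down the exact alternating combination of the $\beta_i$; once the formula for the rotation angle is in hand, everything else is immediate from the elementary fact that $O(2)$ elements fixing a nonzero vector are exactly the reflections and the identity. I would double-check the even-$n$ computation on the smallest case $n=4$ (where the condition reads $\beta_2=\pi$, forcing a degenerate-looking but still valid configuration) and $n=6$ to make sure the indexing of the alternating sum is correct, and to confirm that the complementary residue $2\theta\equiv \pi$ cannot occur under $0<\beta_i<\pi$.
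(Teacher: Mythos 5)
Your approach is exactly the paper's: the dual graph is the $n$-cycle, so everything reduces to the single composite $t$ of the $n$ wall reflections, which is a reflection (fixed space a line) when $n$ is odd and a rotation (fixing a nonzero vector iff it is the identity) when $n$ is even. One correction to the bookkeeping you deferred: pairing consecutive reflections, $s_{r_{2k+1}}s_{r_{2k}}$ is a rotation by $2\beta_{2k+1}$ (twice the angle of the region between those two walls), so $t$ is a rotation by $2(\beta_1+\beta_3+\cdots+\beta_{n-1})$ --- twice the sum of \emph{every second} region angle, not twice an alternating sum (twice the alternating sum would be off by a factor of $2$ modulo $2\pi$). Since $\sum_i\beta_i=2\pi$ and all $\beta_i>0$, this partial sum lies in $(0,2\pi)$ and is a multiple of $\pi$ only when it equals $\pi$, which is your "routine inequality check." Finally, your $n=4$ sanity check should have raised a flag rather than been waved through: $\beta_2=\pi$ is \emph{not} admissible (it violates $\beta_2<\pi$); the condition in the statement is to be read with the cyclic convention $\beta_n=\beta_0$, i.e.\ it is $\beta_0+\beta_2+\cdots+\beta_{n-2}=\pi$, matching Example~\ref{ex:insc_quadrangle} ($\beta_0+\beta_2=\pi$ for $n=4$) and equivalent, via $\sum_i\beta_i=2\pi$, to the odd-indexed sum above being $\pi$.
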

\begin{proof}
    The only relevant closed walk is $\Walk = R_0R_1\dots R_{n-1}R_0$ and the
    corresponding transformation 
    \[
        t_\Walk \ \defeq \ s_{R_{0}R_{n-1}}\cdots s_{R_2R_1} s_{R_1R_0} 
    \]
    is a product of $n$ reflections in $\R^2$. If $n$ is odd, then $t_\Walk$
    is a reflection and hence there is a unique $v \in \R^2$ up to scaling
    such that $t_\Walk(v) = v$. If $n$ is even, then $t_\Walk$ is a rotation
    by $2(\beta_2 + \beta_4 + \cdots + \beta_n)$ and $t_\Walk$ has a fixpoint
    if and only if $\beta_2 + \beta_4 + \cdots + \beta_n = \pi$. In this case
    $t_\Walk = \mathrm{id}$ and $\InSpc(\Fan) = \R^2$.
\end{proof}

\begin{cor}
    Let $\VInProf_n \subseteq \Prof_n$ be the profiles of virtually
    inscribable fans.  If $n$ is odd, then $\VInProf_n = \Prof_n$. If $n=2k$
    is even, then $\frac{1}{\pi}\VInProf_n$ is the relative interior of
    $\Delta_{k-1} \times \Delta_{k-1}$, the product of two standard simplices.
\end{cor}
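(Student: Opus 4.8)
The corollary is a direct geometric repackaging of Proposition~\ref{prop:2d_virtual_insc}, so the plan is to unwind the two cases and, in the even case, to recognize the cut-out set as a product of open simplices via a coordinate permutation.

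When $n$ is odd, Proposition~\ref{prop:2d_virtual_insc} already asserts that \emph{every} $2$-dimensional fan with $n$ regions is virtually inscribable, so $\VInProf_n = \Prof_n$ and there is nothing further to prove.

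Assume now $n = 2k$. Proposition~\ref{prop:2d_virtual_insc} characterizes virtual inscribability by a single linear equation on the profile; using $\beta_0 + \cdots + \beta_{n-1} = 2\pi$ this equation is equivalent to the pair
\[
    \beta_0 + \beta_2 + \cdots + \beta_{n-2} \ = \ \pi, \qquad
    \beta_1 + \beta_3 + \cdots + \beta_{n-1} \ = \ \pi .
\]
I would then apply the linear isomorphism $\R^n \xrightarrow{\ \sim\ } \R^k \times \R^k$ that records the even-indexed coordinates $(\beta_0,\beta_2,\dots,\beta_{n-2})$ in the first factor and the odd-indexed ones $(\beta_1,\beta_3,\dots,\beta_{n-1})$ in the second (a permutation of coordinates), composed with the scaling $\beta \mapsto \tfrac1\pi\beta$. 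Under this map, the constraints cutting out $\VInProf_n$ inside $\Prof_n$ — namely $0 < \beta_i < \pi$ for all $i$ together with the two displayed equations — become: every coordinate lies in the open interval $(0,1)$, the first block sums to $1$, and the second block sums to $1$. This is exactly the description of $\relint(\Delta_{k-1}) \times \relint(\Delta_{k-1})$, and since $\relint(A\times B) = \relint(A)\times\relint(B)$ for polytopes $A,B$, we obtain $\tfrac1\pi\VInProf_n = \relint(\Delta_{k-1}\times\Delta_{k-1})$, as claimed.

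There is no real obstacle here; the argument is pure bookkeeping. The one point I would be careful to record is that the second equation $\beta_1+\beta_3+\cdots+\beta_{n-1}=\pi$ is forced by the first together with the defining equation of $\Prof_n$, so that the even and odd blocks genuinely decouple and each is an \emph{independent} open standard simplex (in particular the upper bounds $\beta_i<\pi$ become redundant but stay consistent). I would likewise double-check the index conventions in the displayed condition of Proposition~\ref{prop:2d_virtual_insc}: via the convention $\beta_{n+i}=\beta_i$ and $\sum_i\beta_i=2\pi$ it is precisely the statement that the even-indexed angles $\beta_0,\beta_2,\dots,\beta_{n-2}$ sum to $\pi$.
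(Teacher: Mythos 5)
Your proof is correct and is essentially the argument the paper intends: the corollary is stated there without proof as an immediate repackaging of Proposition~\ref{prop:2d_virtual_insc}, and your bookkeeping (splitting into even- and odd-indexed coordinate blocks, noting that the two sum conditions are equivalent modulo $\beta_0+\cdots+\beta_{n-1}=2\pi$, and that the upper bounds $\beta_i<\pi$ become redundant for $k\ge 2$) is exactly what is needed. You are also right to flag the index convention: the displayed condition in the proposition must be read as the full even-indexed sum $\beta_0+\beta_2+\cdots+\beta_{n-2}=\pi$, as its proof (a rotation by $2(\beta_2+\cdots+\beta_n)$ with $\beta_n=\beta_0$) makes clear, and the corollary is only correct under that reading.
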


\begin{example}\label{ex:insc_quadrangle}
    Let $\Fan$ be a two-dimensional fan with $4$ regions. Then
    Proposition~\ref{prop:2d_virtual_insc} shows that $\Fan$ is
    virtually inscribed if and only if its profile
    $\beta(\Fan) = (\beta_0, \beta_1, \beta_2, \beta_3)$ satisfies:
    \[
        \beta_0 + \beta_2 \ = \ \beta_1 + \beta_3 \ = \ \pi\,.
    \]
    This is precisely the condition that $\Fan$ is the fan of a cyclic
    quadrangle of Euclidean geometry and therefore any realization of
    $\Fan$ will be inscribed, if $\Fan$ satisfies this condition. Thus,
    any rhombus ($\beta_0 = \beta_2$, $\beta_1 = \beta_3$) which is
    not a rectangle can not be (virtually) inscribed, while any
    isosceles trapezoid ($\beta_0 = \beta_1$, $\beta_2 = \beta_3$) is
    virtually inscribed (see Figure~\ref{fig:insc_quadrangle}).
    \begin{figure}[h]
        \centering
        \begin{subfigure}{.3\textwidth}
            \includegraphics[width=\textwidth]{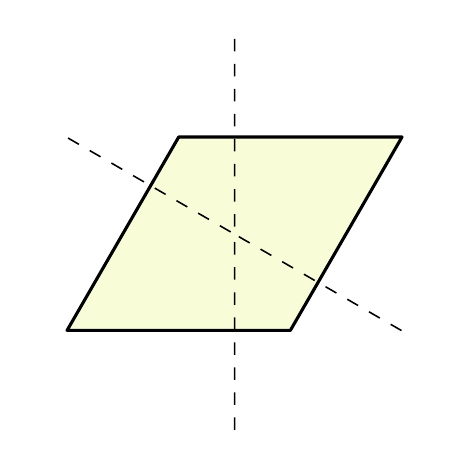}
            \caption{}
        \end{subfigure}
        \begin{subfigure}{.3\textwidth}
            \includegraphics[width=\textwidth]{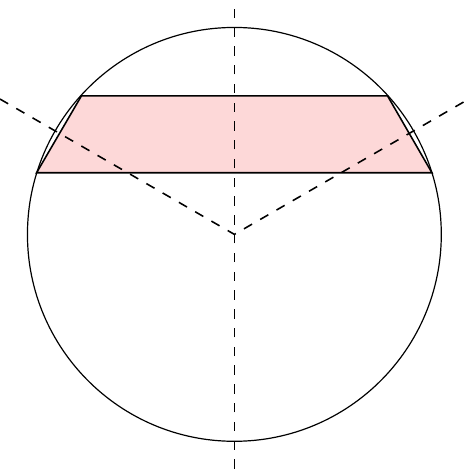}
            \caption{}
        \end{subfigure}
        \begin{subfigure}{.3\textwidth}
            \includegraphics[width=\textwidth]{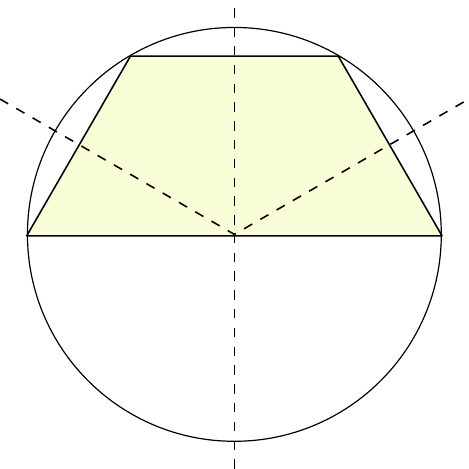}
            \caption{}
        \end{subfigure}
        \begin{subfigure}{.3\textwidth}
            \includegraphics[width=\textwidth]{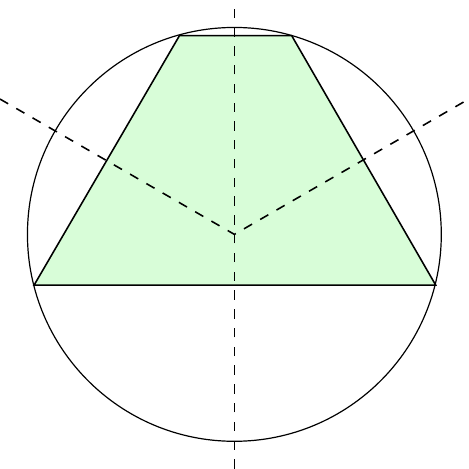}
            \caption{}
        \end{subfigure}
        \begin{subfigure}{.3\textwidth}
            \includegraphics[width=\textwidth]{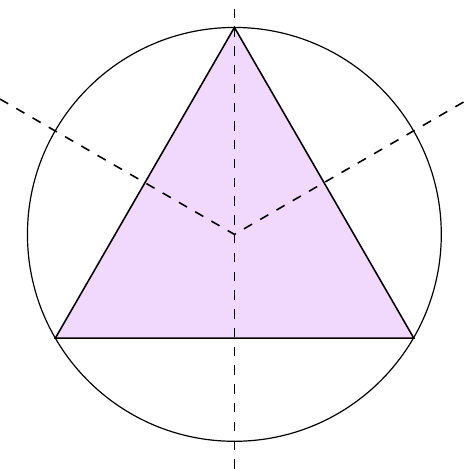}
            \caption{}
        \end{subfigure}
        \begin{subfigure}{.3\textwidth}
            \includegraphics[width=\textwidth]{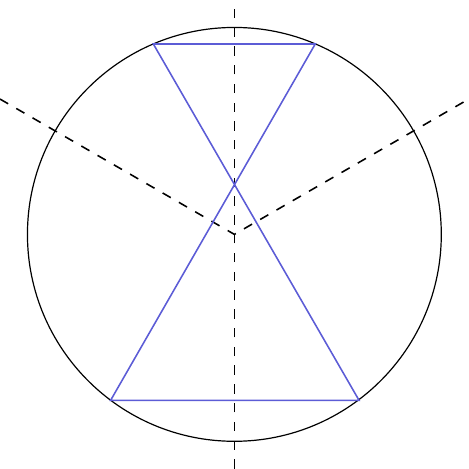}
            \caption{}
        \end{subfigure}
        \caption{(A): Normal fan of rhombus which can not be virtually
          inscribed.
          (B) - (D): Multiple inscribed realizations of the normal fan of an
          isosceles trapezoid. (E) - (F): Virtually inscribed realizations.
          While the trajectory of (E) closes to a polygon, its
          normal fan differs. The trajectory of (F) overlaps with itself.
        \label{fig:insc_quadrangle}
      }
    \end{figure}
\end{example}

\begin{example}\label{ex:noninsc_pentagon}
    Figure~\ref{fig:noninsc_pentagon} shows a two-dimensional fan with
    profile the
    $\beta = (\frac{2\pi}{3}, \frac{\pi}{3}, \frac{\pi}{3},
    \frac{\pi}{3}, \frac{\pi}{3})$. It is virtually inscribable, but
    there does not exist a inscribed polygon with this normal fan,
    since all trajectories starting from the based inscribed space
    degenerate to triangles.
    \begin{figure}[h!]
        \centering
        \begin{subfigure}{.28\textwidth}
            \includegraphics[width=\textwidth]{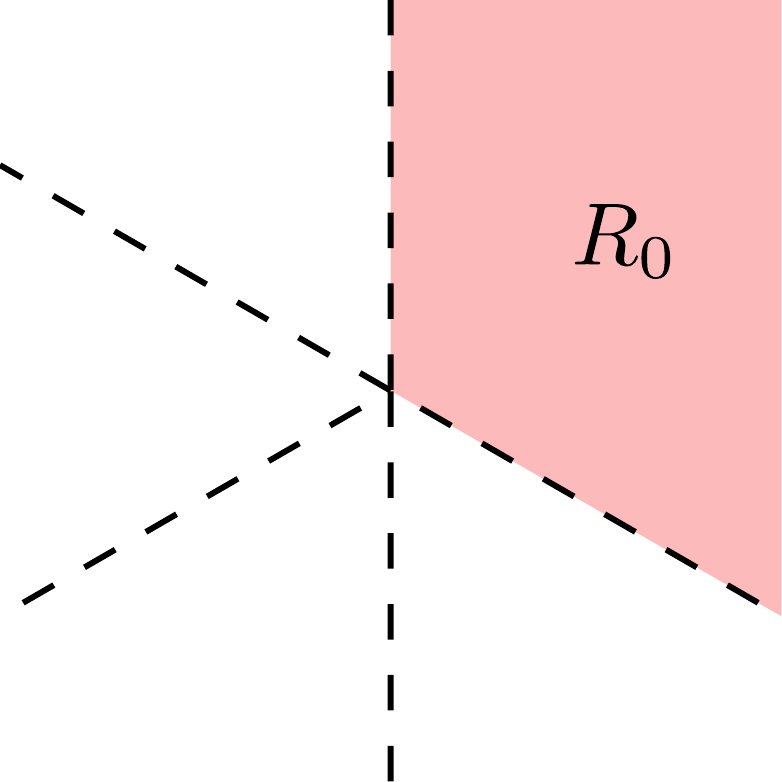}
            \caption{}
        \end{subfigure}\qquad
        \begin{subfigure}{.28\textwidth}
            \includegraphics[width=\textwidth]{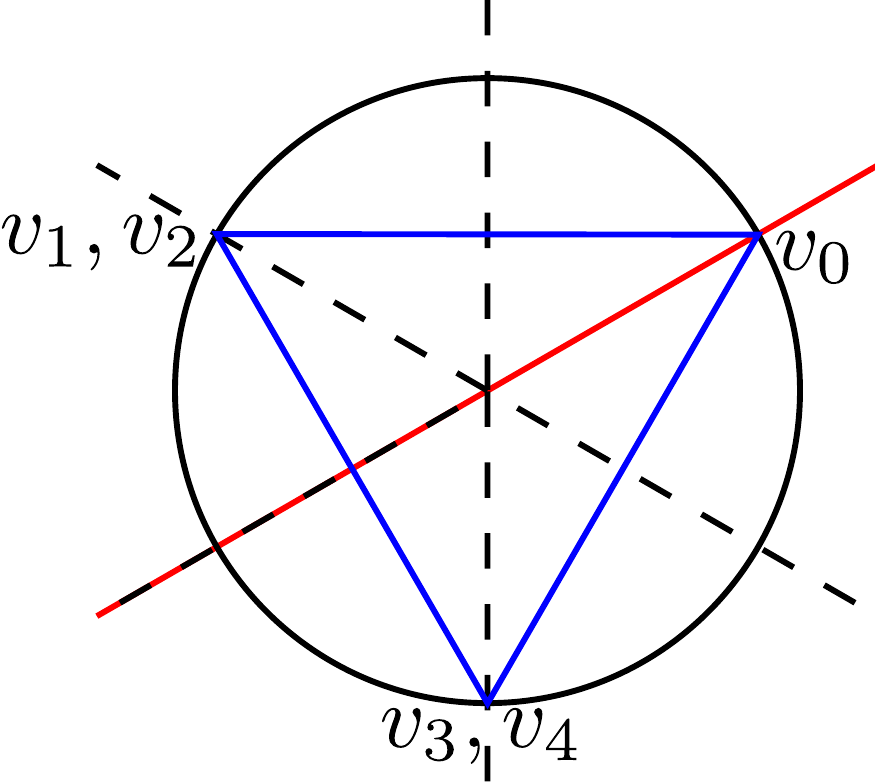}
            \caption{}
        \end{subfigure}\qquad
        \begin{subfigure}{.28\textwidth}
            \includegraphics[width=\textwidth]{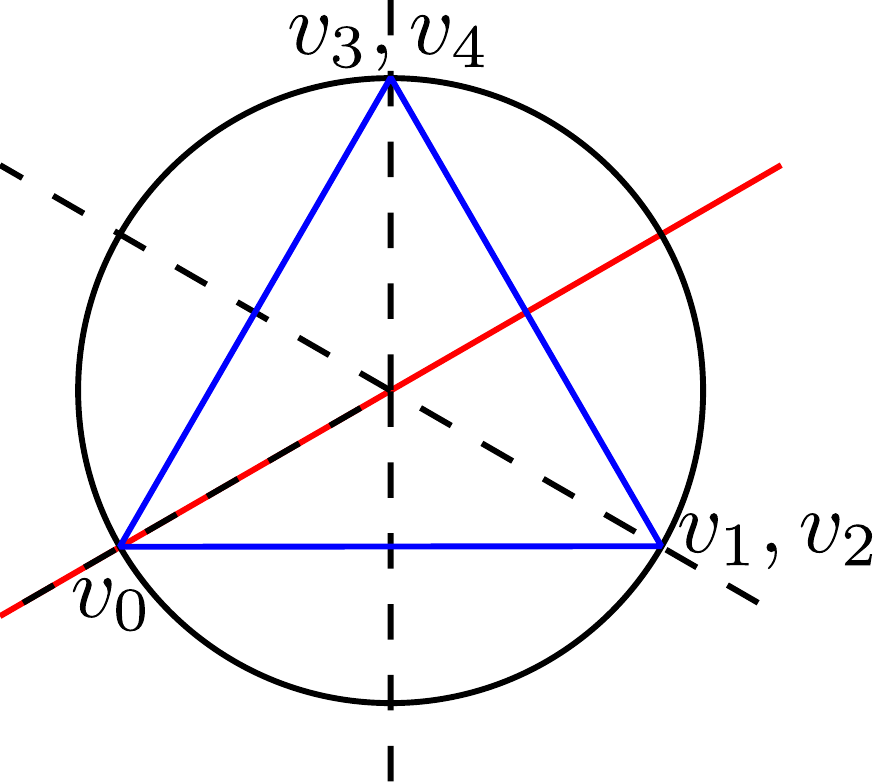}
            \caption{}
        \end{subfigure}
        \caption{(A) Virtually inscribable fan $\Fan$, which is not
          inscribable. Base region $R_0$ in highlighted. (B) and (C)
          two virtually inscribed realizations, where multiple
          vertices coincide. The $1$-dimensional inscribed space in
          red.
          \label{fig:noninsc_pentagon}
        }
    \end{figure}
\end{example}

We next determine the subset $\InProf_n \subseteq \VInProf_n$ of profiles of
inscribable fans. 

\begin{thm}\label{thm:2d_insc}
    Let $\beta \in \VInProf_n$ be a virtually inscribable profile. If $n$ is
    odd, then $\beta \in \InProf_n$ if and only if
    \[
        \beta_{j+0} - \beta_{j+1} + \cdots - \beta_{j+n-2} + \beta_{j+n-1} 
        \ > \ 0
    \]        
    for all $0 \le j < n$. If $n = 2m$ is even, then $\beta \in \InProf_n$ 
    if and only if 
    \[ 
        \sum_{i = 1}^h \beta_{2i+j} + \sum_{i = h+1}^{m-1} \beta_{2i+1+j} \
        < \ \pi
    \]    
    for all $0 \le h < m$ and $0 \le j < n$. 
\end{thm}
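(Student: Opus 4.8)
The plan is to use Proposition~\ref{prop:ref_game} to rephrase inscribability of $\Fan$ as the solvability, in \emph{positive} reals, of a single cyclic linear system in angle parameters, and then to solve that system directly, separating the cases $n$ odd and $n$ even.

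Set up coordinates, identifying $\R^2\cong\C$ and rotating $\Fan$ so that $R_i$ is the polar sector $[\phi_i,\phi_{i+1}]$ with $\phi_0=0$ and $\phi_i=\beta_0+\cdots+\beta_{i-1}$; then $\phi_n=2\pi$, the wall $R_{i-1}\cap R_i$ is the ray at angle $\phi_i$, and its reflection is $z\mapsto e^{2\mathrm i\phi_i}\bar z$. If $\Fan$ is inscribable then, $\InCone(\Fan)$ being a cone, there is an inscribed realization $P$ whose circumcircle is the unit circle; write $v_{R_j}=e^{\mathrm i\theta_j}$ for the vertex of $P$ in $\interior(R_j)$ (Corollary~\ref{cor:key_int}), so $\theta_j\in(\phi_j,\phi_{j+1})$ by the counterclockwise ordering of the regions. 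By Corollary~\ref{cor:key_reflect}, $v_{R_j}=s_{R_{j-1}R_j}(v_{R_{j-1}})$, which---taking the branch dictated by the ordering and using $v_{R_n}=v_{R_0}$---reads $\theta_{j-1}+\theta_j=2\phi_j$ for $1\le j\le n$, with $\theta_n:=\theta_0+2\pi$. Substituting $\delta_j:=\theta_j-\phi_j$ turns these into the cyclic system $\delta_{j-1}+\delta_j=\beta_{j-1}$ (indices mod $n$), while $v_{R_j}\in\interior(R_j)$ becomes $\delta_j\in(0,\beta_j)$; since $\delta_{j-1}+\delta_j=\beta_{j-1}$ already forces $\delta_{j-1}<\beta_{j-1}$ as soon as $\delta_j>0$, only the conditions $\delta_j>0$ remain. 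Conversely, any positive solution produces, via $\theta_j=\phi_j+\delta_j$ and $v_0=e^{\mathrm i\theta_0}$, a vector $v_0\in\InSpc(\Fan,R_0)$ (the system is exactly $t_\Walk(v_0)=v_0$ for the defining closed walk) with $t_{\Walk}(v_0)\in\interior(R_j)$ for $\Walk=R_0R_1\cdots R_j$, so Proposition~\ref{prop:ref_game} gives an inscribed realization. Thus the theorem is equivalent to
\[
    \beta\in\InProf_n\iff \exists\,\delta\in\R^n_{>0}:\ \delta_{j-1}+\delta_j=\beta_{j-1}\ \text{ for all }j\in\Z/n\Z .
\]

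For $n$ odd, the map $\delta\mapsto(\delta_{j-1}+\delta_j)_j$ is invertible, and telescoping around the odd-length cycle gives the unique solution $\delta_j=\tfrac12\sum_{k=0}^{n-1}(-1)^k\beta_{j+k}$; this is positive for all $j$ precisely when $\sum_{k=0}^{n-1}(-1)^k\beta_{j+k}>0$ for all $j$, and since $n-1$ is even the last summand is $+\beta_{j+n-1}$, which is exactly the inequality in the statement. For $n=2m$ even, the system is consistent over $\R$ iff the alternating sum $\sum_j(-1)^j\beta_j$ vanishes, i.e.\ iff $\sum_{j\text{ even}}\beta_j=\sum_{j\text{ odd}}\beta_j=\pi$, which is the virtual-inscribability condition of Proposition~\ref{prop:2d_virtual_insc} that is already assumed; its solution set is then the affine line $\delta^{*}+\R e$ with $e=\big((-1)^j\big)_j$. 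This line meets $\R^n_{>0}$ iff some $t$ satisfies $\delta^{*}_j+t(-1)^j>0$ for all $j$, i.e.\ iff $\delta^{*}_a+\delta^{*}_b>0$ for every even $a$ and odd $b$---a quantity independent of the chosen particular solution, which telescoping along the arc from $a$ to $b$ expresses as a partial alternating sum of consecutive $\beta$'s. Rewriting these partial alternating sums with the identity $\sum_{j\text{ even}}\beta_j=\sum_{j\text{ odd}}\beta_j=\pi$ recasts each inequality $\delta^{*}_a+\delta^{*}_b>0$ in the form ``a prescribed sum of $m-1$ of the $\beta_{\bullet+j}$ is $<\pi$'', and I would then match arcs and parities against the index set $\{(h,j):0\le h<m,\ 0\le j<n\}$ to recover exactly the listed family.

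The step I expect to be the main obstacle is this final matching in the even case: fixing the precise dictionary between pairs $(a,b)$ with $a$ even and $b$ odd and pairs $(h,j)$, correctly handling the choice of arc and the parity shift, and checking that after the $\pi$-identity rewrite one obtains the stated inequalities with neither redundancy nor omission. The reduction to the $\delta$-system and the whole odd case are routine. As a sanity check I would verify that for $n=3$ all inequalities read $\beta_j<\pi$ and that for $n=4$ they again collapse to $\beta_j<\pi$, so that every $3$-fan and every virtually inscribable $4$-fan is inscribable, in agreement with Example~\ref{ex:insc_quadrangle}.
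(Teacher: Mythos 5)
Your reduction is sound: after the change of variables $\alpha_i=2\delta_{i+1}$ (equivalently $\delta_i=\beta_i-\alpha_i/2$, with $\alpha_i$ the central angle between consecutive vertices), your cyclic system $\delta_{j-1}+\delta_j=\beta_{j-1}$, $\delta_j>0$ is exactly the system $2\beta_i=\alpha_{i-1}+\alpha_i$, $\alpha_i>0$ that the paper works with, and your odd case coincides with the paper's argument. Where you genuinely diverge is the even case: the paper identifies $\tfrac1\pi\operatorname{cl}(\InProf_n)$ with the edge polytope of the $n$-cycle and imports its known facet description (Ohsugi--Hibi, Villarreal), whereas you analyze directly when the affine solution line $\delta^*+\R\,\bigl((-1)^j\bigr)_j$ meets the open positive orthant. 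The step you flag as the main obstacle does go through and is routine: for $a$ even and $b=a+2s+1$ odd, telescoping gives $\delta_a+\delta_b=\sum_{k=a}^{b-1}(-1)^k\beta_k$ (independent of the particular solution since the kernel vector has $e_a+e_b=0$), and subtracting this from $\sum_{k\ \mathrm{even}}\beta_k=\pi$ turns $\delta_a+\delta_b>0$ into the inequality of the statement with $j=a-1$ and $h=s$; the $m-1$ selected indices are $\{a+1,a+3,\dots,b-2\}\cup\{b+1,b+3,\dots,a+n-2\}$. Be aware that this yields only the $m^2$ inequalities with $j$ of one parity; the theorem's list of $nm$ inequalities is redundant by a factor of two (already for $n=6$ the families $h=0$ and $h=2$ give the same pairs, and for $n=4$ everything collapses to $\beta_i<\pi$, matching your sanity checks), so there is no omission on your side, only repetition in the stated family. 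Your route buys a self-contained elementary derivation of the facet inequalities; the paper's buys brevity and the identification of $\operatorname{cl}(\InProf_n)$ with a known polytope.
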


\begin{proof}
    Consider $n$ distinct points on the unit circle which are labelled counterclockwise
    by $v_0, v_1, \dots, v_{n-1}$.  Then $P = \conv(v_0,\dots,v_{n-1})$ is
    inscribed and clearly every inscribable fan is obtained this way. In
    particular if we label the regions of $\Fan$ by $R_i \defeq N_{v_i} P$, then
    every such choice of $n$ points yields a unique
    inscribable profile $\beta \in \InProf_n$ up to rotation.

    For $n$ ordered points $v_0,\dots,v_{n-1}$, define $\alpha_i$ to be the
    angle between $\overline{0 v_i}$ and $\overline{0v_{i+1}}$, where we set
    $v_{n} \defeq v_0$; see Figure~\ref{fig:2fan}. Then $\alpha =
    (\alpha_0,\dots,\alpha_{n-1})$ determines $v_0,\dots,v_{n-1}$ up to
    rotation. In particular $\alpha$ arises from a point configuration if and
    only if 
    \[
        \alpha_0 + \cdots + \alpha_{n-1} = 2\pi \qquad \text{ and } \qquad 
        \alpha_i > 0 \quad \text{ for } 0 \le i < n \, .
    \]
    Thus, the set of admissible $\alpha$ is $\interior(2 \pi \cdot
    \Delta_{n-1})$. By Corollary~\ref{cor:key_reflect}, each angle $\alpha_i$
    is bisected by the ray of $\Fan(P)$ corresponding to $[v_i,v_{i+1}]$ and
    therefore $2 \beta_i = \alpha_{i-1} + \alpha_i$, which gives a linear map
    $\Psi : \R^n \to \R^n$ with $\Psi(\interior(2 \pi \cdot \Delta_{n-1})) =
    \InProf_n$.

    If $n$ is odd, then $\Psi$ is bijective and $\alpha = \Psi^{-1}(\beta)$ is
    given by
    \[
        \alpha_i \ = \ \frac{1}{2}\sum_{j = 0}^{n-1} (-1)^j \beta_{i + j}
    \]
    for $0 \le i < n$.

    For the case $n = 2m$, we consider the closure $Q = \frac{1}{\pi}\operatorname{cl}(\InProf_n) = 2 \cdot \Psi(
    \Delta_{n-1} )$. The polytope $Q$ is the convex hull of the $n$ cyclic
    shifts of $(1,1,0,\dots,0)$. This polytope is known as the \emph{edge
    polytope} of the $n$-cycle; cf.~Ohsugi--Hibi~\cite{OH} and
    Villarreal~\cite{Villarreal}. Consider the undirected cycle $C_n$ with
    nodes $0,\dots,n-1$ and edges $(i-1,i)$ for $1 \le i < n$ and $(0,n-1)$.
    The vertices of $Q$ are naturally in bijection to the edges of $C_n$.
    Since $n$ is even, we may color the edges alternatingly with two colors.
    The facets, and hence the defining linear inequalities, are given by
    omitting an edge of each color. Deleting the two edges from $C_n$ leaves a
    disjoint union of two paths of length  $2r$ and $2s$, respectively. Summing
    the coordinates of the independent sets in each of the two paths of size
    $r$ and $s$ yields the given inequalities.
\end{proof}

\begin{cor}
    If $n$ is odd, then the closure of $\InProf_n$ is an $(n-1)$-dimensional
    simplex. If $n=2k$ is even, then the closure of $\InProf_n$ is a free sum
    of two $(k-1)$-simplices. In both cases, the vertices are given by the
    cyclic shifts of $(\pi,\pi,0,\dots,0)$.
\end{cor}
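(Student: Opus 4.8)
The plan is to read both statements off the linear map $\Psi\colon\R^n\to\R^n$, $\Psi(\alpha)_i=\tfrac12(\alpha_{i-1}+\alpha_i)$ (indices mod $n$), introduced in the proof of Theorem~\ref{thm:2d_insc}, together with the identity established there that $\Psi$ maps the relatively open simplex $\relint(2\pi\cdot\Delta_{n-1})=\{\alpha\in\R^n:\alpha_i>0,\ \textstyle\sum_i\alpha_i=2\pi\}$ onto $\InProf_n$. Since $\Psi$ is continuous and $\relint(2\pi\cdot\Delta_{n-1})$ is dense in the compact polytope $2\pi\cdot\Delta_{n-1}$, the image $\Psi(2\pi\cdot\Delta_{n-1})$ is a compact set containing $\InProf_n$ as a dense subset; hence $\operatorname{cl}(\InProf_n)=\Psi(2\pi\cdot\Delta_{n-1})$. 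The images of the vertices $2\pi e_i$ of $2\pi\cdot\Delta_{n-1}$ are $\Psi(2\pi e_i)=\pi(e_i+e_{i+1})$, which are precisely the $n$ cyclic shifts of $(\pi,\pi,0,\dots,0)$. So in both cases it remains only to identify the combinatorial type of $\operatorname{cl}(\InProf_n)$ and, in particular, to check that these $n$ points are its vertices.

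For $n$ odd this is immediate: by Theorem~\ref{thm:2d_insc} the map $\Psi$ is invertible, hence restricts to an affine isomorphism of the hyperplane $\{x:\sum_i x_i=2\pi\}$, and therefore carries the $(n-1)$-dimensional simplex $2\pi\cdot\Delta_{n-1}$ to an $(n-1)$-dimensional simplex with vertex set $\{\pi(e_i+e_{i+1}):i\in\Z/n\}$.

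For $n=2k$ even the map $\Psi$ is no longer injective --- its kernel is the line spanned by $(1,-1,1,-1,\dots)$ --- so I would argue directly with $Q\defeq\tfrac1\pi\operatorname{cl}(\InProf_n)=\conv\{e_i+e_{i+1}:i\in\Z/n\}$, the edge polytope of the even cycle $C_n$ occurring in the proof of Theorem~\ref{thm:2d_insc}. Partition its $n$ vertices into the two colour classes $V_0=\{e_{2j}+e_{2j+1}:0\le j<k\}$ and $V_1=\{e_{2j+1}+e_{2j+2}:0\le j<k\}$ (indices mod $n$); each is a set of $k$ vectors with pairwise disjoint supports, hence affinely independent, and $\sum_{v\in V_0}v=\sum_{v\in V_1}v=\1$. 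Consequently $\conv(V_0)$ and $\conv(V_1)$ are $(k-1)$-simplices sharing the barycenter $c\defeq\tfrac1k\1$, which lies in the relative interior of each, being the equal-weight convex combination of the vertices. Put $\sigma_\epsilon\defeq\conv(V_\epsilon)-c$ and $L_\epsilon\defeq\lin(\sigma_\epsilon)$ for $\epsilon\in\{0,1\}$. A short computation identifies $L_0=\{y:y_{2j}=y_{2j+1}\text{ for all }j,\ \sum_i y_i=0\}$ and $L_1=\{y:y_{2j+1}=y_{2j+2}\text{ for all }j,\ \sum_i y_i=0\}$, both of dimension $k-1$; their intersection forces $y$ to be constant with zero coordinate sum, so $L_0\cap L_1=\{\0\}$ and $\dim(L_0+L_1)=2k-2$. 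Hence inside the $(2k-2)$-dimensional space $\lin(Q-c)=L_0+L_1$ the subspaces $L_0,L_1$ are complementary, each $\sigma_\epsilon$ is a $(k-1)$-simplex with $\0$ in its relative interior, and $Q-c=\conv(\sigma_0\cup\sigma_1)$ is by definition the free sum $\sigma_0\oplus\sigma_1$ of two $(k-1)$-simplices. Translating back by $c$ gives the claim, and incidentally re-derives the $k^2$ facets found in the proof of Theorem~\ref{thm:2d_insc}.

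The routine part is the odd case and the bookkeeping around $\Psi$; the one step needing genuine care is the even case, where one has to verify that the two colour classes of vertices of the edge polytope of $C_{2k}$ span \emph{complementary} affine subspaces meeting in a single common relative-interior point --- which is exactly the complementarity condition in the definition of a free sum, the feature that distinguishes it from an arbitrary union of two simplices through a common point.
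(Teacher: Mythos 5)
Your argument is correct. It follows the paper's setup exactly: the closure of $\InProf_n$ is $\Psi(2\pi\cdot\Delta_{n-1})$, the vertex images are the cyclic shifts of $\pi(e_1+e_2)$, the odd case is the affine isomorphism, and the even case reduces to the edge polytope of the even cycle $C_{2k}$ from the proof of Theorem~\ref{thm:2d_insc}. The one place you go beyond what the paper records is the even case: the paper identifies the combinatorial type implicitly through its facet description (omit one edge of each colour class, giving $k^2$ facets), and leaves the free-sum statement as a consequence; you instead verify the defining condition of a free sum directly from the vertex description, by checking that the two colour classes span complementary $(k-1)$-dimensional subspaces $L_0,L_1$ of $\lin(Q-c)$ meeting only at the common barycenter, which lies in the relative interior of both simplices. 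That computation ($y_{2j}=y_{2j+1}$ versus $y_{2j+1}=y_{2j+2}$ around the cycle forcing $L_0\cap L_1=\{\0\}$) is exactly the point that distinguishes a free sum from an arbitrary convex hull of two simplices through a common point, and it is the right thing to check; it also recovers the $k^2$ facets as a sanity check. So the proposal is a sound, slightly more self-contained version of the paper's intended argument.
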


\begin{example}\label{ex:hexagon_profile}
    We can reprove Corollary~\ref{cor:permutahedra} for the braid
    arrangement $\Arr_2$ using Theorem~\ref{thm:2d_insc}.  $\Arr_2$
    becomes pointed when dividing by its lineality space. The
    resulting fan $\Fan$ is the normal fan of a regular hexagon and
    has the profile
    \[
        \beta \defeq \beta(\Fan) = (\tfrac{\pi}{3}, \tfrac{\pi}{3},
        \tfrac{\pi}{3}, \tfrac{\pi}{3}, \tfrac{\pi}{3}, \tfrac{\pi}{3})
    \]
    and a two-dimensional inscribed cone.   With $\Psi : \R^6 \to \R^6$ as in the proof of
    Theorem~\ref{thm:2d_insc}, we see that:
    \[
        \Psi^{-1}(\beta) \ = \ \{ \alpha \in \R^6 : \alpha_0 = \alpha_2 =
        \alpha_4, \,\alpha_1 = \alpha_3 = \alpha_5,\, \alpha_0 + \alpha_1
        = \tfrac{2\pi}{3}\}\,.
    \]
    The extreme rays of $\cInCone(\Fan)$ are given by
    extreme rays of $\Psi^{-1}(\beta) \cap \R^6_{\geq 0}$, which
    correspond to two triangles
    $\Delta = \Delta(3, 1), \nabla = \Delta(3, 2)$, see
    Figure~\ref{fig:insc_hexagon}. Therefore:
    \[
        \cInCone(\Fan) \ = \ \{\mu_1 \Delta + \mu_2 \nabla :
        \mu_1, \mu_2 \in \R_{\geq 0}\}\,.
    \]%
    \begin{figure}[h!]
        \centering
        \includegraphics[width=0.12\textwidth]{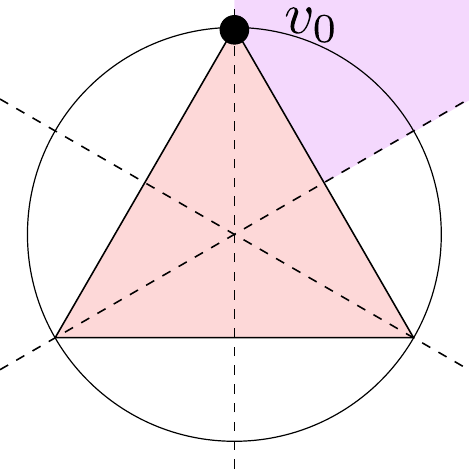}\qquad
        \includegraphics[width=0.12\textwidth]{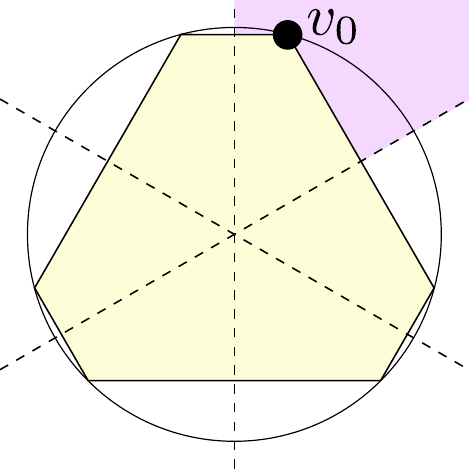}\qquad
        \includegraphics[width=0.12\textwidth]{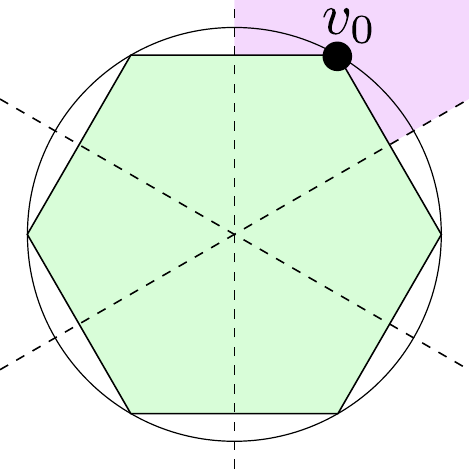}\qquad
        \includegraphics[width=0.12\textwidth]{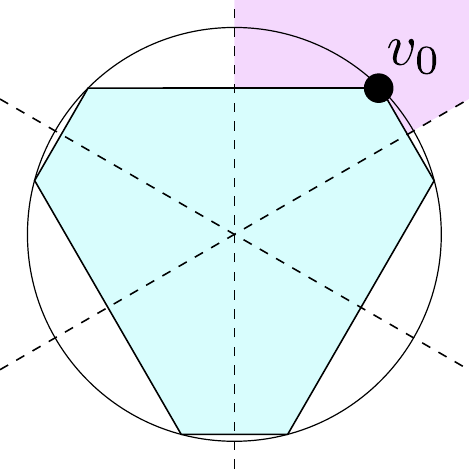}\qquad
        \includegraphics[width=0.12\textwidth]{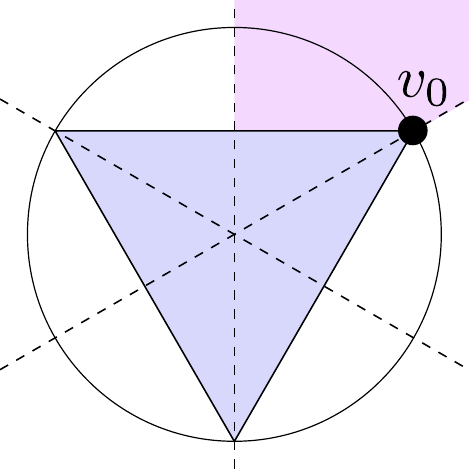}
        \caption{Multiple inscribed realizations of the normal fan of a
          regular hexagon by varying $v_0$ in $R_0$.
        \label{fig:insc_hexagon}
      }
    \end{figure}
\end{example}

Let us close with a different parametrization of $\InProf_n$. Again, let
$v_0,\dots,v_{n-1}, v_n = v_0$ be the cyclically labelled vertices of a polygon
$P$ inscribed to a circle centered at the origin. Then $P$ is determined up to
rotation by the edge length $\ell_i = \|v_{i-1} - v_i\|$ for $0 \le i < n$.
Set $\ell(P) = (\ell_0,\dots,\ell_{n-1})$. Up to positive scaling, $\ell(P)$
uniquely determines $\Fan(P)$. 

\begin{prop}
    For $n \ge 3$, we have
    \[
        \InProf_n \ \cong \ \interior(\Delta(n,2)) \, .
    \]
\end{prop}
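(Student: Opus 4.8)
The plan is to implement the claimed identification through the normalized edge-length vector and to reduce the whole statement to the classical fact that a convex cyclic polygon is pinned down by its ordered side lengths. Recall from the proof of Theorem~\ref{thm:2d_insc} that an inscribed $n$-gon $P$ with circumcenter $\0$ is determined up to rotation by its circumradius $r>0$ and the vector $\alpha=(\alpha_0,\dots,\alpha_{n-1})$ of central angles of consecutive vertices, which ranges over $\interior(2\pi\cdot\Delta_{n-1})$, with profile the linear image $\beta=\Psi(\alpha)$ given by $2\beta_i=\alpha_{i-1}+\alpha_i$. So inscribed $n$-gons, up to rotation and scaling, are already parametrized by $\interior(2\pi\cdot\Delta_{n-1})$; what the proposition asserts is that this can equally be done through edge lengths, with parametrizing polytope $\Delta(n,2)$. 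The elementary input is the chord formula $\ell_i=2r\sin(\alpha_{i-1}/2)$ (indices mod $n$), so that $\ell(P)$ is $2r$ times $(\sin(\alpha_0/2),\dots,\sin(\alpha_{n-1}/2))$ up to a cyclic relabelling.

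First I would verify that $\hat\ell(P)\defeq\tfrac{2}{\ell_0+\cdots+\ell_{n-1}}\,\ell(P)$ always lies in $\relint(\Delta(n,2))=\{x:\sum_ix_i=2,\ 0<x_i<1\}$. Positivity and the coordinate sum are clear, so it suffices to prove the strict polygon inequalities $\ell_i<\sum_{j\ne i}\ell_j$. These follow from strict concavity of $\sin$ on $(0,\pi)$ together with $\sin 0=0$, which give $\sin(\theta_1+\cdots+\theta_k)<\sin\theta_1+\cdots+\sin\theta_k$ for positive angles of sum at most $\pi$ with $k\ge2$; applied to $\theta_j=\alpha_{j-1}/2$ for $j\ne i$, whose sum is $\pi-\alpha_{i-1}/2<\pi$ and which number $n-1\ge2$, this is exactly the desired inequality. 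Since $\hat\ell$ is invariant under rotations and scalings and $\beta(P)=\Psi(\alpha)$ depends linearly on $\alpha$, this already reproves the remark preceding the proposition that $\ell(P)$ up to scaling determines $\Fan(P)$, and it exhibits $\hat\ell$ as a map from inscribed $n$-gons modulo rotation and scaling into $\relint(\Delta(n,2))$ that factors the passage to the profile.

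The heart of the proof is that $\hat\ell$ is onto $\relint(\Delta(n,2))$ and, modulo rotation and scaling, injective --- equivalently, that $\alpha\mapsto(\sin(\alpha_0/2),\dots,\sin(\alpha_{n-1}/2))$, post-normalized, is a bijection $\interior(2\pi\cdot\Delta_{n-1})\to\relint(\Delta(n,2))$. This is the classical existence-and-uniqueness theorem for cyclic polygons with prescribed ordered edges. For existence, fix $x$ with $\sum_ix_i=2$ and $0<x_i<1$, relabel so that $x_{n-1}=\max_i x_i$, and sweep the circumradius: for $r\ge\tfrac12 x_{n-1}$ set $\theta_i(r)=2\arcsin(x_i/2r)$ for $i<n-1$, and take $\theta_{n-1}(r)$ to be the minor arc $2\arcsin(x_{n-1}/2r)$ on the branch where $\Theta(r):=\sum_i\theta_i(r)$ stays $\le2\pi$ and the major arc $2\pi-2\arcsin(x_{n-1}/2r)$ on the branch where it is $\ge2\pi$. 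The two branches agree at $r=\tfrac12 x_{n-1}$, where $\Theta=\pi+2\sum_{j<n-1}\arcsin(x_j/x_{n-1})$; the minor branch decreases to $0$ and the major branch tends to $2\pi$ from above as $r\to\infty$, its leading correction being $\tfrac1r(\sum_{j<n-1}x_j-x_{n-1})=\tfrac1r(2-2x_{n-1})>0$ --- this positivity is precisely the polygon inequality $x_{n-1}<1$. Hence, according as the common value at $r=\tfrac12 x_{n-1}$ is $\ge 2\pi$ or $<2\pi$, the minor resp.\ major branch meets $2\pi$, and the corresponding $(r,\alpha)$ is a cyclic $n$-gon with edge vector $x$; uniqueness up to rotation follows from a parallel monotonicity argument. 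Finally, for $n$ odd the map $\Psi$ is invertible by Theorem~\ref{thm:2d_insc}, so $\beta$ determines $\alpha$ and hence $\hat\ell$, and $\hat\ell$ descends to a bijection $\relint(\Delta(n,2))\cong\InProf_n$; for $n=2k$ the same two ingredients give the identification after the linear reduction of Theorem~\ref{thm:2d_insc}.

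I expect the existence-and-uniqueness lemma for cyclic polygons to be the only genuine obstacle: concretely the minor-arc/major-arc case split (geometrically, whether $P$ contains its circumcenter) and verifying strict monotonicity of $\Theta$ on the relevant branch so as to get uniqueness; everything else is linear bookkeeping already set up for Theorem~\ref{thm:2d_insc}.
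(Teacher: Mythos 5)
Your proof takes essentially the same route as the paper: parametrize inscribed $n$-gons by their (normalized) edge-length vectors and identify the admissible vectors, via the polygon inequalities, with $\interior(\Delta(n,2))$. The only real difference is that you supply a detailed radius-sweep argument for the existence and uniqueness of a cyclic polygon with prescribed ordered edge lengths, a classical fact the paper's proof simply asserts as clear.
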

\begin{proof}
    Let $\ell = (\ell_1,\dots,\ell_n)$ with $\ell_i > 0$ for all $i$.
    It should be clear that $\ell$ can be realized as the length vector of a
    polygon if and only if $\ell$ can be realized as the length vector of an
    inscribed polygon. Hence, we only need to determine admissible $\ell$. 
    
    As scaling $\ell$ does not change the fan, we can normalize $\ell_1 +
    \cdots + \ell_n = 2$. It follows from the triangle inequality that $\ell$
    is admissible if and only if $\ell_j  <  \sum_{i \neq j} \ell_i$,  which
    is equivalent to $\ell_j <  1$.  Thus
    \[
        \InProf_n \ \cong \
    \left \{ \ell \in \R^n : 
    \begin{array}{c} 
        0 < \ell_i < 1 \text{ for } i=1,\dots,n \\
        \ell_1 + \cdots + \ell_n = 2 
    \end{array} \right \}  \ = \ \interior(\Delta(n,2)) \, . \qedhere
    \]
\end{proof}

Note that the isomorphism in the proposition maps $\ell(P)$ to $\beta(\Fan)$,
which is a highly nonlinear map.

\section{Inscribable fans in general dimensions}\label{sec:gen_insc_fans}

Using the results of the previous section, we can give some conditions for
inscribability of fans in higher dimensions as well as for particular classes
of fans and polytopes.

\subsection{Restrictions from faces} \label{sec:restrict}
We spell out the trivial
observation that makes the connection to the previous section transparent.

\begin{prop}\label{prop:in_faces}
    If $P \subset \R^d$ is an inscribed polytope and $F \subseteq P$ is a
    face, then $F$ is inscribed.
\end{prop}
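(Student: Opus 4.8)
The plan is to observe that the property ``being inscribed'' passes to faces essentially by definition, and the only point requiring an argument is that the sphere inscribing $F$ is the intersection of the sphere inscribing $P$ with $\aff(F)$.

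First I would set up notation: let $S$ be the sphere (in $\aff(P)$) on which all vertices of $P$ lie, and let $c = c(P)$ be its center. For a face $F \subseteq P$, recall that $V(F) \subseteq V(P)$, so every vertex of $F$ lies on $S$. Hence $V(F) \subseteq S \cap \aff(F)$. Next I would note that $S \cap \aff(F)$ is itself a sphere inside $\aff(F)$: if $\dim\aff(F) = k$, then $S \cap \aff(F)$ is either empty, a point, or a $(k-1)$-sphere, and since it contains the affinely spanning set $V(F)$ (which has at least $k+1$ affinely independent points, the vertices of the $k$-dimensional face $F$), it must be a genuine $(k-1)$-sphere. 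Therefore all vertices of $F$ lie on a common sphere relative to $\aff(F)$, which is exactly the definition of $F$ being inscribed.

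I expect there to be essentially no obstacle here — this is the ``trivial observation'' the paper advertises. The only mild subtlety worth a sentence is the degenerate case where $F$ is a vertex or an edge: a vertex is vacuously inscribed, and for an edge the two endpoints always lie on the $0$-sphere consisting of the two points equidistant from their midpoint, so these cases cause no trouble. One could also phrase the whole argument purely in terms of the defining inequality $\|x - c\|^2 = \rho^2$ for a fixed center $c$ and radius $\rho$: every $v \in V(F) \subseteq V(P)$ satisfies this equation, so the center $c$ (projected orthogonally onto $\aff(F)$, if one insists on working intrinsically) together with the induced radius witnesses that $F$ is inscribed. Either formulation yields the claim in a couple of lines.
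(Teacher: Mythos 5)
Your proof is correct; the paper itself gives no proof and simply advertises this as a ``trivial observation,'' and your argument is exactly the intended one. Note that under the paper's definition (vertices lying on \emph{a} common sphere) the first line already suffices --- $V(F)\subseteq V(P)\subseteq S$ --- and the intersection with $\aff(F)$ is only needed if one wants the canonical inscribing sphere relative to the affine hull, as used later for defining the center $c(F)$.
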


Let $\Fan$ be a fan in $\R^d$ and a $C \in \Fan$ a cone. For two cones $C,D
\subset \R^d$ we write $D-C$ for the convex cone  $\{ d-c : d \in D, c \in
C\}$. The \Defn{localization} of $\Fan$ at $C$, is the fan
\[
    \Fan_C \defeq \{ D - C : D \in \Fan, C \subseteq D \} \, .
\]
This is a fan with lineality space $\lineal(\Fan) = C - C$.

\begin{prop}
    Let $\Fan$ be a (virtually) inscribable fan and $C \in \Fan$ a cone. Then
    the localization $\Fan_C$ is (virtually) inscribable.
\end{prop}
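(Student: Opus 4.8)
The statement to prove is: if $\Fan$ is (virtually) inscribable and $C \in \Fan$, then the localization $\Fan_C$ is (virtually) inscribable. The key idea is that localizing a fan corresponds geometrically to looking at the normal fan of a face of a polytope realizing $\Fan$, so the result should follow from Proposition~\ref{prop:in_faces} together with the reflection-game characterization of (virtual) inscribability. I would handle the two cases --- ``inscribable'' and ``virtually inscribable'' --- in parallel but with slightly different tools.

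\textbf{The inscribable case.} Suppose $\Fan = \Fan(P)$ for an inscribed polytope $P$. A cone $C \in \Fan$ is the normal cone $N_F P$ of a unique face $F \subseteq P$, and a standard fact about normal fans (see~\cite{ziegler}) is that $\Fan_C = \Fan_{N_FP}$ is precisely the normal fan $\Fan(F)$ of the face $F$, viewed inside the linear span $\lin(\aff(F) - \aff(F))$ after quotienting by $\lineal(\Fan_C) = C - C$. By Proposition~\ref{prop:in_faces}, $F$ is inscribed (its vertices lie on $S \cap \aff(F)$, which is a sphere in $\aff(F)$). Hence $\Fan_C = \Fan(F)$ is the normal fan of an inscribed polytope, i.e.\ $\Fan_C$ is inscribable. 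The only mild bookkeeping is to pass correctly between $\aff(F)$ (an affine subspace) and the linear subspace $(\lineal(\Fan_C))^\perp$ in which $\Fan_C$ naturally lives; translating $F$ so that the center $c(F)$ of its inscribing sphere is at the origin and identifying $\lin(F - c(F))$ with $(C-C)^\perp$ makes this clean.

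\textbf{The virtually inscribable case.} Here $\Fan$ need not be polytopal, so I would argue directly with the reflection game. Fix a region $D_0 \in \Fan$ with $C \subseteq D_0$ and use the image $\bar D_0 = D_0 - C$ as the base region of $\Fan_C$. The regions of $\Fan_C$ are exactly the images $\bar D = D - C$ of regions $D \in \Fan$ containing $C$, and two such are adjacent in $G(\Fan_C)$ iff $D, D'$ are adjacent in $G(\Fan)$; moreover the wall $D \cap D'$ contains $C$, so $\lin(\bar D \cap \bar D') = \lin(D \cap D')$ after the quotient, and the reflection $s_{\bar D \bar D'}$ is the restriction (to $(C-C)^\perp$) of $s_{DD'}$. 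Consequently, for any closed walk $\bar\Walk$ in $G(\Fan_C)$ based at $\bar D_0$, lifting it to the corresponding closed walk $\Walk$ in $G(\Fan)$ based at $D_0$ gives $t_{\bar\Walk} = t_\Walk|_{(C-C)^\perp}$. Now take any $v \in \InSpc(\Fan, D_0)$ with $v \neq \0$; such $v$ exists since $\Fan$ is virtually inscribable. Decompose $v = v' + v''$ with $v' \in C - C = \lineal(\Fan)$ and $v'' \in (C-C)^\perp$. Each $t_\Walk$ fixes $\lineal(\Fan)$ pointwise, so $t_\Walk(v) = v$ forces $t_\Walk(v'') = v''$, hence $v'' \in \InSpc(\Fan_C, \bar D_0)$. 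The one thing to rule out is $v'' = \0$, i.e.\ $v \in \lineal(\Fan)$ --- but $\InSpc(\Fan, D_0)$ is by definition contained in $(\lineal(\Fan))^\perp$, so $v \neq \0$ gives $v = v'' \neq \0$, and $\Fan_C$ is virtually inscribable.

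\textbf{Main obstacle.} The genuinely substantive point is the identification of walls, hyperplanes, and reflections under localization --- specifically, verifying that $\lin(\bar D \cap \bar D') = \lin((D\cap D') - C)$ equals the image of $\lin(D \cap D')$ under the quotient $\R^d \to \R^d / (C - C) \cong (C-C)^\perp$, and that this is compatible with the orthogonal reflections, so that $t$-maps transport correctly. This is where the hypothesis $C \subseteq D \cap D'$ (which holds because both $D, D' \supseteq C$) is essential. Once that compatibility is in hand, both cases are short; the polytopal case is essentially a citation to the behavior of normal fans under taking faces plus Proposition~\ref{prop:in_faces}, and the virtual case is the orthogonal-decomposition argument above.
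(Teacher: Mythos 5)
Your overall strategy coincides with the paper's. The inscribable case is exactly the paper's argument: identify $\Fan_C$ with the normal fan of the face $F$ with $C = N_FP$ and invoke Proposition~\ref{prop:in_faces}. For the virtual case the paper likewise observes that $s_{W-C} = s_W$ for walls $W \supseteq C$ (since $(W-C)-(W-C) = W-W$) and that $G(\Fan_C)$ is the vertex-induced subgraph of $G(\Fan)$ on the regions containing $C$, so that the closed walks of $G(\Fan_C)$ impose a subset of the fixed-point conditions defining $\InSpc(\Fan, D_0)$. Up to that point your write-up is correct and in fact more explicit than the paper's.

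The gap is in the last step of the virtual case. You assert $C - C = \lineal(\Fan)$; this is false in general --- $C - C$ is the lineality space of the \emph{localization} $\Fan_C$, and it strictly contains $\lineal(\Fan)$ whenever $C$ is not the minimal cone of $\Fan$. Consequently the inference ``$v \in (\lineal(\Fan))^\perp$, hence $v = v''$'' does not go through: a nonzero $v \in \InSpc(\Fan, D_0)$ may have a nonzero component inside $(C-C) \cap (\lineal(\Fan))^\perp$, and in the extreme case $v \in \lin C$ your construction yields $v'' = \0$, which certifies nothing about $\InSpc(\Fan_C, \bar{D}_0)$. What your argument actually establishes is that the orthogonal projection of $\InSpc(\Fan, D_0)$ onto $(C-C)^\perp$ lands in $\InSpc(\Fan_C, \bar{D}_0)$; to conclude that $\Fan_C$ is virtually inscribable one must still rule out $\InSpc(\Fan, D_0) \subseteq \lin(C) + \lineal(\Fan)$. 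For what it is worth, the paper's own proof is no more careful at this point: it simply asserts the inclusion $\InSpc(\Fan,R) \subseteq \InSpc(\Fan_C, R-C)$, which is only literally meaningful when $\InSpc(\Fan,R)$ is orthogonal to $C - C$. So your extra decomposition step correctly isolates the delicate point, but the justification you give for $v'' \neq \0$ is invalid as written and needs to be repaired, or the degenerate case $\InSpc(\Fan, D_0) \subseteq \lin(C) + \lineal(\Fan)$ must be treated separately.
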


\begin{proof}
    If $\Fan$ is inscribable and hence polytopal, then the proposition easily
    follows from Proposition~\ref{prop:in_faces}: Let $P \in \InCone(\Fan)$.
    There is a face $F \subseteq P$ such that $C = N_FP$ and it is easy to see
    that $\Fan_C$ is the normal fan of $F$. By
    Proposition~\ref{prop:in_faces}, it follows that $\Fan_C$ is inscribable. 

    For the general case, pick a region $R \in \Fan$ with $C \subseteq R$.  If
    $W \in \Fan$ is a wall with $C \subseteq W$, then $(W-C) - (W-C) = W - W$
    and hence $s_{W} = s_{W-C}$. Moreover, $G(\Fan_C)$ is a vertex-induced
    subgraph of $G(\Fan)$. This shows that $\InSpc(\Fan,R) \subseteq
    \InSpc(\Fan_C,R - C)$.
\end{proof}

Note that the lineality space of $\Fan_C$ contains $C - C$. Hence, we may
consider the intersections $\{ (D-C) \cap C^\perp : D-C \in \Fan_C \}$. This
corresponds to the projection of $\Fan_C$ to $C^\perp$, which is again a
(virtually) inscribable fan. If $C \in \Fan$ is a cone of codimension $2$,
then we can identify $\Fan_C$ with a $2$-dimensional fan. 

\begin{cor}
    Let $\Fan$ be a virtually inscribable fan. Then for all codimension-$2$
    cones $C \in \Fan$ one has $\beta(\Fan_C) \in \VInProf_n$, where $n$ is
    the number of regions containing $C$.  If $\Fan$ is inscribed, then
    $\beta(\Fan_C) \in \InProf_n$.
\end{cor}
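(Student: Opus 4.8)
The plan is to combine the two results just established: the proposition that localizations of (virtually) inscribable fans are (virtually) inscribable, and the corollaries from Section~\ref{sec:dim2} characterizing (virtually) inscribable $2$-dimensional fans via their profiles. First I would fix a codimension-$2$ cone $C \in \Fan$ and pass to the localization $\Fan_C$, which by the preceding proposition is (virtually) inscribable. Since $\lineal(\Fan_C) \supseteq C - C$ has dimension $d-2$, I would then project $\Fan_C$ to $C^\perp$, obtaining a $2$-dimensional fan $\Fan_C'$; as noted in the paragraph preceding the statement, this projection preserves (virtual) inscribability, so $\Fan_C'$ is a $2$-dimensional (virtually) inscribable fan whose number of regions $n$ equals the number of regions of $\Fan$ containing $C$.

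The next step is to observe that $\Fan_C'$, being a complete pointed $2$-dimensional fan with $n$ regions, has a well-defined profile $\beta(\Fan_C') = \beta(\Fan_C) \in \Prof_n$ in the sense of Section~\ref{sec:dim2}. By definition, $\VInProf_n$ is precisely the set of profiles of virtually inscribable $2$-dimensional fans with $n$ regions, so virtual inscribability of $\Fan_C'$ gives $\beta(\Fan_C) \in \VInProf_n$. Likewise, if $\Fan$ is inscribable, then $\Fan_C$ and hence $\Fan_C'$ is inscribable (using the polytopal case of the localization proposition, which relies on Proposition~\ref{prop:in_faces}), and then $\beta(\Fan_C) \in \InProf_n$ by definition of $\InProf_n$.

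I expect the only genuine subtlety — rather than an obstacle — is making sure the bookkeeping is consistent: that ``$\Fan_C$ identified with a $2$-dimensional fan'' really does mean the projection to $C^\perp$, that this $2$-dimensional fan is complete and pointed (so that Section~\ref{sec:dim2} applies verbatim), and that its regions correspond bijectively to the regions $D \in \Fan$ with $C \subseteq D$, so the parameter $n$ in the statement is correctly identified. All of this is immediate from the structure of normal fans and localizations, so the proof is essentially a one-line citation of the two prior results once the identifications are spelled out; there is no real computational or conceptual hurdle beyond that.
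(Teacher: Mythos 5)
Your proposal is correct and follows exactly the route the paper intends: localize at the codimension-$2$ cone $C$, invoke the preceding proposition that localizations inherit (virtual) inscribability, pass to the projection onto $C^\perp$ to obtain a complete $2$-dimensional fan with $n$ regions, and then read off membership in $\VInProf_n$ (resp.\ $\InProf_n$) directly from the definitions of these profile sets in Section~\ref{sec:dim2}. The bookkeeping points you flag (pointedness and completeness of the projected fan, and the bijection between its regions and the regions of $\Fan$ containing $C$) are exactly the identifications the paper treats as immediate, so there is nothing missing.
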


It is non-trivial to compute the profiles of $2$-dimensional localizations of
$\Fan$ and hence the previous result is of limited applicability. It would be
very interesting if realizations of $3$\nobreakdash-dimensional fans with a fixed
combinatorics can be parametrized by their profiles around rays. To be
precise, consider a planar $3$-connected graph $G$. By Steinitz result, this
is graph of a $3$-polytope. Consider the collection of fans $\Fan$ with
$G(\Fan) = G$. Let $r_1,\dots, r_m$ be the rays of $\Fan$ and $\Fan_i \defeq
\Fan_{r_i}$ the corresponding $2$-dimensional localizations.

\begin{quest}
    What can be said about the image
    \[
        \{ (\beta(\Fan_i))_{i=1,\dots,m} : \Fan \text{ $3$-dimensional fan
        with } G(\Fan) = G \} \, ?
    \]
\end{quest}

Let $\Fan$ be a fixed pointed and polytopal fan in $\R^d$ and
$R \in \Fan$ a region. We may use the realization of $\InCone(\Fan)$ as
a subcone of $R$ as given in~\eqref{eqn:InSpc_real1}. It follows that
\[
    \InCone(\Fan,R) \ \subseteq \ \InCone(\Fan_C,R - C) +
    \lineal(\Fan_C)
\]
and the inclusion is typically strict, as
can be seen for a pyramid over a quadrilateral;
cf.~Example~\ref{ex:pyramid} below.

We record the following simple observation.

\begin{cor}\label{cor:inspc_intersect}
    Let $\Fan$ be a full-dimensional and strongly connected
    fan in $\R^d$ and $R$ a region. For fixed $1 \le k \le d$
    \[
        \InCone(\Fan,R) \ \subseteq \ \bigcap_{C}  \InCone(\Fan_C, R -
        C) + \lineal(\Fan_C) \, ,
    \]
    where the intersection is over all $k$-cones $C \subseteq R$.
\end{cor}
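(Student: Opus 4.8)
The statement is an immediate iteration of the two facts already established just above it: first, that for any region $R \in \Fan$ containing a cone $C$, one has the inclusion
\[
    \InCone(\Fan,R) \ \subseteq \ \InCone(\Fan_C, R-C) + \lineal(\Fan_C)\,,
\]
and second, that $\lineal(\Fan_C) \supseteq C - C$. The plan is therefore simply to explain why the first inclusion holds for \emph{each} $k$-cone $C \subseteq R$ separately, and then intersect over all such $C$.

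\textbf{Key steps.} First I would recall the geometric content of the single-cone inclusion. Given $P \in \InCone(\Fan,R)$, realized as an inscribed polytope with inscribing sphere centered at the origin and with $v_R(P) = v_R \in \interior(R)$, there is a face $F \subseteq P$ with $N_F P = C$; since $C \subseteq R$, the vertex $v_R$ lies in the relative interior of the minimal face of $P$ with normal cone $\supseteq C$, and in any case $F$ contains $v_R$ up to the translation by $c(F)$. The normal fan of $F$ is exactly $\Fan_C$, and by Proposition~\ref{prop:in_faces} the face $F$ is again inscribed. Translating $F$ so its inscribing sphere is centered at the origin, we get an element of $\InCone(\Fan_C, R-C)$, and the two centers $c(P)=\0$ and $c(F)$ differ by a vector which lies in the affine hull of $F$, hence in $C - C \subseteq \lineal(\Fan_C)$. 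This gives $v_R(P) \in \InCone(\Fan_C,R-C) + \lineal(\Fan_C)$, using the linear homeomorphism $v_R$ of Corollary~\ref{cor:InSpc_iso} to pass between $\InCone(\Fan,R)$ and $\InCone(\Fan,R)$ as a subcone of $R$ via~\eqref{eqn:InSpc_real1}. Second, since this holds for \emph{every} $k$-cone $C \subseteq R$ with the same fixed $v_R(P)$ on the left-hand side, the point $v_R(P)$ lies in the intersection $\bigcap_C \bigl(\InCone(\Fan_C,R-C) + \lineal(\Fan_C)\bigr)$. As $P$ was arbitrary, the claimed containment follows.

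\textbf{Main obstacle.} There is no serious obstacle; the work is entirely in being careful about translations and base points. The one point requiring attention is the identification of $\Fan_C$ with the normal fan of the face $F$ (for polytopal $\Fan$) versus the purely combinatorial/reflection-group description of $\InCone(\Fan_C, R-C)$ in the virtually inscribable setting — but since here $\Fan$ is assumed strongly connected and full-dimensional and we are intersecting realizations, it suffices to invoke the already-proven inclusion $\InSpc(\Fan,R) \subseteq \InSpc(\Fan_C, R-C)$ together with the realization~\eqref{eqn:InSpc_intersect_rep}, restricted to the localizations. In short, the proof is: apply the preceding single-cone inclusion to each $k$-cone $C \subseteq R$ and intersect.
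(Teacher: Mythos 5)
Your approach is the paper's approach: the paper states this corollary as a "simple observation" immediately after recording the single-cone inclusion $\InCone(\Fan,R) \subseteq \InCone(\Fan_C,R-C) + \lineal(\Fan_C)$, and the intended proof is exactly what you do — establish that inclusion for each $k$-cone $C \subseteq R$ (via Proposition~\ref{prop:in_faces} applied to the face $F$ with $N_FP = C$, whose normal fan is $\Fan_C$) and intersect. One justification in your write-up is off, though the conclusion it supports is true. You write that $c(F) - c(P)$ "lies in the affine hull of $F$, hence in $C-C$". Lying in $\aff(F)$ does not put a vector in $C - C$: for a pointed full-dimensional $\Fan$ one has $\lineal(\Fan_C) = C - C = \lin(C) = (\lin\aff F)^\perp$, so $\aff(F)$ and $C-C$ are essentially orthogonal rather than nested. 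The correct reason is that $c(F)$ is the center of $S \cap \aff(F)$, i.e.\ the orthogonal projection of $c(P) = \0$ onto $\aff(F)$, so $c(F) - c(P)$ is orthogonal to $\lin(\aff F)$ and therefore lies in $\lin(C) \subseteq \lineal(\Fan_C)$. This perpendicularity is the actual geometric content of the single-cone inclusion (it is why the translation correction lands in the lineality space and nowhere else), so it is worth stating correctly; with that one-line repair your argument is complete and coincides with the paper's.
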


Recall that a $2$-dimensional polytope is \Def{even/odd} if it has an even/odd
number of vertices.

\begin{cor}\label{cor:3polytope_odd}
    Let $P$ be a normally inscribed $3$-polytope. If $P$ has an odd
    $2$-face, then $\dim \InCone(P) \le 2$. If $P$ has two adjacent odd
    $2$-faces, then $\dim \InCone(P) = 1$, that is, up to homothety $P$
    is the unique inscribed polytope with normal fan $\Fan(P)$.
\end{cor}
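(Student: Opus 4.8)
\textbf{Proof proposal for Corollary~\ref{cor:3polytope_odd}.}

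The plan is to apply Corollary~\ref{cor:inspc_intersect} with $k = 2$, using the codimension-$2$ cones of $\Fan(P)$, which correspond exactly to the edges of $P$, and whose $2$-dimensional localizations are the normal fans of the $2$-faces incident to that edge. Fix a full-dimensional pointed fan $\Fan = \Fan(P)$ and a region $R$. For an edge $e$ of $P$ with $2$-faces $F, F'$ meeting along $e$, the localization $\Fan_C$ at the corresponding $1$-cone $C = N_e P$ is (after projecting to $C^\perp$) a complete $2$-dimensional fan with four regions — the normal cones of the two endpoints of $e$ inside $F$ and inside $F'$. This is not the fan relevant to a single odd $2$-face; instead I would localize at a vertex-cone. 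More precisely, the cleanest route is: for a $2$-face $G$ of $P$, pick a codimension-$2$ cone $C$ lying in $R$ with $\Fan_C$ projecting to the normal fan of $G$. Since $\dim \Fan_C$-after-projection is $2$ and has $n = |V(G)|$ regions, Corollary~\ref{cor:inspc_intersect} gives
\[
    \InCone(\Fan, R) \ \subseteq \ \InCone(\Fan_C, R - C) + \lineal(\Fan_C) \, ,
\]
and $\InCone(\Fan_C, R-C)$ is, modulo the $1$-dimensional lineality $\lineal(\Fan_C) = C - C$, the inscribed cone of a $2$-fan with $n$ regions.

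The key input is the dimension count for $2$-dimensional inscribed cones from Section~\ref{sec:dim2}. By Proposition~\ref{prop:2d_virtual_insc}, the based inscribed space $\InSpc$ of a $2$-fan with $n$ regions has dimension $1$ when $n$ is odd and dimension $\le 2$ always; hence $\InCone$ of such a fan has dimension $\le 1$ when $n$ is odd. So if $G$ is an odd $2$-face, the term $\InCone(\Fan_C, R-C)$ contributes at most $1 + \dim \lineal(\Fan_C) = 2$ dimensions, forcing $\dim \InCone(\Fan, R) \le 2$. This proves the first assertion. For the second, suppose $G_1, G_2$ are two odd $2$-faces sharing an edge $e$. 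Then I intersect the two constraints: $\InCone(\Fan, R)$ sits inside $\bigl(\InCone(\Fan_{C_1}, R - C_1) + \lineal(\Fan_{C_1})\bigr) \cap \bigl(\InCone(\Fan_{C_2}, R - C_2) + \lineal(\Fan_{C_2})\bigr)$, where $C_i = N_{G_i} P$ is the $1$-cone dual to $G_i$. Each summand is (at most) a $2$-plane, and the two lineality lines $\lineal(\Fan_{C_i}) = C_i - C_i$ are the lines spanned by the edge directions dual to $G_1$ and $G_2$; since $G_1 \ne G_2$ these lines are distinct, and since the two $2$-planes each contain only a $1$-dimensional "genuinely inscribed" direction, the intersection is at most $1$-dimensional. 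Because $\InCone(\Fan, R)$ is always at least $1$-dimensional (it is a cone, closed under positive scaling, and nonempty by hypothesis), we get $\dim \InCone(P) = 1$, i.e. $P$ is the unique inscribed polytope with normal fan $\Fan$ up to homothety.

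The main obstacle is bookkeeping the lineality spaces correctly: I must verify that when $G_1$ and $G_2$ share the edge $e$, the two lines $C_1 - C_1$ and $C_2 - C_2$ (equivalently, the affine-hull directions of $G_1$ and $G_2$ inside the ambient $\R^3$) are genuinely transverse, and that no $2$-plane of the form $\InCone(\Fan_{C_i}, R - C_i) + \lineal(\Fan_{C_i})$ accidentally contains the other lineality line. In $\R^3$, $G_1$ and $G_2$ span distinct planes through $e$, so their direction planes intersect exactly in the line spanned by $e$; thus each of the two $2$-planes contains the common edge direction, and their intersection is at least that line and — generically, and in fact always given the odd-parity constraints cutting each down to one inscribed direction orthogonal to its lineality — exactly that line. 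I would spell this out by choosing coordinates adapted to $R$ so that $C_1$, $C_2$ and the common edge cone are explicit, reducing to a short linear-algebra verification. A secondary point to check is that Corollary~\ref{cor:inspc_intersect} is being invoked with a legitimate choice of $2$-cones $C \subseteq R$: one must confirm that the $2$-faces $G_1, G_2$ can be realized as $C_i = N_{G_i}P$ with $C_i$ a face of the chosen region $R$, which holds precisely when $R$ is a region containing a vertex lying on both $G_1$ and $G_2$ — and such a vertex exists since $G_1 \cap G_2 = e$ is an edge with two endpoints.
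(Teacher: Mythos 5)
Your overall strategy is the paper's: localize $\Fan=\Fan(P)$ at the $1$-dimensional normal cones $C_i = N_{G_i}P$ of the odd $2$-faces, use Proposition~\ref{prop:2d_virtual_insc} to see that each localized inscribed cone is (together with its lineality) only $2$-dimensional, and intersect the resulting constraints via Corollary~\ref{cor:inspc_intersect}. The first assertion therefore goes through essentially as in the paper, once you recover from the false start at the beginning: in $\R^3$ the codimension-$2$ cones of $\Fan(P)$ are the rays $N_FP$ dual to the $2$-faces, not the walls $N_eP$ dual to the edges, and $N_eP$ is a $2$-dimensional cone, not a ``$1$-cone'' with four regions in its localization.

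The genuine gap is in the transversality argument for the second assertion. You declare that $\lineal(\Fan_{C_i}) = C_i - C_i$ is ``equivalently, the affine-hull direction of $G_i$'' and deduce that both $2$-planes contain the direction of the common edge $e$, so that their intersection is the line spanned by $e$. This conflates two mutually orthogonal objects: $C_i - C_i = \R c_i$ is the \emph{normal line} of the facet $G_i$, whereas the affine-hull direction of $G_i$ is the $2$-plane $c_i^{\perp}$. The direction of $e$ is orthogonal to both $c_1$ and $c_2$, and in general it lies in \emph{neither} of the two sets you are intersecting; so the line you identify as the intersection is not even contained in them, and the claim that each $2$-plane contains it is what your verification would need but cannot deliver. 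What the two sets actually are (take the common vertex $v \in e = G_1 \cap G_2$, set $R = N_vP$, and center the inscribing sphere at the origin) is $\InCone(\Fan_{C_i}, R - C_i) = \R_{\ge 0}\, v + \R\, C_i$: the ray through the vertex $v$ plus the facet-normal line. Their intersection is $\R_{\ge 0}\, v$, and it is exactly $1$-dimensional because $v, c_1, c_2$ are linearly independent: $\lin(c_1,c_2) = \lin(N_eP)$ is a supporting plane of the $3$-dimensional cone $N_vP$, while $v \in \interior(N_vP)$ by Corollary~\ref{cor:key_int}. This is the ``short linear-algebra verification'' you defer to the end; as you set it up, it checks transversality of the wrong planes and names the wrong intersection line, so it would not close. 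With the correction above the argument does close and coincides with the paper's proof.
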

\begin{proof}
    Let $F_1 \subseteq P$ be an odd $2$-face and choose a vertex $v \in F_1$.
    We set $\Fan = \Fan(P)$, $R = N_vP$, and $C_1 \defeq N_{F_1}P$ the
    $1$-dimensional normal cone of $F_1$. It follows from
    Proposition~\ref{prop:2d_virtual_insc} that $\InCone(\Fan_{C_1},R-C_1)$ is
    $2$-dimensional and the first claim follows from
    Corollary~\ref{cor:inspc_intersect}.
    
    Now, let $F_2$ be an odd $2$-face sharing an edge with $F_1$ and
    set $C_2 = N_{F_2}P$. We may also assume that $v \in F_2$ and by
    the same argument as above $\InCone(\Fan_{C_2},R-C_2)$ is
    $2$-dimensional. In fact
    $\InCone(\Fan_{C_i},R - C_i) = \R_{\ge0} v + \R C_i$. Since
    $F_1 \cap F_2$ is an edge, it follows immediately that
    $\InCone(\Fan_{C_1},R - C_1) \cap \InCone(\Fan_{C_2},R - C_2)$ is
    $1$-dimensional. Appealing to Corollary~\ref{cor:inspc_intersect}
    again yields the claim.
\end{proof}

\begin{example}[Dodecahedra]
    Let $P_{12}$ be a $3$-polytope combinatorially equivalent to the
    dodecahedron. There is at most on inscribed normally equivalent
    realization of $P_{12}$ up to scaling.
\end{example}

\begin{example}[Pyramids]\label{ex:pyramid}
    Every $3$-dimensional pyramid over an inscribable polygon has a
    unique inscribed realization with fixed normal fan, up to scaling.
\end{example}

\begin{cor}\label{cor:ins_simplicial}
    Let $P$ be a simplicial polytope of dimension $d \ge 3$. Then $\dim
    \InCone(P) \le 1$.
\end{cor}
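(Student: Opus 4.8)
The plan is to reduce the statement to the planar classification via the face-localization machinery of Corollary~\ref{cor:inspc_intersect}, applied to codimension-$2$ cones. Let $P$ be a simplicial $d$-polytope with $d \ge 3$ and set $\Fan = \Fan(P)$. Since $P$ is simplicial, every face of $P$ is a simplex; in particular every $2$-face is a triangle, so every codimension-$2$ cone $C \in \Fan$ is contained in exactly $n = 3$ regions. By Proposition~\ref{prop:2d_virtual_insc} (applied with $n = 3$, which is odd), the $2$-dimensional localization $\Fan_C$ has $\dim \InSpc(\Fan_C) = 1$, and hence $\InCone(\Fan_C, R - C)$ is at most $1$-dimensional for any region $R \supseteq C$.

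First I would fix a region $R \in \Fan$ and apply Corollary~\ref{cor:inspc_intersect} with $k = d-2$, obtaining
\[
    \InCone(\Fan, R) \ \subseteq \ \bigcap_{C} \bigl( \InCone(\Fan_C, R - C) + \lineal(\Fan_C) \bigr),
\]
the intersection being over all codimension-$2$ cones $C \subseteq R$. Each summand $\InCone(\Fan_C, R - C) + \lineal(\Fan_C)$ sits inside the $(d-1)$-dimensional subspace $\lin(C) = \lineal(\Fan_C) + \R\cdot(\text{the $1$-dimensional }\InSpc(\Fan_C))$... more carefully: $\lineal(\Fan_C) = C - C$ has dimension $d-2$, and $\InCone(\Fan_C, R-C)$ adds at most one further dimension transverse to it, so each set on the right is contained in a subspace of dimension at most $d-1$. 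The key point is then to show that the intersection over all such $C \subseteq R$ is at most $1$-dimensional.

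The main obstacle is this final linear-algebra step: arguing that sufficiently many of these $(d-1)$-dimensional constraints cut the dimension down to $1$. For this I would use the combinatorial structure of a simplicial polytope at a vertex. Fix a vertex $v$ of $P$ with $N_v P = R$; since $P$ is simplicial, the vertex figure at $v$ is a $(d-1)$-simplex, so $v$ lies on exactly $d$ edges $e_1,\dots,e_d$ and on exactly $\binom{d}{2}$ triangular $2$-faces, one for each pair $\{e_i, e_j\}$. The corresponding codimension-$2$ cones $C_{ij} = N_{F_{ij}}P$ satisfy $C_{ij} \subseteq R$ and $\lin(C_{ij})$ is spanned by the normal directions of all edges at $v$ except $e_i$ and $e_j$ — equivalently, in the realization $\InCone(\Fan,R) \subseteq R$, the localization constraint at $C_{ij}$ forces the candidate vertex $v_R(P')$ to lie in a translate of the $2$-plane containing the edges $e_i, e_j$, while the lineality directions record the positions of the remaining neighbors. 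Intersecting the planar conditions coming from $C_{1j}$ for $j = 2,\dots,d$ pins down the neighbor positions along $e_2,\dots,e_d$ as rigid functions of the position of $v_R(P')$ along $e_1$ (each planar inscribed cone being a half-line, by the $n=3$ case, so the radius determines everything). Hence the whole configuration is determined up to the single scaling parameter (the radius of the circumsphere), giving $\dim \InCone(P) = \dim \InCone(\Fan, R) \le 1$. I would phrase this cleanly by noting that, by Lemma~\ref{lem:key}, an inscribed $P'$ with $\Fan(P') = \Fan$ is determined by one vertex, and the $n=3$ localization result says that vertex is in turn determined by its norm once $\Fan$ is fixed, because around each triangular $2$-face the three vertices on a fixed circle are rigid up to that circle's radius; compatibility across the connected dual graph then leaves only the global radius free.
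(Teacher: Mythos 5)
Your overall strategy --- localize at the codimension-$2$ cones, use that every $2$-face of a simplicial polytope is a triangle so each localization has a $1$-dimensional inscribed space by Proposition~\ref{prop:2d_virtual_insc}, and then intersect the resulting hyperplane conditions --- is a legitimate route, and it differs from the paper, which instead localizes at \emph{rays}: the facets of a simplicial polytope are simplices, $\dim\InCone(S)=1$ for a simplex (by induction on dimension), and two \emph{adjacent} facets force the intersection in Corollary~\ref{cor:inspc_intersect} down to dimension $1$ exactly as in the proof of Corollary~\ref{cor:3polytope_odd}. However, your execution of what you correctly identify as the main obstacle --- the final linear-algebra step --- rests on a false premise. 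You assert that, because $P$ is simplicial, the vertex figure at $v$ is a $(d-1)$-simplex, so that $v$ lies on exactly $d$ edges and there is a triangular $2$-face for every pair of them. That is the defining property of a vertex of a \emph{simple} polytope, not of a simplicial one: the octahedron is simplicial, each of its vertices lies on $4$ edges and $4$ triangular $2$-faces, and the two edges from $e_1$ to $e_2$ and to $-e_2$ span no $2$-face at all. Consequently the family of conditions ``$C_{1j}$ for $j=2,\dots,d$'' that you intersect need not exist, and the claim that these alone pin down all neighbour positions is unjustified.

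The argument is repairable. For each triangular $2$-face $\conv(v,u_i,u_j)$, the $n=3$ case does fix the ratio of the two edge parameters $\lambda_i,\lambda_j$ at $v$ (where $u_i=v+\lambda_i\alpha_i$), because the inscribed realization of a $2$-dimensional fan with three regions is unique up to scaling. The graph on the neighbours of $v$ whose edges are given by the $2$-faces at $v$ is the graph of the vertex figure, hence connected, so chaining these ratios determines all $\lambda_i$ up to one common scalar; since the $\alpha_i$ span $\R^d$ and $\lambda_i=-2\inner{\alpha_i,v}$, this determines $v$, and hence $P'$ by Lemma~\ref{lem:key}, up to a single global scaling. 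As written, though, the proposal's decisive step depends on the simple/simplicial confusion and is not established.
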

\begin{proof}
    We can reuse the argument of the proof of
    Corollary~\ref{cor:3polytope_odd}: If $F_1,F_2 \subset P$ are adjacent
    facets with $\dim \InCone(F_i) \le 1$, then
    Corollary~\ref{cor:inspc_intersect} implies the desired result.  It is
    straightforward (for example, by induction), to show that every simplex
    $S$ satisfies $\dim \InCone(S) = 1$.
\end{proof}

\subsection{Simple and even polytopes}
We now turn to the other extremal class of simple polytopes. For a set $X
\subseteq \R^d$, denote by $\Centers(X)$ the affine subspace of centers of
spheres containing $X$, that is,
\[
   \Centers(X) \defeq \{c \in \R^d : \|x - c\| = \|y - c\| \text{ for all $x, y \in X$}\}\,.
\]
We call $\Centers(X)$ the \Def{generalized bisector} of $X$, since
for two points $\x, \y \in \R^2$, $\Centers(\{\x, \y\})$ is the
well-known bisector of $\x$ and $\y$ of Euclidean geometry. Clearly
$\Centers(X) \subseteq \Centers(Y)$ for $Y \subseteq X \subseteq \R^d$.
Stronger even:
\begin{lem}\label{lem:centers_intersection}
   Let $X, Y \subseteq \R^d$. Then
   $\Centers(X \cup Y) \subseteq \Centers(X) \cap \Centers(Y)$. If
   $X \cap Y \neq \emptyset$, then
   $\Centers(X \cup Y) = \Centers(X) \cap \Centers(Y)$.
\end{lem}
\begin{proof}
   The first claim follows directly from the definition. Now let $c \in
    \Centers(X) \cap \Centers(Y)$, $x \in X$, $y \in Y$ and $z \in X \cap Y$.
    Then $\|c - x\| = \|c - z\| = \|c - y\|$ so $c \in M(X \cup Y)$.
\end{proof}
For two affine subspaces
$L, L' \subseteq \R^d$, write $L \parallel L'$ if $L = L' + t$ for
some $t \in \R^d$.

\begin{lem}\label{lem:centers_empty_or_aff_perp_parallel}
   Let $X \subseteq \R^d$. Either $\Centers(X) = \emptyset$ or
   $\Centers(X) \parallel \aff(X)^\perp$.
\end{lem}
\begin{proof}
   Let $\dim \aff(X) = k$ and let $B$ be an affine basis of $X$,
   $|B| = k+1$. Then there exists a unique sphere centered in a point
   $c_B \in \aff(X)$ that contains $B$ and $\Centers(B) = c_B + \aff(X)^\perp$. If
   $c_B \neq c_{B'}$ for two affine bases $B, B'$ of $X$, then
   $\Centers(X) = \emptyset$, otherwise $\Centers(X) = c_B +
    \aff(X)^\perp$, which is parallel to  $\aff(X)^\perp$.
\end{proof}

\newcommand\faces{\mathcal{F}}%
For a polytope $P \subset \R^d$, $P$ is inscribed if and only if
$\Centers(V(P)) \neq \emptyset$. For $F \subset P$ a face, let
\[
    \faces_k(F, P) \defeq \{G \subseteq P \text{ face} :  F \subseteq G,  \dim G = \dim F + k  \}
\]
and set
\[
   K(F, P) \defeq \bigcup_{G \in \faces_1(F, P)} V(G)\,.
\]
By projecting along $F$, one can convince oneself that
$\aff(K(F, P)) = \aff(P)$.

\begin{cor}\label{cor:K(F,P)}
   Let $F$ be a proper, non-empty face of an inscribed polytope
   $P \subset \R^d$. Then
   \[
       \Centers(K(F, P)) = \Centers(V(P))
   \]
\end{cor}
\begin{proof}
   Since $\aff K(F, P) = \aff P$, we have
   $\Centers(K(F, P)) \parallel \aff(K(F, P))^\perp =
   P^\perp$. Moreover $K(F, P) \subseteq V(P)$, so
   $\Centers(V(P)) \subseteq \Centers(K(F, P))$. But
   $\Centers(V(P)) \parallel P^\perp \parallel \Centers(K(F, P))$, so
   $\Centers(K(F, P)) = \Centers(V(P))$.
\end{proof}

The following result gives a local criterion on inscribability.

\begin{thm}\label{thm:inscribability_j_k}
   Let $P \subseteq \R^d$ be a polytope and
   $0 \leq j < j + 2 \leq k \leq d$. Then $P$ is inscribed if and
   only if the following two conditions hold: every $k$-face is
   inscribed and $\Centers(K(F, P)) \neq \emptyset$ for all $j$-faces $F$.
\end{thm}
The necessity of both conditions is exemplified by simplicial and
simple polytopes respectively, that is, every $k$-face of a simplicial
polytope is a simplex and thus inscribed, while
$\Centers(K(F, P)) \neq \emptyset$ for every simple polytope. Thus, it
is necessary to combine both.

For $j = 0$, we obtain the following remarkable characterization:
\begin{cor}\label{cor:inscribability_0_k}
    Let $P \subset \R^d$ be a $d$-polytope and $2 \le k \le d$. If
    every vertex $v$ together with its neighbors lie on some sphere
    and if all $k$-faces are inscribed to some sphere, then $P$ is
    inscribed.
\end{cor}
\begin{proof}[Proof of Theorem~\ref{thm:inscribability_j_k}]
   If $P$ is inscribed, then every face of $P$ is
   inscribed. Moreover, since $K(F, P) \subseteq V(P)$ we have
   $\emptyset \neq \Centers(V(P)) \subseteq \Centers(K(F, P))$. For
   the converse, we only need to consider the case $k = j+2$. Let $F$
   be a $j$-face and $G \supseteq F$ a $(j+1)$-face of $P$. We want
   to show that $M(K(F, P)) = M(K(G, P))$. By varying $F$ and $G$ over all
   $j$- and $(j+1)$-faces respectively, we can then conclude that
   $M(K(F, P)) = M(K(F', P))$ for any two $j$-faces $F, F'$ of $P$, so that
   $\Centers(V(P)) = K(F, P) \neq \emptyset$, i.e.\ $P$ is inscribed.

   Thus, we are left to show that $M(K(F, P)) = M(K(G, P))$. We have
   \begin{align*}
     \Centers(K(F, P))
     \ & \stackrel{\mathclap{\eqref{lem:centers_intersection}}}{=} \ \bigcap_{G \in \faces_{1}(F, P)} \Centers(V(G))
       = \bigcap_{H \in \faces_{2}(F, P)} \Centers(K(F, H))\\
     \ &\stackrel{\mathclap{\eqref{cor:K(F,P)}}}{=} \  \bigcap_{H \in \faces_{2}(F, P)} \Centers(V(H))
       \ \subseteq \ \bigcap_{H \in \faces_{1}(G, P)} \Centers(V(H))
       \ \stackrel{\mathclap{\eqref{lem:centers_intersection}}}{=} \ \Centers(K(G, P))\,.
   \end{align*}
   Since $\emptyset \neq \Centers(K(F, P)) \subseteq \Centers(K(G, P))$, we
   obtain $\Centers(K(G, P)) \parallel P^\perp \parallel \Centers(K(F, P))$ by Lemma~\ref{lem:centers_empty_or_aff_perp_parallel}. We
   conclude that $\Centers(K(F, P)) = \Centers(K(G, P))$.
\end{proof}

We stated above that the condition $\Centers(K(F, P)) \neq \emptyset$
in Theorem~\ref{thm:inscribability_j_k} is not necessary for simple
polytopes. Stronger even, a $d$-polytope is called \Defn{$k$-simple}
if every $(d-k-1)$-face is contained in exactly $k + 1$
facets~\cite[Ch.~4.5]{grunbaum}. Every polytope is at
least $1$-simple and a $(d-1)$-simple polytope is simply a simple
polytope.

\begin{cor}\label{cor:simple_inscribed}
   Let $P$ be a $k$-simple $d$-polytope, $1 \leq k \leq d-1$. Then $P$ is
   inscribed, if and only if all its $(d-k+1)$-faces are
   inscribed. In particular a simple polytope is inscribed if and
   only if all its $k$-faces are inscribed.
\end{cor}
\begin{proof}
   Let $F$ be a $(d - k - 1)$-face of $P$. Let $B_F \subseteq V(F)$
   be an affine basis off $\aff(F)$, and for each
   $G \in \faces_1(F, P)$, let $v_G \in V(G) \setminus V(F)$. Set
   $B \defeq B_F \cup \{c_G : G \in \faces_1(F, P)\}$. Then
   $|B_F| = d + 1$ and $\aff(B_F) = \aff(K(F, P)) = P^\perp$, so
   $M(B_F) \neq \emptyset$. Let $c \in M(B_F)$. Since $F$ is a facet
   of all $G \in \faces_1(F, P)$ and $G$ is inscribed, we have
   $c \in M(V(G))$ and therefore $c \in M(K(F, P))$. We can now apply
   Theorem~\ref{thm:inscribability_j_k}.
\end{proof}

\begin{rem}
   There is also a dual notion of an generalized angular
   bisector. For this, take a circumscribed polytope $P$, that is, a
   polytope all of whose facets are \emph{tangent} to a
   sphere. Assume that the sphere is centered at the origin. Then its
   polar $P^\triangle$ is inscribed. For $F \subseteq P$ a face, let
   $F^\diamond \subseteq P^\triangle$ denote its associated dual
   face. Let $\hat{M}(F)$ be the linear subspace of all points which
   are the center of a sphere tangent to all affine hulls of facets
   in $P$ containing $F$. We call $\hat{M}(F)$ the \Defn{generalized
     angular bisector} at $F$. Then
   $\hat{M}({F^\diamond}) = \Centers(V(F))$.  This generalizes a
   well-known theorem of Euclidean geometry: The angular bisectors of
   a triangle coincide with the perpendicular bisectors of the
   Gergonne triangle (the contact triangle of the inscribed circle).
\end{rem}

Let us call a polytope $P$ \Def{even} if all $2$-faces are even.  Equivalently,
$P$ is even if the graph of $P$ is bipartite. 

\begin{thm}\label{thm:inspc_even}
    Let $P \subset \R^d$ be a normally inscribed and full-dimensional
    polytope. Then the following are equivalent:
    \begin{enumerate}[\rm (i)]
        \item $P$ is even;
        \item $\dim \InCone(P) = d$;
        \item $\InSpc(P, R_0) = \R^d$ for any region $R_0$ of $\Fan(P)$.
    \end{enumerate}
\end{thm}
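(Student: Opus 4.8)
The plan is to prove the cycle of implications $(iii) \Rightarrow (ii) \Rightarrow (i) \Rightarrow (iii)$, with the bulk of the work in the last step. The first implication is immediate: by Corollary~\ref{cor:InSpc_iso} and equation~\eqref{eqn:InSpc_intersect_rep}, $\InCone(\Fan,R_0)$ is the intersection of $\InSpc(\Fan,R_0)$ with finitely many open cones $t_{\Walk_R}^{-1}(\interior(R))$; if $\InSpc(\Fan,R_0)=\R^d$ and $\Fan$ is already inscribable, this intersection is nonempty and open in $\R^d$, hence $d$-dimensional. For $(ii)\Rightarrow(i)$, I would argue contrapositively: if $P$ has an odd $2$-face $F$, then pick a vertex $v\in F$ and set $C=N_FP$, a codimension-$2$ cone; by the corollary following Proposition~\ref{prop:2d_virtual_insc} together with Corollary~\ref{cor:inspc_intersect}, $\InCone(\Fan,N_vP)\subseteq \InCone(\Fan_C, N_vP-C)+\lineal(\Fan_C)$, and the right-hand side has dimension at most $1 + (d-2) = d-1$ since $\InSpc(\Fan_C)$ is $1$-dimensional for an odd profile. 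So $\dim\InCone(P)\le d-1$.

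\textbf{The main step, $(i)\Rightarrow(iii)$.} This is where I expect the real work. Fix a base region $R_0$. I need to show every closed walk $\Walk$ at $R_0$ has $t_\Walk = \id$, equivalently that $t_\Walk$ depends only on the endpoints of $\Walk$, equivalently that $t_\Walk$ is trivial on every generator of the fundamental group of the dual graph $G(\Fan)$. The strategy is to use that $P$ is inscribable (so $\Fan$ is polytopal and strongly connected) and reduce to the already-understood $2$-dimensional case. A standard fact about polytopal — indeed, any strongly connected — fans is that the fundamental group of the dual graph $G(\Fan)$ is generated by the "small" cycles coming from codimension-$2$ cones: for each codimension-$2$ cone $C\in\Fan$, the regions containing $C$ form a cycle in $G(\Fan)$, and conjugates of these cycles (by paths from $R_0$) generate $\pi_1(G(\Fan))$. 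This is essentially the statement that $G(\Fan)$, filled in with $2$-cells along these local cycles, is simply connected — which holds because the corresponding cell complex is (homeomorphic to) the sphere $S^{d-1}$, or can be proven directly by the usual "shelling"/connectivity argument for fans. I would cite or prove this as a lemma.

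\textbf{Finishing.} Granting that, it suffices to check $t_\Walk=\id$ for $\Walk$ a conjugate of a local cycle around a codimension-$2$ cone $C$. Since $t_{\Walk'}\InSpc(\Fan,R_0) = \InSpc(\Fan,R_0')$ for $\Walk'$ a connecting path and each $t_{\Walk'}$ is orthogonal, it is enough to treat the local cycle itself. But the local cycle's transformation is computed inside the localization $\Fan_C$: as shown in the proof that localizations of virtually inscribable fans are virtually inscribable, $s_W=s_{W-C}$ for walls $W\supseteq C$, so the local-cycle transformation equals the single relevant $t_\Walk$ for the $2$-dimensional fan obtained by intersecting $\Fan_C$ with $C^\perp$. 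Now $P$ even forces every $2$-face, hence (by Proposition~\ref{prop:in_faces} applied to the face $F$ with $N_FP=C$) every such $2$-dimensional localization, to have an even number $n=2k$ of regions; and since $P$ is inscribable the profile $\beta(\Fan_C)$ lies in $\InProf_n\subseteq\VInProf_n$, so by Proposition~\ref{prop:2d_virtual_insc} we get $\beta_2+\beta_4+\cdots+\beta_{n-2}=\pi$ and therefore $t_\Walk=\id$ for that local cycle. Hence $t_\Walk=\id$ for all closed walks and $\InSpc(\Fan,R_0)=(\lineal\Fan)^\perp=\R^d$. The anticipated obstacle is nailing down the generation-by-local-cycles statement cleanly — one must be careful that $\Fan$ need only be strongly connected, not necessarily simplicial, and that the "cycle around a codimension-$2$ cone" is genuinely a cycle (which uses completeness or at least that $\Fan$ is polytopal, guaranteed here since $P$ exists).
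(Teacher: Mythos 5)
Your proposal is correct and follows essentially the same route as the paper: both reduce the question to the codimension-$2$ localizations via the representation \eqref{eqn:InSpc_intersect_rep} and the fact that cycles around codimension-$2$ cones generate all closed walks (Proposition~\ref{prop:cycle_basis}), and then invoke Proposition~\ref{prop:2d_virtual_insc} together with inscribability of the $2$-faces to conclude that the local monodromy is trivial exactly when the face is even. The generation-by-local-cycles fact you worry about is exactly what the paper also relies on (and states without further elaboration), so your treatment is if anything slightly more careful.
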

\begin{proof}
    Let $\Fan = \Fan(P)$ and choose a region $R_0 \in \Fan$.  The
    representation~\eqref{eqn:InSpc_intersect_rep} of $\InCone(\Fan)$ given in
    the proof of Theorem~\ref{thm:InSpc} is the intersection of open
    $d$-dimensional cones with the inscribed space $\InSpc(\Fan,R_0)$. Thus
    it suffices to show that $\InSpc(\Fan,R_0) = \R^d$ if and only if $P$
    is even.
    
    For a $2$-face $F \subseteq P$ and $C = N_FP$, let us write $\Walk_C$ for
    the unique cycle in $\Fan_C$. Now $\InSpc(\Fan,R_0)$ is the intersection
    of $\ker(t_{\Walk\Walk_C\Walk^{-1}} - \id)$, where $C$ ranges over all
    codimension-$2$ cones and $\Walk$ is a walk to a region of $\Fan_C$ and
    $\Walk^{-1}$ is the reversed walk.  It follows from
    Proposition~\ref{prop:2d_virtual_insc} that $\InSpc(\Fan_C,R_0 - C) =
    C^\perp$ precisely if $F$ is even.  Hence $\InSpc(\Fan,R_0) = \R^d$ if and
    only if $P$ is even.
\end{proof}

We call a complete fan $\Fan$ \Defn{even} if all its codimension-$2$
cones are incident to an even number of regions. Using essentially the
same argument we arrive at:

\begin{cor}
    Let $\Fan$ be a full-dimensional, strongly connected and even fan
    in $\R^d$. Then $\InSpc(\Fan, R_0) = \{0\}$ or $= \R^d$ for any
    region $R_0$ of $\Fan$. Moreover, $\Fan$ is virtually inscribable
    if and only if $\Fan_C$ is virtually inscribable for all
    $C \in \Fan$ of codimension $2$.
\end{cor}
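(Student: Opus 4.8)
The plan is to adapt the proof of Theorem~\ref{thm:inspc_even}, replacing "$2$-face $F$" by "codimension-$2$ cone $C$" and "even polytope" by "even fan". First I would fix a region $R_0$ of $\Fan$ and recall from the discussion preceding Proposition~\ref{prop:ref_game} that $\InSpc(\Fan,R_0)$ is the intersection, over all closed walks $\Walk$ based at $R_0$, of the fixed subspaces $\ker(t_\Walk - \id)$. The key structural fact is that the fundamental group of the dual graph $G(\Fan)$ is generated by the "small" cycles around codimension-$2$ cones: every closed walk based at $R_0$ is, up to the relations coming from cancellation $s_W s_W = \id$, a product of conjugates $\Walk_j \Walk_{C_j} \Walk_j^{-1}$, where $C_j$ ranges over codimension-$2$ cones, $\Walk_{C_j}$ is the unique cycle in the localization $\Fan_{C_j}$ (a complete $2$-dimensional fan after quotienting by lineality), and $\Walk_j$ is a walk from $R_0$ to a region containing $C_j$. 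This is exactly the simply-connectedness of the "reflection game", used implicitly in Theorem~\ref{thm:inspc_even}. Hence
\[
    \InSpc(\Fan,R_0) \ = \ \bigcap_{C} \ker\bigl(t_{\Walk_C \Walk_{C}\Walk_C^{-1}} - \id\bigr) \, ,
\]
where $C$ runs over all codimension-$2$ cones.

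Next I would compute each summand. Conjugation by $t_{\Walk_C}$ is an orthogonal change of coordinates, so $\ker(t_{\Walk_C\Walk_C\Walk_C^{-1}}-\id) = t_{\Walk_C}\bigl(\ker(t_{\Walk_C}-\id)\bigr)$, and $\ker(t_{\Walk_{C}}-\id)$ splits as $\lineal(\Fan) \oplus \ker\bigl(t_{\Walk_C}|_{C^\perp} - \id\bigr)$ since every $t_\Walk$ fixes $\lineal(\Fan) = C - C$ pointwise. Applying Proposition~\ref{prop:2d_virtual_insc} to the $2$-dimensional fan obtained from $\Fan_C$ by intersecting with $C^\perp$: if the number $n$ of regions incident to $C$ is even, then $t_{\Walk_C}|_{C^\perp}$ is a rotation, so its fixed space is either $\{0\}$ (when $\Fan_C$ is not virtually inscribable) or all of $C^\perp$ (when it is, i.e. when $t_{\Walk_C}|_{C^\perp} = \id$). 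Since $\Fan$ is even, only this case occurs, and so each summand above is either $(\lineal(\Fan))^\perp$-degenerate-only or all of $\R^d$, depending solely on whether $\Fan_C$ is virtually inscribable. The intersection over all $C$ is therefore $\R^d$ if every $\Fan_C$ is virtually inscribable, and otherwise it is the intersection of some proper orthogonal complements. To upgrade "proper" to "$\{0\}$" I would note that $\bigcap_C t_{\Walk_C}(C^\perp)$ over a set containing at least one $C$ with $\Fan_C$ not virtually inscribable need not itself be zero a priori, but the statement as written only claims the dichotomy "$\{0\}$ or $\R^d$"; re-reading, the clean way is: if some $\Fan_C$ fails to be virtually inscribable then I must show $\InSpc(\Fan,R_0)=\{0\}$, which follows because the even hypothesis forces $t_{\Walk_C}|_{C^\perp}$ to be a nontrivial rotation on the plane $t_{\Walk_C}(C^\perp)$ with no nonzero fixed vector, and a vector in $\InSpc(\Fan,R_0)$ projects to a fixed vector of that rotation.

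Finally, the "moreover" is now immediate: $\Fan$ is virtually inscribable iff $\InSpc(\Fan,R_0) \neq \{0\}$ iff (by the dichotomy just proved) $\InSpc(\Fan,R_0) = \R^d$ iff every $\ker(t_{\Walk_C\Walk_C\Walk_C^{-1}}-\id) = \R^d$ iff every $\Fan_C$ is virtually inscribable. I expect the main obstacle to be the combinatorial claim that closed walks are generated by the conjugated codimension-$2$ cycles together with the reflection relations $s_Ws_W = \id$ — i.e. a precise statement that the relevant presentation of $\pi_1(G(\Fan))$ is "cellular". This is standard for complete fans (one can argue via a shelling of the polytope dual to $\Fan$, or directly via the fact that the nerve built from regions and codimension-$2$ cones is simply connected), but it deserves to be stated carefully since all the computations above are modulo exactly those relations; everything else is a routine transcription of the proof of Theorem~\ref{thm:inspc_even} with "even polytope" replaced by "even fan".
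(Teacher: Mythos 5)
Your reduction of $\InSpc(\Fan,R_0)$ to the intersection of the fixed spaces $\ker(t_{\Walk\Walk_C\Walk^{-1}}-\id)$ over codimension-$2$ cones $C$ is exactly the route the paper intends (it only says ``using essentially the same argument'' as Theorem~\ref{thm:inspc_even}), and the direction ``all $\Fan_C$ virtually inscribable $\Rightarrow \InSpc(\Fan,R_0)=\R^d$'' is fine: evenness makes each $t_{\Walk\Walk_C\Walk^{-1}}$ a rotation, and virtual inscribability of an even $\Fan_C$ forces that rotation to be the identity. The gap is in the converse, and you put your finger on it yourself before arguing it away. If some $\Fan_C$ is not virtually inscribable, then $t_{\Walk\Walk_C\Walk^{-1}}$ is a nontrivial rotation in the $2$-plane $t_\Walk^{-1}\bigl((\lin C)^\perp\bigr)$, and the observation that a vector $v\in\InSpc(\Fan,R_0)$ ``projects to a fixed vector of that rotation'' only says that this projection is $0$, i.e.\ $v\in t_\Walk^{-1}(\lin C)$ --- a codimension-$2$ subspace, not $\{0\}$. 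Nothing rules out a nonzero vector lying in all of these codimension-$2$ subspaces simultaneously, so the dichotomy does not follow.

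This is not a repairable slip: once the standing hypothesis of Theorem~\ref{thm:inspc_even} that $P$ is normally inscribed is dropped, the dichotomy can actually fail. Let $\Fan$ be the normal fan of the parallelepiped $P=[\0,u_1]+[\0,u_2]+[\0,u_3]$ with $u_1=e_1$, $u_2=e_2$, $u_3=e_1+e_3$. Every ray of $\Fan$ is contained in four regions, so $\Fan$ is complete, full-dimensional and even, and the wall reflections are the reflections $\sigma_i$ in $u_i^\perp$. The cycles around the rays give $(\sigma_1\sigma_2)^2=(\sigma_2\sigma_3)^2=\id$ and $(\sigma_1\sigma_3)^2=\mathrm{diag}(-1,1,-1)$, the rotation by $\pi$ about the line $\R e_2$. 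Since each $\sigma_i$ maps $e_2$ to $\pm e_2$, every conjugate $t_\Walk^{-1}(\sigma_1\sigma_3)^2 t_\Walk$ is again this same rotation, so $\hom$-group and hence $\InSpc(\Fan,R_0)=\R e_2$ is one-dimensional: neither $\{0\}$ nor $\R^3$. Moreover $\Fan$ is virtually inscribable while the localization at the ray normal to the non-rectangular facet $[\0,u_1]+[\0,u_3]$ is not, so the ``only if'' half of the second assertion fails as well (the segment $[\0,u_2]$ is the inscribed virtual polytope witnessing this). So the step you need cannot be supplied under the stated hypotheses; the statement requires an additional assumption (for instance the normal inscribability of Theorem~\ref{thm:inspc_even}, under which your argument, and the paper's, goes through).
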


\begin{rem}
    If $P$ is simple and even, then its combinatorial dual polytope is
    \emph{balanced}; see~\cite{joswig}. Let us call a $3$-connected
    planar graph $G$ \Def{even} if all faces are even. Interestingly,
    Dillencourt--Smith~\cite{DillencourtSmith} showed that every
    trivalent and even polytopal graph can be realized as the graph of an
    inscribed $3$-polytope. This translates to our setting as follows:
    Every trivalent and even polyhedral graph is the dual graph of an
    inscribable $3$-dimensional fan.
\end{rem}

A last observation on even fans stems from
Corollary~\ref{cor:inspc_intersect}. Given a fan $\Fan$, can we extend every
inscribed realization of a localization $\Fan_C$ ($C$ being some cone of
$\Fan$) to an inscribed realization of $\Fan$? Together with
Theorem~\ref{thm:inspc_even}, we can answer this in the affirmative for
\emph{virtual} inscribability of even fans. Let $\pi_C : \R^d \to C^\perp$ be
the orthogonal projection.

\begin{cor}
    Let $\Fan$ be a full-dimensional and strongly connected fan,
    $C \in \Fan$ a cone and $C \subseteq R \in \Fan$ a region. Then
    $\pi_C(\InSpc(\Fan, R)) \subseteq \InSpc(\Fan_C, R - C)$.
    Equality holds when $\Fan$ is inscribable and even.
\end{cor}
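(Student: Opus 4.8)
The statement has two parts: the inclusion $\pi_C(\InSpc(\Fan,R)) \subseteq \InSpc(\Fan_C, R-C)$ in general, and equality when $\Fan$ is inscribable and even. For the inclusion, I would argue as follows. Recall that $\lineal(\Fan_C) = C - C$, so $C^\perp$ is a complement to the lineality space of $\Fan_C$, and $\InSpc(\Fan_C,R-C)$ was defined to live in $(\lineal(\Fan_C))^\perp = C^\perp$; thus $\pi_C$ is the natural map to compare the two spaces. The key point is the already-recorded fact (from the proof of the proposition on localizations) that the dual graph $G(\Fan_C)$ is the vertex-induced subgraph of $G(\Fan)$ on the regions containing $C$, and that for a wall $W \supseteq C$ the reflections agree, $s_W = s_{W-C}$. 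Hence every closed walk $\Walk$ in $G(\Fan_C)$ based at $R$ is also a closed walk in $G(\Fan)$ based at $R$, and $t_\Walk$ is computed by the same product of reflections in either fan. So if $v \in \InSpc(\Fan,R)$ then $t_\Walk(v) = v$ for all such $\Walk$. Now I want to deduce $t_\Walk(\pi_C(v)) = \pi_C(v)$: write $v = \pi_C(v) + w$ with $w \in C - C = \lineal(\Fan_C)$; since each $s_W$ with $W \supseteq C$ fixes $\lin(W) \supseteq C-C$ pointwise (the reflecting hyperplane contains $C-C$), we get $t_\Walk(w) = w$, hence $t_\Walk(\pi_C(v)) = t_\Walk(v) - w = v - w = \pi_C(v)$. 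Therefore $\pi_C(v) \in \InSpc(\Fan_C,R-C)$, which is the desired inclusion.

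**The equality case.** Suppose now $\Fan$ is inscribable and even. By Theorem~\ref{thm:inspc_even} (applied to a polytope $P \in \InCone(\Fan)$, using that $P$ is even iff $\Fan$ is even in the sense that all codimension-$2$ cones meet an even number of regions — this is the content of the corollary following Theorem~\ref{thm:inspc_even}, and I'd invoke it directly), we have $\InSpc(\Fan,R) = \R^d$, so $\pi_C(\InSpc(\Fan,R)) = C^\perp$. On the other hand, since $\Fan$ is even, every codimension-$2$ cone of $\Fan_C$ is again incident to an even number of regions — because codimension-$2$ cones of $\Fan_C$ correspond to codimension-$2$ cones of $\Fan$ containing $C$, with the same incident regions — so $\Fan_C$ is an even fan. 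It is also strongly connected (its dual graph is a vertex-induced subgraph of the connected $G(\Fan)$; one checks connectedness survives, or more simply one uses that $\Fan_C$ is polytopal, being the normal fan of the face of $P$ dual to $C$, hence strongly connected) and full-dimensional after quotienting by its lineality space. Then the corollary following Theorem~\ref{thm:inspc_even} gives $\InSpc(\Fan_C,R-C) \in \{\{\0\}, C^\perp\}$, and since $\Fan_C$ is inscribable (being the normal fan of a face of the inscribed polytope $P$, by Proposition~\ref{prop:in_faces}) it is in particular virtually inscribable, forcing $\InSpc(\Fan_C,R-C) = C^\perp$. Combining, both sides equal $C^\perp$.

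**Main obstacle.** The routine parts are the bookkeeping about reflections fixing $C - C$ and the identification of codimension-$2$ cones and their incident regions under localization; these are direct from the definitions and the already-proven localization proposition. The one place requiring a little care is justifying that $\Fan_C$ (modulo its lineality space) genuinely satisfies the hypotheses of the even-fan corollary — full-dimensionality, strong connectedness, and evenness — and that its virtual inscribability (hence the dichotomy collapsing to $C^\perp$) follows; the cleanest route is to pass through the inscribed polytope $P$ and the face of $P$ dual to $C$, rather than arguing abstractly about fans, since then strong connectedness and virtual inscribability are immediate from Proposition~\ref{prop:in_faces} and the fact that normal fans of polytopes are strongly connected. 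I expect no genuine difficulty beyond assembling these ingredients in the right order.
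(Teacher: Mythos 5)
Your proof is correct and matches the argument the paper intends (the corollary is stated without proof, but it is meant to follow from the localization proposition --- whose proof already records $s_W=s_{W-C}$ and that $G(\Fan_C)$ is a vertex-induced subgraph of $G(\Fan)$ --- together with Theorem~\ref{thm:inspc_even}). The only remark is that your equality case can be shortened: since $\InSpc(\Fan_C,R-C)\subseteq (\lineal(\Fan_C))^\perp = C^\perp$ by definition, the chain $C^\perp=\pi_C(\R^d)=\pi_C(\InSpc(\Fan,R))\subseteq\InSpc(\Fan_C,R-C)\subseteq C^\perp$ already forces equality, so verifying that $\Fan_C$ is even and virtually inscribable is not needed.
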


Clearly, $\pi_C(\InCone(\Fan, R)) \subseteq \InCone(\Fan_C, R - C)$ but
the inclusion is strict in general. The next example shows that
equality is not attained for simplicial and even fans.

\begin{example}
    Let $P \defeq \conv \{ (x, y, z) \in \Z^3 : x^2 + y^2 + z^2 = 61 \}$.
    This is an inscribed \mbox{$3$-dimensional} polytope with $72$ vertices shown
    in Figure~\ref{fig:non_surj}.
    \begin{figure}[h] 
        \centering
        \includegraphics[width=0.35\textwidth]{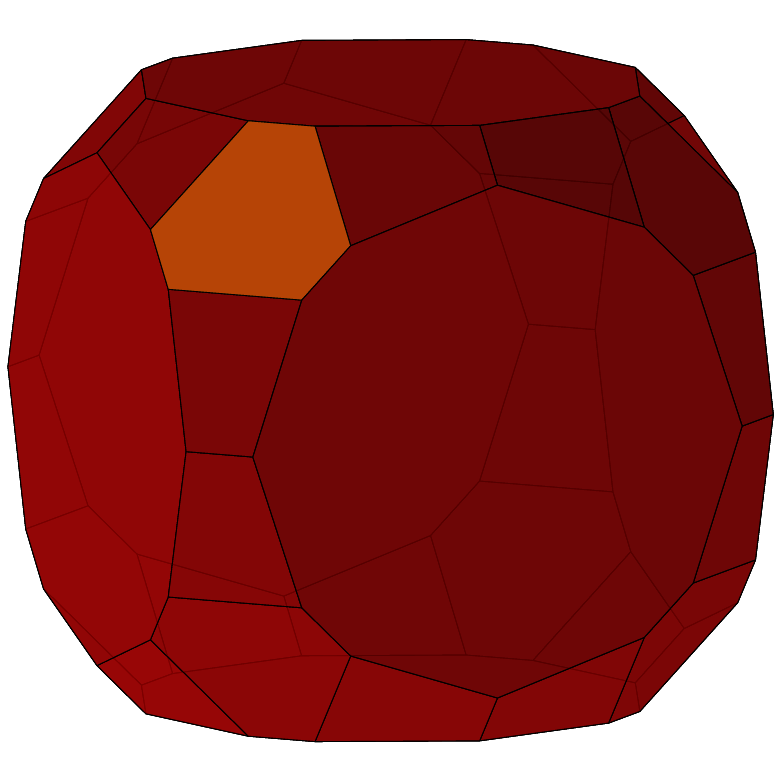}\qquad 
        \includegraphics[width=0.35\textwidth]{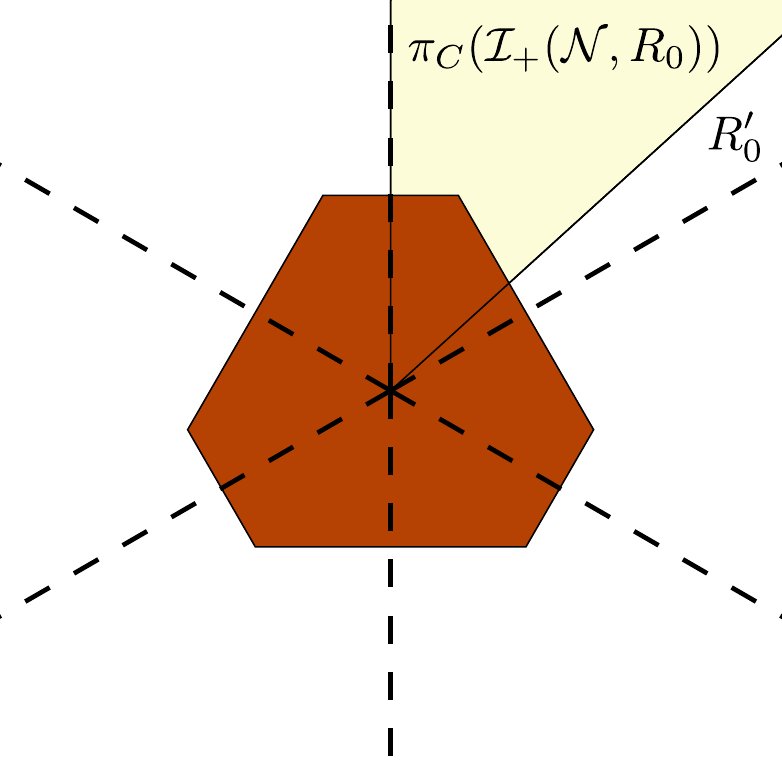}
        \caption{On the left: Inscribed simple and even polytope $P$
          with a highlighted facet $F = P^c$ for $c = (1, 1, 1)$. On
          the right: The normal fan of $F$ and the image of the
          projection $\pi_C(\InCone(P, R_0))$.}
        \label{fig:non_surj}
    \end{figure}
    It can be checked that $P$ is even and simple. Let $F \subset P$
    be the hexagonal $2$-face that maximizes the linear function
    $c = (1,1,1)$.  This is a permutahedron for the point
    $v_0 = (3, 4, 6)$. Let $R_0 \defeq N_{v_0}P$ and
    $R_0' \defeq N_{v_0}F \cap c^\perp$. The based inscribed cone of $F$ coincides
    with $R_0'$;
    cf.~Section~\ref{sec:permutahedra}. Figure~\ref{fig:non_surj}
    shows the projection $\pi_C(\InCone(P, R_0))$ as a proper
    subcone of $R_0' = \InCone(F, R_0')$.
\end{example}

This behavior is similar to that of simplicial fans and their
polytopal (virtual) realizations,
see~\cite[Lem.~8.5]{McMullen-weights}.

\subsection{Inscribed Nestohedra}\label{sec:nestohedra}
In this section, we use Corollary~\ref{cor:simple_inscribed} to give a
characterization of inscribed nestohedra, an important subclass of generalized
permutahedra.

A polytope $P \subset \R^d$ is a \Def{generalized permutahedron} if for any
two adjacent vertices $u,v \in V(P)$, there are $i \neq j$ such that $u-v =
\mu(e_i-e_j)$ for some $\mu \in \R$.  Generalized permutahedra were introduced
by Postnikov~\cite{post}, where it was also shown that the above condition is
equivalent to the existence of a polytope $Q$ such that $P+Q$ is a
permutahedron; cf.~Section~\ref{sec:permutahedra}. Generalized permutahedra
constitute a important class in the field of algebraic and geometric
combinatorics and many well-known combinatorial polytopes can be realized as
generalized permutahedra; cf.~\cite[Sect.~8]{post}. In~\cite{Coxetermatroids},
a type-A \Def{matroid polytope} is defined as a polytope whose edges are along
directions $e_i - e_j$ and whose whose vertices are equidistant from some
point. That is, type-A matroid polytopes are precisely inscribed generalized permutahedra.

Postnikov described a large class of \emph{simple} generalized permutahedra,
the so-called nestohedra. A collection $\BB$ of subsets of $[d]$ is called a
\Def{building set} if $I \cap J \neq \emptyset$ implies $I \cup J \in \BB$ for
any $I,J \in \BB$. For $I \subseteq [d]$, write $\Delta_I \defeq \conv( e_i : i
\in I)$. The generalized permutahedron associated to a building set $\BB$ 
\[
    \Delta_\BB \ \defeq \ \sum_{I \in \BB} \Delta_I
\]
is called a \Def{nestohedron}. The name derives from the fact that $\Delta_\BB$
is a simple polytope whose face lattice is anti-isomorphic to the complex of
nested sets of $\BB$. A \Def{nested set} is a subset $N \subseteq \BB$ such
that $I \cap J \neq \emptyset$ implies $I \subseteq J$ or $J \subseteq I$ for
any $I,J \in N$ and if $J_1,\dots,J_k \in N$ are disjoint, then
$\bigcup_i J_i \not\in B$. The \Def{nested complex} is the collection of
nested sets of $\BB$ ordered by inclusion.

We mentioned two important examples of nestohedra: For $d \ge 2$, let
\[
    \BBass \ \defeq \ \{ \{i,i+1,\dots,j\} : 1 \le i < j \le d \} \, .
\]
then $\Delta_{\BBass}$ is combinatorially isomorphic to the
\Def{associahedron} or Stasheff polytope~\cite{Stasheff}. 
The collection of all cyclic intervals of $[d]$
\[
    \BBcyc \ \defeq \ \BBass \cup \{ [d] \setminus \{i,\dots,j\} : 1 < i \le
    j < d \} 
\]
gives rise to the \Def{cyclohedron}~\cite{Stasheff}.

For $i,j,k \in [d]$, let us write $N_{i,j}^k(\BB)$ for the set of
$I \in \BB$ with $i,j \in I$ and $k \not\in I$ and
$n_{i,j}^k(\BB) \defeq |N_{i,j}^k(\BB)|$ for its size. Furthermore, for
$J \subseteq [d]$, define the \Defn{restriction} of $\BB$ be the
building set
\[
    \BB|_J \defeq \{I \in \BB : I \subseteq J\}\,.
\]

\begin{thm}\label{thm:ins_nestohedra}
    Let $\BB$ be a building set. The nestohedron $\Delta_\BB$ is
    inscribed if and only if for all $J \subseteq [d]$ the following
    condition holds: If for $i,j,k \in J$ both
    $n_{i,j}^k(\BB|_J) > 0$ and $n_{j,k}^i(\BB|_J) > 0$ holds, then
    \[
        n_{i,j}^k(\BB|_J) \  = \
        n_{j,k}^i(\BB|_J) \  = \
        n_{i,k}^j(\BB|_J) \, .
    \] 
\end{thm}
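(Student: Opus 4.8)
The plan is to combine Corollary~\ref{cor:simple_inscribed} with the explicit $2$-dimensional criterion from Proposition~\ref{prop:2d_virtual_insc}, together with the edge structure of nestohedra. Since $\Delta_\BB$ is simple, Corollary~\ref{cor:simple_inscribed} tells us that $\Delta_\BB$ is inscribed if and only if all of its $2$-faces are inscribed. So the whole problem reduces to two tasks: (1) identify the $2$-faces of $\Delta_\BB$ and compute, for each, the relevant local fan $\Fan_C$ (where $C = N_FP$ is the codimension-$2$ normal cone of a $2$-face $F$); and (2) translate the inscribability condition for a $2$-dimensional fan — which by Example~\ref{ex:insc_quadrangle} and Theorem~\ref{thm:2d_insc} is a statement about the profile $\beta(\Fan_C)$ — into the combinatorial language of building sets. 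I expect task (1) to be the standard combinatorics of nested complexes: $2$-faces of $\Delta_\BB$ correspond to nested sets of corank $2$, and each such face is a Minkowski summand-type sub-nestohedron living on some $J \subseteq [d]$ after contracting; the restriction $\BB|_J$ is exactly what records the relevant building set. This explains why the theorem is phrased in terms of $\BB|_J$ for all $J \subseteq [d]$.

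The key computation is the following. A $2$-face of a generalized permutahedron is a polygon whose edges are along root directions $e_a - e_b$; for a simple generalized permutahedron such $2$-faces are either triangles, quadrilaterals, pentagons, or hexagons (the only ``even/odd'' possibilities compatible with being a $2$-dimensional generalized permutahedron with root edge directions). Triangles are always inscribable, so they impose no condition. For the others I would use the edge-length description of $\InProf_n$ (the Proposition stating $\InProf_n \cong \interior(\Delta(n,2))$, i.e. a polygon inscribable iff its edge-length vector $\ell$ satisfies $0 < \ell_i < \sum_{j\neq i}\ell_j$) combined with the fact that the edge lengths of a $2$-face of $\Delta_\BB = \sum_I \Delta_I$ are determined by how many summands $\Delta_I$ contribute to each edge direction. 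Concretely, an edge of $\Delta_\BB$ in direction $e_i - e_j$ has length equal to the number of $I \in \BB$ that ``separate'' $i$ from $j$ in the appropriate sense — this is precisely where the counts $n_{i,j}^k(\BB|_J)$ enter: on the $2$-face supported on coordinates essentially indexed by $\{i,j,k\}$ (after restricting to $J$ and contracting), the three edge-direction classes $e_i-e_j$, $e_j-e_k$, $e_i-e_k$ have lengths proportional to $n_{i,j}^k$, $n_{j,k}^i$, $n_{i,k}^j$ respectively (up to the bookkeeping of which sets contain all of $i,j,k$, which cancels). A hexagonal $2$-face then occurs exactly when all three of these counts are positive, and by the hexagon case of Theorem~\ref{thm:2d_insc} (equivalently Example~\ref{ex:hexagon_profile}: the regular-hexagon fan is inscribable but the inscribed realizations force $\ell_0 = \ell_2 = \ell_4$ and $\ell_1 = \ell_3 = \ell_5$, and here opposite edges are forced equal by the generalized-permutahedron symmetry, so the surviving condition is $\ell$-triple equality) this $2$-face is inscribable if and only if the three counts are equal. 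Quadrilateral and pentagonal faces need to be checked separately: a quadrilateral face in a generalized permutahedron has opposite edges parallel and equal in length (it is a parallelogram-like cyclic quadrilateral in the relevant metric), hence automatically satisfies the Example~\ref{ex:insc_quadrangle} condition; a pentagonal face, if it arises, should similarly be handled via Theorem~\ref{thm:2d_insc} and I expect it to impose no new constraint or to be subsumed — this case-check is a subsidiary obstacle.

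In more detail, the steps in order: (i) Recall from Postnikov that $\Delta_\BB$ is simple with face poset anti-isomorphic to the nested complex, and describe its $2$-faces; in particular show each $2$-face $F$, together with its normal cone $C = N_F\Delta_\BB$, gives a $2$-dimensional localization $\Fan_C$ whose profile/edge-lengths are governed by a restricted building set $\BB|_J$ for a suitable $J$ (the ``ground set'' of the face). (ii) Show that the edge of $\Delta_\BB$ in direction $e_i - e_j$ occurring in a given $2$-face has length $= n_{i,j}^k(\BB|_J)$ up to a global constant shared by all three edge classes of that face — this is the heart of the argument and amounts to expanding the Minkowski sum $\sum_{I \in \BB|_J}\Delta_I$ and reading off the vertex displacement across that edge. (iii) Apply Corollary~\ref{cor:simple_inscribed}: $\Delta_\BB$ inscribed $\iff$ every $2$-face inscribed $\iff$ (by the $2$-dimensional classification, using that triangles and parallelogram-quadrilaterals are free and hexagons demand equality of the three edge lengths) the displayed numerical condition holds for all $J$ and all $i,j,k \in J$ with $n_{i,j}^k(\BB|_J), n_{j,k}^i(\BB|_J) > 0$. (iv) Conclude, noting that the symmetry of the condition under permuting $i,j,k$ means the hypothesis ``both $n_{i,j}^k > 0$ and $n_{j,k}^i > 0$'' already forces consideration of the third count.

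\textbf{Main obstacle.} The principal difficulty is step (ii): carefully identifying which $2$-faces of $\Delta_\BB$ are hexagons versus quadrilaterals/pentagons/triangles and computing their edge lengths from the building set. This requires a precise dictionary between corank-$2$ nested sets and triples $(i,j,k) \subseteq J$, and a clean argument that contracting a nested set down to a $2$-face of a nestohedron on coordinates $\{i,j,k\}$ turns $\BB$ into (an object governed by) $\BB|_J$ with the edge lengths being exactly $n_{i,j}^k(\BB|_J)$ etc. Once that combinatorial translation is pinned down, the geometry is entirely supplied by Corollary~\ref{cor:simple_inscribed} and the $2$-dimensional results of Section~\ref{sec:dim2}.
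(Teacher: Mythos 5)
Your overall strategy coincides with the paper's: reduce to $2$-faces via Corollary~\ref{cor:simple_inscribed}, compute each $2$-face from the Minkowski sum $\Delta_\BB^c=\sum_{I\in\BB}\Delta_I^c$ (which shows that every $2$-face not of product type is a translate of $n_{i,j}^k(\BB|_J)\Delta_{\{i,j\}}+n_{j,k}^i(\BB|_J)\Delta_{\{j,k\}}+n_{i,k}^j(\BB|_J)\Delta_{\{i,k\}}+m\Delta_{\{i,j,k\}}$ with $J=\{l:c_l\le c_i\}$), and then apply the planar classification of Section~\ref{sec:dim2}. So the skeleton and your step (ii) are exactly the paper's key computation.

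The gap is in the case analysis, and it sits at the crux of the theorem. The case where exactly two of the three counts are positive is not a ``subsidiary obstacle'' that ``imposes no new constraint'': it is precisely what makes the hypothesis read ``$n_{i,j}^k>0$ and $n_{j,k}^i>0$'' rather than ``all three positive''. When exactly two counts are positive, the building-set axiom forces $m\ge 1$ (take $I_1\in N_{i,j}^k(\BB|_J)$ and $I_2\in N_{j,k}^i(\BB|_J)$; then $j\in I_1\cap I_2$, so $I_1\cup I_2\in\BB|_J$ contains $i,j,k$), so the face is a pentagon with profile $(\tfrac{2\pi}{3},\tfrac{\pi}{3},\tfrac{\pi}{3},\tfrac{\pi}{3},\tfrac{\pi}{3})$ as in Example~\ref{ex:noninsc_pentagon}, and such a pentagon is never inscribable. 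Your guess leans the wrong way here, and without this case the ``only if'' direction fails. Your quadrilateral argument is also backwards: a parallelogram with edge directions $e_i-e_j$ and $e_j-e_k$ has interior angles $60^\circ$ and $120^\circ$ and is never cyclic --- Example~\ref{ex:insc_quadrangle} says exactly this for non-rectangular rhombi --- so ``parallelogram, hence automatically inscribable'' is not a valid inference. The quadrilateral $2$-faces that actually occur are fine for other reasons: products of two segments with disjoint index pairs are rectangles because $e_i-e_j\perp e_k-e_l$, and the faces with exactly one positive count and $m>0$ are isosceles trapezoids. Finally, in the hexagon case opposite edges have lengths $n$ and $n+m$ and are generally \emph{not} equal; the correct criterion is the alternating edge-length condition of Example~\ref{ex:hexagon_profile}, which still yields $n_{i,j}^k=n_{j,k}^i=n_{i,k}^j$ with $m$ unconstrained, so your conclusion there survives even though the stated justification does not.
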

\begin{proof}
    Since $\Delta_\BB$ is simple, Corollary~\ref{cor:simple_inscribed} yields
    that it suffices to check that all $2$-faces of $\Delta_\BB$ are
    inscribed. For $c \in \R^d$, we have
    \[
        \Delta_\BB^c \ = \ \sum_{I \in \BB} \Delta_I^c   \, .
    \]
    Hence, for every face $F \subseteq \Delta_\BB$ of dimension $2$,
    there is a $c$ with $d-2$ distinct coordinates such that
    $\Delta_\BB^c = F$. If $c_i = c_j < c_k = c_l$ for some
    $i, j, k, l$, then $F$ is the Cartesian product of two segments
    and hence inscribable. The remaining case is if $c_i = c_j = c_k$
    for some distinct $i,j,k$. Let
    $J \defeq \{l \in [d] : c_l \leq c_i\}$. If
    $I \in \BB \setminus \BB|_J$, then $\Delta_I^c$ is a vertex. For
    this reason, the face $F$ is a translate of
    \[
        n_{i,j}^k(\BB|_J) \Delta_{\{i,j\}}  + n_{j,k}^i(\BB|_J) \Delta_{\{j,k\}}
        + n_{i,k}^j(\BB|_J) \Delta_{\{i,k\}}  + m \Delta_{\{i,j,k\}}
    \]
    for some $m \ge 0$. The normal fan of $F$ is determined by how
    many numbers of $n_{i,j}^k(\BB|_J)$, $n_{i,k}^j(\BB|_J)$,
    $n_{j,k}^i(\BB|_J)$ are greater than zero. If zero of them are,
    then $F$ is a translate of $m\Delta_{\{i,j,k\}}$ and thus
    inscribable. If there is only one, then $F$ is a translate of an
    isosceles triangle and inscribable again. If there are two, then
    $F$ is a rhombus or a pentagon with normal fan as in
    Example~\ref{ex:noninsc_pentagon} and not inscribable. Finally, if
    there are three, then $F$ is a hexagon with normal fan as in
    Example~\ref{ex:hexagon_profile}, which is inscribable if and only
    if all three numbers are equal.
\end{proof}

\begin{example}[Pitman--Stanley polytopes]
    The $(d-1)$-dimensional \Def{Pitman--Stanley polytope} is the nestohedron
    of the building set
    \[
        \BBflag \ \defeq \ \{ \{1,\dots,k\} : 1 \le k \le d \} \, ,
    \]
    see~\cite[Sect.~8.5]{post}. It is combinatorially but not
    linearly isomorphic to the $(d-1)$-dimensional cube. By
    Theorem~\ref{thm:ins_nestohedra}, it is an inscribed nestohedron.
\end{example}

A particularly nice subclass of nestohedra is given by the graph associahedra.
Let $G = (V,E)$ be a simple graph on nodes $V = [d]$. The \Def{graphical
building set} $\BB(G)$ consists of all non-empty $I \subseteq [d]$ such that
the vertex-induced subgraph $G[I]$ is connected. It is easy to see that
$\BB(G)$ is indeed a building set. If $G$ is a path or a cycle, then
$\Delta_{\BB(G)}$ is the associahedron and the cyclohedron, respectively. If
$G$ is the complete graph, then $\Delta_{\BB(G)}$ is the permutahedron.

\begin{cor}
    Let $G$ be a graph. Then $\Delta_{\BB(G)}$ is inscribed if and only if $G$
    is a disjoint union of complete graphs.
\end{cor}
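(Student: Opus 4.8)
The plan is to derive this as a corollary of Theorem~\ref{thm:ins_nestohedra}. The key preliminary observation is that for a graphical building set one has $\BB(G)|_J = \BB(G[J])$ for every $J \subseteq [d]$: a set $I \subseteq J$ induces a connected subgraph of $G$ if and only if it induces a connected subgraph of $G[J]$. Hence $n_{i,j}^k(\BB(G)|_J)$ counts exactly the subsets $I$ of $J$ with $i,j \in I$, $k \notin I$, and $G[I]$ connected, and the task reduces to deciding for which graphs $G$ the numerical condition of Theorem~\ref{thm:ins_nestohedra} holds for all $J$ and all distinct $i,j,k \in J$.

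For the ``if'' direction, suppose $G$ is a disjoint union of complete graphs; then so is $G[J]$ for every $J$, and a subset $I \subseteq J$ is connected precisely when it is a nonempty subset of $C \cap J$ for a single connected component $C$ of $G$. Whenever $n_{i,j}^k(\BB(G)|_J) > 0$ and $n_{j,k}^i(\BB(G)|_J) > 0$, the pairs $\{i,j\}$ and $\{j,k\}$ each lie inside one component, so $i,j,k$ all lie in the same $C$; writing $m \defeq |C \cap J| \ge 3$, every subset of $C \cap J$ is connected, so each of $n_{i,j}^k$, $n_{j,k}^i$, $n_{i,k}^j$ (restricted to $\BB(G)|_J$) equals $2^{m-3}$ — two prescribed elements in, one prescribed out, the remaining $m-3$ free. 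The criterion of Theorem~\ref{thm:ins_nestohedra} is therefore satisfied and $\Delta_{\BB(G)}$ is inscribed.

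For the ``only if'' direction I would invoke the classical fact that a graph is a disjoint union of complete graphs if and only if it contains no induced path on three vertices. So if $G$ is not such a union, pick vertices $i,j,k$ with $ij, jk \in E$ but $ik \notin E$ and set $J \defeq \{i,j,k\}$. Then the connected subsets of $J$ are all subsets of $\{i,j,k\}$ except $\{i,k\}$ itself, whence $n_{i,j}^k(\BB(G)|_J) = n_{j,k}^i(\BB(G)|_J) = 1$ while $n_{i,k}^j(\BB(G)|_J) = 0$. The first two are positive but the three are not equal, so Theorem~\ref{thm:ins_nestohedra} shows $\Delta_{\BB(G)}$ is not inscribed.

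There is no serious obstacle here: once Theorem~\ref{thm:ins_nestohedra} is available, the argument is essentially bookkeeping. The only points requiring a moment's care are the identity $\BB(G)|_J = \BB(G[J])$ and recalling the forbidden-induced-$P_3$ characterization of disjoint unions of cliques, both of which are standard.
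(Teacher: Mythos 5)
Your proof is correct, and the ``only if'' direction is exactly the paper's argument: find an induced path $ij, jk \in E$, $ik \notin E$, set $J = \{i,j,k\}$, and observe that $n_{i,j}^k = n_{j,k}^i = 1$ while $n_{i,k}^j = 0$, contradicting Theorem~\ref{thm:ins_nestohedra}. Where you diverge is the ``if'' direction: the paper first factors $\Delta_{\BB(G)}$ as the product $\Delta_{\BB(G_1)} \times \cdots \times \Delta_{\BB(G_k)}$ over connected components and then (implicitly) relies on the fact that $\Delta_{\BB(K_m)}$ is a permutahedron, hence inscribed by Section~\ref{sec:permutahedra}, and that products of inscribed polytopes are inscribed. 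You instead verify the counting criterion of Theorem~\ref{thm:ins_nestohedra} directly, computing that whenever $n_{i,j}^k(\BB|_J)$ and $n_{j,k}^i(\BB|_J)$ are both positive, all three vertices lie in one clique meeting $J$ in $m$ elements and all three counts equal $2^{m-3}$. Your route is more self-contained and uniform (everything reduces to the one theorem), while the paper's is shorter and makes the geometric reason visible (the inscribed factors are permutahedra); both are complete, and your explicit treatment of the ``if'' direction actually fills in a step the paper leaves to the reader.
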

\begin{proof}
    If $G$ is disconnected with connected components $G_1,\dots,G_k$, then 
    \[
        \Delta_{\BB(G)} \ = \ \Delta_{\BB(G_1)} \times \Delta_{\BB(G_2)}
        \times \cdots \times \Delta_{\BB(G_k)} \, .
    \]
    So it suffices to restrict to connected $G$ and $|V| = d \ge 2$. 
    
    Assume that $G$ is not complete. We can find nodes $i,j,k$ such
    that $ij, jk \in E$ and $ik \not\in E$. Let
    $J \defeq \{i, j, k\}$. Then $n_{i,j}^k(\BB|_J) = 1$,
    $n_{j,k}^i(\BB|_J) = 1$, but $n_{i,k}^j(\BB|_J) = 0$. It follows from
    Theorem~\ref{thm:ins_nestohedra} that $\Delta_{\BB(G)}$ is not inscribed.
\end{proof}

The corollary implies that neither the associahedron nor the cyclohedron (in
their realization as a generalized permutahedron) is inscribed.

For $I, J \subseteq [d]$, let $I \triangle J \defeq (I \setminus J) \cup (J
\setminus I)$ be their \Defn{symmetric difference}. 
A building set is called
\Defn{$\triangle$-closed}, if for all $I, J \in \BB$ with $I \not\subseteq J$
and $J \not\subseteq I$
\[
    I \cap J \neq \emptyset \ \Longrightarrow \ 
    I \triangle J \in \BB \, .
\]
For example, both the building sets of Pitman--Stanley polytopes, as well as
the graphical building sets of complete graphs are $\triangle$-closed.
\begin{prop}
    If $\BB$ is a $\triangle$-closed building set, then $\Delta_\BB$ is
    inscribed.
\end{prop}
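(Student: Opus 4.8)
The plan is to apply Theorem~\ref{thm:ins_nestohedra}, so the entire task reduces to a counting lemma about the building set: for every $J \subseteq [d]$ and every triple $i,j,k \in J$, if both $n_{i,j}^k(\BB|_J) > 0$ and $n_{j,k}^i(\BB|_J) > 0$, then the three counts $n_{i,j}^k(\BB|_J)$, $n_{j,k}^i(\BB|_J)$, $n_{i,k}^j(\BB|_J)$ coincide. Since $\BB|_J$ is again a building set and, crucially, it is again $\triangle$-closed (restriction to $J$ preserves the defining implication: if $I,I' \subseteq J$ then $I \triangle I' \subseteq J$ as well), I may as well take $J = [d]$ and prove the statement for all $\triangle$-closed building sets.

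First I would fix $i,j,k$ and set $A \defeq N_{i,j}^k(\BB)$, $B \defeq N_{j,k}^i(\BB)$, $C \defeq N_{i,k}^j(\BB)$; so $A$ consists of the sets in $\BB$ containing $i,j$ but not $k$, and similarly for $B,C$. The plan is to build a bijection $A \to B$. Given $I \in A$ and any fixed $I' \in B$ (which exists by hypothesis), consider $I \triangle I'$: note $i \in I \setminus I'$, $k \in I' \setminus I$, while $j \in I \cap I'$, so $I \cap I' \neq \emptyset$, and neither contains the other (because of $i$ and $k$). Hence $\triangle$-closedness gives $I \triangle I' \in \BB$. Now $j$ lies in exactly one of $I, I'$... wait — $j$ lies in both, so $j \notin I \triangle I'$; $i \in I \triangle I'$ and $k \in I \triangle I'$. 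Thus $I \triangle I' \in C$. This shows the map $\phi_{I'} : I \mapsto I \triangle I'$ sends $A$ into $C$, and it is injective since $\triangle$ with a fixed set is an involution on the power set. To see it is onto $C$: given $K \in C$ (so $i,k \in K$, $j \notin K$), I want $I \in A$ with $I \triangle I' = K$, i.e. $I = K \triangle I'$; one checks $i \in K \triangle I'$ (since $i \in K$, $i \notin I'$), $j \in K \triangle I'$ (since $j \notin K$, $j \in I'$), $k \notin K \triangle I'$ (in both), and $K \cap I' \neq \emptyset$, $K \not\subseteq I'$, $I' \not\subseteq K$ by looking at $i,j,k$, so $K \triangle I' \in \BB$, i.e. in $A$. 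Hence $\phi_{I'} : A \to C$ is a bijection, giving $|A| = |C|$. Symmetrically, fixing some $I'' \in A$ (nonempty by hypothesis), $\psi_{I''} : B \to C$, $J \mapsto J \triangle I''$, is a bijection, so $|B| = |C|$. Therefore $|A| = |B| = |C|$, which is exactly the required condition.

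The main obstacle I anticipate is purely bookkeeping: one must verify carefully in each step that $I \triangle I'$ (respectively $K \triangle I'$, $J \triangle I''$) genuinely has the correct membership pattern for $i,j,k$ and that the two sets being symmetric-differenced are incomparable and intersecting, so that $\triangle$-closedness actually applies; there is a small asymmetry in that $\triangle$-closedness as stated requires $I \not\subseteq J$ and $J \not\subseteq I$, which is always guaranteed here by the presence of $i$ in one and $k$ in the other but should be spelled out. A secondary point worth a sentence is the reduction to $J = [d]$: one should remark that $\BB|_J$ is $\triangle$-closed because, for $I, I' \in \BB|_J$, $I \triangle I' \subseteq I \cup I' \subseteq J$, and $I \triangle I' \in \BB$ by $\triangle$-closedness of $\BB$, hence $I \triangle I' \in \BB|_J$; and $n_{\cdot,\cdot}^{\cdot}(\BB|_J)$ is literally the corresponding count inside the smaller $\triangle$-closed building set, so the $J=[d]$ case applied to $\BB|_J$ yields the general statement of Theorem~\ref{thm:ins_nestohedra}.

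Beyond this, one should double-check that the hypothesis "$n_{i,j}^k(\BB|_J) > 0$ and $n_{j,k}^i(\BB|_J) > 0$" is exactly what is needed to pick the base points $I' \in B$ and $I'' \in A$; if one of the two counts is zero the conclusion is vacuous in Theorem~\ref{thm:ins_nestohedra}, and indeed the bijections above need a nonempty source to be constructed. I expect the whole argument to be about half a page once the membership verifications are written out.
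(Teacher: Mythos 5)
Your proof is correct and follows essentially the same route as the paper: reduce to $J=[d]$ by noting that restriction preserves $\triangle$-closedness, then use symmetric difference with a fixed element of one of the nonempty families $N_{i,j}^k(\BB)$, $N_{j,k}^i(\BB)$ to produce involutive bijections equating all three cardinalities. The only difference is cosmetic (which pairs of families you biject and how explicitly you verify injectivity/surjectivity); the membership checks you flag as the "main obstacle" all go through exactly as you describe.
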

\begin{proof}
    We want to check the condition of Theorem~\ref{thm:ins_nestohedra}.
    Clearly, if $\BB$ is triangle closed, then also $\BB|_J$ for any $J
    \subseteq [d]$, so we only need to show the condition for $J = [d]$.  Let
    $i, j, k \in \BB$ and both $n_{i,j}^k(\BB) > 0$ and $n_{j,k}^i(\BB) > 0$.
    Furthermore, let $K \in N_{i,j}^k(\BB)$. Then for all $J \in
    N_{i,k}^j(\BB)$, we see that $J \cap K \not\in \{\emptyset, J, K\}$ and thus
    $J \triangle K \in N_{j,k}^i(\BB)$. Therefore, the map
    \[
        N_{i,k}^j(\BB) \to N_{j,k}^i(\BB)\,, \qquad J \mapsto J \triangle K\,,
    \]
    is a bijection. By a symmetric argument, using some
    $I \in N_{j,k}^i(\BB)$, the map $J \mapsto J \triangle I$ gives a
    bijection between $N_{i,k}^j(\BB)$ and $N_{i,j}^k(\BB)$, so all
    three set have equal size.
\end{proof}

While this gives a quite general class of inscribed nestohedra, not
all examples arise this way. 

\begin{example}
    Consider the following building set for $d = 4$:
    \[
        \BB \ \defeq \ \{ \{1, 2, 3\},\;\{1, 2, 4\},\;\{1, 3, 4\},\;\{2,
        3, 4\},\;\{1, 2, 3, 4\}\}\,.
    \]
    Using Theorem~\ref{thm:ins_nestohedra} one can check that $\Delta_\BB$ is
    inscribed, but $\BB$ is not $\triangle$-closed.
\end{example}

\section{Type cones and inscribed virtual polytopes}\label{sec:type}

In this section, we develop a different perspective on the inscribed cone
$\InCone(\Fan)$ that is invariant under orthogonal transformations of $\Fan$
and that allows us to give a polynomial time algorithm to check if a rational
polytope $P$ is normally inscribable (Theorem~\ref{thm:algo_rational}).

The collection $\TypeCone(P)$ of polytopes normally equivalent to $P$ modulo
translation is an open polyhedral cone~\cite{McMullen-rep} and we realize
$\InCone(P)$ as a subcone. We recall in Section~\ref{sec:PL} that polytopes
normally equivalent to $P$ can be seen as the cone of strictly-convex
piecewise-linear functions supported on $\Fan = \Fan(P)$. This way, the
Grothendieck group associated to the monoid $(\TypeCone(P),+)$ is the 
space $\PL(\Fan)$ of general piecewise-linear functions on $\Fan$
modulo linear functions. We will
realize the inscribed space $\InSpc(P)$ as a subspace of $\PL(\Fan) /
(\R^d)^*$. This gives an interpretation of $\InSpc(\Fan,R_0) \setminus
\cInCone(\Fan,R_0)$ as inscribed \emph{virtual} polytopes
(Section~\ref{sec:virtual}).  Interestingly, a fan $\Fan$ may have virtual
inscribed polytopes but no \emph{actual} inscribed polytopes. This parallels
the situation of fans not admitting a strictly convex PL-functions. Explicit
descriptions in terms of inequalities and equations are given in
Section~\ref{sec:computing}.

\subsection{Type cones and piecewise-linear functions}\label{sec:PL}
The type cone $\TypeCone(P)$ of a polytope $P \subset \R^d$ is the collection
of polytopes $P' \subset \R^d$ normally equivalent to $P$, up to translation.
In this section we recall the well-known result that $\TypeCone(P)$ is an open
polyhedral cone.  In order to understand the polytopes in the closure
$\cTypeCone(P)$, let us call a polytope $Q \subset \R^d$ a \Def{weak Minkowski
summand} of $P$ if there are $\mu > 0$ and a polytope $R$ such that 
\begin{equation}\label{eqn:wMS}
    \mu P \ = \  Q + R \, .
\end{equation}
In particular, $Q$ is normally equivalent to $P$ if, in addition, $P$ is a weak
Minkowski summand of $Q$. We shall see that $\cTypeCone(P)$ is precisely the
collection of weak Minkowski summands of $P$, up to translation.

To understand the boundary structure of $\cTypeCone(P)$, let $\Fan, \Fan'$ be
fans with the same support. We say that $\Fan'$ \Def{coarsens} $\Fan$ (or that
$\Fan$ \Def{refines} $\Fan'$) if for
every region $C \in \Fan$ there is $C' \in \Fan'$ with $C \subseteq C'$.
`Coarsening' defines a partial order on complete fans in $\R^d$ with minimum
$\{\R^d\}$.  It follows from~\eqref{eqn:wMS} that $\Fan(Q)$ coarsens
$\Fan(P)$ whenever $Q$ is a weak Minkowski summand of $P$.  In the
introduction, we called a fan $\Fan$ \emph{polytopal} if
$\Fan = \Fan(P)$ for some polytope $P$ and we write $\cTypeCone(\Fan) \defeq
\cTypeCone(P)$. Conversely, Theorem.~15.1.2 of~\cite{grunbaum} implies that
every polytopal coarsening of $\Fan(P)$ is the normal fan of a weak Minkowski
summand of $P$. The following captures the most important structural
properties of type cones.

\begin{thm}[{\cite[Theorem~7]{McMullen-rep}}]\label{thm:typecones}
    Let $\Fan$ be a complete fan. Then $\cTypeCone(\Fan)$ is a polyhedral
    cone. The set of non-empty faces of $\cTypeCone(P)$ ordered by inclusion
    is isomorphic to the poset of polytopal coarsenings of $\Fan$. If $F
    \subseteq \cTypeCone(\Fan)$ is a non-empty face corresponding to $\Fan'$,
    then $F = \cTypeCone(\Fan')$.
\end{thm}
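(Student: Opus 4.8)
The plan is to prove Theorem~\ref{thm:typecones} by passing to the language of piecewise-linear functions on $\Fan$, as announced at the start of Section~\ref{sec:PL}. Fix the complete fan $\Fan$ in $\R^d$. To a polytope $P$ with $\Fan(P)$ coarsening $\Fan$ we associate its support function $h_P(c) = \max_{x \in P} \inner{c,x}$, which is a convex function on $\R^d$ that is linear on each region of $\Fan$; conversely every convex piecewise-linear function supported on $\Fan$ is the support function of a weak Minkowski summand of any $P$ with $\Fan(P) = \Fan$. Since $h_{P+Q} = h_P + h_Q$ and $h_{P}$ is determined by $P$ up to the linear term coming from translations, this identifies the monoid $(\cTypeCone(\Fan),+)$ with the cone $C(\Fan)$ of convex PL-functions on $\Fan$ modulo linear functions, sitting inside the finite-dimensional vector space $\mathrm{PL}(\Fan)/(\R^d)^*$. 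A PL-function on $\Fan$ is recorded by its values $(h_v)_{v}$ on a chosen ray-generator of each ray of $\Fan$ (equivalently, by the linear functional it restricts to on each region), and convexity is the finite system of \emph{wall-crossing inequalities}: for each wall $W = R \cap R'$ the two linear pieces $\ell_R, \ell_{R'}$ must agree on $\lin(W)$ and satisfy $\ell_R \le \ell_{R'}$ on the $R'$-side. This exhibits $\cTypeCone(\Fan)$ as the intersection of $\mathrm{PL}(\Fan)/(\R^d)^*$ with finitely many closed halfspaces, hence a polyhedral cone; this is the first assertion.

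\textbf{Next I would identify the face structure.} A face $F$ of the polyhedral cone $\cTypeCone(\Fan)$ is cut out by turning some subset of the wall-crossing inequalities into equalities. For a PL-function $h$ lying in the relative interior of $F$, the set of walls $W$ at which equality $\ell_R = \ell_{R'}$ holds (i.e.\ $h$ is actually linear across $W$) depends only on $F$; call it $S(F)$. The key geometric observation is that $h$ being linear across exactly the walls in $S(F)$ means precisely that the domains of linearity of $h$ are the regions of the coarsening $\Fan'$ obtained from $\Fan$ by merging regions along the walls of $S(F)$ — and since $h$ is convex, $\Fan' = \Fan(P')$ for the weak Minkowski summand $P'$ with support function $h$, so $\Fan'$ is polytopal. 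One checks $\Fan'$ is a genuine fan: merging across a set of walls produces convex regions precisely because $h$ is globally convex and linear on each merged piece. Conversely, any polytopal coarsening $\Fan'$ of $\Fan$ arises this way: by Theorem~15.1.2 of~\cite{grunbaum} it is $\Fan(P')$ for a weak Minkowski summand $P'$ of $P$, and $h_{P'} \in \cTypeCone(\Fan)$ has relative interior in the face associated to the walls that $\Fan'$ erases. This gives a well-defined order-preserving bijection (in both directions) between nonempty faces of $\cTypeCone(\Fan)$ and polytopal coarsenings of $\Fan$: a larger face erases fewer walls, hence corresponds to a finer coarsening, matching the coarsening partial order on the polytopal-fan side.

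\textbf{Finally, the identification $F = \cTypeCone(\Fan')$.} If $F$ corresponds to $\Fan'$, a point of $F$ is a convex PL-function $h$ on $\Fan$ that is linear across every wall in $S(F)$, i.e.\ a convex PL-function on the coarsening $\Fan'$; conversely every convex PL-function on $\Fan'$ refines to one on $\Fan$ lying in $F$. Thus, modulo linear functions, $F$ and $\cTypeCone(\Fan')$ are the same set of support functions, hence the same cone of translation classes of weak Minkowski summands — so $F = \cTypeCone(\Fan')$ as claimed. \textbf{The main obstacle} I anticipate is the bookkeeping in the face correspondence: one must verify carefully that the subset of wall-inequalities made tight on a face is exactly the set of walls erased by the corresponding coarsening (no ``accidental'' extra tightness occurs at relative-interior points, which uses that a generic convex function with prescribed domains of linearity is strictly convex across the remaining walls), and that merging exactly those walls always yields a bona fide fan rather than a non-convex subdivision — both of which follow from convexity of $h$ but deserve an explicit argument. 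The polyhedrality and the $F = \cTypeCone(\Fan')$ statement are then comparatively routine once the dictionary is set up.
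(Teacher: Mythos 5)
Your proof is correct and follows essentially the same route the paper intends: Theorem~\ref{thm:typecones} is quoted as a known result of Shephard/McMullen (via Section~15.1 of Gr\"unbaum), and the paper then develops precisely your dictionary in Section~\ref{sec:PL} --- Proposition~\ref{prop:PL} for the support-function correspondence, Theorem~\ref{thm:PL-lambda} for the wall-crossing inequalities $\lambda_{RS}\ge 0$, and Corollary~\ref{cor:TypeCone_desc} together with the unnamed corollary on connected components of the graph $\{RS : \lambda_{RS}=0\}$ for the face/coarsening bijection. The one substantive step you assert rather than prove is that convexity across each wall already implies global convexity of a PL-function on a complete fan (so that the wall inequalities really cut out $\cTypeCone(\Fan)$ inside $\PL(\Fan)/(\R^d)^*$); this is exactly the nontrivial direction of Theorem~\ref{thm:PL-lambda}, which the paper handles via the acyclic orientation of $G(\Fan)$ induced by a generic point of a region, and it deserves the same explicit argument in your write-up.
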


A polytope $P$ is \Def{indecomposable} if $\cTypeCone(P)$ is $1$-dimensional,
that is, if every weak Minkowski summand of $P$ is homothetic to $P$.
Theorem~\ref{thm:typecones} implies that the rays of $\cTypeCone(P)$
correspond to the indecomposable weak Minkowski summands of $P$.

It is advantageous to view Theorem~\ref{thm:typecones} from the perspective of
piecewise-linear functions. In fact, the results leading to
Corollary~\ref{cor:TypeCone_desc} essentially give a complete proof of
Theorem~\ref{thm:typecones}. Let $\Fan$ be a full-dimensional and strongly
connected fan in $\R^d$. A continuous function $\ell : |\Fan| \to \R$ is a
\Def{piecewise-linear} (PL) function supported on $\Fan$ if for every region
$R \in \Fan$, the restriction $\ell_R$ is given by a linear function. It is
clear that the collection $\PL(\Fan)$ of all piecewise-linear functions
supported on $\Fan$ has the structure of an $\R$-vector space. 

The \Def{support function} of a polytope $P \subset \R^d$ is the function
$h_P : \R^d \to \R$ 
\[
    h_P(c) \ \defeq \ \max\{ \inner{c,x} : x \in P \} \ = \
     \max\{ \inner{c,v} : v \in V(P) \} \, .
\]
This is a convex PL-function supported on $\Fan(P)$  that uniquely determines
$P$.
A convex PL-function $\ell$ is \Def{strictly} convex with respect to $\Fan$ if 
\[
    \ell(x+y) \ < \  \ell(x) + \ell(y)
\]
for any points $x,y \in \R^d$ not contained in the same region.

\begin{prop}\label{prop:PL}
    Let $\Fan$ be a polytopal fan and $\ell$ a PL-function supported
    on $\Fan$. The following are equivalent:
    \begin{enumerate}[\rm (i)]
        \item $\ell$ is strictly convex;
        \item For all $x \in \R^d$ it holds that 
            $ \ell(x)  =  \max \{ \ell_R(x) : R \in \Fan \text{ region} \} $;
        \item $\ell = h_P$ for some polytope $P$ with $\Fan(P) = \Fan$.
    \end{enumerate}
\end{prop}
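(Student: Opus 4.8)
The plan is to establish the cycle of implications (i) $\Rightarrow$ (ii) $\Rightarrow$ (iii) $\Rightarrow$ (i), which is the natural route since (ii) is the computational characterization that makes $\PL(\Fan)$ tractable and (iii) is the statement we most want to use elsewhere. Throughout, write $\ell_R$ for the unique linear function agreeing with $\ell$ on the region $R$; since $\ell$ is piecewise-linear and $\Fan$ is complete, each $\ell_R$ is defined on all of $\R^d$ by extension.

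For (i) $\Rightarrow$ (ii): fix $x \in \R^d$ and a region $R \ni x$, so $\ell(x) = \ell_R(x)$; it suffices to show $\ell_S(x) \le \ell(x)$ for every other region $S$. First observe that a strictly convex PL-function is in particular convex, hence $\ell \ge \ell_S$ pointwise on $|\Fan| = \R^d$ for every $S$ (a convex function lies above each of its linear pieces — this is the standard supporting-hyperplane fact, which I would record as a short lemma or cite from the PL-function folklore). Then $\ell_S(x) \le \ell(x)$, giving (ii). For (ii) $\Rightarrow$ (i): if $x,y$ lie in distinct regions, pick a region $R$ containing $x+y$; then using (ii) at $x+y$ and linearity of $\ell_R$ on the segment, $\ell(x+y) = \ell_R(x) + \ell_R(y)$, and by (ii) at $x$ and $y$ we have $\ell_R(x) \le \ell(x)$, $\ell_R(y) \le \ell(y)$; the point is that at least one inequality is strict because $x$ and $y$ are not in a common region — if both were equalities then $\ell$ would coincide with $\ell_R$ on points of two different regions, and one shows this forces $x,y$ into the region $R$ (or a common region), contradicting the hypothesis. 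This last step is where I expect the main friction: making precise that the equality locus $\{z : \ell(z) = \ell_R(z)\}$ is exactly $\overline{R}$ (or more carefully, that it is a union of faces and cannot contain interior points of two adjacent regions) requires the finite polyhedrality of $\Fan$ and a little care, and is really the heart of the argument.

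For (ii) $\Rightarrow$ (iii): assuming (ii), define $P \defeq \{ c \in (\R^d)^* : \inner{c,x} \le \ell(x) \text{ for all } x\}$; equivalently $P = \bigcap_{R} \{c : \inner{c,\cdot} \le \ell_R\}$, a finite intersection of halfspaces, hence a polytope (its boundedness comes from $\Fan$ being full-dimensional, so the $\ell_R$ together constrain $c$ in all directions). A standard computation shows $h_P = \ell$: by construction $h_P \le \ell$, and (ii) together with the fact that for each region $R$ there is a vertex $v_R$ of $P$ with $\inner{v_R, \cdot} = \ell_R$ on $R$ gives the reverse inequality; identifying $\Fan(P)$ with $\Fan$ then follows from $h_P = \ell$ being linear on each $R$ and strictly convex across walls. (Here I would lean on Proposition~\ref{prop:normally_equiv} and the discussion of support functions already in the excerpt.) Finally (iii) $\Rightarrow$ (i) is essentially the definition: if $\ell = h_P$ with $\Fan(P) = \Fan$, and $x,y$ are not in a common region of $\Fan$, then the faces $P^x$ and $P^y$ are distinct, so the maximizer of $\inner{x+y,\cdot}$ over $P$ cannot simultaneously maximize both $\inner{x,\cdot}$ and $\inner{y,\cdot}$, whence $h_P(x+y) < h_P(x) + h_P(y)$; the polytopality hypothesis on $\Fan$ guarantees $P$ is a genuine polytope so these maximizers are attained at vertices and the strict inequality is real. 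Assembling the three implications closes the loop and proves the proposition.
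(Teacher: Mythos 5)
Your overall architecture is the same as the paper's: the cycle (i) $\Rightarrow$ (ii) $\Rightarrow$ (iii) $\Rightarrow$ (i). Two steps take a mildly different route. For (i) $\Rightarrow$ (ii) you invoke the general fact that a convex function dominates each of its linear pieces (which also needs the remark that the paper's ``strict convexity'' together with positive homogeneity of a PL function yields ordinary convexity); the paper instead does a two-line cone computation: for $r \in \interior(R)$ and $s \in S$ chosen so that $r+s \in \interior(S)$, strict convexity gives $\ell_S(r) = \ell_S(r+s) - \ell_S(s) = \ell(r+s) - \ell_S(s) < \ell_R(r)$. For (ii) $\Rightarrow$ (iii) you build $P$ as the dual body $\{c : \inner{c,x} \le \ell(x) \text{ for all } x\}$, whereas the paper takes $P = \conv(v_R : R)$ and reads $h_P = \max_R \inner{v_R,\cdot} = \ell$ straight off (ii); both yield $h_P = \ell$, though your parenthetical that $P$ is a finite intersection of halfspaces is off ($\inner{c,\cdot} \le \ell_R$ on the full-dimensional cone $R$ is a polyhedral condition on $c$, not a single halfspace). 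Your (iii) $\Rightarrow$ (i) matches the paper's, and is in fact slightly more careful since it treats arbitrary $x,y$ not in a common region rather than only interior points.

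The friction you flag is real, and it does not only affect your redundant extra implication (ii) $\Rightarrow$ (i) --- it resurfaces at the one place your cycle genuinely needs it. Condition (ii) by itself only says $\ell$ is convex (it equals the upper envelope of its linear pieces); it does \emph{not} force distinct regions to carry distinct linear pieces. Concretely, if $Q$ is a polytope whose normal fan is a proper polytopal coarsening of $\Fan$ (e.g.\ $\Fan$ refines the normal fan of a square by one extra ray), then $\ell = h_Q$ is supported on $\Fan$ and satisfies (ii), yet $\ell$ is not strictly convex with respect to $\Fan$ and $\Fan(Q) \neq \Fan$. So the equality-locus claim in your (ii) $\Rightarrow$ (i) is false as stated, and in your (ii) $\Rightarrow$ (iii) the phrase ``strictly convex across walls'' begs the question: strictness is exactly what (ii) does not supply, and without it $\Fan(P)$ may be a strict coarsening of $\Fan$. (The paper's own proof of (ii) $\Rightarrow$ (iii) silently omits the verification $\Fan(P) = \Fan$ as well.) The repair is to read (ii) as the statement the (i) $\Rightarrow$ (ii) argument actually delivers, namely $\ell_R(x) > \ell_S(x)$ for all $x \in \interior(R)$ and all regions $S \neq R$; with that strict version the $v_R$ are pairwise distinct, each $v_R$ is a vertex of $P$ with $R \subseteq N_{v_R}P$, completeness of both fans forces $N_{v_R}P = R$, and both of your implications out of (ii) close.
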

\begin{proof}
    (i) $\Rightarrow$ (ii): Let $R \in \Fan$ be a region and $r \in
    \interior(R)$. For any other region $S \in \Fan$, let $s \in S$
    such that $r+s \in \interior(S)$. Then
    \[
        \ell_S(r) + \ell_S(s) \ = \ \ell_S(r+s) \ = \ \ell(r+s) \ < \ \ell(r) +
        \ell(s) \ = \ \ell_R(r) + \ell_S(s) \, .
    \]
    This shows $\ell_R(r) \ge \ell_S(r)$ for all $r \in R$ and all regions
    $S$ and hence (ii) holds.

    (ii) $\Rightarrow$ (iii): For $R \in \Fan$, let $v_R \in \R^d$ such that
    $\ell_R(x) = \inner{v_R,x}$ for all $x \in R$ and define $P = \conv(v_R :
    R \in \Fan)$. Convexity now forces $h_P(x) = \max\{ \inner{v_R,x} : R\} =
    \ell(x)$ for all $x$.

    (iii) $\Rightarrow$ (i): Let $r \in \interior(R)$ and $s \in
    \interior(S)$ for two distinct regions $R,S \in \Fan = \Fan(P)$.
    Let $v_R = P^r$ and $v_S = P^s$ be the corresponding vertices. That is,
    $h_P(r) = \inner{r,v_R} > \inner{r,x}$ for all $x \in P \setminus \{v_R\}$
    and likewise for $s$ and $v_S$. For $r+s$ there is a point $x \in P$ such
    that 
    \[
        h_P(r+s) \ = \ \inner{r+s,x} \ = \ 
        \inner{r,x} + \inner{s,x} \ < \ 
        \inner{r,v_R} + \inner{s,v_S} \ = \ h_P(r) + h_P(s) \, .
        \qedhere
    \]
\end{proof}

The domains of linearity of a convex PL-function $\ell \in \PL(\Fan)$
are convex and yield a coarsening of $\Fan$. The proof now implies
that any convex PL-function $\ell$ determines a polytope
$Q \ \defeq \ \{ x : \inner{c,x} \le \ell(c) \text{ for all } c \}$
whose normal fan $\Fan(Q)$ coarsens $\Fan$. Hence, if $\Fan = \Fan(P)$
is polytopal, then convex PL-functions supported on $\Fan$ are in
bijection to weak Minkowski summands of $P$.

Recall that the dual graph of $\Fan$ is the undirected graph $G(\Fan) = (V,E)$
whose nodes $V(\Fan)$ are the regions of $\Fan$ and regions $R,S$ satisfy $RS
\in E(\Fan)$ if $\dim R \cap S = d-1$. If $R,S$ are adjacent, then let
$\alpha_{RS}$ be the unit vector such that $\lin(R\cap S) = \alpha_{RS}^\perp$
and $\inner{\alpha_{RS},x} \le 0$ for all $x \in R$. That is, $\alpha_{RS}$ is
an \emph{outer} normal vector to $R$ and  note that $\alpha_{SR} =
-\alpha_{RS}$. Let $\ell_R : \R^n \to \R$ be a linear function for each region
$R \in \Fan$. The collection $(\ell_R)_R$ determines a PL function supported
on $\Fan$ if $\ell_R(x) = \ell_S(x)$ for all $x \in R \cap S$. If $R$ and $S$
are adjacent, then this is equivalent to 
\begin{equation}\label{eqn:PL_adjacent}
    \ell_S(x) - \ell_R(x)  \ = \ \lambda_{RS}\inner{\alpha_{RS},x}
\end{equation}
for some $\lambda_{RS} \in \R$. Clearly, $\lambda_{SR} = \lambda_{RS}$ and
thus we can regard $\lambda$ as a function $ E(\Fan) \to \R$.

Let $R_0$ be a fixed base region with $\ell_0 \defeq \ell_{R_0}$. For any region
$R$ let $R_0 R_1\dots R_k = R$ be a walk in $G(\Fan)$. Then
\begin{equation}\label{eqn:PL-R0}
    \ell_R(x) \ = \ \ell_{0}(x) + \sum_{i=1}^k
    \lambda_{R_{i-1}R_i} \inner{\alpha_{R_{i-1}R_i},x} \, .
\end{equation}
In particular, this means that for any closed walk ($R_k = R$), we
have
\begin{equation}\label{eqn:closed_walk}
    \sum_{i=1}^k \lambda_{R_{i-1}R_{i}}  \alpha_{R_{i-1}R_{i}} \ = \ 0 \, .
\end{equation}
A map $\lambda : E(\Fan) \to \R$ satisfying \eqref{eqn:closed_walk} for all
closed walks is called a \Def{$1$-weight} on $\Fan$;
cf.~\cite{McMullen-weights}. As we do not consider higher weights in this
paper, we simply call $\lambda$ a \Def{weight}. The argument above now shows
the following.

\begin{prop}\label{prop:PL_as_pairs}
    Let $\Fan$ be a full-dimensional and strongly connected fan in $\R^d$. For
    a fixed region $R_0$ we have that $\PL(\Fan)$ is isomorphic to the
    collection of pairs $(\ell_0,\lambda) \in (\R^d)^* \times \R^{E(\Fan)}$,
    where $\ell_0$ is a linear function and $\lambda = (\lambda_{RS})_{RS \in
    E(\Fan)}$ is a weight.
    Changing the base region only affects $\ell_0$ and leaves $\lambda$
    invariant.
\end{prop}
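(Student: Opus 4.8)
The plan is to exhibit two mutually inverse linear maps between $\PL(\Fan)$ and the space $\mathcal{L}$ of admissible pairs $(\ell_0,\lambda)\in(\R^d)^*\times\R^{E(\Fan)}$ subject to \eqref{eqn:closed_walk}, using the computations \eqref{eqn:PL_adjacent}--\eqref{eqn:closed_walk} already recorded above. First I would define the forward map $\Phi\colon\PL(\Fan)\to\mathcal{L}$ by $\Phi(\ell)\defeq(\ell_{R_0},\lambda)$, where for each edge $RS\in E(\Fan)$ the scalar $\lambda_{RS}$ is the one produced by \eqref{eqn:PL_adjacent}. Here one checks well-definedness: since the wall $R\cap S$ is $(d-1)$-dimensional it spans the hyperplane $\lin(R\cap S)=\alpha_{RS}^\perp$, so the linear functional $\ell_S-\ell_R$, which vanishes on $R\cap S$, is a scalar multiple of $\inner{\alpha_{RS},\cdot}$ and the scalar is unique; running the same computation with $R$ and $S$ swapped and using $\alpha_{SR}=-\alpha_{RS}$ gives $\lambda_{SR}=\lambda_{RS}$, so $\lambda$ is a well-defined function on $E(\Fan)$. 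That $\lambda$ satisfies \eqref{eqn:closed_walk} for every closed walk is immediate from \eqref{eqn:PL-R0}: telescoping \eqref{eqn:PL_adjacent} along a closed walk returns $\ell_{R_0}$ to itself, so the linear functional $x\mapsto\sum_i\lambda_{R_{i-1}R_i}\inner{\alpha_{R_{i-1}R_i},x}$ vanishes identically.

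Next I would construct the inverse $\Psi\colon\mathcal{L}\to\PL(\Fan)$. Given $(\ell_0,\lambda)$, for each region $R$ pick a walk $R_0R_1\cdots R_k=R$ in $G(\Fan)$ (possible since $\Fan$ is strongly connected) and define $\ell_R$ by the right-hand side of \eqref{eqn:PL-R0}. The crucial point is walk-independence: any two walks from $R_0$ to $R$ differ by a closed walk based at $R_0$ after cancelling backtracks, and a backtrack along an edge contributes $\lambda_{RS}\alpha_{RS}+\lambda_{SR}\alpha_{SR}=0$; hence \eqref{eqn:closed_walk} for the remaining closed walk forces the two expressions for $\ell_R$ to coincide. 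One then checks that the family $(\ell_R)_R$, each piece being linear by construction, glues to a continuous function on $|\Fan|$: across every wall $R\cap S$ the two adjacent pieces agree by \eqref{eqn:PL_adjacent}, and agreement across all walls propagates to agreement on every lower-dimensional cone $C$, because within the star of $C$ one can pass from any incident region to any other through a chain of wall-adjacent regions whose walls all contain $C$.

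Finally I would record that $\Phi$ and $\Psi$ are linear (all defining formulas are linear in $\ell$, respectively in $(\ell_0,\lambda)$) and mutually inverse: $\Psi(\Phi(\ell))=\ell$ since \eqref{eqn:PL-R0} reconstructs each $\ell_R$ from $\ell_{R_0}$ and the $\lambda_{RS}$, while $\Phi(\Psi(\ell_0,\lambda))=(\ell_0,\lambda)$ since $\Psi$ returns $\ell_{R_0}=\ell_0$ along the trivial walk and recovers each $\lambda_{RS}$ via \eqref{eqn:PL_adjacent}. The base-region claim is then transparent: the defining relation \eqref{eqn:PL_adjacent} for $\lambda_{RS}$ makes no reference to $R_0$, so replacing $R_0$ by another region $R_0'$ leaves $\lambda$ unchanged and only replaces $\ell_0=\ell_{R_0}$ by $\ell_{R_0'}$, which by \eqref{eqn:PL-R0} equals $\ell_{R_0}$ plus the $\lambda$-weighted telescoping sum along any walk from $R_0$ to $R_0'$.

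I expect the only step requiring more than bookkeeping to be the gluing argument in the construction of $\Psi$, namely that continuity of a piecewise-linear family across all walls automatically yields agreement on every lower-dimensional cone; this is precisely where strong connectivity of $\Fan$ (more accurately, of the localizations $\Fan_C$) enters. Everything else reduces to the identities \eqref{eqn:PL_adjacent}--\eqref{eqn:closed_walk} that were established before the statement.
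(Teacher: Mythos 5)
Your proposal is correct and follows essentially the same route as the paper, which derives the proposition directly from the identities \eqref{eqn:PL_adjacent}--\eqref{eqn:closed_walk} and leaves the reconstruction of $\ell$ from $(\ell_0,\lambda)$ implicit. You merely spell out the inverse direction (walk-independence via \eqref{eqn:closed_walk} and the gluing across lower-dimensional cones) in more detail than the paper does, which is a faithful elaboration rather than a different argument.
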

For a PL-function $\ell \in \PL(\Fan)$ we will write $\lambda(\ell)$ for the
corresponding element in $\R^{E(\Fan)}$ in this isomorphism. 
We next give a well-known description of strictly convex PL-functions in
terms of the weights $\lambda \in \R^{E(\Fan)}$.

\begin{thm}\label{thm:PL-lambda}
    Let $\Fan$ be a complete fan in $\R^d$ and let $\ell \in \PL(\Fan)$.
    Then $\ell$ is strictly convex if and only if $\lambda(\ell)_{RS} > 0$ for
    all adjacent regions $R,S \in \Fan$.
\end{thm}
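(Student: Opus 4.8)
The plan is to prove the two implications separately, each time translating between the PL-function $\ell$ and its weight vector $\lambda=\lambda(\ell)$ via identity~\eqref{eqn:PL_adjacent}, and then to appeal to Proposition~\ref{prop:PL}.

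\emph{Necessity of $\lambda_{RS}>0$.} Suppose $\ell$ is strictly convex with respect to $\Fan$ and let $R,S\in\Fan$ be adjacent regions. Fix $r_0\in\interior(R)$ and $s\in\interior(S)$; since $\interior(S)$ is open, for $\eps>0$ small the vector $r\defeq\eps r_0$ lies in $\interior(R)$ while $r+s\in\interior(S)$. By the fan axioms no region contains both $r$ and $s$, so strict convexity gives $\ell(r+s)<\ell(r)+\ell(s)$. Substituting $\ell(r)=\ell_R(r)$, $\ell(s)=\ell_S(s)$ and $\ell(r+s)=\ell_S(r+s)=\ell_S(r)+\ell_S(s)$ (linearity of $\ell_S$) yields $(\ell_S-\ell_R)(r)<0$, which by~\eqref{eqn:PL_adjacent} reads $\lambda_{RS}\inner{\alpha_{RS},r}<0$; since $\alpha_{RS}$ is an outer normal of $R$ and $r\in\interior(R)$ we have $\inner{\alpha_{RS},r}<0$, hence $\lambda_{RS}>0$.

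\emph{Sufficiency.} Assume $\lambda_{RS}>0$ for all adjacent $R,S$. First I would show $\ell$ is convex, equivalently $\ell(x)=\max_{R}\ell_R(x)$ for all $x$. For a generic segment $[a,b]$ the restriction $\ell|_{[a,b]}$ is piecewise linear and crosses a sequence of walls $W_i=R_{i-1}\cap R_i$ transversally; at the $i$-th crossing the slope jumps by $(\ell_{R_i}-\ell_{R_{i-1}})(b-a)=\lambda_{R_{i-1}R_i}\inner{\alpha_{R_{i-1}R_i},b-a}$, and since $b-a$ points from $R_{i-1}$ into $R_i$ this inner product is positive, so the slope is nondecreasing and $\ell|_{[a,b]}$ is convex. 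Generic segments being dense, $\ell$ is convex by continuity, so — as in the discussion after Proposition~\ref{prop:PL} — $\ell=h_Q$ with $Q=\conv(v_R:R\in\Fan)$, where $\ell_R=\inner{v_R,\cdot}$, and $\Fan(Q)$ coarsens $\Fan$.

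\emph{Upgrading to strict convexity.} It remains to prove $\Fan(Q)=\Fan$; then $\Fan$ is polytopal and Proposition~\ref{prop:PL}\,(iii)$\Rightarrow$(i) finishes the proof. If $\Fan(Q)$ were a proper coarsening, some region $R$ of $\Fan$ would lie strictly inside the maximal cone $N_{v_R}Q$. Joining a point of $N_{v_R}Q\setminus R$ to an interior point of $R$ and using convexity of $N_{v_R}Q$ together with completeness of $\Fan$, one finds an interior point $s$ of a region $S$ adjacent to $R$ across a wall $W$ with $s\in N_{v_R}Q$. Then $\inner{s,v_R}\ge\inner{s,v_S}$, i.e.\ $\ell_R(s)\ge\ell_S(s)$, while $\ell=\max$ gives $\ell_S(s)=\ell(s)\ge\ell_R(s)$; hence $\ell_R(s)=\ell_S(s)$, and since also $\ell_R=\ell_S$ on $\lin(W)$ while $s\notin\lin(W)$, we get $\ell_R=\ell_S$, i.e.\ $\lambda_{RS}=0$, a contradiction.

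The computations with~\eqref{eqn:PL_adjacent} are routine; the genuine work — and the step I expect to be the main obstacle — is the passage from ``convex across every wall'' to global convexity (where completeness of $\Fan$ enters, both in the density-of-generic-segments argument and in the final step that excludes merging of non-adjacent regions). Alternatively one could phrase sufficiency through Theorem~\ref{thm:typecones}: convex PL-functions on $\Fan$ are exactly support functions of weak Minkowski summands, so $\lambda\ge0$ cuts out $\cTypeCone(\Fan)$ inside $\PL(\Fan)/(\R^d)^*$, and $\lambda>0$ singles out those summands whose normal fan equals $\Fan$, namely $\TypeCone(\Fan)$.
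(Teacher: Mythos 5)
Your proof is correct, and while the necessity direction is the same computation as in the paper (you place $r+s$ in $\interior(S)$ where the paper places $x+y$ in $R$, which only flips a sign), your sufficiency argument takes a genuinely different route. The paper avoids proving convexity as a separate step: it fixes a generic $x\in\interior(R_0)$, orients each dual edge $RS$ according to the sign of $\inner{\alpha_{RS},x}$ to get an acyclic orientation with $R_0$ as unique source, and then telescopes \eqref{eqn:PL-R0} along a directed path to get $\ell_{R_0}(x)-\ell_R(x)=-\sum_i\lambda_{R_{i-1}R_i}\inner{\alpha_{R_{i-1}R_i},x}>0$ in one stroke; this simultaneously yields Proposition~\ref{prop:PL}(ii) and the strictness that forces $\Fan(Q)=\Fan$. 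You instead decompose the claim into (a) convexity, via monotonicity of the directional derivative of $\ell$ along generic segments crossing walls, and (b) exclusion of a proper coarsening, via the observation that two regions merging in $\Fan(Q)$ would force $\ell_R=\ell_S$ on a spanning set and hence $\lambda_{RS}=0$. Both steps are sound (your step (b) correctly notes that $\interior(R)\subseteq N_{v_R}Q$ makes $N_{v_R}Q$ full-dimensional, and the segment argument is the classical one), and your identification of completeness as the load-bearing hypothesis is accurate — it is exactly what makes $R_0$ the unique source in the paper's orientation argument and what lets you reach an adjacent region inside $N_{v_R}Q$ in yours. The trade-off: the paper's argument is shorter and self-contained, while yours is more modular and makes the geometric content ($\ell$ is convex, and strictness is precisely non-degeneracy across each wall) explicit. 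Your closing remark that one could instead invoke Theorem~\ref{thm:typecones} should be used with care, since in the paper Corollary~\ref{cor:TypeCone_desc} is itself deduced from the present theorem, so that route would be circular as a proof within this paper's logical order.
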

\begin{proof}
    Let $\ell$ be strictly convex and $R,S$ adjacent regions. Let $x \in
    \interior(R)$ and $y \in \interior(S)$ such that $x+y \in R$. It follows
    from~\eqref{eqn:PL_adjacent} that 
    \[
        \ell(x+y) \ = \ \ell_R(x+y) < \ell_R(x) + \ell_S(y) \ = \
        \ell_R(x) + \ell_R(y)  + \lambda_{RS} \inner{\alpha_{RS},y} \, .
    \]
    Since $S$ and $R$ are separated by $\alpha_{RS}^\perp$ and 
    $y \not\in R$, we have $\inner{\alpha_{RS},y} > 0$ and hence
    $\lambda_{RS} > 0$.

    For the converse, note that by Proposition~\ref{prop:PL}(ii) it is
    sufficient to show that for $x \in \interior(R_0)$, $\ell_{R_0}(x) >
    \ell_S(x)$ for all regions $S \neq R_0$. As $R_0$ is arbitrary, this shows
    it for all regions. Let $x \in \interior(R_0)$ be generic, that is,
    $\inner{\alpha_{RS},x} \neq 0$ for all $RS \in E(\Fan)$. Any such point
    induces an acyclic orientation on $G(\Fan)$ as follows: For two adjacent
    regions $R,S$, we orient the edge from $R$ to $S$ if
    $\inner{\alpha_{RS},x} < 0$. Note that $R_0$ is the unique source and for
    every region $R$ there is a directed path $R_0R_1\dots R_k = R$. We
    compute
    \[
        \ell_{R_0}(x) - \ell_{R}(x) \ = \ -\sum_{j=1}^k
        \underbrace{\lambda_{R_{i-1}R_i}}_{> 0}
        \underbrace{\inner{\alpha_{R_{i-1}R_i},x}}_{<0} \ > \ 0 \, . \qedhere
    \]
\end{proof}

The result implies that for a convex PL-function $\ell$ supported on $\Fan$,
we can read off the induced coarsening of $\Fan$.

\begin{cor}
    Let $\ell$ be a convex PL-function supported on $\Fan$ and let
    $\lambda = \lambda(\ell)$.  Denote the connected components of the
    graph $(V(\Fan), \{ RS \in E(\Fan) : \lambda_{RS} = 0 \})$ by
    $V_1,\dots,V_s \subseteq V(\Fan)$.  The coarsening $\Fan'$ of
    $\Fan$ induced by $\ell$ has regions $\bigcup V_i$ for
    $i=1,\dots,s$.
\end{cor}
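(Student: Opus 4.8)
The plan is to realise the coarsening $\Fan'$ as the normal fan of the polytope $Q \defeq \{x \in \R^d : \inner{c,x} \le \ell(c)\text{ for all }c\}$ and to match each region of $\Fan'$ with a set of the form $\bigcup_{R \in V_i}R$. Recall from Proposition~\ref{prop:PL} and the discussion following it that, writing $v_R$ for the gradient of $\ell_R$ (so $\ell_R(x) = \inner{v_R,x}$ for $x \in R$), one has $\ell = h_Q = \max_R \inner{v_R,\cdot}$, $Q = \conv(v_R : R \in \Fan)$, and $\Fan(Q) = \Fan'$ is a coarsening of $\Fan$. The first step is the elementary translation between weights and gradients: by~\eqref{eqn:PL_adjacent}, adjacent regions $R,S$ satisfy $v_R = v_S$ precisely when $\lambda_{RS} = 0$, and iterating~\eqref{eqn:PL-R0} along a walk shows that any two regions joined by a walk through edges with $\lambda = 0$ have the same gradient. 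In particular, all regions of one component $V_i$ share a common gradient $v^{(i)}$, and the set $C_i \defeq \bigcup_{R \in V_i}R$ is a full-dimensional cone on which $\ell$ agrees with the linear function $\inner{v^{(i)},\cdot}$.

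Next I would establish the two inclusions relating the $C_i$ to the regions of $\Fan'$. For ``$C_i \subseteq N_{v^{(i)}}Q$'': since $\ell = h_Q$ equals $\inner{v^{(i)},\cdot}$ on $C_i$ and $v^{(i)} \in Q$, the vector $v^{(i)}$ maximises $\inner{\cdot,c}$ over $Q$ for every $c \in C_i$; as $C_i$ is full-dimensional, this forces $v^{(i)}$ to be a vertex of $Q$, with $C_i$ contained in its normal cone. Conversely, let $N_vQ$ be any region of $\Fan' = \Fan(Q)$. Because $\Fan$ refines $\Fan'$, the cone $N_vQ$ is the union of the $\Fan$-regions it contains, and each such region $R$ satisfies $\ell|_R = h_Q|_R = \inner{v,\cdot}$, hence $v_R = v$. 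It remains to see that any two such regions $R,S$ lie in a single component. Here I use that $N_vQ$ is a \emph{convex} full-dimensional cone: choose generic $x \in \interior(R)$, $y \in \interior(S)$; then $[x,y] \subseteq N_vQ$ by convexity, and genericity makes $[x,y]$ cross $\Fan$ only through walls, yielding a chain of regions $R = R_0, R_1, \dots, R_k = S$ with consecutive members adjacent and with each $R_j$ containing a point $p_j$ lying in both $\interior(R_j)$ and $N_vQ$. Since $\Fan$ refines $\Fan'$, the point $p_j$ lies in the interior of the unique $\Fan'$-region containing $R_j$, so that region must be $N_vQ$ and $R_j \subseteq N_vQ$; therefore $v_{R_j} = v$ for all $j$, whence $\ell_{R_{j-1}} = \ell_{R_j}$ and, by~\eqref{eqn:PL_adjacent}, $\lambda_{R_{j-1}R_j} = 0$. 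Thus $R_0R_1\cdots R_k$ is a walk through zero-weight edges and $R,S$ share a component. Combining the two inclusions, $N_vQ = C_i$ where $V_i$ is the component of any region inside $N_vQ$, and every $C_i$ arises this way, so the regions of $\Fan'$ are precisely $C_1,\dots,C_s$.

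The routine ingredients are the identities relating~\eqref{eqn:PL_adjacent}--\eqref{eqn:PL-R0} to gradients and the standard transversality fact that a generic segment inside a full-dimensional cone meets $\Fan$ only through codimension-$1$ cones. The point that requires care --- and the reason it pays to pass to the polytope $Q$ --- is the second inclusion: knowing merely that $\ell$ is linear on $C_i$ does not by itself show that $C_i$ is convex, and a linear function restricted to a segment does not determine the gradient on the full-dimensional regions the segment traverses. Working with the normal cone $N_vQ$, which is convex \emph{a priori}, is exactly what lets the generic-segment argument propagate the equality $v_{R_j} = v$ across an entire region of $\Fan'$.
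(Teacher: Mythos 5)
Your proof is correct. The paper states this corollary without proof, as an immediate consequence of Theorem~\ref{thm:PL-lambda} and the identification of the coarsening induced by $\ell$ with $\Fan(Q)$ for $Q=\{x:\inner{c,x}\le\ell(c)\}$; your argument --- $\lambda_{RS}=0$ iff $v_R=v_S$ across a wall by~\eqref{eqn:PL_adjacent}, plus the a priori convexity of the normal cones $N_vQ$ exploited via a generic segment to connect all $\Fan$-regions inside a region of $\Fan'$ by zero-weight walls --- is exactly the intended filling-in of the details.
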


As we are only interested in polytopes $P$ with normal fan $\Fan$ up to
translation, we get the following.
\begin{cor}\label{cor:TypeCone_desc}
    Let $\Fan$ be a complete fan with dual graph $G(\Fan) = (V,E)$. Then 
    \[
        \cTypeCone(\Fan)  \ \cong \ \{ \lambda \in \R^E_{\ge 0} : \lambda
        \text{ satisfies~\eqref{eqn:closed_walk} for all closed walks}  \} \,
        .
    \]
\end{cor}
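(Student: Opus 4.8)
The plan is to reduce the statement to the linear-algebraic bookkeeping of piecewise-linear functions developed above. By the discussion following Proposition~\ref{prop:PL}, the convex PL-functions supported on $\Fan$ are in bijection with the weak Minkowski summands of a polytope $P$ with $\Fan(P) = \Fan$, and hence --- after quotienting by the subspace $(\R^d)^* \subseteq \PL(\Fan)$ of linear functions, which corresponds precisely to translating the summand --- with $\cTypeCone(\Fan)$. So it suffices to locate the cone of convex PL-functions inside $\PL(\Fan)/(\R^d)^*$.

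First I would apply Proposition~\ref{prop:PL_as_pairs}: fixing a base region $R_0$, the map $\ell \mapsto (\ell_0, \lambda(\ell))$ is a linear isomorphism of $\PL(\Fan)$ onto $(\R^d)^* \times L$, where
\[
    L \ \defeq \ \{ \lambda \in \R^{E(\Fan)} : \lambda \text{ satisfies~\eqref{eqn:closed_walk} for all closed walks} \} \, ,
\]
and the first factor is exactly the image of the linear functions (these have $\lambda \equiv 0$, and adding a linear function leaves $\lambda(\ell)$ unchanged). Hence $\ell + (\R^d)^* \mapsto \lambda(\ell)$ is a linear isomorphism $\PL(\Fan)/(\R^d)^* \cong L$. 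Under this identification Theorem~\ref{thm:PL-lambda} states that the \emph{strictly} convex PL-functions map onto $L \cap \R^{E(\Fan)}_{>0}$, so $\TypeCone(\Fan) \cong L \cap \R^{E(\Fan)}_{>0}$.

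It then remains to pass to the closure, and here the only thing to check is that being convex (not necessarily strictly) corresponds on the $\lambda$-side to $\lambda_{RS} \ge 0$ for all adjacent $R,S$. The implication ``convex $\Rightarrow$ $\lambda \ge 0$'' is the computation in the proof of Theorem~\ref{thm:PL-lambda} with each strict inequality relaxed to a weak one (using positive homogeneity of PL-functions supported on a fan). For the converse I would perturb: if $\lambda(\ell) \ge 0$ and $\ell'$ is any strictly convex PL-function on $\Fan$ --- one exists since $\Fan$ is polytopal --- then $\lambda(\ell + \eps \ell') = \lambda(\ell) + \eps\lambda(\ell') > 0$ for every $\eps > 0$, so $\ell + \eps\ell'$ is strictly convex by Theorem~\ref{thm:PL-lambda}, and letting $\eps \to 0^+$ exhibits $\ell$ as a pointwise limit of convex functions, hence convex. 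Thus the cone of convex PL-functions maps onto $L \cap \R^{E(\Fan)}_{\ge 0}$, giving $\cTypeCone(\Fan) \cong L \cap \R^E_{\ge 0} = \{\lambda \in \R^E_{\ge 0} : \lambda \text{ satisfies~\eqref{eqn:closed_walk}}\}$. Equivalently, one may observe that $L \cap \R^E_{>0}$ is nonempty by polytopality, hence is the relative interior of the polyhedral cone $L \cap \R^E_{\ge 0}$, while $\cTypeCone(\Fan)$ is by definition the closure of $\TypeCone(\Fan)$.

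There is no genuine obstacle here: the corollary repackages Propositions~\ref{prop:PL} and~\ref{prop:PL_as_pairs} together with Theorem~\ref{thm:PL-lambda}. The one point that needs a moment's care --- and which I expect to be the only subtlety --- is matching the convex / strictly-convex dichotomy with the closed-cone / relative-interior dichotomy on the $\lambda$-side, i.e.\ verifying that no coordinate $\lambda_{RS}$ is forced to vanish on $L$; this is precisely where polytopality of $\Fan$ (the existence of a strictly convex PL-function) enters.
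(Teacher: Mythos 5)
Your argument is correct and follows the same route the paper intends: the corollary is stated there without proof as the direct combination of Proposition~\ref{prop:PL}, Proposition~\ref{prop:PL_as_pairs} and Theorem~\ref{thm:PL-lambda}, exactly as you assemble it, and your care with the closure step (nonemptiness of $L \cap \R^{E}_{>0}$ forcing its closure to be $L \cap \R^{E}_{\ge 0}$) isolates the right subtlety. One small remark: the hypothesis is a \emph{complete} fan, and for a non-polytopal complete fan your perturbation by a strictly convex $\ell'$ is unavailable; in that case the converse direction is better obtained by relaxing the inequalities in the second half of the proof of Theorem~\ref{thm:PL-lambda}, which shows that $\lambda \ge 0$ already forces $\ell = \max_R \ell_R$ and hence convexity without any polytopality assumption.
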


Note that~\eqref{eqn:closed_walk} has to hold for only finitely many closed
walks. We give two particular choices for these finitely many walks.  Let $T$
be a spanning tree of $G(\Fan)$. Every edge $RS \in E(\Fan) \setminus E(T)$
closes a cycle $C_{RS}$ in $T \cup \{RS\}$, called the \emph{fundamental
cycle} with respect to $T$. The following is standard and follows from the
fact that both collections of cycles give a basis for the cycle space of
$G(\Fan)$.

\begin{prop}\label{prop:cycle_basis}
    Let $\Fan$ be a full-dimensional and strongly connected fan and $\lambda
    \in \R^E$. Then $\lambda$ satisfies~\eqref{eqn:closed_walk} for all closed
    walks if $\lambda$ satisfies~\eqref{eqn:closed_walk}  for
    \begin{enumerate}[\rm (i)]
        \item all fundamental cycles with respect to a fixed spanning tree
            $T$, or
        \item the cycles of links of codimension-$2$ cones not in the boundary
            of $|\Fan|$.
    \end{enumerate}
\end{prop}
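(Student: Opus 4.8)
The plan is to recast~\eqref{eqn:closed_walk} as a single linear condition on the cycle space of the dual graph $G(\Fan)$, after which both parts reduce to a spanning statement for that cycle space. The ``only if'' direction is trivial, since every fundamental cycle and every link walk \emph{is} a closed walk; so the content is the ``if'' direction.

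First I would fix, for each edge $e = RS \in E(\Fan)$, one of its two orientations, say from $R$ to $S$, and record the vector $a_e \defeq \alpha_{RS}$ (the opposite orientation gives $\alpha_{SR} = -a_e$). A closed walk $\Walk = R_0R_1\cdots R_k = R_0$ then determines a $1$-chain $z_\Walk \in \Z^{E(\Fan)}$ whose $e$-entry counts, with signs, how often $\Walk$ traverses $e$ in the chosen orientation; telescoping shows $z_\Walk$ lies in the cycle space $Z \defeq \ker\bigl(\partial\colon \R^{E(\Fan)} \to \R^{V(\Fan)}\bigr)$. Using $\lambda_{RS} = \lambda_{SR}$ one rewrites the left-hand side of~\eqref{eqn:closed_walk} as
\[
    \sum_{i=1}^k \lambda_{R_{i-1}R_i}\,\alpha_{R_{i-1}R_i} \ = \ \sum_{e \in E(\Fan)} (z_\Walk)_e\,\lambda_e\, a_e \,,
\]
and the right-hand side, call it $B(z_\Walk,\lambda)$, is linear in $z_\Walk$ for each fixed $\lambda$. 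Hence $\lambda$ satisfies~\eqref{eqn:closed_walk} for \emph{all} closed walks iff $B(z,\lambda) = 0$ for every $z$ in the span of $\{z_\Walk : \Walk \text{ a closed walk}\}$; since simple fundamental cycles (which are such $z_\Walk$) already span $Z$ and every $z_\Walk \in Z$, this span is exactly $Z$. Thus it is enough to check $B(z_j,\lambda) = 0$ for any family $(z_j)$ spanning $Z$, and part (i) follows immediately from the classical fact that the fundamental cycles $C_{RS}$, $RS \in E(\Fan)\setminus E(T)$, form a basis of $Z$.

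For part (ii), the link walk $\Walk_C$ of a codimension-$2$ cone $C$ not on the boundary of $|\Fan|$ runs once around the regions and walls containing $C$, so $B(z_{\Walk_C},\lambda)$ is again the left-hand side of~\eqref{eqn:closed_walk} for this walk. What remains is to show that the chains $z_{\Walk_C}$ span $Z$. I would argue this via the standard ``wall-crossing'' principle for fans: fixing a base region, any closed walk can be transformed into the trivial walk by repeatedly inserting or deleting the link walks $\Walk_C$ of interior codimension-$2$ cones (together with backtracks, which contribute $0$ to the chain). Equivalently, the $2$-complex obtained from $G(\Fan)$ by attaching a $2$-cell along each $\Walk_C$ has the homotopy type of (a deformation retract of) $|\Fan|$, which is contractible when $\Fan$ is complete — the case relevant for Corollary~\ref{cor:TypeCone_desc} — so its $H_1$ vanishes and the classes $[\Walk_C]$ generate $H_1(G(\Fan);\R) = Z$. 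Combined with the linear reduction above, this yields (ii).

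The telescoping identity and the graph-theoretic fact in (i) are routine. The one genuinely non-formal step is the spanning statement in (ii): one must know that the only relations among closed walks come from circling codimension-$2$ cones, which is precisely where simple-connectivity of $|\Fan|$ (automatic for complete $\Fan$) is used. This is the expected main obstacle, and the place where I would either cite the simple-connectivity/wall-crossing fact or spell out the induction on the number of regions crossed.
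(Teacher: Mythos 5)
Your proof is correct and follows the same route the paper takes: the paper dispatches this proposition in one sentence, asserting it is ``standard'' because both collections of cycles generate the cycle space of $G(\Fan)$, and your write-up simply makes explicit the two ingredients that sentence relies on --- the observation that, since $\lambda_{SR}=\lambda_{RS}$ and $\alpha_{SR}=-\alpha_{RS}$, the left-hand side of~\eqref{eqn:closed_walk} is a linear function of the $1$-chain $z_\Walk$ and hence only its class in the cycle space matters, and the two spanning facts. (Pedantically, the link cycles in (ii) span but need not form a basis --- for the normal fan of a $3$-polytope there is one more of them than $\dim Z$ --- but that changes nothing.)

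One remark worth recording: you are right that simple connectivity is the genuine content of (ii), and right to retreat to complete fans there. For a fan that is merely full-dimensional and strongly connected, (ii) as literally stated can fail: take the three cones over the rectangular facets of a triangular prism containing the origin. Every codimension-$2$ cone (every ray) lies in $\partial|\Fan|$, so the hypothesis of (ii) is vacuous, yet the dual graph is a $3$-cycle and~\eqref{eqn:closed_walk} for that cycle is a nontrivial condition on $\lambda$. The point is that $|\Fan|\setminus\{\0\}$ retracts onto the spherical cross-section, which need not be simply connected even though $|\Fan|$ itself is star-shaped; completeness (or simple connectivity of the cross-section) is the hypothesis actually needed, and it holds in every application the paper makes of (ii).
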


\subsection{Inscribed Minkowski summands}\label{sec:relative}
Let $\Fan$ be a polytopal fan.  Theorem~\ref{thm:typecones} states that the
facial structure of the polyhedral cone $\cTypeCone(\Fan)$ is given in terms
of polytopal coarsenings of $\Fan$, partially ordered by refinement.  Let
$\cInCone(\Fan) \subseteq \cTypeCone(\Fan)$ be the closure of $\InCone(\Fan)$ in
the Hausdorff metric. The following example illustrates that the geometry
and the combinatorics of $\cInCone(\Fan)$ is more subtle and deserves further
study.

\begin{example} \label{ex:relatively_inscribable}
    Let $\Fan$ be the normal fan of the regular hexagon $P$. We already
    computed the inscribed cone of $\Fan$ using its profile in
    Example~\ref{ex:hexagon_profile}, but we now want to illustrate how the
    inscribed cone is embedded into the type cone.
    Corollary~\ref{cor:TypeCone_desc} allows us to embed $\cTypeCone(\Fan)$ as
    a $4$-dimensional subcone of $\R^6$. The hyperplane $\lambda_1 + \lambda_2
    + \dots + \lambda_6 = 1$ meets all rays of $\cTypeCone(\Fan)$ and the
    intersection yields a $3$-dimensional polytope $T$, depicted in
    Figure~\ref{fig:hexagon_type_cone}.  The faces of $T$ are in one-to-one
    correspondence with the polytopal coarsenings of $\Fan$. The vertices of
    $T$ correspond to the indecomposable summands of $P$. North- and south
    pole correspond to the triangles $\triangle$ and $\nabla$. The
    three vertices on the equator of $T$ are segments along the three distinct
    edge directions of $P$.

    \begin{figure}[h] 
        \centering
        \includegraphics[width=0.45\textwidth]{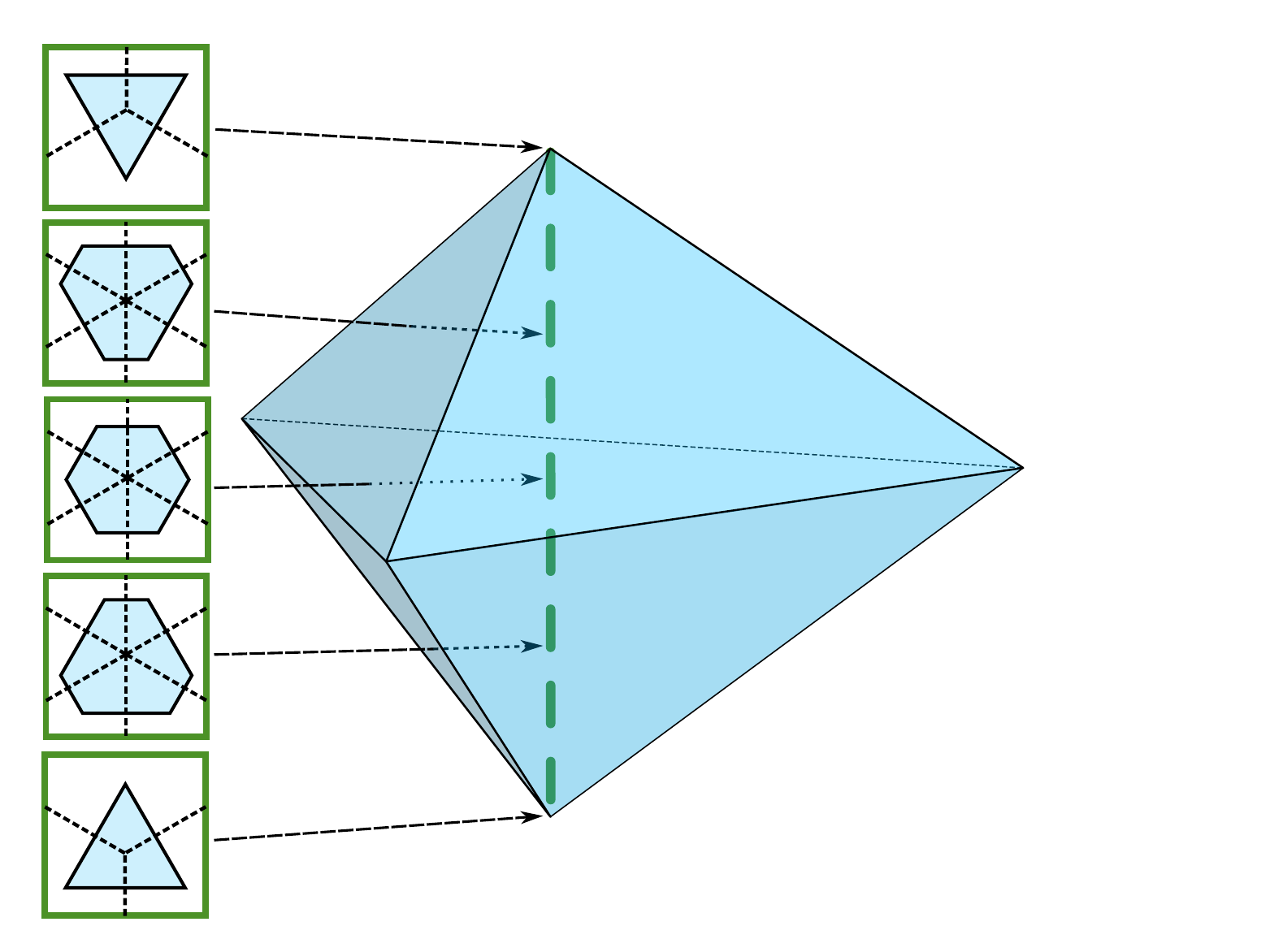}
        \includegraphics[width=0.45\textwidth]{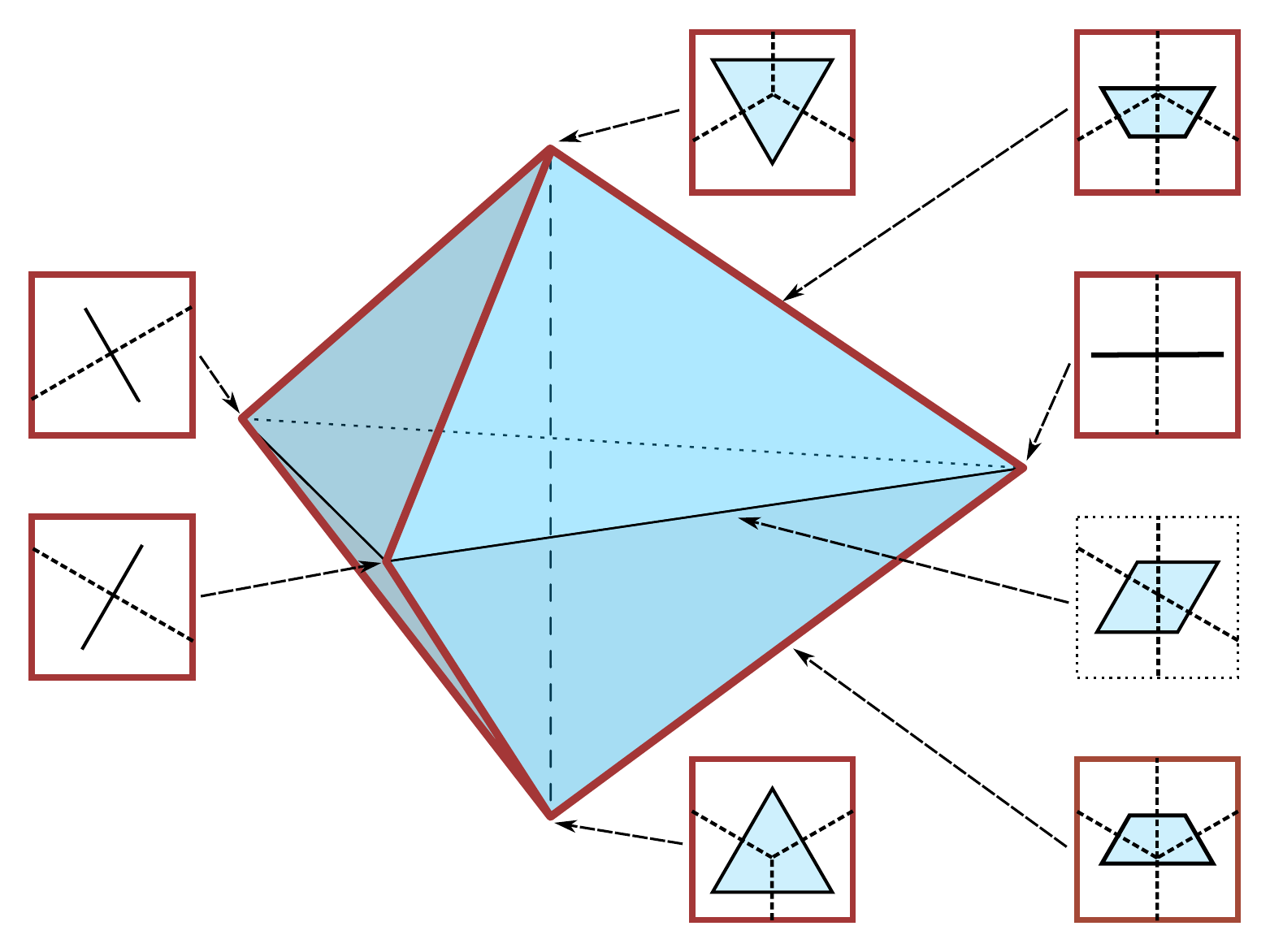}
        \caption{The type cone of a regular hexagon (intersected with
          a hyperplane) is shown on the left. The inscribed cone is
          the vertical green segment. Inscribed weak Minkowski summands are
          highlighted in red on the right. Most of them are not relatively
          inscribed.}
        \label{fig:hexagon_type_cone}
    \end{figure}

    All indecomposable summands are inscribed to some sphere but, as we have
    seen in Example~\ref{ex:hexagon_profile}, the inscribed cone
    $\InCone(\Fan)$ is spanned by two rays corresponding to $\triangle$ and
    $\nabla$.  In $T$, $\cInCone(\Fan)$ is thus given by the segment
    connecting the two polytopes; see Figure~\ref{fig:hexagon_type_cone} left.
    The other decomposable summands of $P$ are rhombi, isosceles trapezoids,
    and certain pentagons. We already worked through all weak Minkowski
    summands of $P$ in the Examples~\ref{ex:insc_quadrangle}
    and~\ref{ex:noninsc_pentagon} and saw that only the isosceles trapezoids
    are normally inscribable. The extraneous inscribed examples are marked in
    red in the right of Figure~\ref{fig:hexagon_type_cone}.
\end{example}

The example demonstrates that not every inscribable coarsening $\Fan'$
of $\Fan$ occurs in the boundary of $\cInCone(\Fan)$.  We will call a
polytope $Q$ inscribed \Defn{relatively to} $\Fan$ if
$Q \in \cInCone(\Fan)$.  Consequently, a coarsening $\Fan'$ of $\Fan$
is inscribable \Def{relative to $\Fan$}, if $\Fan' = \Fan(Q)$ for some
$Q \in \cInCone(\Fan)$.

\begin{thm}\label{thm:Q_in_cInSpc}
    Let $\Fan$ be an inscribable fan and $Q \in \cTypeCone(\Fan)$.
    Then $Q \in \cInCone(\Fan)$ if and only if $Q$ is inscribed and 
    $(V(Q)-c(Q)) \cap R \neq \emptyset$ for all regions $R \in \Fan$.
\end{thm}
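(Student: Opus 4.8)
The plan is to reduce everything to the reflection game of Section~\ref{sec:ref_game}. Translating $Q$ we may assume $c(Q)=\0$, so that $V(Q)-c(Q)=V(Q)$; both the hypothesis and the conclusion $Q\in\cInCone(\Fan)$ are translation invariant. Put $\Fan'\defeq\Fan(Q)$, which coarsens $\Fan$ because $Q\in\cTypeCone(\Fan)$, so each region $R\in\Fan$ lies in a unique region $R'\in\Fan'$; abbreviate $w_R\defeq v_{R'}(Q)$, the vertex of $Q$ whose $\Fan'$-normal cone contains $R$. I want to show that $w_{R_0}$ represents $Q$ inside $\cInCone(\Fan,R_0)$, namely that $w_{R_0}\in\InSpc(\Fan,R_0)$, that $w_R\in R$ for every region $R$, and that $\conv(w_R:R\in\Fan)=Q$; by Corollary~\ref{cor:InSpc_iso} this is equivalent to the statement.

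For the implication ``$Q\in\cInCone(\Fan)\Rightarrow$'' I would take $P_n\in\InCone(\Fan)$ with $c(P_n)=\0$ and $P_n\to Q$ in the Hausdorff metric. The circumradius of $P_n$ equals $\max\{\|x\|:x\in P_n\}$, which is continuous in the Hausdorff metric, hence converges to $r\defeq\max\{\|x\|:x\in Q\}$. For a region $R\in\Fan$ with parent $R'\in\Fan'$ and $c\in\interior(R)\subseteq\interior(R')$ one has $v_R(P_n)=P_n^c\to Q^c=w_R$, while $\|v_R(P_n)\|$ is the circumradius of $P_n$; hence $\|w_R\|=r$ for all $R$, so $Q$ is inscribed. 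Also $v_R(P_n)\in\interior(R)$ by Corollary~\ref{cor:key_int}, so $w_R\in\overline R=R$, which gives $V(Q)\cap R\neq\emptyset$. (The degenerate case $r=0$ gives $Q=\{\0\}$, where both conditions hold trivially.)

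For the converse, assume $Q$ inscribed with $c(Q)=\0$ and $V(Q)\cap R\neq\emptyset$ for all $R\in\Fan$. First I would check that $w_R\in R$ for every $R$: a vertex $u\in V(Q)\cap R$ satisfies $u\in\interior(N_uQ)$ by Corollary~\ref{cor:key_int}, and since $u\in R\subseteq R'$ while distinct regions of $\Fan'$ have disjoint interiors, $N_uQ=R'$, so $u=w_R$. Next, the heart of the argument: for adjacent $R,S\in\Fan$ with wall $W=R\cap S$ I would prove $w_S=s_W(w_R)$. If $R,S$ lie in distinct regions $R'\ne S'$ of $\Fan'$, these are adjacent in $\Fan'$ with wall $W'=R'\cap S'\supseteq W$, so $\lin(W)=\lin(W')$, $s_W=s_{W'}$, and $w_S=s_{W'}(w_R)$ by Corollary~\ref{cor:key_reflect}. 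If $R,S\subseteq R'$ for one region $R'\in\Fan'$, then $w_R=w_S=v_{R'}(Q)$, and applying the previous observation to both $R$ and $S$ forces $w_R\in R\cap S=W\subseteq\lin(W)$, whence $s_W(w_R)=w_R=w_S$. Consequently $t_\Walk(w_{R_0})=w_{R_k}$ for every walk $\Walk=R_0R_1\cdots R_k$ of $G(\Fan)$; in particular closed walks fix $w_{R_0}$, so $w_{R_0}\in\InSpc(\Fan,R_0)$.

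Finally I would push $w_{R_0}$ into $\cInCone(\Fan)$. Using that $\Fan$ is inscribable, fix $w_0\in\InCone(\Fan,R_0)$ and set $w_\lambda\defeq\lambda w_0+(1-\lambda)w_{R_0}\in\InSpc(\Fan,R_0)$ for $\lambda\in(0,1]$; choosing a path $\Walk_R$ from $R_0$ to $R$, one has $t_{\Walk_R}(w_\lambda)=\lambda\,t_{\Walk_R}(w_0)+(1-\lambda)w_R\in\lambda\interior(R)+(1-\lambda)R\subseteq\interior(R)$, so by~\eqref{eqn:InSpc_intersect_rep} $w_\lambda\in\InCone(\Fan,R_0)$ and the corresponding inscribed polytope has normal fan $\Fan$. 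Since the inverse of the homeomorphism $v_{R_0}$ is continuous and sends $w$ to $\conv(t_{\Walk_R}(w):R\in\Fan)$, letting $\lambda\to0$ produces a sequence in $\InCone(\Fan)$ converging to $\conv(t_{\Walk_R}(w_{R_0}):R)=\conv(w_R:R)=\conv(V(Q))=Q$, so $Q\in\cInCone(\Fan)$. The step I expect to be the main obstacle is exactly the reflection identity $s_W(w_R)=w_S$ across a wall $W$ that has become interior to a region of $\Fan'$ after coarsening: this is where the hypothesis $V(Q)\cap R\neq\emptyset$ is used in full, and it would break down for a coarsening in which some region of $\Fan$ carries no vertex of $Q$.
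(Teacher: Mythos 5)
Your proof is correct and follows essentially the same route as the paper: the forward direction by passing to the limit of the based vertices $v_R(P_n)$, and the converse by establishing the reflection identity $w_S=s_W(w_R)$ across each wall (split into the same two cases according to whether the wall survives in $\Fan(Q)$) and then perturbing by an actual inscribed polytope with normal fan $\Fan$. The only differences are cosmetic — you work in the based cone $\InCone(\Fan,R_0)$ with the convex combination $\lambda w_0+(1-\lambda)w_{R_0}$ where the paper writes $Q+\eps P$, and you justify inscription of the limit via continuity of the circumradius rather than via Corollary~\ref{cor:key_reflect}.
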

\begin{proof}
    Let $(P_n)_{n \in \N}$ be a sequence of polytopes in $\InCone(\Fan)$ which
    converges to $Q$. We may assume that $c(P_n) = 0$ for all $n \ge 0$ and by
    Lemma~\ref{lem:key} and Corollary~\ref{cor:key_int}, $P_n$ is uniquely
    determined by $\{p_n\}  = V(P_n) \cap \interior(R)$ for some arbitrary but
    fixed region $R \in \Fan$. It follows that $\lim_{n \to \infty} p_n = p
    \in R$. We infer from Corollary~\ref{cor:key_reflect} that $Q$ is
    inscribed with $c(Q) = 0$ and thus $p$ is a vertex of $Q$. This shows that
    $V(Q) \cap R \neq \emptyset$ for all regions $R \in \Fan$.

    For the only if part, assume that $c(Q) = 0$. Since
    $Q \in \cTypeCone(\Fan)$, the normal fan $\Fan(Q)$ is a coarsening
    of $\Fan$ and there is a region $C \in \Fan(Q)$ with
    $R \subseteq C$. Now $Q$ is inscribed and
    Corollary~\ref{cor:key_int} gives that $\interior C$ contains
    exactly one vertex of $Q$ and that $\partial C$ does not contain
    any vertex of $Q$. Since $V(Q) \cap R \neq \emptyset$, we see
    that $|V(Q) \cap R| = 1$.  We write $q_R$ for the unique vertex of
    $Q$ contained in $R$.

    Let $R, R' \in \Fan$ be two regions such that $R \cap R'$ is a wall and
    associated reflection $s_{RR'}$.  Let $C, C' \in \Fan(Q)$ be the unique
    regions such that $R \subseteq C$ and $R' \subseteq C'$.  If $C = C'$,
    then $q_R = q_{R'} \in R \cap R'$ and therefore $s_{RR'}(q_{R}) = q_{R'}$.
    If $C \neq C'$, then $s_{CC'} = s_{RR'}$ and therefore $s_{RR'}(q_R) =
    s_{CC'}(q_C) = q_{C'} = q_{R'}$. Let $P \in \InCone(\Fan)$ with $c(P) = 0$.
    It follows that $Q + \eps P \in \InCone(\Fan)$ for every $\eps > 0$ and
    hence $Q \in \cInCone(\Fan)$.
\end{proof}

\begin{cor}\label{cor:based_cInSpc}
    Let $R_0 \in \Fan$ be a region. The linear map $v_{R_0}$ extends to a linear
    homeomorphism
    \[
        \cInCone(\Fan) \ \cong \ \cInCone(\Fan, R_0)\,.
    \]
    This extension is given by $\{v_{R_0}(Q)\} = (V(Q) - c(Q)) \cap R$.
\end{cor}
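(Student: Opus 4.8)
The plan is to promote the linear homeomorphism $v_{R_0}\colon\InCone(\Fan)\to\InCone(\Fan,R_0)$ of Corollary~\ref{cor:InSpc_iso} to the closures, with Theorem~\ref{thm:Q_in_cInSpc} supplying all the structure. First I would define the extension. Let $Q\in\cInCone(\Fan)$; by Theorem~\ref{thm:Q_in_cInSpc} the polytope $Q$ is inscribed, and after translating so that $c(Q)=\0$ every region $R\in\Fan$ meets $V(Q)$. The proof of that theorem shows moreover that $|V(Q)\cap R|=1$ and that $s_{RR'}(q_R)=q_{R'}$ for the unique vertices $q_R,q_{R'}$ in adjacent regions $R,R'$; hence $q_R=t_\Walk(q_{R_0})$ for every walk $\Walk$ from $R_0$ to $R$, and so $Q=\conv(q_R:R\text{ a region})=\conv\bigl(t_{\Walk_R}(q_{R_0}):R\bigr)$ for any fixed system of walks $\Walk_R$ from $R_0$. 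Setting $v_{R_0}(Q)\defeq q_{R_0}$, i.e.\ $\{v_{R_0}(Q)\}=(V(Q)-c(Q))\cap R_0$, gives a map that is well defined on translation classes (as $c(\,\cdot\,)$ is translation-equivariant) and restricts to the original $v_{R_0}$ on $\InCone(\Fan)$.

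Second I would establish that this map is a bijection onto $\cInCone(\Fan,R_0)$, the closure of $\InCone(\Fan,R_0)$ in $\R^d$, with inverse $\Phi(v_0)\defeq\conv\bigl(t_{\Walk_R}(v_0):R\text{ a region}\bigr)$. For $v_0\in\InCone(\Fan,R_0)$ one has $\Phi(v_0)=v_{R_0}^{-1}(v_0)\in\InCone(\Fan)$ by Corollary~\ref{cor:InSpc_iso}; for a general $v_0\in\cInCone(\Fan,R_0)$ pick $v_n\to v_0$ with $v_n\in\InCone(\Fan,R_0)$, note that $\Phi$ is continuous for the Hausdorff metric (the convex hull of finitely many continuously varying points varies continuously), and conclude $\Phi(v_0)=\lim_n\Phi(v_n)\in\cInCone(\Fan)$. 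That $v_{R_0}$ and $\Phi$ are mutually inverse is exactly the identity $Q=\Phi(v_{R_0}(Q))$ from the previous paragraph, together with the observation that $v_0=t_{\Walk_{R_0}}(v_0)$ is a vertex of $\Phi(v_0)$ lying in $R_0$ (all vertices of $\Phi(v_0)$ lie on a sphere about $\0$, hence are extreme points, and $\cInCone(\Fan,R_0)\subseteq R_0$ by Corollary~\ref{cor:key_int}), so $v_{R_0}(\Phi(v_0))=v_0$.

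It remains to see that $v_{R_0}$ is a linear homeomorphism. Continuity of $\Phi$ was used above; for continuity of $v_{R_0}$ suppose $Q_n\to Q$ in $\cInCone(\Fan)$ with all centers at $\0$. Since $Q_n,Q\in\cTypeCone(\Fan)$, their support functions restrict to linear functions on the full-dimensional cone $R_0$, namely $\inner{v_{R_0}(Q_n),\,\cdot\,}$ and $\inner{v_{R_0}(Q),\,\cdot\,}$; Hausdorff convergence forces $h_{Q_n}\to h_Q$ uniformly on compact subsets of $R_0$, whence $v_{R_0}(Q_n)\to v_{R_0}(Q)$. Thus $v_{R_0}$ and $\Phi$ are continuous, so $v_{R_0}$ is a homeomorphism. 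Finally, $v_{R_0}$ is additive and positively homogeneous on the dense subcone $\InCone(\Fan)$ by Corollary~\ref{cor:InSpc_iso} (and $\InCone(\Fan)$ is closed under Minkowski addition by Theorem~\ref{thm:InSpc}), and addition and dilation are continuous, so these identities persist on $\cInCone(\Fan)$; hence the extension is linear.

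The main obstacle is the continuity of the extended $v_{R_0}$: along a sequence approaching the boundary of $\InCone(\Fan)$ vertices of the $Q_n$ may merge, so one cannot follow ``the vertex over $R_0$'' by combinatorial bookkeeping alone. The support-function argument above circumvents this; alternatively, and perhaps more transparently, one may observe that via centered support functions $\cInCone(\Fan)$ embeds linearly into the finite-dimensional space $\PL(\Fan)$, on which $v_{R_0}$ is simply the linear map reading off the linear piece over $R_0$, making linearity and continuity (in both directions) immediate. Everything else is routine on top of Theorem~\ref{thm:Q_in_cInSpc}.
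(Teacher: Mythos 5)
Your proof is correct and follows exactly the route the paper intends: the corollary is stated without proof as an immediate consequence of Theorem~\ref{thm:Q_in_cInSpc}, whose proof already supplies the key facts you use (one vertex per region after centering, and the reflection relations $s_{RR'}(q_R)=q_{R'}$, so $Q=\conv(t_{\Walk_R}(q_{R_0}))$). The continuity and linearity details you add via support functions (or the $\PL(\Fan)$ embedding) are exactly the routine verifications the paper leaves implicit.
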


A second characterization of inscribability relative to a fan is given
by the following. Let $C \in \Fan$ be a cone and
$P \in \cTypeCone(\Fan)$. The face $P^C \defeq P^c$ does not depend on
$c \in \relint C$.

\begin{thm}\label{thm:edges_orthogonal}
    Let $\Fan$ be a polytopal fan and $Q \in \cTypeCone(\Fan)$ be an
    inscribed polytope with $c(Q) = \0$. The following are equivalent:
    \begin{enumerate}[\rm (i)]
    \item $Q \in \cInCone(\Fan)$;
    \item $c(Q^C) \in \lin C$ for all walls $C$ of $\Fan$.
    \end{enumerate}
\end{thm}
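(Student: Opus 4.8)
The plan is to use the characterization from Theorem~\ref{thm:Q_in_cInSpc}, namely that $Q \in \cInCone(\Fan)$ if and only if $Q$ is inscribed and every region $R \in \Fan$ meets $V(Q)$ (recall $c(Q)=\0$), and to translate the condition ``$(V(Q)) \cap R \neq \emptyset$ for all $R$'' into the statement about circumcenters of the wall-faces $Q^C$. First I would fix a wall $C$ of $\Fan$, with $C = R \cap R'$ for two regions $R, R'$, and observe that $Q^C$ is a face of $Q$ whose normal cone in $\Fan(Q)$ contains $C$. Since $Q$ is inscribed, $Q^C$ is inscribed as well (Proposition~\ref{prop:in_faces}), and because $c(Q) = \0$ lies on the circumsphere's center of $Q$, we have $c(Q) \in \Centers_Q \subseteq \Centers_{Q^C}$, so $c(Q^C)$ is the orthogonal projection of $\0$ onto $\aff(Q^C)$; equivalently, $c(Q^C) \in \aff(Q^C)$ is the point with $c(Q^C) \perp \lin(Q^C - Q^C)$. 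The wall $C$ has $\lin(C) = (Q^C - Q^C)^{\perp} \oplus (\text{something})$—more precisely, since $C$ is a codimension-$1$ cone in $\Fan$ and $\Fan$ refines $\Fan(Q)$, the face $Q^C$ has dimension exactly $d-1$ when $C$ is not contained in a region of $\Fan(Q)$, and dimension $\le d-2$ otherwise; in the first case $\lin(Q^C - Q^C) = \lin(C)^{\perp}$, so $c(Q^C) \in \lin(C)$ is automatic from orthogonality, while the content is in the lower-dimensional case.

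The heart of the argument is the equivalence with the vertex condition. For the direction (i)$\Rightarrow$(ii), assuming $Q \in \cInCone(\Fan)$, Theorem~\ref{thm:Q_in_cInSpc} gives a vertex $q_R \in V(Q)$ with $q_R \in R$ for each region $R$; for a wall $C = R \cap R'$, Corollary~\ref{cor:key_reflect} (applied in the limit, or via the reflection relations used in the proof of Theorem~\ref{thm:Q_in_cInSpc}) gives $q_{R'} = s_C(q_R)$ where $s_C$ is the reflection in $\lin(C)$. Now $q_R, q_{R'} \in Q^C$ since both belong to the face selected by $c \in \relint C$. The midpoint $\tfrac12(q_R + q_{R'}) = \tfrac12(q_R + s_C(q_R))$ is the orthogonal projection of $q_R$ onto $\lin(C)$, hence lies in $\lin(C)$; and being the midpoint of two points of $Q^C$ equidistant from $\0$ (both on the circumsphere), it is exactly the foot of the perpendicular from $\0$ to the line through them, which one checks coincides with $c(Q^C)$ — here I would use that all of $Q^C$ lies on a single sphere centered at $\0$, so the circumcenter $c(Q^C)$ is the projection of $\0$ onto $\aff(Q^C)$, and $\tfrac12(q_R+q_{R'})$, being in $\lin(C) \cap \aff(Q^C)$ and equidistant-compatible, must be that projection (when $q_R \neq q_{R'}$; when they coincide, $q_R = q_{R'} \in \lin(C)$ directly and $Q^C$ is lower-dimensional with $c(Q^C) = q_R \in \lin C$). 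Thus $c(Q^C) \in \lin(C)$.

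For (ii)$\Rightarrow$(i), I would run this backwards: assuming $c(Q^C) \in \lin(C)$ for every wall $C$, I want to produce, for each region $R$, a vertex of $Q$ lying in $R$, and verify the reflection compatibility needed to invoke Theorem~\ref{thm:Q_in_cInSpc}. Fix a base region $R_0$; since $Q \in \cTypeCone(\Fan)$, there is a region $\tilde C \in \Fan(Q)$ with $R_0 \subseteq \tilde C$, and $Q$ inscribed plus Corollary~\ref{cor:key_int} gives a unique vertex $q_{R_0} \in \interior(\tilde C) \supseteq$ (part of) $R_0$ — actually I'd want $q_{R_0} \in R_0$, which needs a small argument: since $Q$ is inscribed with center $\0$, $q_{R_0} \in \interior(\tilde C)$, and I claim it lies in $R_0$. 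This is the subtle point and likely the main obstacle: a priori $\interior(\tilde C)$ could contain regions of $\Fan$ other than $R_0$, and we need the condition (ii) to force the vertex into $R_0$. The mechanism is: walk from $R_0$ across walls; condition $c(Q^C) \in \lin C$ at each wall $C$ forces $s_C$ to fix $Q^C$ appropriately, so that defining $q_R := t_{\Walk}(q_{R_0})$ along a path $\Walk$ from $R_0$ to $R$ is well-defined and lands in $R$ — essentially reconstructing the argument of Proposition~\ref{prop:ref_game}. Concretely, for adjacent regions $R, R'$ inside the same $\tilde C$, the wall $C = R\cap R'$ has $Q^C = \{q\}$ a single vertex (since $\tilde C$'s vertex is unique), and $c(Q^C) = q \in \lin C$ means $q$ is fixed by $s_C$, consistent with $q_{R'} = s_C(q_R) = q_R = q$; for $R, R'$ in distinct regions $\tilde C \neq \tilde C'$ of $\Fan(Q)$, the reflection $s_C$ equals $s_{\tilde C \tilde C'}$ and $c(Q^C) \in \lin C$ says the edge $Q^C = [q_{\tilde C}, q_{\tilde C'}]$ is orthogonal to $\lin C$ with midpoint on $\lin C$, which is exactly the statement that $s_C$ swaps the two circumsphere-vertices, i.e.\ $q_{R'} = s_C(q_R)$. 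Chaining these along any walk shows $t_\Walk(q_{R_0})$ depends only on $R$ (using that $Q$ is genuinely inscribed so the local solutions are consistent, cf.\ Lemma~\ref{lem:key}), produces the vertex set required by Theorem~\ref{thm:Q_in_cInSpc}, and hence $Q \in \cInCone(\Fan)$. The main obstacle, as flagged, is carefully handling the degenerate walls — those $C$ with $Q^C$ of dimension $< d-1$, where $Q^C$ is a face shared by several regions of $\Fan$ inside one region of $\Fan(Q)$ — and checking that condition (ii) is vacuous/automatic there (since $c(Q^C) \in \aff(Q^C) \subseteq \lin C$ already), so that the real content is concentrated on the walls that survive in $\Fan(Q)$.
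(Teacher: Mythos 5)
Your overall strategy --- reducing both directions to the vertex condition of Theorem~\ref{thm:Q_in_cInSpc} --- is exactly the paper's, and your (i)$\Rightarrow$(ii) argument is essentially the paper's proof: split on whether $Q^C$ is the single vertex $q_R=q_{R'}\in R\cap R'=C\subseteq\lin C$ or the edge $[q_R,q_{R'}]$ with $q_{R'}=s_C(q_R)$, whose circumcenter is the midpoint and lies on $\lin C$. (A cosmetic slip: since $C$ is a wall, $Q^C$ has dimension $1$ or $0$, not $d-1$ or $\le d-2$.)

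The direction (ii)$\Rightarrow$(i) has a genuine gap. You correctly isolate what must be proved --- that the vertex $q_R$ of $Q$ whose normal cone contains $R$ actually lies in $R$ --- and you call it the main obstacle, but the mechanism you offer does not close it. What you verify is that the assignment $R\mapsto q_R$ is reflection-compatible across every wall and that $t_\Walk(q_{R_0})$ is path-independent. Compatibility constrains only the \emph{relative} position of adjacent $q_R$'s and says nothing about which region they sit in: in Proposition~\ref{prop:ref_game} the containment $t_\Walk(v_0)\in\interior(R)$ is an additional hypothesis on top of compatibility, and Figures~\ref{fig:insc_quadrangle}(E)--(F) and~\ref{fig:noninsc_pentagon} show perfectly compatible reflection data whose points do not land in the corresponding regions. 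You also concede that you cannot even start the induction, since you cannot show $q_{R_0}\in R_0$; propagation cannot manufacture that. The missing step is a \emph{local} half-space argument carried out for each region $R$ separately, with no base point and no walking: for every facet-wall $C\subset R$ with $H=\lin C$ and outer normal $\alpha$, one has $q_R\in Q^C$; if $Q^C$ is a vertex, then (ii) gives $q_R=c(Q^C)\in H$; if $Q^C$ is an edge, it is parallel to $\alpha$ with midpoint on $H$, and since $q_R$ maximizes $c\in\relint R$ over $Q^C$ it is the endpoint with $\inner{\alpha,q_R}<0$. Intersecting these conditions over all facets of $R$ yields $q_R\in R$, and only then does Theorem~\ref{thm:Q_in_cInSpc} apply. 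This is what the paper does; your compatibility computation is true but is not the statement you need.
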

\begin{proof}
    (i) $\Rightarrow$ (ii): Let $C \in \Fan$ be a cone of codimension $1$ and
    let $R, R' \in \Fan$ be the regions such that $C = R \cap R'$.  If $v_R =
    v_{R'}$, then $P^C = v_R$ and $v_R \in R \cap R' = C$ by
    Theorem~\ref{thm:Q_in_cInSpc}. Otherwise, $v_R \neq v_{R'}$ are the
    endpoints of the edge $P^C$ whose center is in $C$ by
    Corollary~\ref{cor:key_reflect}.

    (ii) $\Rightarrow$ (i): Let $R \in \Fan$ be a region and $v_R$ be the
    corresponding vertex of $Q$. Let $C \subset R$ be a wall with supporting
    hyperplane $H \defeq \lin C$. Now $v_R \in Q^C$ and $c(Q^C) \in H$
    implies that $v_R$ is not separated from $R$ by $H$. This holds for all
    walls $C$ and shows $v_R \in R$.  Theorem~\ref{thm:Q_in_cInSpc} then
    implies $Q \in \cInCone(\Fan)$.
\end{proof}

We close this section with a description of the facets of $\cInCone(\Fan)$ when
$\Fan$ is an \emph{even} inscribable fan.
The coarsenings of $\Fan$ are encoded by certain contractions of $G(\Fan)$.
Recall that the \Def{contraction} of an edge $e = uv$ in a simple graph $G =
(V,E)$ is the graph $G/e$ with nodes $V \setminus v$ and edges $\{ e \in E : v
\not\in e \} \cup \{ uw : vw \in E \}$. Our definition of contractions does
not produce parallel edges and hence stays in the category of simple graphs.
If $\Fan'$ coarsens $\Fan$, then let 
\[
    \Contr(\Fan,\Fan')  \ \defeq \ \{ RS \in E(\Fan) : R, S \subseteq C \text{
    for some region } C \in \Fan' \} \, .
\]
Then $G(\Fan')$ is isomorphic to $G(\Fan)/\Contr(\Fan,\Fan')$. On the level
of weights $\lambda(\ell)$ for PL-functions $\ell$ supported on $\Fan$,
coarsenings correspond to $\lambda : E(\Fan) \to \R$ with $\lambda(e) =
0$ for $e \in \Contr(\Fan,\Fan')$.

\begin{prop}
    Let $\Fan$ be an inscribable and even fan. Let $\Fan'$ be inscribable
    relative to $\Fan$ corresponding to a facet of $\cInCone(\Fan)$.
    Then $\Contr(\Fan,\Fan') \subseteq E(\Fan)$ is a matching.
\end{prop}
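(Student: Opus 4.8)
The plan is to transport the question to the based inscribed cone $\cInCone(\Fan, R_0) \subseteq \R^d$ and to describe $\Contr(\Fan, \Fan')$ as the set of edges whose supporting halfspace becomes tight along the given facet.

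Since $\Fan$ is inscribable it is in particular virtually inscribable, and as $\Fan$ is even the corollary to Theorem~\ref{thm:inspc_even} gives $\InSpc(\Fan, R_0) = \R^d$, i.e.\ $t_\Walk = \id$ for every closed walk $\Walk$. Hence the orthogonal map $t_R \defeq t_{\Walk_R}$ does not depend on the walk $\Walk_R$ chosen from $R_0$ to $R$. For an edge $RS \in E(\Fan)$ set $n_{RS} \defeq t_R^{-1}(\alpha_{RS})$, where $\alpha_{RS}$ is the outer normal of $R$ at the wall $R \cap S$; from $t_S = s_{RS} t_R$ and $\alpha_{SR} = -\alpha_{RS}$ one computes $t_S^{-1}(\alpha_{SR}) = t_R^{-1}(\alpha_{RS})$, so $n_{RS}$ depends only on the unordered edge. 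Combining \eqref{eqn:InSpc_intersect_rep} (with $\InSpc(\Fan, R_0) = \R^d$) with the linear homeomorphism of Corollary~\ref{cor:based_cInSpc} yields
\[
    \cInCone(\Fan) \ \cong \ \cInCone(\Fan, R_0) \ = \ \bigcap_{RS \in E(\Fan)} \bigl\{\, v \in \R^d : \inner{n_{RS}, v} \le 0 \,\bigr\} \,,
\]
a $d$-dimensional polyhedral cone (of dimension $d$ because $\InCone(\Fan, R_0)$ is a nonempty open subset of $\R^d$). Moreover, if $v \in \cInCone(\Fan, R_0)$ and $Q$ is the associated inscribed polytope, then $v_R(Q) = t_R(v)$ and $v_S(Q) = s_{RS}(v_R(Q))$ by Corollary~\ref{cor:key_reflect}, so $RS \in \Contr(\Fan, \Fan(Q))$ (that is, $v_R(Q) = v_S(Q)$) if and only if $t_R(v) \in \alpha_{RS}^\perp$, equivalently $\inner{n_{RS}, v} = 0$.

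Let $F$ be the facet of $\cInCone(\Fan, R_0)$ corresponding to $\Fan'$, and let $H \defeq \lin(F)$, a hyperplane with normal $n_H$ oriented so that $\cInCone(\Fan, R_0) \subseteq \{\inner{n_H, \cdot} \le 0\}$. Pick $v_0 \in \relint F$; by the last paragraph $RS \in \Contr(\Fan, \Fan')$ iff $\inner{n_{RS}, v_0} = 0$, and since $\relint F$ spans $H$ this holds iff $n_{RS}$ is parallel to $n_H$. Because $\cInCone(\Fan, R_0)$ is contained both in $\{\inner{n_{RS}, \cdot} \le 0\}$ and in $\{\inner{n_H, \cdot} \le 0\}$ but has dimension $d$, such an $n_{RS}$ cannot be a negative multiple of $n_H$ (that would force the cone into $H$); hence $n_{RS}$ is a positive multiple of $n_H$ for every $RS \in \Contr(\Fan, \Fan')$.

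Now suppose $\Contr(\Fan, \Fan')$ is not a matching: it contains distinct edges $XY$ and $XZ$ with $Y \neq Z$, sharing the region $X$. Then $n_{XY}$ and $n_{XZ}$ are positive multiples of $n_H$, hence of each other, and applying the orthogonal map $t_X$ shows that the outer normals $\alpha_{XY} = t_X(n_{XY})$ and $\alpha_{XZ} = t_X(n_{XZ})$ of the two distinct walls $X \cap Y$ and $X \cap Z$ of the full-dimensional cone $X$ are positive multiples of each other. This is impossible, since distinct walls of a full-dimensional polyhedral cone have non-parallel outer normals. Hence $\Contr(\Fan, \Fan')$ is a matching. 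The crux is the third paragraph: identifying $\Contr(\Fan, \Fan')$ with $\{RS : n_{RS} \parallel n_H\}$ and keeping track of signs so that these normals are genuinely \emph{positively} proportional. This is precisely where evenness enters, guaranteeing $\InSpc(\Fan, R_0) = \R^d$ so that no projection onto a proper subspace — in which distinct wall-normals could become parallel — is needed.
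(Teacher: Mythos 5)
Your proof is correct and follows essentially the same route as the paper: both reduce to the based cone $\cInCone(\Fan,R_0)$, use evenness (via Theorem~\ref{thm:inspc_even}) to make it full-dimensional, and derive the contradiction from two contracted edges at a common region forcing the base vertex onto two distinct wall hyperplanes of that region --- you merely make the halfspace description $\inner{n_{RS},\cdot}\le 0$ explicit where the paper argues directly with $v_C(Q)\in R_0\cap S\cap S'$. One minor citation slip: for $Q$ on the boundary, the relation $v_S(Q)=s_{RS}(v_R(Q))$ comes from the proof of Theorem~\ref{thm:Q_in_cInSpc}, not from Corollary~\ref{cor:key_reflect}, which is stated only for polytopes whose normal fan is exactly $\Fan$.
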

\begin{proof}
    Let $Q \in \partial \cInCone(\Fan)$ be an inscribed polytope with
    $c(Q) = \0$ and $\Fan(Q) = \Fan'$ and assume that
    $\Contr(\Fan, \Fan')$ is not a matching. Thus, there exists a
    region $R_0$ and two edges
    $R_0 S, R_0 S' \in \Contr(\Fan, \Fan')$, i.e.\ $R_0$, $S$ and $S'$
    are coarsened to a common region $C$ in $\Fan(Q)$. By
    Theorem~\ref{thm:Q_in_cInSpc} we see that
    $v_C(Q) \in R_0 \cap S \cap S'$. But, since $\Fan$ is even and
    hence $\cInCone(\Fan,R_0) \subseteq R_0$ is a full-dimensional
    subcone, $v_C(Q)$ does not lie in the interior of a facet of
    $\cInCone(\Fan,R_0)$, so neither does $Q$ lie in the interior of a
    facet of $\cInCone(\Fan)$ by Corollary~\ref{cor:based_cInSpc}.
\end{proof}

\subsection{Virtual polytopes and the inscribed space}\label{sec:virtual}
Recall from Section~\ref{sec:intro_virtual} that the type space
$\TypeSpc(\Fan)$ is the Grothendieck group of the monoid
$(\TypeCone(\Fan),+)$. Since $\TypeCone(\Fan)$ is a convex cone, we can
identify $\TypeSpc(\Fan)$ with $\TypeCone(\Fan) + (-\TypeCone(\Fan))$. The
following well-known result gives $\TypeSpc(\Fan)$ a simple interpretation in
terms of piecewise-linear functions.
\begin{prop}\label{prop:diff_convex}
    Let $\Fan$ be a polytopal fan in $\R^d$. For all
    $\ell \in \PL(\Fan)$ there exist polytopes $P,Q \subseteq \R^d$
    with $\Fan = \Fan(P) = \Fan(Q)$ such that $\ell = h_P - h_Q$.
    Moreover, if $\ell = h_{P'} - h_{Q'}$ for polytopes $P',Q'$, then
    $P + Q' = P' + Q$.
\end{prop}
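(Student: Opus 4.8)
The plan is to derive the existence statement from the observation that the strictly convex PL functions supported on $\Fan$ form a nonempty open convex cone inside the finite-dimensional vector space $\PL(\Fan)$, and to derive the ``moreover'' part from the cancellation law for Minkowski sums together with the fact that a polytope is determined by its support function.

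For the existence of a decomposition $\ell = h_P - h_Q$, let $S \subseteq \PL(\Fan)$ be the set of support functions $h_P$ of polytopes $P$ with $\Fan(P) = \Fan$; by Proposition~\ref{prop:PL} this is exactly the set of strictly convex PL functions supported on $\Fan$. Theorem~\ref{thm:PL-lambda} characterizes $S$ by the finitely many strict linear inequalities $\lambda(\ell)_{RS} > 0$, one for each edge $RS$ of the dual graph $G(\Fan)$; since $\ell \mapsto \lambda(\ell)$ is linear (Proposition~\ref{prop:PL_as_pairs}), $S$ is the intersection of $\PL(\Fan)$ with finitely many open halfspaces, hence an open convex cone, and it is nonempty because $\Fan$ is polytopal. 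Any nonempty open cone $S$ in a real vector space satisfies $S - S = \PL(\Fan)$: fixing $\ell_0 \in S$ with $\ell_0 = h_{P_0}$, openness gives $\ell_0 + \tfrac{1}{t}\ell \in S$ for all sufficiently large $t > 0$, and then $\ell = (t\ell_0 + \ell) - t\ell_0$ exhibits $\ell$ as a difference of two elements of $S$ (using that $S$ is stable under positive scaling). Writing $t\ell_0 + \ell = h_P$ and $t\ell_0 = h_Q$ with $Q = tP_0$ yields $\ell = h_P - h_Q$ with $\Fan(P) = \Fan(Q) = \Fan$, as desired.

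For the uniqueness part, suppose $\ell = h_P - h_Q = h_{P'} - h_{Q'}$. Rearranging gives $h_P + h_{Q'} = h_{P'} + h_Q$. Since the support function of a Minkowski sum is the sum of the support functions, this says $h_{P+Q'} = h_{P'+Q}$, and since a polytope is uniquely determined by its support function we conclude $P + Q' = P' + Q$. The only step that is not completely formal is verifying that $S$ is open in $\PL(\Fan)$; this is precisely what Theorem~\ref{thm:PL-lambda} buys us, turning strict convexity into finitely many strict linear conditions on the weights $\lambda$. Everything else reduces to the elementary geometry of convex cones and the basic calculus of support functions.
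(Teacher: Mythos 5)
Your proposal is correct and follows essentially the same route as the paper: both add a large positive multiple of a strictly convex support function $h_{P_0}$ to $\ell$ and use Theorem~\ref{thm:PL-lambda} (strict convexity $\Leftrightarrow$ finitely many strict linear conditions on $\lambda$) together with Proposition~\ref{prop:PL} to conclude that $\ell + t\,h_{P_0}$ is again a support function, while the ``moreover'' part in both cases is just $h_{P+Q'} = h_P + h_{Q'}$ plus injectivity of $P \mapsto h_P$. Your packaging of the key step as ``a nonempty open cone satisfies $S - S = \PL(\Fan)$'' is a harmless rephrasing of the paper's direct computation with $\lambda(\ell) + \mu\lambda(h_P) > 0$.
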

\begin{proof}
    Let $P$ be a polytope with normal fan $\Fan$.  By linearity, for any $\mu
    > 0$, we have $\lambda(\ell + \mu h_P) = \lambda(\ell) + \mu
    \lambda(h_P)$. Since $h_P$ is strictly convex, there exists a $\mu > 0$
    such that $\lambda(\ell) + \mu \lambda(h_P) > 0$. That is, the function
    $\ell + \mu h_P$ is strictly convex and hence the support function of a
    polytope $Q \in \TypeCone(\Fan)$, by Proposition~\ref{prop:PL}. This
    implies $\ell = h_Q - h_{\mu P}$.  For the final statement, observe that
    $h_{P+Q'} = h_{P} + h_{Q'}$.
\end{proof}

Hence $\TypeSpc(\Fan)$ can be identified with piecewise-linear functions
supported on $\Fan$ up to translation and we write $P-Q$ for $h_P - h_Q$.  If
$h_P - h_Q$ is convex, then it is the support function of a polytope $R$ with
$P = Q+R$ and hence $P-Q = R$ is a polytope. Otherwise, we call $P-Q$ a
\Def{virtual polytope}. Virtual polytopes arise naturally in a number of
settings; cf.~\cite{PS}. Notably, $\TypeSpc(\Fan)$ is the first graded piece
in McMullen's polytope algebra~\cite{McMullen-algebra} and its dual space are
$1$-homogeneous translation-invariant valuations on polytopes with normal fan
$\Fan$. 

Let $\ell \in \PL(\Fan)$. For every region $R \in \Fan$, let
$v_R(\ell) \in \R^d$ such that $\inner{v_R(\ell), x} \equiv \ell_R(x)$. We
define the \Def{vertex set}
$V(\ell) \defeq \{v_R(\ell) : R \in \Fan \text{ region}\}$. If
$\ell = h_P$, then $V(h_P)$ is precisely the vertex set of $P$. In
general $V(\ell)$ is not in bijection to the regions of $\Fan$. We
call a PL function $\ell$ \Def{inscribed} if $V(\ell)$ lies on a
sphere. In this case, there exists a unique sphere containing
$V(\ell)$ whose center, denoted $c(\ell)$, is in
$\aff V(\ell)$. For a different geometric perspective, let us associate
an
\[
    \mathcal{H}(\ell) \defeq \{ \ker( \ell_R(x) - t) \subset \R^d \times \R :
    R \in \Fan \text{ region} \} \, .
\]
This is the smallest arrangement of linear hyperplanes containing the graph of
$\ell$.

It follows that $\ell$ is inscribed if and only if there is a sphere in
$\R^{d+1}$ that is tangent to all planes in $\mathcal{H}(\ell)$. If $\ell$ is
strictly convex, then the sphere can be chosen to be contained in the epigraph
of $\ell$ and shows the duality between inscribed and \emph{circumscribed}
polytopes.

If $C \in \Fan$ is a cone, we define the \Defn{localization} $\ell_C$
of $\ell$ at $C$ to be the PL function supported on the localization
$\Fan_C$, where $\ell_C$ coincides with $\ell_R$ on the cone
$C - R \in \Fan_C$. If $F$ is a face of a polytope $P$, then
$(h_P)_C = h_F$, where $C \defeq N_FP$ is the normal cone of $F$, so
localizations are the natural generalization of faces to PL functions.
Clearly, $V(\ell_C) \subseteq V(\ell)$ and therefore an inscribed PL
function has inscribed localizations. 

Example~\ref{ex:relatively_inscribable} implies that the collection
$\{ P - Q \in \TypeSpc(\Fan) : h_P - h_Q \text{ inscribed}\}$ is not a
linear subspace of $\TypeSpc(\Fan)$.  In light of
Theorem~\ref{thm:edges_orthogonal}, we define an inscribed PL function
$\ell \in \PL(\Fan)$ to be \Defn{inscribed relative to $\Fan$}, if
$c(\ell_C) - c(\ell) \in \lin C$ for all cones $C \in F$ of
codimension $1$. Note that this definition retains
Corollary~\ref{cor:key_reflect} in that ``neighboring vertices''
should be obtained by reflecting at the hyperplanes of walls. We
define
\[
    \InSpc(\Fan) \defeq \{ P - Q \in \TypeSpc(\Fan) : h_P - h_Q
    \text{ inscribed relative to $\Fan$} \}\,.
\]
to be the \Defn{inscribed space} of $\Fan$.
Clearly, $P \in \cInCone(\Fan)$ if and only if $h_P \in \InSpc(\Fan)$,
thus:

\begin{prop}\label{prop:InSpc_is_pos_VInSpc}%
    Let $\Fan$ be a polytopal fan. Then
    $\InCone(\Fan) = \InSpc(\Fan) \cap \TypeCone(\Fan)$. If $\Fan$ is
    inscribable, then
    $\cInCone(\Fan) = \InSpc(\Fan) \cap \cTypeCone(\Fan)$.
\end{prop}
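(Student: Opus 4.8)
The plan is to unwind the definition of $\InSpc(\Fan)$ on genuine support functions and match it with Theorem~\ref{thm:edges_orthogonal}. The key preliminary remark is this: for any polytope $P$ whose normal fan $\Fan(P)$ coarsens $\Fan$, one has $V(h_P) = V(P)$; each localization satisfies $(h_P)_C = h_{P^C}$ for every cone $C \in \Fan$, in particular for every wall $C$ of $\Fan$, even though such a $C$ need not be a normal cone of $P$ (it may lie in the relative interior of a larger normal cone of $P$, in which case $P^C$ is a single vertex and $(h_P)_C$ is linear); and, whenever $P$ is inscribed, $c(h_P) = c(P)$, since $\aff V(h_P) = \aff V(P)$. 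Consequently ``$h_P$ is inscribed relative to $\Fan$'' unwinds to the two conditions: $P$ is inscribed, and $c(P^C) - c(P) \in \lin C$ for every wall $C$ of $\Fan$. After translating $P$ so that $c(P) = \0$, this is precisely condition (ii) of Theorem~\ref{thm:edges_orthogonal} for $P \in \cTypeCone(\Fan)$.

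For the first statement, an element of $\InSpc(\Fan) \cap \TypeCone(\Fan)$ is represented by a genuine polytope $P$ whose support function $h_P$ is \emph{strictly} convex with respect to $\Fan$; by Proposition~\ref{prop:PL} this forces $\Fan(P) = \Fan$, so $P \in \cTypeCone(\Fan)$ and $P$ is inscribed. Translating so that $c(P) = \0$ and feeding the conditions extracted above into Theorem~\ref{thm:edges_orthogonal} yields $P \in \cInCone(\Fan)$; together with $\Fan(P) = \Fan$ this gives $P \in \InCone(\Fan)$. Conversely, if $P \in \InCone(\Fan)$ then $\Fan(P) = \Fan$, $P$ is inscribed, and $P \in \InCone(\Fan) \subseteq \cInCone(\Fan)$; Theorem~\ref{thm:edges_orthogonal} then supplies the relative-inscribedness conditions for the walls of $\Fan$, so $h_P$ is inscribed relative to $\Fan$ and $P$ represents an element of $\InSpc(\Fan) \cap \TypeCone(\Fan)$.

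For the second statement, assume $\Fan$ is inscribable. If $Q \in \InSpc(\Fan) \cap \cTypeCone(\Fan)$, then $Q$ is a genuine polytope with $\Fan(Q)$ coarsening $\Fan$ and $h_Q$ inscribed relative to $\Fan$; by the opening remark $Q$ is inscribed and, after translating so that $c(Q) = \0$, satisfies $c(Q^C) \in \lin C$ for every wall $C$ of $\Fan$, whence $Q \in \cInCone(\Fan)$ by Theorem~\ref{thm:edges_orthogonal}. Conversely, any $Q \in \cInCone(\Fan)$ lies in $\cTypeCone(\Fan)$ (the latter is a closed polyhedral cone by Theorem~\ref{thm:typecones} and contains $\InCone(\Fan)$, hence its closure) and is an inscribed polytope; Theorem~\ref{thm:edges_orthogonal} again gives the wall conditions, so $h_Q \in \InSpc(\Fan)$.

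The only point that needs genuine care is the identity $(h_Q)_C = h_{Q^C}$ for a wall $C$ of $\Fan$ that is \emph{not} a normal cone of $Q$: here one checks directly from the definition of localization that $(h_Q)_C$ is the piecewise-linear function on $\Fan_C$ agreeing with the two linear pieces of $h_Q$ on the two regions $R, R' \in \Fan$ with $R \cap R' = C$, so that $V((h_Q)_C) = \{Q^{R}, Q^{R'}\} = V(Q^C)$ (a two-point set if $R, R'$ stay distinct in $\Fan(Q)$, a single vertex otherwise). The remaining bookkeeping — passing between translation classes in $\TypeCone(\Fan)$, $\cTypeCone(\Fan)$ and the normalization $c(Q) = \0$ required by Theorem~\ref{thm:edges_orthogonal}, and using Proposition~\ref{prop:PL} to ensure that strict convexity of $h_P$ records membership in $\TypeCone(\Fan)$ rather than only in $\cTypeCone(\Fan)$ — is routine, and is what makes the observation stated just before the proposition precise.
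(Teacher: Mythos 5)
Your proof is correct and follows the same route the paper intends: the paper defines ``inscribed relative to $\Fan$'' explicitly so that Theorem~\ref{thm:edges_orthogonal} makes the equivalence $Q \in \cInCone(\Fan) \Leftrightarrow h_Q \in \InSpc(\Fan)$ immediate, and then asserts the proposition as a direct consequence. You simply spell out the details the paper leaves as ``clear'' --- in particular the identification $V((h_Q)_C) = V(Q^C)$ for walls $C$ of $\Fan$ that are not normal cones of $Q$, and the use of strict convexity via Proposition~\ref{prop:PL} to separate $\TypeCone(\Fan)$ from $\cTypeCone(\Fan)$ --- all of which is consistent with the paper's argument.
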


As a first consequence, we can now easily describe the facial structure of
$\cInCone(\Fan)$:
\begin{cor}\label{cor:relatively_inscribed}
    Let $\Fan$ be an inscribable fan. The faces of $\cInCone(\Fan)$ are
    in correspondence to relatively inscribable coarsenings of $\Fan$.
\end{cor}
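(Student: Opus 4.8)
The plan is to combine the already-established identification $\cInCone(\Fan) = \InSpc(\Fan) \cap \cTypeCone(\Fan)$ from Proposition~\ref{prop:InSpc_is_pos_VInSpc} with the facial description of $\cTypeCone(\Fan)$ in Theorem~\ref{thm:typecones}, which says the non-empty faces of $\cTypeCone(\Fan)$ are exactly the subcones $\cTypeCone(\Fan')$ for $\Fan'$ a polytopal coarsening of $\Fan$. Since $\InSpc(\Fan)$ is a linear subspace of $\TypeSpc(\Fan)$ (by the definition preceding Proposition~\ref{prop:InSpc_is_pos_VInSpc}), intersecting the polyhedral cone $\cTypeCone(\Fan)$ with a linear subspace again yields a polyhedral cone whose faces are precisely the intersections of the faces of $\cTypeCone(\Fan)$ with that subspace. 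So the first step is just to invoke this elementary fact about intersecting a polyhedral cone with a subspace: every face of $\cInCone(\Fan)$ has the form $\cTypeCone(\Fan') \cap \InSpc(\Fan)$ for some polytopal coarsening $\Fan'$ of $\Fan$.

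Next I would identify which such intersections are non-empty and which coarsenings they correspond to. By Proposition~\ref{prop:InSpc_is_pos_VInSpc} applied to $\Fan'$ — or more directly by the definition of relative inscribability — a coarsening $\Fan'$ of $\Fan$ satisfies $\cTypeCone(\Fan') \cap \InSpc(\Fan) \neq \{\0\}$ if and only if there is an inscribed polytope $Q$ with $\Fan(Q) = \Fan'$ lying in $\cInCone(\Fan)$, which is exactly the definition of $\Fan'$ being \emph{inscribable relative to $\Fan$}. The one subtlety is that two different faces $\cTypeCone(\Fan')$ and $\cTypeCone(\Fan'')$ of $\cTypeCone(\Fan)$ could conceivably have the same (non-zero) intersection with $\InSpc(\Fan)$; I would rule this out by noting that for $Q$ in the relative interior of such an intersection, $\Fan(Q)$ is well-defined and recovers the coarsening, so the map from relatively inscribable coarsenings to faces of $\cInCone(\Fan)$ is injective. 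Concretely: if $Q$ lies in the relative interior of a face $F$ of $\cInCone(\Fan)$, then $F = \cTypeCone(\Fan(Q)) \cap \InSpc(\Fan)$ and $\Fan(Q)$ is the unique coarsening with this property.

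Finally I would check that this correspondence is order-preserving (inclusion of faces corresponds to refinement of coarsenings), which is immediate from the same property for $\cTypeCone(\Fan)$ in Theorem~\ref{thm:typecones} together with the fact that intersecting with the fixed subspace $\InSpc(\Fan)$ preserves inclusions. I do not expect a serious obstacle here: the only thing requiring a little care is the well-definedness/injectivity discussed above, i.e.\ making sure that passing to the relative interior of a face of $\cInCone(\Fan)$ pins down a single coarsening, and that every relatively inscribable coarsening actually arises (which is essentially its definition). In short, the corollary is a formal consequence of Proposition~\ref{prop:InSpc_is_pos_VInSpc}, Theorem~\ref{thm:typecones}, and the general principle that slicing a polyhedral cone by a linear subspace restricts its face lattice.
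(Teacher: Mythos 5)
Your proposal is correct and follows essentially the same route as the paper, whose proof is a one-line appeal to Proposition~\ref{prop:InSpc_is_pos_VInSpc} and Theorem~\ref{thm:typecones}; you simply spell out the standard facts about intersecting a polyhedral cone with a linear subspace that the paper leaves implicit. The care you take with well-definedness (recovering the coarsening as $\Fan(Q)$ for $Q$ in the relative interior of a face) is exactly the detail the paper elides.
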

\begin{proof}
    Since by assumption $\InCone(\Fan) \neq \emptyset$, we see from
    Proposition~\ref{prop:InSpc_is_pos_VInSpc} and
    Theorem~\ref{thm:typecones} that the faces if $\cInCone(\Fan)$ stem
    from relatively inscribable coarsenings of $\Fan$.
\end{proof}

We will now show that the based inscribed space $\InSpc(\Fan,R_0)$ of
Section~\ref{sec:ref_game} is linearly isomorphic to $\InSpc(\Fan)$.
Recall that for a complete fan $\Fan$, the based
inscribed space $\InSpc(\Fan,R_0)$ is given by all $v \in \R^d$ such that
\[
    v \ = \ t_\Walk(v) \ = \ s_{R_{k}R_{k-1}}\cdots s_{R_2R_1} s_{R_1R_0}(v)
\]
for every closed walk $\Walk = R_0 R_1 \dots R_k$ in $G(\Fan)$ starting in
$R_0$.

Let $v_0 \in \InSpc(\Fan,R_0)$ and $R \in \Fan$ a region. Choose a walk
$\Walk$ from $R_0$ to $R$ and define $v_R \defeq t_\Walk(v_0)$. We claim that
the collection $(\ell_R)_R$ given by
\[
    \ell_R(x) \ \defeq \ \inner{v_R,x} \quad \text{ for all } x \in R \,.
\]
is a piecewise-linear function supported on $\Fan$. If $S$ and $R$ are
adjacent regions, then choose an appropriate walk $\Walk$ from $R_0$ to $R$
which extends to a walk $\Walk' = \Walk S$. Then for $v_R \defeq t_\Walk(v_0)$
and $v_S \defeq t_{\Walk'}(v_0)$
\[
    \ell_S(x) - \ell_R(x) \ = \ \inner{v_S, x} - \inner{v_R, x} \ = \
    \inner{s_{R,S}(v_R), x} - \inner{v_R, x} \ = \ -2
    \inner{\alpha_{RS}, v_R} \cdot \inner{\alpha_{RS}, x}\,,
\]
so it satisfies equation~\eqref{eqn:PL_adjacent}, from which we can
deduce that $(\ell_R)_R$ is in fact a PL function on $\Fan$. We denote this
PL-function by  $\ell_{\Fan,R_0,v} \in \PL(\Fan)$.

\begin{prop}\label{prop:VInSpc_embedd}
    Let $\Fan$ be a full-dimensional and strongly connected fan in $\R^d$.
    The map 
    \[
        \Xi_{\Fan,R_0} : \InSpc(\Fan,R_0) \to \PL(\Fan) \qquad \Xi_{\Fan,
        R_0}(v) \ \defeq \ \ell_{\Fan, R_0, v}
    \]
    is a linear embedding of $\InSpc(\Fan,R_0)$ into $\PL(\Fan)$. The
    image $\Xi_{\Fan,R_0}(\InSpc(\Fan, R_0))$ does not depend on
    $R_0$.
\end{prop}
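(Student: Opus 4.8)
The plan is to verify the three assertions of Proposition~\ref{prop:VInSpc_embedd} in turn: that $\Xi_{\Fan,R_0}$ is well-defined, that it is a linear embedding, and that its image is independent of $R_0$. Well-definedness is the crucial point: for $v \in \InSpc(\Fan,R_0)$ we must check that $v_R \defeq t_\Walk(v_0)$ does not depend on the chosen walk $\Walk$ from $R_0$ to $R$. This is exactly the defining property of $\InSpc(\Fan,R_0)$: if $\Walk$ and $\Walk'$ are two such walks, then $\Walk (\Walk')^{-1}$ is a closed walk based at $R_0$, so $t_{\Walk}(v) = t_{\Walk'}(v)$ by~\eqref{eqn:t_W}. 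The discussion preceding the proposition already establishes that the resulting collection $(\ell_R)_R$ satisfies~\eqref{eqn:PL_adjacent} on adjacent regions and hence glues to an element $\ell_{\Fan,R_0,v} \in \PL(\Fan)$, so $\Xi_{\Fan,R_0}$ is a well-defined map.

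Linearity is immediate from the construction: each $t_\Walk$ is linear, so $v \mapsto v_R = t_\Walk(v)$ is linear for each fixed region $R$, and hence $v \mapsto (\ell_R)_R$ is linear. For injectivity, suppose $\ell_{\Fan,R_0,v} = 0$ in $\PL(\Fan)$. Then in particular its restriction to the base region is the zero linear function, i.e.\ $\inner{v,x} = 0$ for all $x \in R_0$; since $R_0$ is full-dimensional, $v = \0$. (Alternatively one notes that $v = v_{R_0}$ is recovered as the linear functional defining $\ell$ on $R_0$, so $\Xi_{\Fan,R_0}$ has a left inverse.)

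For the statement that $\Xi_{\Fan,R_0}(\InSpc(\Fan,R_0))$ is independent of $R_0$, I would use the relation $\InSpc(\Fan,R_0') = t_{\Walk'}\InSpc(\Fan,R_0)$ recorded in Section~\ref{sec:ref_game}, where $\Walk'$ is any walk from $R_0$ to $R_0'$. Given $v \in \InSpc(\Fan,R_0)$, set $v' \defeq t_{\Walk'}(v) \in \InSpc(\Fan,R_0')$; I claim $\ell_{\Fan,R_0,v} = \ell_{\Fan,R_0',v'}$ as elements of $\PL(\Fan)$. Indeed, for any region $R$ choose a walk $\Walk$ from $R_0$ to $R$; then $(\Walk')^{-1}$ followed by $\Walk$ is a walk from $R_0'$ to $R$, and by definition the $R$-component of $\ell_{\Fan,R_0',v'}$ is $t_{\Walk}\, t_{\Walk'}^{-1}(v') = t_{\Walk}\, t_{\Walk'}^{-1} t_{\Walk'}(v) = t_{\Walk}(v)$, which is precisely the $R$-component of $\ell_{\Fan,R_0,v}$. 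Since this holds on every region, the two PL-functions coincide, and the two images agree.

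The only genuine subtlety — and the step I would flag as the main obstacle — is making sure that the "walk composition" bookkeeping in the last paragraph is consistent with the orientation conventions for $t_\Walk$, since $t_\Walk$ is a product of reflections read off a walk and reversing or concatenating walks must be tracked carefully (recall $s_{RR'} = s_{R'R}$, so reversal of a walk inverts the product). Everything else is a direct unwinding of the definitions, using only that each $t_\Walk$ is an orthogonal (in particular linear) transformation fixing $\lineal(\Fan)$ pointwise and that $\InSpc(\Fan,R_0)$ is by definition the fixed space of all closed-walk transformations.
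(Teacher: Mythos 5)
Your proposal is correct and follows essentially the same route as the paper: well-definedness of $v_R$ from the closed-walk condition~\eqref{eqn:t_W}, the gluing computation~\eqref{eqn:PL_adjacent} from the discussion preceding the proposition, and base-change via $\InSpc(\Fan,R_0') = t_{\Walk'}\InSpc(\Fan,R_0)$. The only (harmless) deviation is in the injectivity step: you recover $v$ as the linear functional defining $\ell$ on the full-dimensional region $R_0$, whereas the paper recovers $v$ from the weights $\lambda_{R_0S_i} = -2\inner{\alpha_{R_0S_i},v}$ along walls of $R_0$ spanning $(\lineal\Fan)^\perp$ --- a formulation that also survives passing to $\PL(\Fan)$ modulo linear functions, which is used later; both arguments prove the stated claim.
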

\begin{proof}
    Let us note that the map $\Xi_{\Fan,R_0}$ is indeed linear.  We will
    first prove that $\Xi_{\Fan,R_0}$ defines a linear map of
    $\InSpc(\Fan,R_0)$ into $\PL(\Fan)$. 
    
    We will prove the claim using Theorem~\ref{thm:PL-lambda}. Recall that
    for $RS \in E(\Fan)$, $\alpha_{RS}$ is the unit vector satisfying
    $\inner{\alpha_{RS},x} \le 0$ for all $x \in R$. If $\Walk = R_0R_1\dots
    R_k$ is a path in $G(\Fan)$ and $v \in \InSpc(\Fan,R_0)$, then define
    $v_0 \defeq v$ and $v_i \defeq s_{R_{i-1}R_i}(v_{i-1})$ for $1 \le i \le
    k$. Since $s_{R_{i-1}R_i}$ is a reflection in
    $\alpha_{R_{i-1}R_i}^\perp$, we get
    \[
        v_i \ = \ s_{R_{i-1}R_i}(v_{i-1}) \ = \ v_{i-1} -
        2\inner{\alpha_{R_{i-1}R_i}, v_{i-1}} \alpha_{R_{i-1}R_i} \, .
    \]
    If we set $\lambda_{R_{i-1}R_i} \defeq -2\inner{\alpha_{R_{i-1}R_i}, v_{i-1}}$,
    then
    \begin{equation}\label{eqn:point_to_lambda}
        v_i \ = \ v_0 + \sum_{i=1}^k \lambda_{R_{i-1}R_i} \alpha_{R_{i-1}R_i} \, .
    \end{equation}
    To see that this is well defined, let $\Walk' \defeq R_0'R_1'\dots
    R_{l-1}R_{l}'$ be a different walk with $R_0' = R_0$ and $R_{l-1}'R_l =
    R_{k-1}R_k$. Then $R_0\dots R_{k-1} R_{l-2}'\dots R_1'R_0$ is a closed
    walk and the definition of $\InSpc(\Fan,R_0)$ implies that hence $v_{k} =
    s_{R_{l-1}R_l}(v'_{l-1})$. 
    
    If $\Walk$ is a closed walk in $G(\Fan)$, then this implies $v_k = v_0$
    and hence $\sum_{i=1}^k \lambda_{R_{i-1}R_i} \alpha_{R_{i-1}R_i} = 0$.
    This means that $\lambda$ satisfies~\eqref{eqn:closed_walk} and hence
    $\Xi_{\Fan,R_0}(v)$ is a piecewise-linear function supported on $\Fan$
    with  data $(\inner{v,\cdot},\lambda)$.

    If $R_0'$ is a different base region and $\Walk'$ is a walk from
    $R_0$ to $R_0'$, then
    $\InSpc(\Fan, R_0') = t_{\Walk'}(\Fan, R_0)$ and
    $\ell_{\Fan, R_0, v} = \ell_{\Fan, R_0', v'}$ for
    $v' \defeq t_{\Walk'}(v) \in \InSpc(\Fan, R_0')$, thus
    $\Xi_{\Fan,R_0}(\InSpc(\Fan, R_0)) =
    \Xi_{\Fan,R_0'}(\InSpc(\Fan, R_0'))$.    

    To show that $\Xi_{\Fan,R_0}$ yields an embedding of $\InSpc(\Fan,R_0)$
    into $\PL(\Fan)$, we argue that the point $v$ can be recovered from
    $\lambda$. For this observe that $R_0$ is a full-dimensional cone and let
    $l \defeq \dim \lineal(\Fan)$. There are regions $S_1,\dots,S_{d-l} \in
    \Fan$ adjacent to $R_0$ such that the vectors $\alpha_{R_0S_i}$ for
    $i=1,\dots,d$ are linearly independent and span $(\lineal(\Fan))^\perp$.
    Thus $v$ is the unique point with $-2\inner{\alpha_{R_0S_i},v} =
    \lambda_{R_0S_i}$ and therefore can be reconstructed from $\lambda$.
\end{proof}

Notice that $\Xi_{\Fan,R_0}(v)$ is inscribed relatively to $\Fan$ for
all $v \in \InSpc(\Fan,R_0)$ since $v_R(\ell_{\Fan, R_0, v}) =
v_R$. Conversely, if $\ell$ is inscribed relative to $\Fan$, then for
some linear function $l(x)$, we have that $V(\ell + l)$ is contained
in a sphere centered at the origin. It is now easy to see that
$\ell + l = \Xi_{\Fan,R_0}(v)$ for $v \defeq v_{R_0}(\ell + l)$. This shows
the isomorphism between $\InSpc(\Fan, R_0)$ and $\InSpc(\Fan)$.

\begin{cor}\label{cor:VInSpc_embedd}
    $\InSpc(\Fan, R_0) \cong \InSpc(\Fan)$ for every complete fan $\Fan$ and
    base region $R_0 \in \Fan$.
\end{cor}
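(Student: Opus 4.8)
The plan is to upgrade the linear embedding $\Xi_{\Fan,R_0}\colon \InSpc(\Fan,R_0)\to\PL(\Fan)$ of Proposition~\ref{prop:VInSpc_embedd} to a linear isomorphism onto $\InSpc(\Fan)$, once we pass to the quotient of $\PL(\Fan)$ by the linear functions $(\R^d)^*$ in which $\InSpc(\Fan)$ lives. Write $\overline{\Xi}$ for the composite of $\Xi_{\Fan,R_0}$ with this quotient map; it is linear, and by Proposition~\ref{prop:VInSpc_embedd} its image is independent of $R_0$. I would then verify three things in turn: $\overline{\Xi}$ maps into $\InSpc(\Fan)$, it surjects onto $\InSpc(\Fan)$, and it is injective.

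For the first, fix $v\in\InSpc(\Fan,R_0)$ and put $\ell \defeq \Xi_{\Fan,R_0}(v)=\ell_{\Fan,R_0,v}$. By construction $v_R(\ell)=t_\Walk(v)$ for a walk $\Walk$ from $R_0$ to $R$, and since the $t_\Walk$ are orthogonal, all these vertices lie on the sphere of radius $\|v\|$ about the origin; hence $\ell$ is inscribed with $c(\ell)=\0$. For a wall $C=R\cap R'$ the localization $\ell_C$ has vertex set $\{v_R(\ell),v_{R'}(\ell)\}$ with $v_{R'}(\ell)=s_{RR'}(v_R(\ell))$, directly from the definition of $\ell_{\Fan,R_0,v}$; a one-line computation then shows the circumcenter $c(\ell_C)$ (the midpoint of the two vertices, or the single vertex if they coincide) lies on $\alpha_{RR'}^\perp=\lin C$, so $c(\ell_C)-c(\ell)\in\lin C$. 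Thus $\ell$ is inscribed relative to $\Fan$, and by Proposition~\ref{prop:diff_convex} it represents an element of $\InSpc(\Fan)$.

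For surjectivity, take any $\ell\in\PL(\Fan)$ inscribed relative to $\Fan$. Adding the linear function $l(x)=-\inner{c(\ell),x}$ shifts every vertex $v_R(\ell)$ by $-c(\ell)$, so $V(\ell+l)$ lies on a sphere centered at the origin; set $v\defeq v_{R_0}(\ell+l)$. The crucial step is to check $\ell+l=\Xi_{\Fan,R_0}(v)$: since $\ell+l$ is inscribed relative to $\Fan$ with center $\0$, for every wall $C=R\cap R'$ the circumcenter $c((\ell+l)_C)$ lies in $\lin C=\alpha_{RR'}^\perp$, and since $v_{R'}(\ell+l)-v_R(\ell+l)$ is a multiple of $\alpha_{RR'}$ by~\eqref{eqn:PL_adjacent}, this forces $v_{R'}(\ell+l)=s_{RR'}(v_R(\ell+l))$ (in the degenerate case $v_R(\ell+l)=v_{R'}(\ell+l)$ one has $v_R(\ell+l)\in\lin C$, and the same identity holds). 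Propagating this along a walk from $R_0$ to $R$ gives $v_R(\ell+l)=t_\Walk(v)$, which is path-independent, so $v\in\InSpc(\Fan,R_0)$; moreover $\Xi_{\Fan,R_0}(v)$ then agrees with $\ell+l$ on every region, hence equals it. Therefore $[\ell]=[\ell+l]=\overline{\Xi}(v)$.

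Finally, for injectivity: if $\overline{\Xi}(v)=0$ then $\Xi_{\Fan,R_0}(v)$ is a linear function, so all its linear pieces coincide; in particular $s_{R_0S}(v)=v$ for every region $S$ adjacent to $R_0$, i.e.\ $v\perp\alpha_{R_0S}$ for all such $S$. As in the proof of Proposition~\ref{prop:VInSpc_embedd} these vectors span $(\lineal(\Fan))^\perp$, and $v$ lies in that subspace, so $v=\0$. Hence $\overline{\Xi}$ is a linear isomorphism $\InSpc(\Fan,R_0)\cong\InSpc(\Fan)$, independent of $R_0$ by the above. I expect the only genuine work to be the middle paragraph---extracting $v_{R'}=s_{RR'}(v_R)$ from the relative-inscribedness condition, including the collapsed-edge case---but this is essentially Corollary~\ref{cor:key_reflect} read in reverse and is short.
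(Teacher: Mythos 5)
Your proof is correct and follows essentially the same route as the paper: the paper establishes this corollary via Proposition~\ref{prop:VInSpc_embedd} together with the short paragraph preceding the corollary, which observes that $\Xi_{\Fan,R_0}(v)$ is inscribed relative to $\Fan$ and that, conversely, any $\ell$ inscribed relative to $\Fan$ equals $\Xi_{\Fan,R_0}(v_{R_0}(\ell+l))$ after recentering by a linear function $l$. Your middle paragraph simply fills in the paper's ``it is now easy to see'' step (extracting $v_{R'}=s_{RR'}(v_R)$ from relative inscribedness via~\eqref{eqn:PL_adjacent}, including the collapsed-edge case), and your injectivity argument matches the recovery of $v$ from $\lambda$ in Proposition~\ref{prop:VInSpc_embedd}.
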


Using Proposition~\ref{prop:InSpc_is_pos_VInSpc}, we also see, that:
\begin{cor}
    Let $\Fan$ be an inscribable fan. Then $\InCone(\Fan) + (-\InCone(\Fan)) =
    \InSpc(\Fan)$.
\end{cor}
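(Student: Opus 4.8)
The plan is to read off the statement from the fact that $\InSpc(\Fan)$ is a genuine linear subspace of the vector space $\TypeSpc(\Fan)$, combined with the identity
$\InCone(\Fan) = \InSpc(\Fan) \cap \TypeCone(\Fan)$ supplied by Proposition~\ref{prop:InSpc_is_pos_VInSpc}. First I would recall that $\InSpc(\Fan)$ is a linear subspace: this is how $\InSpc(\Fan,R_0)$ was defined in Section~\ref{sec:ref_game} (as the solution set of the linear system $t_\Walk(v)=v$), and Corollary~\ref{cor:VInSpc_embedd} transports this to $\InSpc(\Fan) \subseteq \TypeSpc(\Fan)$. Since $\TypeCone(\Fan)$ is an \emph{open} convex cone in $\TypeSpc(\Fan)$, its intersection with the subspace $\InSpc(\Fan)$ is a convex cone that is \emph{relatively open} in $\InSpc(\Fan)$, and it is nonempty exactly because $\Fan$ is inscribable; that is, $\InCone(\Fan)$ is a nonempty, relatively open convex cone inside the vector space $\InSpc(\Fan)$.

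The one elementary point to isolate is the following: if $K$ is a nonempty, relatively open convex cone in a finite-dimensional real vector space $W$, then $K + (-K) = W$. Indeed, $K + (-K)$ is itself a convex cone (it is closed under positive scaling because $K$ is), and if $p \in K$ with an open neighbourhood $B \subseteq K$ of $p$ in $W$, then for every $w \in W$ both $p + tw$ and $p - tw$ lie in $K$ for all sufficiently small $t > 0$, so $2tw = (p+tw) - (p-tw) \in K + (-K)$, and rescaling gives $w \in K + (-K)$. Applying this with $W = \InSpc(\Fan)$ (finite-dimensional since $\TypeSpc(\Fan)$ is) and $K = \InCone(\Fan)$ yields $\InCone(\Fan) + (-\InCone(\Fan)) = \InSpc(\Fan)$, where $-\InCone(\Fan)$ is taken inside $\TypeSpc(\Fan)$ and $\InCone(\Fan) + (-\InCone(\Fan))$ is precisely the set of formal Minkowski differences $P - Q$ with $P,Q \in \InCone(\Fan)$.

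I do not anticipate a genuine obstacle here; the content is entirely in the two cited results. The only subtlety worth a sentence of care is that $\InCone(\Fan)$ is \emph{not} open in $\TypeSpc(\Fan)$ — only relatively open in $\InSpc(\Fan)$ — which is exactly what the subspace-intersection description in Proposition~\ref{prop:InSpc_is_pos_VInSpc} delivers, and which is all the above cone argument needs.
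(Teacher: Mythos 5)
Your argument is correct and follows exactly the route the paper intends: the corollary is stated there as an immediate consequence of Proposition~\ref{prop:InSpc_is_pos_VInSpc}, and your write-up simply makes explicit the elementary fact (a nonempty, relatively open convex cone in a subspace has difference set equal to that subspace) that the paper leaves implicit. No gaps.
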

Clearly, this result is false, if $\Fan$ is not inscribable but virtually
inscribable, i.e.~$\InCone(\Fan) = \emptyset$, but $\InSpc(\Fan) \neq
\emptyset$.  For example, Proposition~\ref{prop:2d_virtual_insc} implies that
$2$-dimensional complete fans with an odd number of rays always have an
inscribed virtual polytope. Figure~\ref{fig:non_insc_hexagon} shows an even
$2$-dimensional fan all whose inscribed polytopes are virtual.

According to Steinitz, combinatorial types of $3$-dimensional polytopes are in
bijection with planar and $3$-connected graphs; cf.~\cite{ziegler}. While
there are complete $3$-dimensional fans $\Fan$ that are not
polytopal~\cite[Example~7.5]{ziegler}, Steinitz result shows that $G(\Fan)$ is
always the dual graph of a polytopal fan. It was shown by Steinitz that there
are planar and $3$-connected graphs $G$ that cannot be realized as the graph
of an inscribed $3$-polytope.

\begin{quest}
    Is there a planar and $3$-connected graph $G$ that can not be realized as
    the graph of an inscribed polytope but as the the graph of an inscribed
    \emph{virtual} polytope?
\end{quest}

Let $G$ be a planar and $3$-connected graph and $G' = (V, E)$ be its planar
dual. Rivin~\cite{Rivin} showed that there is an inscribed $3$-polytope $P$
with graph $G$ if and only if there is $\omega : E \to (0, \pi)$ such that for
every cycle in $C \subseteq E$
\begin{equation}\label{eqn:Rivin}
\begin{aligned}
    \sum_{e \in C} \omega(e) \ &= \ 2\pi \quad \text{ if $C$ is
    non-separating (i.e.\ bounding a $2$-face of $G'$)} \\
    \sum_{e \in C} \omega(e) \ &> \ 2\pi \quad \text{ if $C$ is separating}.
\end{aligned}
\end{equation}

Let us call a planar and $3$-connected graph $G$ \Def{virtually
  inscribable} if there is a virtually inscribable polytopal fan
$\Fan$ with $G(\Fan) = G$.

\begin{quest}
    Can Rivin's characterization~\eqref{eqn:Rivin} be extended to decide if a
    graph is virtually inscribable?
\end{quest}

\subsection{Computing the inscribed cone}\label{sec:computing}%
The reflection game of Section~\ref{sec:ref_game} gives a mean to compute
$\InCone(\Fan, R_0)$ via \eqref{eqn:InSpc_intersect_rep}. However, 
the process is computationally quite involved. In the
remainder of this section, we give a simpler and more elegant description of
$\InSpc(\Fan)$ utilizing Corollary~\ref{cor:TypeCone_desc}.

In order to describe $\lambda(\InSpc(\Fan))$, we first consider a
single cycle in $G(\Fan)$. More generally, let
$A = (\alpha_1,\dots,\alpha_n) \in \R^{d \times n}$ be an ordered
collection of unit vectors. We write $s_i$ for the reflection in
$\alpha_i^\perp$. We are interested in the question when there is a
point $q_0 \in \R^d$ such that $s_ns_{n-1} \dots s_1(q_0) = q_0$.  Let
$q_i \defeq s_is_{i-1} \dots s_1(q_0)$, that is,
\[
    q_i \ = \ s_i(q_{i-1}) \ = \ q_{i-1} - 2 \inner{\alpha_i,q_{i-1}} \alpha_i
    \, .
\]
Furthermore, set $\lambda_i \defeq -2\inner{\alpha_i,q_{i-1}}$. Then
\[
    q_i \ = \ q_0 + \sum_{j=1}^i \lambda_i \alpha_i
\]
and, in particular,
\[
    \lambda_i \ = \ -2\inner{\alpha_i,q_0} - 2 \sum_{j=1}^{i-1}
    \inner{\alpha_i,\alpha_j}
    \lambda_j \, .
\]
Note that $\lambda : \R^d \to \R^n$ with $q_0 \mapsto
(\lambda_i)_{i=1,\dots,n}$ is a linear map and there is $q_0$ with $q_n = q_0$
if and only if $A\lambda(q_0) = 0$. 

Let $G$ be a \Def{Gale transform} of $A$, that is, a matrix $G$ such that
$\ker G = \im A^t$. We define the \Def{skew Gram matrix} of
$\alpha_1,\dots,\alpha_n$ as the skew-symmetric matrix $R =
R(\alpha_1,\dots,\alpha_n) \in \R^{n \times n}$ with $R_{ij} = -R_{ji} =
\inner{\alpha_i,\alpha_j}$ for $i < j$ and $R_{ii} = 0$.

\begin{lem}\label{lem:prod_reflections}
    Let $A = (\alpha_1,\dots,\alpha_n) \in \R^{d \times n}$ be an ordered
    collection of unit vectors. Let $G$ be a Gale transform for $A$ and $R =
    R(\alpha_1,\dots,\alpha_n)$ the skew Gram matrix. There is $q_0 \not\in
    \im(A)^\perp$
    with $s_n \dots s_1(q_0) = q_0$ if and only if there is $\lambda \in \R^n
    \setminus \{0\}$ with
    \[
        A \lambda \ = \  0 \quad \text{ and } \quad GR \lambda \ = \ 0  \, .
    \]
\end{lem}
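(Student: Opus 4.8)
The plan is to translate the fixed-point condition and the admissibility of a weight vector $\lambda\in\R^n$ into linear conditions involving only $A$, $G$ and $R$. Recall from the computation preceding the statement that, starting from $q_0$ with $q_i\defeq s_i(q_{i-1})$, one has $q_i=q_0+\sum_{j=1}^i\lambda_j\alpha_j$ and $\lambda_i=-2\inner{\alpha_i,q_0}-2\sum_{j<i}\inner{\alpha_i,\alpha_j}\lambda_j$. In particular $q_n-q_0=A\lambda$, so the condition $s_n\cdots s_1(q_0)=q_0$ is simply $A\lambda=0$. It therefore remains to pin down which $\lambda$ occur as $\lambda(q_0)$ for some $q_0\in\R^d$, and this is the step I would spend most effort on.

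First I would rewrite the recursion in matrix form. Let $L$ denote the strictly-lower-triangular part of the Gram matrix $A^tA$; since the $\alpha_i$ are unit vectors one has $A^tA=I+L+L^t$, and the recursion reads $(I+2L)\lambda=-2A^tq_0$. As $L$ is strictly lower triangular, $I+2L$ is unipotent and hence invertible, so $\lambda$ arises from some $q_0$ exactly when $(I+2L)\lambda\in\im A^t$. Here the Gale transform does the work: $\im A^t=\ker G$, so this is the condition $G(I+2L)\lambda=0$. Two identities then finish the reduction: the columns of $A^t$ lie in $\ker G$, whence $GA^t=0$ and $G(A^tA)=0$; and — the one genuinely non-mechanical observation — the skew Gram matrix is $R=L^t-L$, so that $I+2L=A^tA-R$. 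Combining, $G(I+2L)\lambda=G(A^tA)\lambda-GR\lambda=-GR\lambda$, and therefore $\lambda=\lambda(q_0)$ for some $q_0$ if and only if $GR\lambda=0$.

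It then only remains to assemble the two directions and to reconcile the nonvanishing conditions. A nonzero $q_0$ with $s_n\cdots s_1(q_0)=q_0$ gives, via $\lambda=\lambda(q_0)$, a vector satisfying $A\lambda=0$ and (since it arises from $q_0$) $GR\lambda=0$; conversely a $\lambda\ne0$ with $A\lambda=0$ and $GR\lambda=0$ is of the form $\lambda(q_0)$, and the resulting $q_0$ satisfies $q_n=q_0$. To trade ``$q_0\ne0$'' for ``$\lambda\ne0$'' in both directions, I would use that $A$ has full rank $d$ — which is forced by the existence of a Gale transform of the stated shape $G\in\R^{(n-d)\times n}$ — so that $A^t$ is injective; then $(I+2L)\lambda(q_0)=-2A^tq_0$ together with invertibility of $I+2L$ yields $\lambda(q_0)=0\iff q_0=0$. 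I would flag this full-rank hypothesis explicitly at the outset.

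The only part I expect to require any insight is the identity $I+2L=A^tA-R$ — i.e.\ that the triangular bookkeeping of the reflection recursion and the full Gram matrix differ exactly by the skew Gram matrix — paired with the observation that $G$ is precisely the device detecting membership of $(I+2L)\lambda$ in $\im A^t$; after that the computation $GR\lambda=-G(I+2L)\lambda$ collapses everything, and the remaining points (unipotence of $I+2L$, $GA^t=0$, $q_n-q_0=A\lambda$, and the full-rank remark) are routine.
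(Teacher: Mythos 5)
Your proof is correct and is essentially the paper's argument in slightly different notation: your identity $I+2L=A^tA-R$ is exactly the paper's decomposition of $S^{-1}=-\tfrac12(I+2L)$ into its symmetric part $-\tfrac12 A^tA$ and skew part $\tfrac12 R$, followed by the same use of $GA^t=0$. Your explicit handling of the equivalence $q_0\neq 0\iff\lambda\neq 0$ (via full rank of $A$ and unipotence of $I+2L$) is a point the paper leaves implicit, and is a welcome addition.
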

\begin{proof}
    Let $\gamma = A^t q_0 = (\inner{\alpha_i,q_0})_{i=1,\dots,n}$ and
    denote by $\delta_{i,j}$ the Kronecker delta. Then
    $\lambda = S \gamma$, where $S = S_n S_{n-1} \dots S_1$ and
    \[
        (S_k)_{ij} \ = \
        \begin{cases}
            -2\inner{\alpha_k,\alpha_j} & \text{ if } j \leq i = k,\\
            \delta_{i,j} & \text{ otherwise. } \\
        \end{cases}
    \]
    In particular, $\lambda = \lambda(q_0)$ for some $q_0$ if and only if
    $S^{-1}\lambda \in \im A^t$ if and only if $G S^{-1} \lambda = 0$.

    Note that $S_k$ is a lower triangular matrix with inverse given by
    \[
    (S_k)^{-1}_{ij} \ = \ 
        \begin{cases}
            -\frac{1}{2} & \text{ if } j = i = k,\\
            -\inner{\alpha_k,\alpha_j} & \text{ if } j < i = k,\\
            \delta_{i,j} & \text{ otherwise. }\\
        \end{cases} 
    \]
    In particular, $S^{-1} = S_1^{-1} \cdots S_n^{-1}$ is given by
    \[
        S^{-1}_{ij} \ = \ 
        \begin{cases}
            -\inner{\alpha_i,\alpha_j} & \text{ if } j < i, \\
            -\frac{1}{2} \delta_{i,j} & \text{ otherwise. } 
        \end{cases}
    \]
    If we consider the canonical decomposition of $S^{-1}$ as a sum of a
    symmetric and a skew-symmetric matrix, we see
    \[
        S^{-1} + (S^{-1})^t = -A^t A \qquad \text{and} \qquad
        S^{-1} - (S^{-1})^t = R  \, .
    \]
    Since $G$ is a Gale-transform of $A^t$, we have $G S^{-1} = \frac{1}{2} G
    R$ and this yields the claim.
\end{proof}

We can combine Corollary~\ref{cor:TypeCone_desc} with
Proposition~\ref{prop:cycle_basis}(i) and
Lemma~\ref{lem:prod_reflections} to give a description of
$\InSpc(\Fan)$ and of $\InCone(\Fan)$ as a subcone of
$\TypeCone(\Fan)$ that is effectively computable.

For the purpose of computation, we view $G(\Fan) = (V,E)$ as a directed
symmetric graph.  More precisely, $G(\Fan)$ is a directed graph with nodes $V$
corresponding to the regions of $\Fan$ and directed edges $E \subseteq V
\times V$ corresponding to the walls of $\Fan$, with the property that $RS \in
E$ implies $SR \in E$. If $RS \in E$, then $\alpha_{RS}$ is the unit outer
normal to $R$. This yields a map $\alpha : E \to S^{d-1}$ into the unit sphere
$S^{d-1} \subset \R^d$ with $\alpha_{SR} = -\alpha_{RS}$. The pair
$(G(\Fan),\alpha)$ gives an encoding of $\Fan$ that is the input to our
algorithm.

If $\Fan = \Fan(P)$ for a convex polytope $P \subset \R^d$, then $G(\Fan)$ can
be computed from $P$ by first determining the edge graph of $P$. The regions
$R$ are in bijection to the vertices $v_R$ of $P$ and $\alpha_{RS}$ is the
unit vector $\frac{v_S - v_R}{\|v_S - v_R\|}$.

\newcommand\cyc{\mathrm{Cyc}(\Fan)}%
Let $\cyc$ be the directed cycles around codimension-$2$ cones
of $\Fan$.  That is, cycles correspoding to $2$-faces of $P$. For any such
cycle $C = R_1\dots R_n$, let $A_C =
(\alpha_{R_1R_2},\dots,\alpha_{R_{n}R_1})$. Further, let $G_C$ and $R_C$  be a
Gale transform and skew Gram matrix for $A_C$. Moreover, for every region $R
\in V$, let $N(R) = \{ S : RS \in E\}$ be the neighbors of $R$ and $G_R$ a Gale
transform of $A_R = (\alpha_{RS} : S \in N(R))$.

\begin{thm}\label{thm:VInSpc_rep}
    Let $\Fan$ be a complete fan represented by $(G(\Fan),\alpha)$. Then
    $\InSpc(\Fan)$ is linearly isomorphic to the collection of $\lambda \in
    \R^{E(\Fan)}$ such that
    \begin{align}
        A_{C} \lambda|_{C} \ &= \ 0 \quad \text{for all } C \in \cyc%
        \label{eqn:AC}\\
        G_{C}R_{C} \lambda|_{C} \ &= \ 0 \quad \text{for all } C \in \cyc%
        \label{eqn:GC}\\
        G_{R} \lambda|_{N(R)} \ &= \ 0 \quad \text{for all } R \in \Fan
        \label{eqn:GR}
    \end{align}
    The inscribed cone $\InCone(\Fan)$ is given by those $\lambda \in
    \R^{E(\Fan)}$ with $\lambda_{RS} > 0$ for all $RS \in E(\Fan)$.
\end{thm}
\begin{proof}
    We work with the based inscribed space $\InSpc(\Fan,R_0)$ for some fixed
    region $R_0 \in \Fan$. Every element of $\InSpc(\Fan)$ is thus represented
    by some $v_{R_0} \in \R^d$ and all other $v_R$ are determined by
    \eqref{eqn:point_to_lambda} for walks $R_0$ to $R$. This yields a $\lambda
    : E(\Fan) \to \R$ that satisfies \eqref{eqn:AC} and \eqref{eqn:GR}. We
    also note that $-2A^t_{R}v_{R} = \lambda|_{N(R)}$ for all $R$, which shows
    that \eqref{eqn:GR} is satisfied.

    Now let $\lambda$ satisfy the given conditions. For any $R \in \Fan$,
    $G_R$ is a Gale transform for $A_R$ and $A_R$ has full rank, so there is a
    unique $v_R$ with $-2 \inner{\alpha_{RS},v_R} = \lambda_{RS}$ for all $S
    \in N(R)$.  Fix $RS \in E$ and let $v = s_{RS}(v_R)$. We claim that $v_S =
    v$. Note that $\inner{\alpha_{RS},v_S} = \inner{\alpha_{SR},v} =
    -\frac{1}{2}\lambda_{RS}$ by construction.

    Let $C \in \cyc$ be a cycle containing $QRST$ for regions $Q,T \in \Fan$.
    Let $s_C$ be the composition of reflections along $C$ starting at $R$.  By
    condition \eqref{eqn:AC} and \eqref{eqn:GC},
    Lemma~\ref{lem:prod_reflections} assures us that there is point $u$ such
    that $u = s_C(u)$. The vectors $A_C$ span a subspace of dimension $2$ and
    $\alpha_{QR}$ and $\alpha_{RS}$ are linearly independent. Therefore,
    $s_C(u) = u$ if and only if $-2\inner{\alpha_{QR},u} = \lambda_{QR}$ and
    $-2\inner{\alpha_{RS},u} = \lambda_{RS}$. In particular $s_C(v_R) = v_R$.
    This implies $\inner{\alpha_{ST},v} = \inner{\alpha_{ST},v_S} =
    -\frac{1}{2}\lambda_{ST}$. Every codimension-$1$ cone is contained in at
    least $d-1$ codimension-$2$ cones. Thus there are at least $d-1$ such
    cycles $C$ through $RS$ for which the vectors $\alpha_{ST}$ are linearly
    independent. This means that $v$ is the unique solution to $-2A^t_{S}v =
    \lambda|_{N(S)}$. 
\end{proof}

\begin{rem}
    The result that precedes can also be interpreted in the context of
    Corollary~\ref{cor:inscribability_0_k}: For an ordinary (that is
    non-virtual) polytope $P$, conditions~\eqref{eqn:AC}
    and~\eqref{eqn:GC} are equivalent to all $2$-faces being
    inscribed, while~\eqref{eqn:AC} and~\eqref{eqn:GR} are equivalent
    to the existence of a sphere containing every vertex together with
    its neighbors.
\end{rem}

This, finally, allows us to prove Theorem~\ref{thm:algo_rational}.

\begin{proof}[Proof of Theorem~\ref{thm:algo_rational}]
    If $\Fan$ is \Def{rational}, that is, if $\alpha_{RS} \in \Q^d$ for all
    $RS \in E(\Fan)$, then $\InCone(\Fan) \subset \TypeCone(\Fan)$ is a
    rational cone and hence contains rational points in the relative interior,
    provided it is non-empty.  This shows the second statement. 
    
    For the first statement, let $\Fan$ be a (polytopal) fan in $\R^d$ with
    $n$ regions.  The linear equations of type~\eqref{eqn:GR} is of order $d$
    times the number of walls of $\Fan$ and is of order $dn^2$. If $C \in
    \cyc$ is a cycle of length $m < n$, then the linear equations of
    type~\eqref{eqn:AC} and \eqref{eqn:GC} are of order $m^2$. There are at
    most $\binom{n}{3}$ cycles. Computing a point in $\InCone(\Fan)$ is
    therefore equivalent to finding a solution to a system of polynomially
    many linear equations and strict inequalities.  The bit complexity of the
    system is polynomial in $(G(\Fan),\alpha)$ and the ellipsoid method
    determines in polynomial time if a solution exists; cf.~\cite{GLS}. This
    completes the proof of Theorem~\ref{thm:algo_rational}.
\end{proof}

\subsection{Non-inscribable fans and inscribable coarsenings}

If $\Fan$ is not polytopal, that is, not the normal fan of a polytope, then
there is a unique finest coarsening $\Fan_{pc}$ of $\Fan$ such that
$\Fan_{pc}$ is polytopal. A representative is given by taking the Minkowski
sum of polytopes for all possible polytopal coarsenings of $\Fan$. A
different way to see is, is to note that by
Corollary~\ref{cor:TypeCone_desc}, 
\[
    \cTypeCone(\Fan_{pc}) \ \cong \ \R^{E(\Fan)}_{\ge 0} \cap \{ \lambda \in
    \R^{E(\Fan)} : \text{$\lambda$ satisfies \eqref{eqn:closed_walk} for all
    closed walks} \}\, ,
\]
where the last set is isomorphic to $\TypeSpc(\Fan)$. This embedding of
the type space $\TypeSpc(\Fan) \hookrightarrow \R^{E(\Fan)}$ meets
the cone $\R^{E(\Fan)}_{\ge 0}$ in the relative interior of some
inclusion-maximal face and the intersection is precisely the closed
type cone of $\Fan_{pc}$.

Analogously, there is a \Def{canonical inscribable coarsening} (cic)
of $\Fan$ given by the fan $\Fan_{cic}$ corresponding to the largest
face of $\cTypeCone(\Fan_{pc})$ meeting the inscribed space
$\InSpc(\Fan)$. If $\Fan$ is inscribed, then
$\Fan_{cic} = \Fan_{pc} = \Fan$. It follows from
Example~\ref{ex:relatively_inscribable} that $\Fan_{cic}$ is
\emph{not} the common refinement of all inscribable coarsenings of
$\Fan$, but the common refinement of all \emph{relatively} inscribable
coarsenings of $\Fan$. In particular, $\Fan_{cic} = \{ \R^d \}$ is
possible even if $\Fan$ has inscribable coarsenings. It is clear that
\[
    \cTypeCone(\Fan_{pc}) \cap \InSpc(\Fan) \ \subseteq \
    \InCone(\Fan_{cic}) \, ,
\]
but the following example shows that the inclusion can be strict in general.

\begin{example}\label{ex:fan_cic}
    Let $\Fan$ be a $2$-dimensional fan with $5$ rays. If no rays are
    antipodal, then $\cTypeCone(\Fan)$ is a $3$-dimensional cone with $5$
    rays, i.e., the cone over a (different) pentagon. The precise
    combinatorics, i.e., which cones can be merged in a coarsening, depend on
    the geometry of $\Fan$.
    
    For example, let $\Fan$ be a $2$-dimensional fan with profile
    $\beta(\Fan) = (\frac{\pi}{3},
    \frac{\pi}{2},\frac{\pi}{3},\frac{\pi}{2},\frac{\pi}{3})$.  A
    cross section of $\cTypeCone(\Fan)$ is shown in
    Figure~\ref{fig:pentagon_type_cone}. The vertices of the cross
    section are labelled by the corresponding indecomposable
    coarsenings of $\Fan$. All of these indecomposable coarsenings
    correspond to triangles and are therefore inscribable.

    By Proposition~\ref{prop:2d_virtual_insc} and Theorem~\ref{thm:2d_insc},
    $\InSpc(\Fan)$ is a $1$-dimensional subspace that does not meet the
    interior of $\cTypeCone(\Fan)$. In fact, $\InSpc(\Fan)$ meets the
    relative interior of a $2$-dimensional face $F \subseteq
    \cTypeCone(\Fan)$. The corresponding fan $\Fan_{cic}$ has $4$ rays and it
    follows from Example~\ref{ex:insc_quadrangle} that $\InCone(\Fan_{cic}) =
    F$. Thus, $\InSpc(\Fan) \cap \cTypeCone(\Fan) \subsetneq
    \InCone(\Fan_{cic})$. This situation is depicted in
    Figure~\ref{fig:pentagon_type_cone}.
    \begin{figure}[h] 
        \centering
        \includegraphics[width=0.45\textwidth]{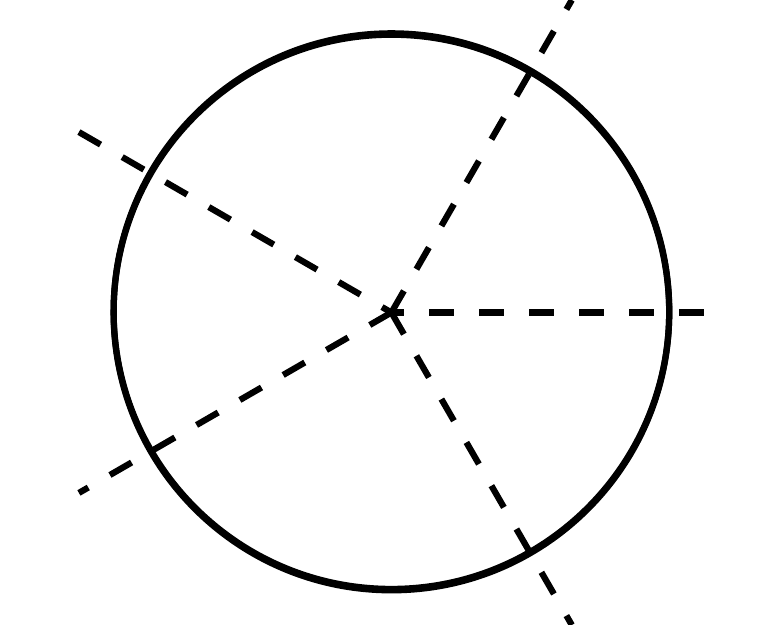}\qquad
        \includegraphics[width=0.45\textwidth]{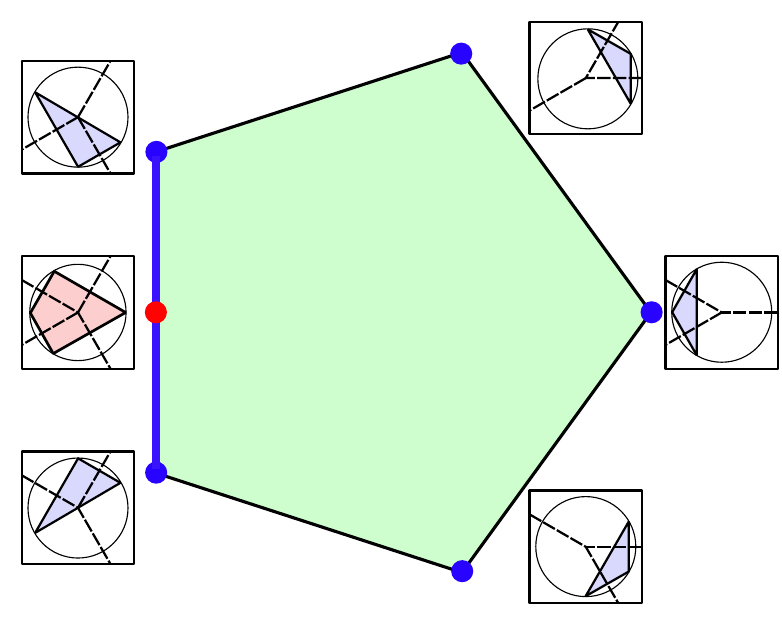}
        \caption{A $2$-dimensional fan $\Fan$ with profile
          $\beta(\Fan) = (\frac{\pi}{3},
          \frac{\pi}{2},\frac{\pi}{3},\frac{\pi}{2},\frac{\pi}{3})$
          and a cross section of the closed type cone. The inscribed
          weak Minkowski sums are marked in blue. The red point is the
          intersection $\InSpc(\Fan) \cap \cTypeCone(\Fan)$.}
        \label{fig:pentagon_type_cone}
    \end{figure}
\end{example}

\section{Routed trajectories and reflection groupoids}\label{sec:erratic}
The constructions in the Sections~\ref{sec:ref_game} and~\ref{sec:type} prompt
two related generalizations, namely \emph{routed trajectories} and
\emph{reflection groupoids}, which may provide a broader perspective on
normally inscribed polytopes.

\subsection{Routed trajectories}\label{sec:routed}

Let $B^d \subset \R^d$ be the $d$-dimensional unit ball and consider a
particle that starts from a point $t_0 \in \partial B^d = S^{d-1}$ along a
straight line trajectory. Upon collision with the boundary at the point $t_1
\in S^{d-1}$, the particle again takes off in a random direction and continues
to produce a trajectory $T = (t_0, t_1,\dots,t_k)$. We will assume that the
trajectory is closed and hence $t_k = t_0$. We define the \Def{route} of the
trajectory $T$ as $\alpha(T) = (\alpha_0,\dots,\alpha_{k-1})$, where $\alpha_i
= \R (t_{i+1} - t_i)$ is the line along which the particle moves. We will be
interested in the space of all trajectories with the same route.

More generally, let $G = (V,E)$ be a simple connected graph and let
$\alpha : E \to \PP^{d-1}$, where $\PP^{d-1}$ is the space of lines
through the origin in $\R^d$. We call $(G,\alpha)$ a \Def{routing
  scheme}. A \Def{trajectory} for $(G,\alpha)$ is a map
$T : V \to S^{d-1}$ such that if $C = v_0,v_1,\dots, v_k$ is a
cycle in $G$, then $T|_C = (T(v_0),\dots,T(v_k))$ is a trajectory with
route $\alpha|_C = (\alpha(v_0v_1),\dots,\alpha(v_{k-1}v_k))$. Thus,
we may interpret $G$ as the state space of a particle with $\alpha$
restricting the admissible directions for each state. The
\Def{trajectory space} is then
\[
    \Traj(G,\alpha) \ \defeq \ \{ T : V \to S^{d-1} : T \text{ is a trajectory
    for } (G,\alpha) \} \, .
\]

\begin{thm}\label{thm:subsphere}
    Let $(G,\alpha)$ be a routing scheme. Then $\Traj(G,\alpha)$ is
    homeomorphic to a subsphere $S^{d-1} \cap U$, where $U$ is a linear
    subspace.
\end{thm}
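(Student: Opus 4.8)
The plan is to carry out, for routing schemes, the same argument that produced the based inscribed space $\InSpc(\Fan,R_0)$ in Section~\ref{sec:ref_game}: fix a base vertex, transport a single point of $S^{d-1}$ along walks by products of reflections, and identify the linear constraint that makes this transport well defined. Throughout, $\Traj(G,\alpha)$ carries the subspace topology inherited from $(S^{d-1})^V$.

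\emph{Step 1: an edgewise characterization of trajectories.} For an edge $e = vw \in E$ let $s_e \colon \R^d \to \R^d$ be the reflection in the hyperplane $\alpha(e)^\perp$. If $t,t' \in S^{d-1}$ and $t'-t$ lies on the line $\alpha(e)$, write $t' = t+\lambda u$ with $u$ a unit vector spanning $\alpha(e)$; then $\|t'\|^2 = \|t\|^2$ forces $\lambda(\lambda + 2\inner{u,t}) = 0$, so $t' = s_e(t)$ (the root $\lambda = 0$ being the case $t \in \alpha(e)^\perp$, where $s_e(t) = t$ anyway). Hence a map $T \colon V \to S^{d-1}$ is a trajectory for $(G,\alpha)$ if and only if $T(w) = s_{vw}(T(v))$ for every edge $vw \in E$. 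Establishing this equivalence is the step that needs the most care: one has to see that imposing the reflection relation along single edges already forces the defining condition along every cycle — which holds because the relation is multiplicative under concatenation of walks and the cycles of $G$ are spanned, in the cycle space, by the fundamental cycles of a spanning tree (cf.\ Proposition~\ref{prop:cycle_basis}) — while keeping track of the degenerate ``overlapping'' steps in which $T(v) = T(w)$.

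\emph{Steps 2--3: the subspace $U$.} Fix a base vertex $v_0 \in V$. For a walk $\Walk = v_0 v_1 \cdots v_k$ in $G$ set $t_\Walk \defeq s_{v_{k-1}v_k}\cdots s_{v_1 v_2}\, s_{v_0 v_1}$, an orthogonal transformation. By Step 1, every trajectory $T$ satisfies $T(v) = t_\Walk(T(v_0))$ for each walk $\Walk$ from $v_0$ to $v$, so $T$ is determined by $q_0 \defeq T(v_0)$ and, running this around a closed walk, $t_\Walk(q_0) = q_0$. Put
\[
    U \ \defeq \ \bigcap_{\Walk} \ker\bigl( t_\Walk - \id \bigr) \ \subseteq \ \R^d ,
\]
the intersection over all closed walks $\Walk$ based at $v_0$; as an intersection of kernels of linear maps this is a linear subspace (finitely many walks — one per fundamental cycle — already suffice). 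Since reflections are involutions and reversing a walk inverts $t_\Walk$, for $q_0 \in U$ the value $t_\Walk(q_0)$ depends only on the endpoint of $\Walk$; choosing a walk $\Walk_v$ from $v_0$ to each vertex $v$ we may therefore define $T_{q_0}(v) \defeq t_{\Walk_v}(q_0)$.

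\emph{Step 4: the homeomorphism.} Consider $\Phi \colon \Traj(G,\alpha) \to S^{d-1}$, $\Phi(T) \defeq T(v_0)$, the restriction of the $v_0$-coordinate projection of $(S^{d-1})^V$, so $\Phi$ is continuous. If $T$ is a trajectory then $T(v_0) \in S^{d-1}$ lies in $U$ by the fixed-point relations above, so $\Phi$ maps into $S^{d-1} \cap U$. Conversely, for $q_0 \in S^{d-1}\cap U$ the map $T_{q_0}$ of Step 3 is well defined, takes values in $S^{d-1}$ (the $t_\Walk$ are orthogonal), has $T_{q_0}(v_0) = q_0$, and satisfies the edgewise relation of Step 1 (extend $\Walk_v$ by the edge $vw$), hence is a trajectory; thus $\Phi$ is onto $S^{d-1} \cap U$. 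It is injective because a trajectory is determined by its value at $v_0$, and its inverse $q_0 \mapsto T_{q_0}$ is continuous since each coordinate map $q_0 \mapsto t_{\Walk_v}(q_0)$ is linear. Therefore $\Phi$ is a homeomorphism $\Traj(G,\alpha) \cong S^{d-1}\cap U$. (As for $\InSpc(\Fan,R_0)$, changing the base vertex from $v_0$ to $v_0'$ replaces $U$ by $t_{\Walk'}(U)$ for any walk $\Walk'$ from $v_0$ to $v_0'$, so the resulting subsphere is intrinsic up to an orthogonal change of coordinates.)
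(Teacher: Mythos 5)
Your proof is correct and follows essentially the same route as the paper's: the paper's argument is precisely the observation that $T$ is determined by $T(v_0)$ via edgewise reflections, that closed walks based at $v_0$ impose the fixed-point conditions cutting out the linear subspace $U$, and that evaluation at $v_0$ gives the homeomorphism onto $S^{d-1}\cap U$. The only quibble is the parenthetical in your Step 1 --- the root $\lambda=0$ gives $t'=t$ for \emph{any} $t$, not only for $t\in\alpha(e)^\perp$, so the collinearity condition alone leaves a two-valued choice at each edge --- but resolving that choice in favour of the reflection is exactly the (implicit) reading of ``trajectory'' that the paper itself adopts, so nothing substantive is lost.
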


The crucial observation is again that if $uv \in E$, then $T(v)$ is the
reflection of $T(u)$ in the hyperplane $\alpha(uv)^\perp$. Hence $T$ is
determined by $T(v_0)$ for an arbitrary but fixed $v_0 \in V$. Moreover, let
us write $s_{uv}$ for the reflection in $\alpha(uv)^\perp$.  Then $t_0 \defeq
T(v_0)$ has to satisfy
\[
    s_{v_k v_{k-1}}\dots s_{v_1v_0}(t_0) \ = \ t_0   \, .
\]
for all closed walks $\Walk = v_0 v_1 \dots v_k$  starting at $v_0$. As in
Section~\ref{sec:ref_game}, we note that the collection of points $t_0 \in
\R^d$ satisfying these conditions yield a linear subspace $U \subset \R^d$ and
hence $\Traj(G,\alpha)$ is homeomorphic to $S^{d-1} \cap U$.

Every full-dimensional and strongly connected fan $\Fan$ naturally determines
a routing scheme $(G,\alpha)$. The results of Sections~\ref{sec:ref_game}
and~\ref{sec:type} can now be interpreted as follows.

\begin{cor}
    Let $\Fan$ be a full-dimensional and strongly connected fan in $\R^d$ with
    associated routing scheme $(G,\alpha)$. Then inscribed virtual polytopes
    with underlying fan $\Fan$ are in correspondence to trajectories for
    $(G,\alpha)$.
\end{cor}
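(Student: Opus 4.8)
The plan is to unwind the definitions on both sides and observe that the correspondence is essentially a matter of bookkeeping, since both objects have been arranged to be governed by exactly the same data: the based inscribed space $\InSpc(\Fan,R_0)$ on the one side, and the trajectory space $\Traj(G,\alpha;v_0)$ on the other. Concretely, given a full-dimensional and strongly connected fan $\Fan$ with dual graph $G = G(\Fan)$, the associated routing scheme is $(G,\alpha)$ where $\alpha : E(\Fan) \to \PP^{d-1}$ sends an edge $RS$ to the line $\R\alpha_{RS}$ spanned by the wall normal. The reflection $s_{uv}$ attached to an edge in the routing scheme then coincides with the reflection $s_{RS} = s_{R\cap S}$ used in the reflection game, since both are the reflection in $\alpha_{RS}^\perp$.

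First I would spell out the two sides. By Corollary~\ref{cor:VInSpc_embedd} and Proposition~\ref{prop:VInSpc_embedd}, the inscribed space $\InSpc(\Fan)$ is linearly isomorphic to the based inscribed space $\InSpc(\Fan,R_0)$, which is the set of $v \in (\lineal(\Fan))^\perp$ fixed by $t_\Walk$ for every closed walk $\Walk$ based at $R_0$. On the other side, the discussion following the statement of Theorem~\ref{thm:tra_spc} identifies $\Traj(G,\alpha)$ with the set of $t_0 \in \R^d$ satisfying $s_{v_kv_{k-1}}\cdots s_{v_1v_0}(t_0) = t_0$ for all closed walks based at $v_0$, intersected with $S^{d-1}$. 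Since $G = G(\Fan)$ and the reflections match, the linear subspace $U$ cutting out $\Traj(G,\alpha;v_0)$ is precisely $\InSpc(\Fan,R_0)$ (modulo the lineality subtlety below). Hence $\Traj(G,\alpha)$ is the unit sphere in this subspace, while $\InSpc(\Fan)$ is the whole subspace; the map sending a virtual polytope $\ell$ to the trajectory $T(R) \defeq v_R(\ell)/\|v_R(\ell)\|$ (for $\ell$ with $c(\ell) = \0$, after normalizing the common radius to $1$) is the desired correspondence, with inverse sending a trajectory $T$ to the PL-function $\ell_{\Fan,R_0,T(R_0)}$ of Proposition~\ref{prop:VInSpc_embedd}. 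That each $T(R)$ does lie in the open region $R$ is \emph{not} required here — we are matching \emph{virtual} inscribed polytopes with \emph{all} trajectories, so the interior conditions of Proposition~\ref{prop:ref_game} play no role.

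The one genuine subtlety — and the step I expect to need the most care — is the lineality space. A fan $\Fan$ need not be pointed: every $t_\Walk$ fixes $\lineal(\Fan)$ pointwise, so $\InSpc(\Fan,R_0)$ was defined inside $(\lineal(\Fan))^\perp$ to kill this trivial kernel, whereas the routing scheme as literally defined produces a subspace $U \subseteq \R^d$ that automatically contains $\lineal(\Fan)$. I would resolve this by passing to the pointed quotient $\Fan/\lineal(\Fan)$, or equivalently by restricting trajectories to $S^{d-1}\cap(\lineal(\Fan))^\perp$, noting that a virtual polytope's support function is only defined up to adding a linear function, which exactly corresponds to translating by an element of $\lineal(\Fan)$ on the trajectory side. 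Once this normalization is fixed, the correspondence is a bijection, and invoking Theorem~\ref{thm:tra_spc} together with Theorem~\ref{thm:VInSpc_rep} shows it is even realized by the same explicit linear system in $\lambda \in \R^{E(\Fan)}$: the conditions $\A_C\lambda|_C = 0$ and $\overline{\A_C}R(\A_C)\lambda|_C = 0$ of Theorem~\ref{thm:tra_spc} are literally the conditions $A_{RS}\lambda|_{C_{RS}} = 0$ and $G_{RS}R_{RS}\lambda|_{C_{RS}} = 0$ of Theorem~\ref{thm:VInSpc_rep}, since the Gram matrix $\A(G,\alpha)$ has principal submatrices $\A_{C_{RS}} = A_{RS}^tA_{RS}$. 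So the proof reduces to: (1) identify the reflections, (2) quotient out the lineality space cleanly, (3) cite the two representation theorems to conclude the parametrizing sets coincide, and (4) check the two explicitly-given maps are mutually inverse.
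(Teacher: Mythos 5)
Your argument is correct and is essentially the intended one: the paper states this corollary without proof, as an immediate consequence of the fact that the routing scheme attached to $\Fan$ uses exactly the reflections $s_{RS}$ of the reflection game, so the fixed-point conditions cutting out $\Traj(G,\alpha)$ and $\InSpc(\Fan,R_0)\cong\InSpc(\Fan)$ coincide. Your two points of extra care — quotienting by $\lineal(\Fan)$ and noting that the trajectory space is the unit sphere in the subspace (so the correspondence is up to the scaling/normalization you describe, excluding $\ell=0$) — are exactly the right normalizations, and the matching of the linear systems in Theorems~\ref{thm:VInSpc_rep} and~\ref{thm:tra_spc} is equivalent rather than literal (e.g.\ $A\lambda=0$ versus $A^{t}A\lambda=0$), which is harmless.
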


To see the special role played by the inscribed polytopes
$\InCone(\Fan)$, observe that a route only determines a line along the
particle has to move but not a direction. Defining $\alpha$ to take values in
the space of oriented lines $\PP^{d-1}_+$, yields a correspondence to
$\InCone(\Fan)$.

\subsection{Reflection groupoids}\label{sec:groupoids}

Category theory gives a natural algebraic formalism for routing
schemes. Recall that a \Def{groupoid} is a (small) category $\Grp$ in
which every morphism is invertible; see~\cite[I.5]{Maclane}. Let
$G = (V,E)$ be a graph and $\alpha : E \to \PP^{d-1}$. As before, we
denote by $s_e$ the reflection in $\alpha_e^\perp$.  We can now
construct a groupoid $\Grp = \Grp(G,\alpha)$ with objects
$\Ob_\Grp = V$ and morphisms
\[
    \hom_\Grp(u,v) \ \defeq \  \{ t_\Walk : \Walk \text{ walk from $u$ to $v$}
    \}
\]
for $u,v \in V$, where the composition of morphisms is inherited from $O(d)$.
If $\Walk^{-1}$ is the reverse walk, then clearly $t_\Walk^{-1} =
t_{\Walk^{-1}}$. As $\Grp(G,\alpha)$ is \emph{generated} by reflections, we
call $\Grp(G,\alpha)$ a \Def{reflection groupoid}. A full-dimensional fan
$\Fan$ defines a reflection groupoid $\Grp(\Fan) \defeq (G(\Fan), \alpha)$
with $\alpha(RS) = (R \cap S)^\perp$ and we set $\Grp(P) \defeq \Grp(\Fan(P))$
for a polytope $P$.

For any $v \in V$, the set $\hom_\Grp(v) \defeq \hom_\Grp(v,v) = \{ t_\Walk :
\Walk \text{ closed walk based at $v$}\}$ is a group. If $G$ is connected,
then $\Grp$ is a \Def{connected} groupoid, that is, $\hom_\Grp(u,v) \neq
\emptyset$ for all $u,v \in V$. In this case, it is well-known that $\Grp$ is
determined up to isomorphism by $\hom_\Grp(v)$ for any fixed $v \in V$ and the
set $V$.  Indeed, for any $w \in V$ and a (non-canonical) choice $h_w \in
\hom_\Grp(v,w)$, we have for any $x,y \in V$
\[
    \hom_\Grp(x,y) \ = \ \{ h_y  g  h_x^{-1} : g \in \hom_\Grp(v) \} \, .
\]
In particular $\hom_\Grp(u) = h_u\hom_\Grp(u) h_u^{-1}$ for all $u \in V$.

The following makes an interesting connection to virtual inscribability.

\begin{thm}\label{thm:traj_inspc}
    Let $\Fan$ be a virtually inscribable fan and $\Grp = \Grp(\Fan)$ its
    reflection groupoid. Then the group $\hom_\Grp(R) \subset O(d)$ is
    generated by reflections for any region $R \in \Fan$.
\end{thm}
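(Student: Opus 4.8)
\emph{Proof plan for Theorem~\textup{\ref{thm:traj_inspc}}.}
The plan is to exhibit an explicit set of reflections generating $\hom_\Grp(R)$, by combining the planar analysis of Proposition~\ref{prop:2d_virtual_insc} with the fact that the fundamental group of the dual graph is normally generated by the cycles around codimension-$2$ cones. First I would localize. By the localization results of Section~\ref{sec:restrict}, for every codimension-$2$ cone $C\in\Fan$ the two-dimensional fan obtained by localizing $\Fan$ at $C$ and projecting to $\lin(C)^\perp$ is again virtually inscribable. Write $\gamma_C$ for the cycle of regions around $C$ and $t_{\gamma_C}\in O(d)$ for the associated orthogonal transformation; it fixes $\lin(C)$ pointwise. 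By the proof of Proposition~\ref{prop:2d_virtual_insc}, on $\lin(C)^\perp\cong\R^2$ the map $t_{\gamma_C}$ is a reflection when the number of regions around $C$ is odd, and a rotation by $2(\beta_2+\beta_4+\cdots)$ when it is even; in the even case virtual inscribability forces this angle to be a full turn, so the rotation is trivial. Hence each $t_{\gamma_C}$ is either a reflection in a hyperplane of $\R^d$ containing $\lin(C)$, or the identity.

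Next I would globalize. Since $t_\Walk$ depends only on the homotopy class of $\Walk$, the assignment $[\Walk]\mapsto t_\Walk$ is a homomorphism $\pi_1(G(\Fan),R)\to O(d)$ with image $\hom_\Grp(R)$. Choosing for each codimension-$2$ cone $C$ a walk $h_C$ from $R$ to a region containing $C$ and setting $c_C\defeq h_C^{-1}\gamma_C h_C$, the loops $c_C$ normally generate $\pi_1(G(\Fan),R)$: this is the fundamental-group version of Proposition~\ref{prop:cycle_basis}(ii), visible from the dual cell structure of the polytopal complex cut out by $\Fan$ on $S^{d-1}$, whose $2$-cells are exactly the $\gamma_C$ and which is simply connected for $d\ge 3$ (the case $d=2$ is immediate, since then $\gamma_{\{\0\}}$ is a full traversal of the dual graph). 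Therefore $\hom_\Grp(R)$ equals the normal closure in $\hom_\Grp(R)$ of the elements $t_{c_C}=t_{h_C}^{-1}\,t_{\gamma_C}\,t_{h_C}$. Each $t_{c_C}$ is a conjugate of a reflection or of the identity by an orthogonal transformation, hence again a reflection or the identity; and the normal closure of a set of reflections in any subgroup of $O(d)$ is generated by conjugates of those reflections, which are themselves reflections. Thus $\hom_\Grp(R)$ is generated by reflections.

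The step I expect to be the main obstacle is making precise that the codimension-$2$ loops normally generate $\pi_1$ of the dual graph, rather than merely spanning its cycle space; depending on the exact phrasing of Proposition~\ref{prop:cycle_basis} one may need to run this argument directly through the dual cell decomposition and to check the low-dimensional fans by hand. A minor point to dispatch along the way is the non-pointed case: every $t_\Walk$ fixes $\lineal(\Fan)$ pointwise and $\lineal(\Fan)\subseteq\lin(C)$ for every codimension-$2$ cone $C$, so the entire argument goes through verbatim after restricting to $(\lineal\Fan)^\perp$.
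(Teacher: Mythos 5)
Your proof is correct and follows essentially the same route as the paper: reduce to loops around codimension-$2$ cones, apply Proposition~\ref{prop:2d_virtual_insc} to the localizations to see that each such loop yields a reflection (odd case) or the identity (even case, where virtual inscribability kills the rotation), and conclude that $\hom_\Grp(R)$ is generated by conjugates of these reflections. The one place you go beyond the paper is in justifying the passage from a cycle basis to normal generation of $\pi_1(G(\Fan),R)$ via the dual cell decomposition of $S^{d-1}$ — the paper dispatches this with ``clearly, it suffices to consider a cycle basis'' — and your more careful treatment of that step is a genuine improvement rather than a deviation.
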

\begin{proof}
    As we stated above, $\hom_\Grp(R)$ is generated by elements of the
    form $t_\Walk^{-1} t_{\Walk'} t_\Walk$, where $\Walk'$ is a closed
    walk and $\Walk$ is a path from $R$ to the start point of
    $\Walk'$. Clearly, it suffices to only consider closed walks which
    form a cycle bases, and so we can choose $\Walk'$ to be a closed
    path around a cone $C$ of codimension $2$. We call $C$ odd/even if
    $\Walk'$ has odd/even length.
    
    If $\Fan$ is virtually inscribable, then the localization $\Fan_C$
    is virtually inscribable. Since $\Fan_C$ is essentially a
    $2$-dimensional fan, it follows from
    Proposition~\ref{prop:2d_virtual_insc} that $t_{\Walk'}$ is the
    identity if $C$ is even and $t_C$ is a reflection if $C$ is
    odd. Hence $\hom_\Grp(R)$ is generated by the reflections
    $t_\Walk^{-1} t_{\Walk'} t_\Walk$, for any walk $\Walk'$ around an odd
    codimension-$2$ cone.
\end{proof}

\begin{cor}
    If $\Fan$ is a virtually inscribable and even fan, then
    $\hom_{\Grp(\Fan)}(R)$ is trivial for all regions $R \in \Fan$.
    In particular $\hom_{\Grp(\Fan)}(R,S) = \{ s_{RS} \}$ for all
    adjacent regions $R,S \in \Fan$.
\end{cor}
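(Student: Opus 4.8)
The plan is to read the statement directly off the proof of Theorem~\ref{thm:traj_inspc}. Fix a region $R$ and write $\Grp = \Grp(\Fan)$. As in that proof, $\hom_\Grp(R)$ is generated by the elements $t_\Walk^{-1} t_{\Walk'} t_\Walk$, where $\Walk'$ ranges over closed walks around the codimension-$2$ cones $C$ of $\Fan$ (these close a cycle basis of $G(\Fan)$) and $\Walk$ is a path from $R$ to the base region of $\Walk'$. Since $\Fan$, and hence each localization $\Fan_C$, is virtually inscribable, and $\Fan_C$ is essentially two-dimensional, Proposition~\ref{prop:2d_virtual_insc} pins down $t_{\Walk'}$: it is a reflection when the number of regions incident to $C$ is odd, and the identity when that number is even.

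The one bookkeeping point to record is that the length of the closed walk around $C$ equals the number of regions of $\Fan$ incident to $C$: the regions of $\Fan$ containing $C$ are precisely the regions of $\Fan_C$ (via $D \mapsto D - C$), and the dual graph of a complete two-dimensional fan is a single cycle running through all of them. Because $\Fan$ is even, every codimension-$2$ cone is incident to an even number of regions, so each generator $t_\Walk^{-1} t_{\Walk'} t_\Walk$ above is the identity. Hence $\hom_\Grp(R) = \{\id\}$ for every region $R$.

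For the ``in particular'' clause, recall that $G(\Fan)$ is connected (complete fans are strongly connected), so $\Grp$ is a connected groupoid and for adjacent regions $R,S$ the set $\hom_\Grp(R,S)$ is in bijection with $\hom_\Grp(R)$ (for instance via $g \mapsto s_{RS}\,g$), hence a singleton. The length-one walk $RS$ realizes $s_{RS} \in \hom_\Grp(R,S)$, so $\hom_\Grp(R,S) = \{s_{RS}\}$. I do not expect any real obstacle here; the only care needed is matching the ``even fan'' hypothesis with the parity condition on codimension-$2$ cones that appears in the proof of Theorem~\ref{thm:traj_inspc}.
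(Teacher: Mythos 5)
Your proof is correct and follows exactly the route the paper intends: the corollary is read off the proof of Theorem~\ref{thm:traj_inspc}, where the generators $t_\Walk^{-1} t_{\Walk'} t_\Walk$ of $\hom_\Grp(R)$ become trivial because every codimension-$2$ cone of an even fan is incident to an even number of regions, and the ``in particular'' clause follows from the standard fact that in a connected groupoid $\hom_\Grp(R,S)$ is a torsor over $\hom_\Grp(R)$. Nothing is missing.
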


Notice that the group $\hom_{\Grp(\Fan)}(R)$ is typically infinite,
even when it is generated by reflections. However, $\hom_\Grp(v)$ is
trivially a subgroup of the (possibly infinite) reflection group
$W(\Grp)$ generated by $s_e$ for $e \in E$.

\begin{cor}
    If $P$ is an inscribed generalized permutahedron, then
    $\hom_{\Grp(P)}(v)$ is a finite reflection group for any $v \in
    \Ob_{\Grp(P)}$.
\end{cor}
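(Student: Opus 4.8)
The plan is to reduce the statement to Theorem~\ref{thm:traj_inspc} together with the observation that, for a generalized permutahedron, the whole reflection groupoid lives inside the symmetric group. First I would record that if $P \subset \R^d$ is a generalized permutahedron, then every edge $uv$ of $P$ satisfies $u - v = \mu(e_i - e_j)$ for some $i \neq j$ and $\mu \in \R \setminus \{0\}$. Writing $R = N_uP$ and $S = N_vP$ for the corresponding adjacent regions of $\Fan \defeq \Fan(P)$, the wall $R \cap S$ is then contained in the braid hyperplane $\{x : x_i = x_j\}$; since $R \cap S$ has codimension $1$, this forces $\lin(R \cap S) = (e_i - e_j)^\perp$, and hence the reflection $s_{RS}$ is exactly the coordinate transposition $(i\,j)$, viewed as an element of $\Sym_d \subset O(d)$. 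Consequently, for every walk $\Walk$ in $G(\Fan)$ the orthogonal transformation $t_\Walk$ is a product of coordinate transpositions, so $\hom_{\Grp(P)}(v) \subseteq \Sym_d$ for every region $v$; in particular $\hom_{\Grp(P)}(v)$ is finite.

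Next I would invoke the inscribability hypothesis: since $P$ is inscribed, the fan $\Fan$ is inscribable, hence virtually inscribable, so Theorem~\ref{thm:traj_inspc} applies and shows that $\hom_{\Grp(P)}(v)$ is generated by reflections. Combining the two facts, $\hom_{\Grp(P)}(v)$ is a finite group generated by orthogonal reflections, i.e.\ a finite reflection group, which is the assertion. One may further note that the reflections produced in the proof of Theorem~\ref{thm:traj_inspc} have the form $t_\Walk^{-1} t_{\Walk'} t_\Walk$ for $\Walk'$ a loop around an odd codimension-$2$ cone of $\Fan$; by the first step these are transpositions in $\Sym_d$, so $\hom_{\Grp(P)}(v)$ is in fact conjugate to a Young subgroup $\Sym_{d_1} \times \cdots \times \Sym_{d_k}$, but this refinement is not needed for the statement.

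I do not expect a genuine obstacle here: the content is entirely contained in the passage to $\Sym_d$ via the edge directions of $P$ plus Theorem~\ref{thm:traj_inspc}. The only point deserving a sentence of care is that ``inscribed generalized permutahedron'' is understood with respect to a sphere inside the affine hull $\{\sum_i x_i = \mathrm{const}\}$ of $P$, so that $\Fan$ has lineality $\R\mathbf{1}$; this is harmless, since all the reflections $s_{RS}$ fix $\R\mathbf{1}$ pointwise and the argument is unaffected.
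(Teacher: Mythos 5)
Your proposal is correct and follows exactly the argument the paper intends (the corollary is stated without proof, but the preceding remark that $\hom_{\Grp}(v)$ sits inside the reflection group $W(\Grp)$ generated by the $s_e$ — here the type-$A$ reflections in $(e_i-e_j)^\perp$, hence a subgroup of $\Sym_d$ — combined with Theorem~\ref{thm:traj_inspc} is precisely your two-step reduction). The identification of each $s_{RS}$ with a coordinate transposition and the remark about the lineality $\R\mathbf{1}$ are both accurate.
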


Let us look at some examples.

\begin{example}[Simplices]
    Let $S_d$ be the $d$-dimensional simplex with vertices
    $\0,e_1,\dots,e_d$.  The group $W(\Grp(S_d))$ is generated by
    reflections normal to $e_i$ and $e_i - e_j$ for $i < j$ and hence
    is the finite reflection group of type $B$.  Consider the vertex
    $v=\0$.  Walks around $2$-faces containing $\0$ give rise to
    reflections in $e_i - e_j$ for $i < j$ and since these walks form
    a cycle basis we have that $\hom_{\Grp(S_d)}(v)$ is in fact the
    finite reflection group of type $A_{d-1}$.
\end{example}

\begin{example}[Crosspolytopes]
    Let $K_d = \conv(\pm e_1,\dots, \pm e_d)$ be the $d$-dimensional
    cross\-polytope. The group $W(\Grp(K_d))$ is generated by reflections normal
    to $e_i - e_j$ and $e_i + e_j$ for $1 \le i < j \le d$ and hence is the
    finite reflection group of type $D_d$. The $2$-dimensional faces are of
    the form $\conv( \pm e_i, \pm e_j, \pm e_k)$ for distinct $i , j , k \in
    [d]$.  We denote the vertices by $\pm i$ and the $2$-faces by $(\pm i, \pm
    j, \pm k)$. 
    
    Starting in the vertex $i$, the $2$-faces $(i,j,k)$ and $(i,-j,-k)$ give
    rise to a reflection in $\{ x_k = x_j \}$. Similarly, the $2$-faces
    $(i,-j,k)$ and $(i,j,-k)$ yield a reflection in $\{ x_k = -x_j \}$.  For
    $v = e_1$, the group $\hom_{\Grp(K_d)}(v)$ is generated by the reflections in
    the hyperplanes $\{ x_k = \pm x_j \}$ for $1 < j < k \le d$. 
    This is group is isomorphic to $D_{d-1}$ fixing the line through $e_1$.
\end{example}

The group $\hom_{\Grp(G,\alpha)}(v)$ encodes the discrete \emph{holonomy} of
the routing scheme $(G,\alpha)$. For a fixed $v \in V$, we may associate a
reference frame to the starting point $T(v)$. Reflecting the reference frame
along a cycle in $G$ yields the holonomy of the trajectory.

In the stated generality, $\hom_{\Grp(G,\alpha)}(v)$ generalizes the
\emph{groups of projectivities} studied in~\cite{joswig}. Let $\Delta
\subseteq 2^V$ be a \Def{simplicial complex} on a finite ground set $V$. That
is, $\Delta \neq \emptyset$ and for every $\tau \subseteq \sigma \in \Delta$,
we have $\tau \in \Delta$. The \Def{dimension} of $\Delta$ is
$(d-1)$-dimensional if $d = \max( |\sigma| : \sigma \in \Delta)$ and $\Delta$
is \Def{pure} if every inclusion-maximal $\sigma \in \Delta$ is of cardinality
$d$. Inclusion-maximal sets are called \Def{facets} and two facets $\sigma,
\sigma' \in \Delta$ are \Def{adjacent} if $|\sigma \cap \sigma'| = d-1$.  The
dual graph $G(\Delta)$ of a pure complex $\Delta$ has nodes given by the
facets of $\Delta$ and $\sigma\sigma' \in E(\Delta)$ if $\sigma,\sigma'$ are
adjacent. Finally $\Delta$ is called \Def{strongly connected} if $G(\Delta)$
is connected. If $e = (\sigma, \sigma')\in E(\Delta)$, then $\sigma \setminus
\sigma' = \{v\}$ and $\sigma'\setminus \sigma = \{v'\}$.
Following~\cite{joswig}, this defines a \Def{perspective} $s_e : \sigma \to \sigma'$
with $s_e(v) \defeq v'$ and the identity on $\sigma \cap \sigma'$. In particular,
if $\Walk = \sigma_0 \sigma_1 \dots \sigma_k$ is a walk in $G(\Delta)$, then
this yields a bijection $t_\Walk : \sigma_0 \to \sigma_k$. Thus, for fixed
$\sigma_0$, this defines the \Def{group of projectivities}
\[
    \Pi(\Delta,\sigma_0) \ \defeq \  \{ t_\Walk : \Walk \text{ closed walk based
    at $\sigma_0$}\} \ \subseteq \mathrm{Bij}(\sigma_0) \, .
\]

For a strongly connected simplicial complex $\Delta \subseteq 2^V$ let $\R^V$
be the vector space with basis $e_v$ for $v \in V$. Let $G = G(\Delta)$ and
define $\alpha : E(G) \to \PP(\R^V)$ by $\alpha( \sigma\sigma') \defeq \R(e_v -
e_{v'})$. 

\begin{prop}
    Let $\Delta$ be a strongly connected simplicial complex and $\sigma_0 \in
    \Delta$ a facet. Let $(G,\alpha)$ be the routing scheme defined above.
    Then $\hom_{\Grp(G,\alpha)}(\sigma_0) \subset \mathrm{GL}(\R^V)$
    is the permutation representation of $\Pi(\Delta,\sigma_0)$.
\end{prop}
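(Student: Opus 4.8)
The plan is to unwind both constructions and observe that, for a simplicial complex, the linear reflections of the routing scheme $(G,\alpha)$ are literally the permutation matrices underlying Joswig's perspectives. First I would make the generators explicit: for an edge $e=(\sigma,\sigma')\in E(G(\Delta))$ with $\sigma\setminus\sigma'=\{v\}$ and $\sigma'\setminus\sigma=\{v'\}$ one has $\alpha_e=\R(e_v-e_{v'})$, and $s_e$ is the reflection in $\alpha_e^\perp=\{x:x_v=x_{v'}\}$. This reflection is exactly the linear map interchanging the basis vectors $e_v\leftrightarrow e_{v'}$ and fixing every other $e_w$, i.e.\ the permutation matrix of the transposition $(v\,v')\in\mathrm{Sym}(V)$. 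Hence every morphism $t_\Walk$ of $\Grp(G,\alpha)$ is a permutation matrix, and in particular $\hom_{\Grp(G,\alpha)}(\sigma_0)\subseteq\mathrm{Sym}(V)\subseteq\GL(\R^V)$ acts on $\R^V$ by permuting the standard basis.

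The core of the argument is then a bookkeeping induction along walks. For a walk $\Walk=\sigma_0\sigma_1\cdots\sigma_k$ in $G(\Delta)$ write $\pi_\Walk\in\mathrm{Sym}(V)$ for the product of transpositions encoding $t_\Walk$. I would prove by induction on $k$ that (i) $\pi_\Walk(\sigma_0)=\sigma_k$ as subsets of $V$, and (ii) the restriction $\pi_\Walk|_{\sigma_0}\colon\sigma_0\to\sigma_k$ coincides with Joswig's composed perspective $t_\Walk$. The case $k=0$ is trivial. For the step, write $\Walk=\Walk'\sigma_k$ with $\Walk'=\sigma_0\cdots\sigma_{k-1}$ and let $(a\,b)$ be the transposition of the last edge, so $\{a\}=\sigma_{k-1}\setminus\sigma_k$, $\{b\}=\sigma_k\setminus\sigma_{k-1}$ and $\pi_\Walk=(a\,b)\,\pi_{\Walk'}$. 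By the inductive hypothesis $\pi_{\Walk'}(\sigma_0)=\sigma_{k-1}$, and since $a\in\sigma_{k-1}$ while $b\notin\sigma_{k-1}$ the transposition $(a\,b)$ carries $\sigma_{k-1}$ to $(\sigma_{k-1}\setminus\{a\})\cup\{b\}=\sigma_k$, which gives (i). For (ii): for $x\in\sigma_0$, if $\pi_{\Walk'}(x)=a$ then $\pi_\Walk(x)=b$, and otherwise $\pi_{\Walk'}(x)\in\sigma_{k-1}\cap\sigma_k$ is fixed by $(a\,b)$ so $\pi_\Walk(x)=\pi_{\Walk'}(x)$ — which is precisely the recursion $t_\Walk=s_{(\sigma_{k-1},\sigma_k)}\circ t_{\Walk'}$ defining the perspective composition, so (ii) follows (after checking that the composition order in $\Grp(G,\alpha)$ is the same as that used for perspectives).

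To conclude, restrict to closed walks $\Walk$ based at $\sigma_0$, so $\sigma_k=\sigma_0$. By (i), $\pi_\Walk$ stabilises the subset $\sigma_0\subseteq V$; hence $\hom_{\Grp(G,\alpha)}(\sigma_0)$ permutes $\{e_v:v\in\sigma_0\}$ among themselves and preserves the coordinate subspace spanned by these vectors. On that subspace the action is, by (ii), through the permutation matrices of $\{t_\Walk:\Walk\text{ closed at }\sigma_0\}=\Pi(\Delta,\sigma_0)$; equivalently, $t_\Walk\mapsto\pi_\Walk|_{\sigma_0}$ is a group homomorphism from $\hom_{\Grp(G,\alpha)}(\sigma_0)$ onto $\Pi(\Delta,\sigma_0)$ intertwining the permutation actions on $\sigma_0$. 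This recovers the permutation representation of $\Pi(\Delta,\sigma_0)$ as the induced action of $\hom_{\Grp(G,\alpha)}(\sigma_0)$ on the coordinates indexed by $\sigma_0$, which is the content of the proposition.

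I expect no deep obstacle: everything reduces to transcribing the definitions of the perspective $s_e$ and of the routing-scheme reflection and checking they name the same transposition. The one place where care is needed is the combinatorial bookkeeping in step (ii) — matching the conventions for composing walks in $\Grp(G,\alpha)$ with those for Joswig's perspectives, and keeping straight which of $\{v\}$, $\{v'\}$ is $\sigma\setminus\sigma'$ and which is $\sigma'\setminus\sigma$.
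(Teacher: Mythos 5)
The paper states this proposition without proof (it is the last statement of the text), so there is no argument of the authors to compare against; your proposal has to be judged on its own. The computational core is correct and is surely the intended argument: the reflection in $(e_v-e_{v'})^\perp$ is exactly the permutation matrix of the transposition exchanging $v$ and $v'$, your induction along walks correctly shows that $\pi_\Walk$ carries the set $\sigma_0$ to $\sigma_k$ and that $\pi_\Walk|_{\sigma_0}$ is Joswig's composed perspective, and the composition conventions for $t_\Walk$ in $\Grp(G,\alpha)$ and for perspectives do agree (first edge applied first in both). This gives a surjective group homomorphism $\hom_{\Grp(G,\alpha)}(\sigma_0)\to\Pi(\Delta,\sigma_0)$, $t_\Walk\mapsto\pi_\Walk|_{\sigma_0}$, intertwining the permutation actions on $\sigma_0$, exactly as you say.

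The one point you should not gloss over is injectivity. The proposition speaks of $\hom_{\Grp(G,\alpha)}(\sigma_0)$ as a subgroup of $\GL(\R^V)$, and a closed walk can act nontrivially on the coordinates indexed by $V\setminus\sigma_0$ while restricting to the identity on $\sigma_0$. Concretely, take $\Delta$ to be the $1$-dimensional complex with facets $\{1,2\},\{2,3\},\{3,4\},\{4,5\},\{5,1\}$ and $\sigma_0=\{1,2\}$; its dual graph is a $5$-cycle, and the walk once around it yields the permutation $(1\,2)(3\,4\,5)$ of $V$. Its square is a closed walk based at $\sigma_0$ whose permutation is the nontrivial $3$-cycle $(3\,5\,4)$, fixing $\sigma_0$ pointwise; so the surjection onto $\Pi(\Delta,\sigma_0)\cong\Z/2$ has nontrivial kernel and $\hom_{\Grp(G,\alpha)}(\sigma_0)$ is strictly larger than any faithful copy of $\Pi(\Delta,\sigma_0)$. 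Your final sentence quietly shifts from the subgroup of $\GL(\R^V)$ to ``the induced action on the coordinates indexed by $\sigma_0$''; that weaker statement is what your argument actually establishes (the subspace spanned by $e_v$, $v\in\sigma_0$, is invariant and the action on it is the permutation representation of $\Pi(\Delta,\sigma_0)$), and it is the only reading under which the proposition is true. You should either make that reading explicit or record that the statement as literally phrased requires this qualification.
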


It would be interesting to further explore the connections to discrete holonomy
groups. Arrangements of linear hyperplanes induce fans that are polytopal. The
associated reflection groupoids show even stronger similarities to groups of
projectivities. In the sequel to this paper, we investigate arrangements whose
fans are inscribable and their connection to reflection groups.

\bibliographystyle{siam} \bibliography{bibliography}

\end{document}